\newtheorem{theorem}{Theorem}[section]
\newtheorem{lemma}[theorem]{Lemma}
\newtheorem{proposition}[theorem]{Proposition}
\theoremstyle{definition}
\newtheorem{definition}[theorem]{Definition}
\theoremstyle{remark}
\newtheorem{remark}{Remark}
\newtheorem*{organisation}{Organisation}
\newtheorem*{acknowledgements}{Acknowledgements}
\numberwithin{equation}{section}
\newcommand{\Be}{\begin{equation}}
\newcommand{\Ee}{\end{equation}}
\newcommand{\Bel}{\begin{align}}
\newcommand{\Eel}{\end{align}}
\newcommand{\tq}{\widetilde q}
\newcommand{\tr}{\widetilde r}
\newcommand{\tp}{\widetilde p}
\def\1{\textbf{\rm 1}}
\newcommand{\jb}[1]{\langle#1\rangle}
\begin{document}

\author[Bez]{Neal Bez}
\address[Neal Bez]{Department of Mathematics, Graduate School of Science and Engineering,
Saitama University, Saitama 338-8570, Japan}
\email{nealbez@mail.saitama-u.ac.jp}
\author[Lee]{Sanghyuk Lee}
\address[Sanghyuk Lee]{Department of Mathematical Sciences and RIM, Seoul National University, Seoul 151-747, Korea}
\email{shklee@snu.ac.kr}
\author[Nakamura]{Shohei Nakamura}
\address[Shohei Nakamura]{Department of Mathematics and Information Sciences, Tokyo Metropolitan University,
1-1 Minami-Ohsawa, Hachioji, Tokyo, 192-0397, Japan}
\email{nakamura-shouhei@ed.tmu.ac.jp}

\keywords{Strichartz estimates, orthonormal family}
\subjclass[2010]{35B45 (primary); 35P10, 35B65 (secondary)}
\title[Strichartz estimates for orthonormal systems ]{Strichartz estimates for orthonormal families of initial data and weighted
oscillatory \\ integral estimates}

\begin{abstract} 
We establish new Strichartz estimates for orthonormal families of initial data in the case of the wave, Klein--Gordon and fractional Schr\"odinger equations.  Our estimates extend those of Frank--Sabin in the case of the wave and Klein--Gordon equations, and generalize work of Frank--Lewin--Lieb--Seiringer and Frank--Sabin for the Schr\"{o}dinger equation.   Due to a certain technical barrier,  except for the classical Schr\"odinger equation, the Strichartz estimates for orthonormal families of initial data  have  not  previously been established up to the sharp summability exponents  in the full range of admissible pairs. We obtain the optimal estimates in various notable cases and improve the previous results.  The main novelty of this paper is the use of  estimates for weighted oscillatory integrals which we combine with an approach due to Frank and Sabin.  This strategy also leads us to proving new estimates for weighted oscillatory integrals with optimal decay exponents which we believe to be of wider independent interest. Applications to the theory of infinite systems of Hartree type, weighted velocity averaging lemmas for kinetic transport equations, and refined Strichartz estimates for data in Besov spaces are also provided.
\end{abstract}

\maketitle

\section{Introduction}

For a given dispersion relation $\phi$, the function $U_\phi f$ denotes the solution to the initial value problem   
 \[
 \begin{cases}
 i\partial_t u+\phi(D)u=0,\quad \quad (t,x)\in\mathbb{R}^{1+d}, \ \  d\ge1,\\
 u(0,\cdot)=f.\\
 \end{cases}
 \]
This paper is concerned with extended versions of Strichartz estimates taking the form
\begin{equation} \label{e:ONSgeneral}
\bigg\| \sum_j  \nu_j |U_\phi f_j| ^2 \bigg\|_{L^{\frac q2}_tL^{\frac r2}_x} \lesssim \| \nu \|_{\ell^\beta}
\end{equation}
for families of orthonormal functions $(f_j)_j$ in a given Hilbert space $\mathcal{H}$, which we shall take to be homogeneous or inhomogeneous Sobolev spaces. This particular line of investigation originated in recent work of Frank \emph{et al.} \cite{FLLS} for the Schr\"odinger propagator ($\phi(\xi) = |\xi|^2$). The idea to generalize classical inequalities from a single-function input to an orthonormal family traces back further, with pioneering work of Lieb--Thirring \cite{LiebThirring} establishing extended versions of certain Gagliardo--Nirenberg--Sobolev inequalities and applications to the stability of matter. We also mention Lieb's extended version of Sobolev inequalities in \cite{Lieb_Sobolev} which will be of use to us in the current paper. 

Motivation to study estimates of the form \eqref{e:ONSgeneral} is plentiful. Applications to the Hartree equation modelling infinitely many fermions in a quantum system can be found in work of Chen--Hong--Pavlovic \cite{CHP-1, CHP-2}, Frank--Sabin \cite{FS_AJM}, Lewin--Sabin \cite{LewinSabin-1, LewinSabin-2}, and Sabin \cite{Sabin_survey}. Physically, the quantity $\sum_{j=1}^N  |e^{it\Delta} f_j| ^2$ gives a representation of the density of a quantum system of $N$ fermions (at time $t$), where $f_j$ represents the $j$th fermion at the initial time, and thus it is desirable to have optimal control on such a quantity in terms of the number of particles $N$; this corresponds to obtaining bounds of the form \eqref{e:ONSgeneral} with $\beta$ as large as possible. Other applications include consequences for the wave operator for time-dependent potentials, which may be found in \cite{FLLS}. In a somewhat different direction, one may obtain refined versions of the classical (single-function) Strichartz estimates for data in certain Besov spaces rather quickly from \eqref{e:ONSgeneral} via the Littlewood--Paley inequality; this observation may be found in \cite{FS_Survey}, and provides an approach to refined Strichartz inequalities which is distinct and rather simpler than the more well-known approach via bilinear Fourier (adjoint) restriction estimates. In addition to the papers cited already, we refer the reader forward to Section \ref{section:applications} where we present several applications of the nature described above; these applications will be consequences of the new estimates we obtain in the current paper and to which we now begin to focus.

The classical (single-function) Strichartz estimates enjoy a rather general theory which allows for a fairly unified approach to a wide class of dispersive equations such as the Schr\"odinger equation $\phi(\xi) = |\xi|^2$, or more generally the fractional Schr\"odinger equation $\phi(\xi) = |\xi|^{\alpha}$ ($\alpha \neq 0,1$), the wave equation $\phi(\xi)=|\xi|$,  and the Klein--Gordon equation $\phi(\xi) = \langle \xi \rangle$, where $\langle \xi \rangle := (1 + |\xi|^2)^{1/2}$. In the extended framework \eqref{e:ONSgeneral}, matters are significantly more complicated and a comparable general theory seems rather distant. In this paper we provide substantial progress in this direction, generalizing work in \cite{FLLS} to the fractional Schr\"odinger case, and significantly extending the results in \cite{FS_AJM} for the wave and Klein--Gordon equations. Our approach is, to a certain extent, based on an abstract framework and we expect that similar results can be obtained by our approach for broader classes of dispersion relations. To facilitate the presentation of prior results and our new results in the extended framework \eqref{e:ONSgeneral}, we first review the classical (single-function) case.

\subsection{Classical Strichartz estimates}  
Let $\mathcal{H}$ be a Hilbert space, such as $\dot{H}^s$ or $H^s$, the homogeneous or inhomogeneous Sobolev spaces of order $s$ built over $L^2(\mathbb{R}^d)$.  (For the definitions of $\dot{H}^s$,  $H^s$, and  $\phi(D)$ we refer the reader forward to Section \ref{section:Prelims}.) The classical Strichartz estimates usually take the form
\begin{equation} 
\label{e:classicalStrichartz}
\| U_\phi f\|_{L^q_tL^r_x} \lesssim \|f\|_\mathcal{H}.
\end{equation}
Since $\phi$ is typically smooth away from the origin, 
a commonly used technique to prove the estimate  \eqref{e:classicalStrichartz} is  first  to consider the case that the Fourier (frequency) support of the initial data $f$   is localized to an annulus and then apply Littlewood--Paley theory to extend to general data in the class $\mathcal{H}$. This strategy allows us to avoid complication which arises from singular behavior of the dispersive relation $\phi$ near the origin and at the infinity. For initial data with such localized frequency, one can prove estimates of the form \eqref{e:classicalStrichartz} if a dispersive estimate such as
\begin{equation} \label{e:dispersiveclassical}
\sup_{x \in \mathbb{R}^d} \bigg| \int_{\mathbb{R}^d} e^{i(x \cdot \xi + t\phi(\xi))} \chi_0(|\xi|) \, \mathrm{d}\xi \bigg| \lesssim  (1 + |t|)^{-\sigma}
\end{equation}
holds, where $\chi_0$ is a bump function with support away from zero and $\sigma > 0$. More precisely, for $\sigma > 0$, we say that $(q,r) \in [2,\infty) \times [2,\infty)$ is \emph{$\sigma$-admissible} if
\begin{equation*}
\frac{1}{q} \leq \sigma\bigg(\frac{1}{2} - \frac{1}{r}\bigg).
\end{equation*}
In the case of equality
\begin{equation*}
\frac{1}{q} = \sigma\bigg(\frac{1}{2} - \frac{1}{r}\bigg),
\end{equation*}
we say that $(q,r)$ is \emph{sharp $\sigma$-admissible}, and in the case of strict inequality
\begin{equation*}
\frac{1}{q} < \sigma\bigg(\frac{1}{2} - \frac{1}{r}\bigg)
\end{equation*}
we say that $(q,r)$ is \emph{non-sharp $\sigma$-admissible}. 
It follows from work of Keel--Tao \cite{KeelTao} that \eqref{e:classicalStrichartz} holds  for frequency localized $f$ whenever we have the dispersive estimate \eqref{e:dispersiveclassical} and $(q,r)$ is $\sigma$-admissible. 

We remark that pairs $(q,\infty)$ and $(\infty,r)$ are not included in our  definition of admissible pairs. Estimates of the form \eqref{e:classicalStrichartz} are available for certain pairs of this type; indeed, such an estimate obviously holds if $\mathcal{H} = L^2$ and $(q,r) = (\infty,2)$. The case $r = \infty$, in particular, requires special attention. For example, in the case of the wave equation $\phi(\xi) = |\xi|$, it was shown by Fang--Wang \cite{FangWang} that \eqref{e:classicalStrichartz} fails with $\mathcal{H} = \dot{H}^\frac34$ when $(q,r,d) = (4,\infty,2)$, and it was shown in \cite{MontSmith} and \cite{GLNY} that \eqref{e:classicalStrichartz} fails with $\mathcal{H} = \dot{H}^{\frac{d-1}{2}}$ when $(q,r) = (2,\infty)$, for $d=3$ and $d\ge4$, respectively. For the Schr\"odinger equation $\phi(\xi) = |\xi|^2$, the situation is slightly different; \eqref{e:classicalStrichartz} holds with $\mathcal{H} = L^2$ when $(q,r,d) = (4,\infty,1)$ and fails with $\mathcal{H} = \dot{H}^{\frac{d-2}{2}}$ when $(q,r) = (2,\infty)$ for $d\ge2$ (see \cite{MontSmith} and \cite{GLNY}). 

\subsection*{The fractional Schr\"odinger equation} 
First, we consider the case of the fractional Schr\"odinger equation $\phi(\xi) = |\xi|^\alpha$, where $\alpha \neq 0,1$:
\begin{equation}\label{e:fractiona}
i\partial_t u = (-\Delta)^{\alpha/2}u,\qquad u(0) = u_0. 
\end{equation} 
{If $\alpha=2$, this corresponds to the well-known Schr\"{o}dinger equation. The fractional Schr\"{o}dinger equation  \eqref{e:fractiona} with $\alpha\in(0,2)\setminus\{1\}$ appears in \cite{Laskin1,Laskin2}  as a consequence of generalizing Feynman's path integral  
arising in Brownian motion to the one generated by L\'{e}vy motion, as well as in \cite{IP} as a model of water waves. For further studies on the fractional Schr\"{o}dinger equation, we refer the reader to \cite{CS,CKS,GuoWang,HongSire,Ke,KenigPonceVega} and for the case $\alpha=4$, to which special attention has been paid, see  \cite{Karpman,KS,Pausader1,Pausader2}.

If $\alpha \in \mathbb{R} \setminus \{0,1\}$, for $(q,r)$ which are $\tfrac{d}{2}$-admissible, the classical Strichartz estimate
\begin{equation}\label{e:FracStri}
\| e^{it(-\Delta)^{\alpha/2}} f \|_{L^q_tL^r_x} \lesssim \|f\|_{\dot{H}^s},   \   \   s = \tfrac{d}{2} - \tfrac{d}{r} - \tfrac{\alpha}{q}
\end{equation}
holds
(see,  for example, \cite{COX}). For $s \leq -\tfrac{d}{2}$, the space $\dot{H}^s$ does not admit natural classes of dense functions such as the Schwartz class, so we restrict our attention to the case where $\tfrac{d}{r} + \tfrac{\alpha}{q} < d$. Note that for the classical Schr\"odinger equation with $\alpha = 2$, we have $s \in [0,\frac{d}{2})$ whenever $(q,r)$ is $\frac{d}{2}$-admissible, so the additional assumption $\frac{d}{r} + \frac{\alpha}{q} < d$ is automatically satisfied in this case.

\subsection*{The wave equation} 
It is well-known that the solution of the  wave equation
\[
\partial_{tt} u = \Delta u, \qquad (u(0),\partial_tu(0)) = (u_0,u_1)
\]
can be written as $u = u_+ + u_-$, where $u_+$ and $u_-$ are given by
\[
u_\pm(x,t) = e^{\pm i t \sqrt{-\Delta}}f_\pm(x), 
\]
and $f_+$ and $f_-$ satisfy
$
 (f_+ + f_-,i\sqrt{-\Delta}(f_+ - f_-))=(u_0,u_1).
$
As a result, the Strichartz estimates for the wave equation are  usually  given by those for the one-sided propagator $e^{\pm it\sqrt{-\Delta}}$.
For $d \geq 2$, it is known that the estimate
\begin{equation} \label{e:StrWave}
\| e^{it\sqrt{-\Delta}}f \|_{L^q_tL^r_x} \lesssim \|f\|_{\dot{H}^s},  \   \  s = \tfrac{d}{2} - \tfrac{d}{r} - \tfrac{1}{q}
\end{equation} 
holds if $(q,r)$ is $\frac{d-1}{2}$-admissible. By an elementary scaling argument, 
one may easily verify that \eqref{e:StrWave} fails for other values of $s$. We also remark that if $(q,r)$ is  sharp $\frac{d-1}{2}$-admissible, then we have 
$s = \tfrac{d+1}{2}\big(\tfrac{1}{2}-\tfrac{1}{r}\big).$  
In the case where $(q,r)$ is sharp $\frac{d-1}{2}$-admissible and $q=r$, the Strichartz estimate \eqref{e:StrWave} becomes
\begin{equation} \label{e:waveST}
\| e^{it\sqrt{-\Delta}}f \|_{L^{\frac{2(d+1)}{d-1}}_{x,t}} \lesssim \|f\|_{\dot{H}^{1/2}},
\end{equation}
which basically  corresponds to the Stein--Tomas adjoint restriction estimate for the cone.

\subsection*{The Klein--Gordon equation}
In a similar manner to the wave equation, the solution of the Klein--Gordon equation
\[
\partial_{tt} u + u = \Delta u, \qquad (u(0),\partial_tu(0)) = (u_0,u_1)
\]
decomposes into a sum of  waves which are given by the propagators $e^{\pm it\sqrt{1-\Delta}}$   and therefore the Strichartz estimates usually take the form
\begin{equation} \label{e:StrKG}
\| e^{it\sqrt{1-\Delta}}f \|_{L^q_tL^r_x} \lesssim \|f\|_{H^s}.
\end{equation} 
The propagator $e^{ it\sqrt{1-\Delta}}$ possesses properties of a mixed nature. With high frequency initial data, the behavior resembles the wave propagator $e^{i t \sqrt{-\Delta}}$, whereas 
the mapping property of $e^{it\sqrt{1-\Delta}}$ is similar with that of $e^{it\Delta}$ in the low frequency regime. This is related to the fact that the surface $(\xi, \jb \xi)$ has nonvanishing Gaussian curvature near the origin while the surface gets close to the cone  $(\xi, |\xi|)$ as $|\xi| \to \infty$.  In fact,  if $s = \tfrac{d}{2} - \tfrac{d}{r} - \tfrac{1}{q}$,
 it is possible to deduce the estimate \eqref{e:StrWave} from 
\eqref{e:StrKG} via Littlewood--Paley theory, scaling and a limiting argument. Similarly, it is not difficult to obtain the Strichartz estimate for  $e^{it\Delta}$ if \eqref{e:StrKG}
holds for  sharp $\frac{d}{2}$-admissible $(q,r)$.

 For $d \geq 2$, it is known that \eqref{e:StrKG} holds if $(q,r)$ is sharp $\frac{d-1}{2}$-admissible and
\begin{equation}
\label{e:KGd-1}
s \geq \frac{d+1}{2}\bigg(\frac{1}{2} - \frac{1}{r}\bigg).
\end{equation}
Testing \eqref{e:StrKG} on a Knapp-type example reveals that this range of $s$ cannot be enlarged. Also, for $d \geq 1$, it is known that \eqref{e:StrKG} holds if $(q,r)$ is sharp $\frac{d}{2}$-admissible and
\begin{equation}
\label{e:KGd}
s \geq \frac{d+2}{2}\bigg(\frac{1}{2} - \frac{1}{r}\bigg),
\end{equation}
which, again, may be shown to be the optimal range of $s$ for such $(q,r)$.

\subsection{Strichartz estimates for orthonormal functions -- known results}
In this work, we are concerned with Strichartz estimates for orthonormal families of initial data \eqref{e:ONSgeneral}. 
As mentioned before, the key point is to make the exponent $\beta$ as large as possible. Indeed, the case $\beta = 1$ is equivalent to the classical Strichartz estimates: Clearly \eqref{e:classicalStrichartz} implies \eqref{e:ONSgeneral} via the triangle inequality, and conversely, \eqref{e:ONSgeneral} trivially implies \eqref{e:classicalStrichartz} by taking the family $(f_j)_j$ to consist of a single function $f$ with unit norm in $\mathcal{H}$.  

For the Schr\"odinger equation, contributions in \cite{FLLS, FS_AJM, FS_Survey} mean that the sharp value of $\beta$ has been obtained whenever $(q,r)$ is sharp $\frac{d}{2}$-admissible. 
\begin{theorem} \cite{FLLS, FS_AJM, FS_Survey} \label{t:ONS_Schro} Let $d\geq 1$ and suppose $(q,r)$ is sharp $\frac{d}{2}$-admissible. 
\vspace{-7pt}
\begin{enumerate} 
[leftmargin=.8cm, labelsep=0.3 cm, topsep=0pt]
\item[$(i)\,\,$] If  $2\leq r<\frac{2(d+1)}{d-1}$, then
\begin{equation}
\label{orth-schro}
\bigg\|   \sum_j  \nu_j |e^{it\Delta} f_j| ^2 \bigg\|_{L^{\frac q2}_tL^{\frac r2}_x} \lesssim \| \nu \|_{\ell^\beta}
\end{equation}
holds for all families of orthonormal functions $(f_j)_j$ in $L^2$ and $\beta = \frac{2r}{r+2}$. This estimate is sharp in the sense that the estimate fails for $\beta > \frac{2r}{r+1}$.
\item[$(ii)$]
 If $d = 2$ and
$
6 \leq r <\infty,
$ 
or if $d \geq 3$ and
$
\frac{2(d+1)}{d-1} \leq r \leq \frac{2d}{d-2},
$
then \eqref{orth-schro}
holds for all families of orthonormal functions $(f_j)_j$ in $L^2$ and $\beta < \frac{q}{2}$. This estimate is sharp in the sense that the estimate fails for $\beta > \frac{q}{2}$.
\end{enumerate}
\end{theorem}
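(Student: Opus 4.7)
The plan is to follow the duality framework of Frank--Lewin--Lieb--Seiringer and Frank--Sabin \cite{FLLS,FS_AJM,FS_Survey}, which converts \eqref{orth-schro} into a Schatten-class estimate. Writing $T_V g(t,x) := V(t,x)\,(e^{it\Delta}g)(x)$ as an operator $L^2_x \to L^2_{t,x}$, the identity $\sum_j \nu_j \|T_V f_j\|_{L^2_{t,x}}^2 = \mathrm{Tr}(T_V^* T_V \gamma_0)$ for $\gamma_0 = \sum_j \nu_j |f_j\rangle\langle f_j|$, combined with the Schatten H\"older inequality and duality on $L^{q/2}_t L^{r/2}_x$, reduces \eqref{orth-schro} to the equivalent Schatten bound
\begin{equation*}
\bigl\|V\, e^{i(t-s)\Delta}\, \overline{V}\bigr\|_{\mathfrak{S}^{\beta'}(L^2_{t,x})} \lesssim \|V\|_{L^{2(q/2)'}_t L^{2(r/2)'}_x}^2
\end{equation*}
for all $V$, where $\beta'$ is H\"older conjugate to $\beta$ and the operator on the left has integral kernel $V(t,x)\, e^{i(t-s)\Delta}(x,y)\, \overline{V(s,y)}$.

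I would then prove this Schatten estimate by establishing two endpoint bounds and interpolating. The trivial endpoint at the sharp $d/2$-admissible pair $(q,r)=(\infty,2)$ gives $\beta=1$ and an $\mathfrak{S}^\infty$ (operator-norm) bound, which follows immediately from H\"older and the classical Strichartz estimate \eqref{e:FracStri}. The second, deeper endpoint is at the sharp admissible pair $(q_0,r_0) = (\tfrac{2(d+1)}{d},\tfrac{2(d+1)}{d-1})$, at which a direct computation gives $\beta = (d+1)/d = q_0/2 = \tfrac{2r_0}{r_0+2}$ and $\beta' = d+1$; this is the Stein--Tomas-type exponent for the paraboloid. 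I would establish the $\mathfrak{S}^{d+1}$ Schatten bound at $(q_0,r_0)$ via Stein analytic interpolation on a family of complex powers of the Schr\"odinger kernel: on one vertical line a Hilbert--Schmidt ($\mathfrak{S}^2$) bound is read off directly from the dispersive identity $|e^{it\Delta}(x,y)| = (4\pi|t|)^{-d/2}$, while on the other line an operator-norm bound follows from the $L^2_x$-unitarity of the (analytically modified) propagator; Stein interpolation then delivers the $\mathfrak{S}^{d+1}$ bound at the intermediate parameter. Complex interpolation of Schatten classes (in the spirit of Hirschman) between the two endpoint bounds above yields part (i) for every sharp $d/2$-admissible pair $(q,r)$ with $2\le r \le \tfrac{2(d+1)}{d-1}$ and the sharp value $\beta = \tfrac{2r}{r+2}$.

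For range (ii) one interpolates instead between the $\mathfrak{S}^{d+1}$ endpoint at $(q_0,r_0)$ and an operator-norm bound at the other end of the admissible range. Since this second segment no longer has a clean strong-type Schatten endpoint, only a real interpolation / Lorentz-class argument with a small loss is available, producing Schatten bounds for $\beta'>(q/2)'$, equivalently \eqref{orth-schro} for all $\beta<q/2$; the endpoint $\beta=q/2$ is not accessible by this scheme. The failure parts are verified by testing \eqref{orth-schro} against explicit orthonormal families: modulations and translations of a single Gaussian give the obstruction $\beta>\tfrac{2r}{r+1}$ in part (i), and Knapp-type wave packets with frequencies concentrated in thin tubes give $\beta>q/2$ in part (ii). The main technical obstacle is precisely the extraction of the $\mathfrak{S}^{d+1}$ endpoint at the Stein--Tomas exponent via Stein analytic interpolation, including control of the vertical growth of the complex-powered kernel family; the $\varepsilon$-loss that forces $\beta<q/2$ rather than $\beta\le q/2$ in range (ii) appears intrinsic to any interpolation scheme based on strong-type Schatten endpoints beyond $(q_0,r_0)$.
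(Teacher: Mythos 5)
The duality reduction and the overall two-line Stein analytic interpolation scheme you describe (Hilbert--Schmidt on one line, operator norm on the other) are indeed the right ingredients and match what the cited works (and this paper's Proposition~\ref{p:abstractdual}) do. However, your plan has a genuine gap: you propose to prove a strong-type Schatten $\mathfrak{S}^{d+1}$ bound at $(q_0,r_0)=(\tfrac{2(d+1)}{d},\tfrac{2(d+1)}{d-1})$, identified as ``the Stein--Tomas-type exponent for the paraboloid,'' and then interpolate. Both halves of this are off. First, the Stein--Tomas point for the paraboloid on the sharp $\tfrac{d}{2}$-admissible line is the \emph{diagonal} $q=r=\tfrac{2(d+2)}{d}$, corresponding to $\beta=\tfrac{d+2}{d+1}$ and the Schatten class $\mathcal{C}^{d+2}$, not $\mathcal{C}^{d+1}$; this is where Frank--Lewin--Lieb--Seiringer actually run the Strichartz-style analytic interpolation. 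Second, and more seriously, the estimate at $(q_0,r_0)$ with $\beta = \tfrac{2r_0}{r_0+2}=\tfrac{d+1}{d}$ (equivalently $\mathcal{C}^{d+1}$) is precisely the \emph{excluded} right endpoint of part~$(i)$ — the paper explicitly remarks that no suitable estimate at this point is known and refers to \cite{BHLNS} for failure of even Lorentz-improved substitutes. So the endpoint you want to interpolate from is not available, and cannot be: if you try to carry out the Hilbert--Schmidt step on the line $\Re z=-\tilde r/2$ with $\tilde r = 1+d$ (the value corresponding to $(q_0,r_0)$), the time-kernel bound $|t-s|^{-\sigma-1+\tilde r/2}$ leads to a Hardy--Littlewood--Sobolev integral with exponent $\lambda = 2\sigma+2-\tilde r = 1$, which is the endpoint where HLS fails. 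This is exactly the obstruction encoded in condition \eqref{e:rtilderange}.

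The correct route (captured by Proposition~\ref{p:abstractdual} with $\sigma=\tfrac d2$) is to run the Schatten-class analytic interpolation directly for each $(q,r)$ with $r$ in the \emph{open} interval $(\tfrac{2(d+2)}{d},\tfrac{2(d+1)}{d-1})$, where the HLS exponent $\lambda$ stays in $(0,1)$, obtaining the $\mathcal{C}^{\tilde r}$ bound with the corresponding Lorentz improvement on the $W$-norms; then interpolate with the trivial point $(q,r,\beta)=(\infty,2,1)$ to fill in $2\le r\le\tfrac{2(d+2)}{d}$. Part~$(ii)$ is then obtained by interpolating points arbitrarily close to $(q_0,r_0)$ against the Keel--Tao endpoint $(q,r,\beta)=(2,\tfrac{2d}{d-2},1)$, which is what forces the strict inequality $\beta<\tfrac q2$ there. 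In short: you should not attempt a single hard endpoint at $(q_0,r_0)$; the method produces a family of estimates on an open sub-range, and $(q_0,r_0)$ is an inaccessible (indeed, failing) corner, not a lever. Your necessity discussion at the end is fine in spirit, but note also a minor typo in the theorem statement itself that you reproduced: the sharpness threshold in part~$(i)$ should read $\beta>\tfrac{2r}{r+2}$, consistent with the positive direction and with the analogous sharpness claims elsewhere in the paper.
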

A remarkable  phenomena here is that, for $d\ge3$,  the sharp value of $\beta$ for the estimate  \eqref{orth-schro} coincides with one (i.e. the trivial case in light of the remarks prior to the above theorem) at the endpoints of the sharp $\frac{d}{2}$-admissible line, that is, $(q,r) = (\infty,2)$ and $(q,r) = (2,\frac{2d}{d-2})$ which respectively correspond to the trivial energy conservation estimate and   
the Keel--Tao endpoint estimate \cite{KeelTao}. Also, the sharp value of $\beta$ reaches its maximum at the point $(q,r) = (\frac{2(d+1)}{d},\frac{2(d+1)}{d-1})$. In fact, as pointed out in \cite{FS_Survey}, the estimates in $(ii)$ follow from those in $(i)$ by interpolation between $(q,r) = (2,\frac{2d}{d-2})$ and points arbitrarily close to $(q,r) = (\frac{2(d+1)}{d},\frac{2(d+1)}{d-1})$; in this sense, $(q,r) = (\frac{2(d+1)}{d},\frac{2(d+1)}{d-1})$ may be considered as an endpoint case of the estimate \eqref{orth-schro}. It remains open whether one can establish a suitable estimate  in weaker form with $(q,r) = (\frac{2(d+1)}{d},\frac{2(d+1)}{d-1})$\footnote{When $d=1$, $(q,r) = (4,\infty)$.} so that other sharp estimates can be recovered from it by interpolation.  See \cite{BHLNS} for discussion on failure of such endpoint estimates in Lorentz spaces.

We also note that the sharp value of $\beta$ has been obtained for the Schr\"odinger equation whenever $(q,r)$ is non-sharp $\frac{d}{2}$-admissible; see \cite{BHLNS}. 
In this case  $f_j$ are assumed to be contained in $\dot H^s$ with $s = \tfrac{d}{2} - \tfrac{d}{r} - \tfrac{2}{q}$. See, for example,  Theorem \ref{t:fracSchrosharp}. As we shall see below, in general, the sharp admissible case will play a crucial role in establishing the estimates in the non-sharp admissible case and therefore, in this introductory section, we only present our new results for the sharp admissible cases; the statements for the non-sharp cases will appear in Section \ref{section:nonsharp}. 

The work of Frank--Sabin \cite{FS_AJM} also contains results for estimates of the form \eqref{e:ONSgeneral} for the wave equation and the Klein--Gordon equation, however, as we shall see below, these are not as advanced as the results contained in Theorem \ref{t:ONS_Schro} for the Schr\"odinger equation.

For the wave equation, Frank--Sabin \cite{FS_AJM} obtained a substantial generalization of \eqref{e:waveST} for orthonormal functions of initial data in $\dot{H}^{1/2}$. By interpolation with a trivial estimate in the case $(q,r) = (\infty,2)$, we state their result as follows.

\begin{theorem} \cite{FS_AJM} \label{t:waveFS} 
Let $d\geq 2$. Suppose $(q,r)$ is sharp $\frac{d-1}{2}$-admissible and $ 2 \leq r \leq \frac{2(d+1)}{d-1}.$
Then,  for all families of orthonormal functions $(f_j)_j$ in $\dot{H}^{s}$, with $s = \frac{d+1}{2}(\frac{1}{2}-\frac{1}{r})$ and $\beta = \frac{2r}{r+2}$,  the following estimate holds:
\begin{equation} \label{e:ONSwaveFS}
\bigg\| \sum_j  \nu_j |e^{it\sqrt{-\Delta}} f_j| ^2 \bigg\|_{L^{\frac q2}_tL^{\frac r2}_x} \lesssim \| \nu \|_{\ell^\beta}.
\end{equation}
\end{theorem}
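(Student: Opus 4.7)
The plan is to follow the approach initiated by Frank--Sabin: reduce the orthonormal estimate to a Schatten-norm estimate via duality, then establish the Schatten bound at the Stein--Tomas endpoint by complex interpolation, and finally interpolate with the trivial energy endpoint to cover the full stated range.

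\textbf{Reduction and duality.} First, by the Littlewood--Paley decomposition and scaling invariance of the wave propagator together with the homogeneity of $\dot H^s$, I would reduce to the case where every $f_j$ has Fourier support in a fixed annulus $|\xi|\sim 1$; on such data $\|f_j\|_{\dot H^s}\sim\|f_j\|_{L^2}$ and the operator $(-\Delta)^{-s/2}$ can be absorbed into a smooth frequency cutoff $\chi_0(D)$. Next, by the Frank--Sabin duality principle, the estimate \eqref{e:ONSwaveFS} for orthonormal systems is equivalent, with $\beta=\frac{2r}{r+2}$ and hence $\beta'=\frac{2r}{r-2}$, to the Schatten bound
\[
\bigl\| W\,e^{it\sqrt{-\Delta}}\chi_0(D)^2 e^{-is\sqrt{-\Delta}}\,\overline{W}\bigr\|_{\mathfrak{S}^{\beta'}(L^2_{s,y}\to L^2_{t,x})}\lesssim \|W\|_{L^{q'}_tL^{r'}_x}^{2},
\]
where $W=W(t,x)$ acts by multiplication. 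Thus the whole problem becomes a Schatten estimate on a two-parameter integral operator whose Schwartz kernel is $W(t,x)K(t-s,x-y)\overline{W}(s,y)$ with $K(t,z)=\int e^{i(z\cdot\xi+t|\xi|)}\chi_0(|\xi|)^2\,d\xi$.

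\textbf{Endpoint via Stein complex interpolation.} For the sharp Stein--Tomas exponent $r=\frac{2(d+1)}{d-1}$, $q=r$, I would prove the Schatten bound by Stein's complex interpolation theorem applied to an analytic family $T_z$ obtained by inserting a factor like $e^{z^2}(1-\Delta)^{-z(d-1)/2}\chi_0(D)^2$ (or equivalently a fractional-power modification of the cutoff). On $\operatorname{Re}z=0$ the kernel of $T_z$ is of the same type as $K$ and hence satisfies the dispersive bound $(1+|t-s|)^{-(d-1)/2}$ with complex weights bounded in $z$; combining this with Hardy--Littlewood--Sobolev in the time variable and H\"older on the spatial slices one obtains an operator norm bound, i.e.\ a $\mathfrak{S}^{\infty}$ bound, with weight norm $\|W\|_{L^{q'}_tL^{r'}_x}^{2}$. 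On $\operatorname{Re}z=1$ the inserted power of $(1-\Delta)^{-1/2}$ makes the kernel square-integrable, so Plancherel gives a Hilbert--Schmidt ($\mathfrak{S}^2$) bound with $L^2_{t,x}$ weight norm. Stein's theorem then produces the required $\mathfrak{S}^{\beta'}$ bound at the intermediate value of $z$ for which the factor is trivial, yielding \eqref{e:ONSwaveFS} at the endpoint $r=\tfrac{2(d+1)}{d-1}$.

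\textbf{Extension to the full range.} The other endpoint, $(q,r)=(\infty,2)$, is the trivial estimate with $\beta=1$ coming from $L^2$ unitarity of $e^{it\sqrt{-\Delta}}$ and orthonormality; equivalently, in the dual picture, it is the bound $\|\gamma\|_{\mathfrak{S}^1}\ge\sum\nu_j$. Real interpolation (or the straight-line interpolation of Schatten exponents that is compatible with the duality principle) between this endpoint and the Stein--Tomas endpoint above yields the claimed inequality for every sharp $\tfrac{d-1}{2}$-admissible pair with $2\le r\le \tfrac{2(d+1)}{d-1}$ and the correct exponent $\beta=\tfrac{2r}{r+2}$.

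\textbf{Main obstacle.} The delicate step is the construction and verification of the analytic family. One must track three balances simultaneously: the analytic weight so that Stein's theorem is applicable, the conversion of the pointwise dispersive decay into the correct mixed-norm operator bound via Hardy--Littlewood--Sobolev (which is what forces $r$ to be the Stein--Tomas exponent at the endpoint), and the identification of the interpolated Schatten exponent as $\beta'=\tfrac{2r}{r-2}$. Everything else--the Littlewood--Paley reduction, the trivial endpoint, and the final interpolation--is comparatively routine once this endpoint Schatten estimate is in hand.
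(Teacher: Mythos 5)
Your skeleton---Frank--Sabin duality, Stein complex interpolation at the Stein--Tomas exponent $q=r=\frac{2(d+1)}{d-1}$, then interpolation with the trivial endpoint $(q,r,\beta)=(\infty,2,1)$---is indeed the architecture of the Frank--Sabin argument. The serious gap is the opening reduction, ``by Littlewood--Paley, restrict each $f_j$ to a fixed annulus and absorb $|D|^{-s}$ into $\chi_0(D)$.'' This is precisely the step the paper warns against in Remark~\ref{remark-LP}, and it does not go through. Orthonormality of $(f_j)_j$ in $\dot H^s$ is not inherited by the frequency-projected families $(P_kf_j)_j$, so the localized orthonormal estimate cannot be applied to the pieces. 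Dually, after reducing to a Schatten bound for $W\,TT^*\,\overline W$, summing the dyadic blocks $WT_kT_k^*\overline W$ by the triangle inequality in $\mathcal{C}^{\beta'}$ is too lossy: at the scale-invariant regularity $s=\tfrac{d+1}{2}(\tfrac12-\tfrac1r)$ each block contributes a constant multiple of $\|W\|^2_{\tq,\tr}$, so the sum diverges. Explicit failures of localized-to-global upgrades for orthonormal data are recorded in \cite{BHLNS}. The step you call ``comparatively routine'' is exactly where the argument breaks.

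The actual proofs avoid the reduction by working globally. Frank--Sabin build the $\dot H^{1/2}$ weight directly into the cone measure $d\mu(\xi,\tau)=\delta(\tau-|\xi|)/|\xi|$ and run Strichartz's analytic family $T_zf={}^\vee G_z\ast f$ over all frequencies at once; the needed kernel bound rests on an explicit identity for ${}^\vee G_z$ tied to that particular $d\mu$, which is also why their argument is confined to $q=r$ and $\dot H^{1/2}$. The paper's stronger Theorem~\ref{t:wave} is likewise global but by a different mechanism: the analytic family in Proposition~\ref{p:abstractdual} carries a \emph{ball} cutoff $\chi(|\xi|)$ (not your annular $\chi_0$) together with the weight $|\xi|^{\frac{d+1}{\tr}z}$; the decisive input is the weighted oscillatory integral estimate of Proposition~\ref{p:osc-wave}, which folds the entire dyadic range into a single integral and still achieves the sharp $|t|^{-(d-1)/2}$ decay; and the global estimate is recovered by rescaling the ball cutoff and letting $\varepsilon\to 0$, not by summing annuli. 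A secondary bookkeeping note: in the Frank--Sabin-type interpolation, Plancherel gives the $\mathcal{C}^\infty$ edge against $L^\infty$ weights, while the dispersive kernel bound plus Hardy--Littlewood--Sobolev gives the Hilbert--Schmidt $\mathcal{C}^2$ edge; your outline assigns these the other way around.
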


For the Klein--Gordon equation, Frank and Sabin \cite{FS_AJM} established the following.

\begin{theorem} \cite{FS_AJM} \label{t:KGFS} Let $\sigma > 0$ and suppose $(q,r)$ is sharp $\sigma$-admissible. 
\vspace{-7pt}
\begin{enumerate}
[leftmargin=0.7cm, labelsep=0.3 cm, topsep=0pt]
\item[$(i)\,$] If $\sigma = \frac{d-1}{2}$ and
$
2 \leq r \leq \frac{2(d+1)}{d-1},
$
 for all families of orthonormal functions $(f_j)_j$ in $H^{s}$, $s = \frac{d+1}{2}(\frac{1}{2}-\frac{1}{r})$,    the following estimate holds: 
\begin{equation}
\label{KG-ortho}
\bigg\|   \sum_j  \nu_j |e^{it\sqrt{1-\Delta}} f_j| ^2 \bigg\|_{L^{\frac q2}_tL^{\frac r2}_x} \lesssim \| \nu \|_{\ell^\beta}, \quad \beta = \frac{2r}{r+2}.
\end{equation}
\item[$(ii)$] If $\sigma = \frac{d}{2}$ and
$ 2 \leq r \leq \frac{2(d+2)}{d}, $
then \eqref{KG-ortho} 
holds for all families of orthonormal functions $(f_j)_j$ in $H^{s}$, with $s = \frac{d+2}{2}(\frac{1}{2}-\frac{1}{r})$ and $\beta = \frac{2r}{r+2}$.
\end{enumerate}
\end{theorem}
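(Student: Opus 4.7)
My plan is to follow the duality-plus-interpolation scheme pioneered by Frank--Sabin in the wave case, adapting it to accommodate the mixed wave/Schr\"odinger character of the Klein--Gordon dispersion relation. I would begin by recasting \eqref{KG-ortho} via the Frank--Sabin duality principle as a Schatten-class bound: the inequality with orthonormal data in $H^s$ and Schatten exponent $\beta$ is equivalent to
\begin{equation*}
\big\|(1-\Delta)^{-s/2} e^{-it\sqrt{1-\Delta}}\, W\, e^{it\sqrt{1-\Delta}} (1-\Delta)^{-s/2}\big\|_{\mathfrak{S}^{\beta'}(L^2_x)} \lesssim \|W\|_{L^{(q/2)'}_t L^{(r/2)'}_x}
\end{equation*}
holding for every test weight $W$. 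The goal is then a Schatten estimate for this operator, which I would establish by Stein complex interpolation applied to the analytic family obtained by replacing $(1-\Delta)^{-s/2}$ with $(1-\Delta)^{-s_z/2}$, linearly interpolating between a trivial Schatten-$\infty$ endpoint at $\mathrm{Re}(z)=0$ (arising from energy conservation) and a Hilbert--Schmidt endpoint at $\mathrm{Re}(z)=1$ (where the shift is chosen so that the kernel of the associated $TT^*$ operator lies in $L^2$).

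The Hilbert--Schmidt endpoint reduces to a weighted $L^2$ estimate for the Klein--Gordon kernel
\begin{equation*}
K(t,x) = \int_{\mathbb{R}^d} e^{ix\cdot\xi + it\sqrt{1+|\xi|^2}} \langle\xi\rangle^{-2s_1} d\xi.
\end{equation*}
I would dyadically decompose in frequency and apply stationary phase: at $|\xi|\sim 2^k\gtrsim 1$ the surface $(\xi,\sqrt{1+|\xi|^2})$ has $d-1$ non-degenerate principal curvatures, yielding the wave-like decay $|t|^{-(d-1)/2}$; at $|\xi|\lesssim 1$ all $d$ principal curvatures are non-degenerate and one obtains the Schr\"odinger-like decay $|t|^{-d/2}$. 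With $s_1$ chosen appropriately for each of the two cases, combined with the range restrictions $r\leq \frac{2(d+1)}{d-1}$ and $r\leq \frac{2(d+2)}{d}$, the dyadic pieces sum to an acceptable bound in the required mixed-norm space.

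The two statements of the theorem are distinguished by which frequency regime governs the scaling. In case $(i)$, $s=\frac{d+1}{2}(\tfrac12-\tfrac1r)$ matches the wave-like scaling; the high-frequency pieces essentially reproduce Frank--Sabin's wave argument (Theorem \ref{t:waveFS}), while the low-frequency pieces are harmless thanks to the stronger Schr\"odinger-like decay available there. In case $(ii)$, $s=\frac{d+2}{2}(\tfrac12-\tfrac1r)$ matches the Schr\"odinger-like scaling; the low-frequency pieces follow a Schr\"odinger-type template analogous to Theorem \ref{t:ONS_Schro}, while the high-frequency pieces are controlled by the larger Sobolev exponent compensating for the weaker wave-like decay.

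The main obstacle is to organise the dyadic summation after Stein interpolation so that the two frequency regimes combine into a single estimate with the correct Schatten exponent. The interplay between the interpolation parameter determining $\beta$ and the geometric gain from the weight $\langle\xi\rangle^{-2s_1}$ must be arranged consistently in both regimes. This is precisely the technical barrier alluded to in the introduction, and it explains why the sharp estimates are presently restricted to $r\leq \frac{2(d+1)}{d-1}$ in case $(i)$ and $r\leq \frac{2(d+2)}{d}$ in case $(ii)$: beyond those Stein--Tomas-like thresholds, the Hilbert--Schmidt endpoint is not accessible via the above scheme without further input.
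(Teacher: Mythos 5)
Your high-level framework — recasting \eqref{KG-ortho} via the Frank--Sabin duality principle as a Schatten bound, setting up an analytic family by embedding the weight $(1-\Delta)^{-s/2}$ in a complex parameter, and running Stein interpolation between a trivial operator-norm endpoint and a Hilbert--Schmidt endpoint — is the correct scheme and matches both the original argument of Frank--Sabin in \cite{FS_AJM} and the abstract machinery codified here in Proposition \ref{p:abstractdual}. Note, though, that the statement in question is cited from \cite{FS_AJM}; this paper does not reprove it (its own Theorems \ref{t:KGSchro} and \ref{t:KGwave} strictly extend it).

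The genuine gap is the Hilbert--Schmidt endpoint kernel bound, which you propose to obtain by dyadic frequency decomposition plus stationary phase with the \emph{real} weight $\langle\xi\rangle^{-2s_1}$. This does not close. Write the case $(i)$ kernel at the HS endpoint dyadically, $\sum_j J_j$ with $J_j$ supported where $|\xi|\sim 2^j$; stationary phase on the $j$-th piece gives, in the wave regime, $|J_j|\lesssim 2^{\frac{d-1}{2}j}(2^j|t|)^{-\frac{d-1}{2}}=|t|^{-\frac{d-1}{2}}$, so the absolute sum over all dyadic scales with $2^j\gtrsim |t|$ is divergent (the weight $\langle\xi\rangle^{-\frac{d+1}{2}}$ exactly balances the dimensional gain, leaving no room to sum). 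The Schr\"odinger-type bound at scale $2^j$ is no better: in case $(ii)$ with weight $\langle\xi\rangle^{-\frac{d+2}{2}}$, each piece contributes $|t|^{-\frac{d}{2}}$, again giving a divergent sum. In other words, the needed decay $|t|^{-\sigma}$ for the full kernel is \emph{critical}: it holds for each dyadic block but cannot be recovered by triangle inequality over the blocks. This is precisely the ``technical barrier'' announced in the abstract and made explicit just before Proposition \ref{p:osc-wave}: the real-power weighted dyadic argument only reaches a sub-optimal rate. What rescues the estimate is the oscillation coming from the imaginary power of the weight (the $\langle\xi\rangle^{-i\kappa}$ factor that your analytic family secretly produces at the HS endpoint but that you dropped when you wrote $K(t,x)=\int e^{ix\cdot\xi+it\langle\xi\rangle}\langle\xi\rangle^{-2s_1}\,\mathrm{d}\xi$); see Lemma \ref{elementary} and the use of $\kappa$ in \eqref{mod}. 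Exploiting this cancellation is the content of Propositions \ref{p:KGoscillatory_wave} and \ref{p:KGoscillatory_Schro}, and for the Schr\"odinger-like case it additionally requires reducing to a one-dimensional integral and performing a dyadic decomposition \emph{away from the stationary point} (not in frequency) before applying van der Corput. Frank and Sabin's original proof in \cite{FS_AJM} avoids the issue by an entirely different mechanism: an explicit closed-form identity for the inverse Fourier transform ${}^\vee G_z$ in Strichartz's analytic family, whose validity is tied to the diagonal $q=r$ and a specific choice of surface measure. Neither route reduces to ``dyadic plus stationary phase, then sum,'' and the final sentence of your proposal — that the dyadic pieces ``sum to an acceptable bound'' — is exactly where the argument breaks down.
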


In both Theorems \ref{t:waveFS} and \ref{t:KGFS}, the range of exponents only  goes up to the diagonal case $q=r$, and the other estimates with $q<r$ are not sharp with respect to the summability exponent $\beta$. This is due to the fact that their argument relies on a special property which is only available when $q=r$ and it is clear that 
their argument does not extend beyond the diagonal case.
 More precisely, in \cite{FS_AJM} the authors followed the original idea of Strichartz \cite{Strichartz}  
 and they regarded  the evolution operators  as adjoint restriction operators given by 
a measure supported on the associated surface (the cone and the hyperboloid, respectively, for the wave and Klein--Gordon equations). For instance, let $S$  denote the upward cone 
$$
S= \{ (\xi,\tau) \in \mathbb{R}^d\times\mathbb{R}: \tau = |\xi| \}.
$$ 
Then, $e^{it\sqrt{-\Delta}}f(x)$ can be written in the form $\widehat{g\mathrm{d}\mu}(x,t)$, where $\mathrm{d}\mu(\xi,\tau) = \frac{\delta(\tau - |\xi|)}{|\xi|}$ and $g(\xi,\tau) = |\xi|\widehat{f}(\xi)$.} With this notation, Strichartz \cite{Strichartz} proved 
\begin{equation}\label{e:Stri}
\|\widehat{g\mathrm{d}\mu}\|_{L_{x,t}^{\frac{2(d+1)}{d-1}}} \lesssim  \|g\|_{L^2(S,\mathrm{d}\mu)} 
\end{equation}
and from this  \eqref{e:waveST} trivially follows. The proof of \eqref{e:Stri} in \cite{Strichartz} used an argument based on interpolation involving an analytic family of operators of the form $T_z f = {}^\vee G_z \ast f$, where $G_z$ is chosen suitably depending on $\mathrm{d}\mu$. The key oscillatory integral estimates on the kernel of these operators relies on a rather delicate identity for ${}^\vee G_z$ whose validity seems tightly connected to the choice of measure $\mathrm{d}\mu$ above. Frank and Sabin's clever observation was that the same basic ingredients could be used to derive \eqref{e:ONSwaveFS} in Theorem \ref{t:waveFS}. For $(q,r)$ beyond the diagonal case $q=r$ (where $s=\frac12$), however, it is necessary to handle data with higher regularity and thus, roughly speaking, one would like to replace $\mathrm{d}\mu$ above with a more singular version of the form $\frac{\delta(\tau - |\xi|)}{|\xi|^a}$ ($a>1$). This causes significant technical difficulty in getting the appropriate kernel estimates; for example, no explicit identity seems available away from the case $a=1$.

Our main contribution to overcome the aforementioned difficulty is to find an appropriate measure and establish corresponding weighted oscillatory integral estimates of so-called ``damped--type" with optimal decay rates (see Section \ref{section:oscillatoryintegrals} for further details). These oscillatory integral estimates then yield kernel estimates  for  a suitable analytic family of operators. Our new idea is sufficiently robust to allow us to significantly improve upon the above results for the wave and Klein--Gordon equations, as well as the fractional Schr\"{o}dinger equation. Below, we describe our main new results for the sharp admissible case; we also obtain new results in the non-sharp admissible case but, 
for reasons we already mentioned before, we postpone our presentation of these results to Section \ref{section:nonsharp}.    

\begin{remark} 
\label{remark-LP}
As an alternative approach to obtain the Strichartz estimate \eqref{e:ONSgeneral} for orthonormal data, one might attempt to adapt the typical strategy for the single-function case based on Littlewood--Paley theory. That is to say, first prove  the estimate for data with localized frequency on dyadic annuli and then put together the estimates for each dyadic piece. However, such an approach does not seem to be effective in the case of orthonormal data. Indeed, in contrast with the classical Strichartz estimate, for \eqref{e:ONSgeneral} there are cases where the frequency localized estimate can not be upgraded to the frequency global estimate; see \cite{BHLNS} for more detail. In particular, our new approach to the estimate \eqref{e:ONSgeneral} for the fractional Schr\"odinger ($\alpha \neq 2$), wave and Klein--Gordon  equations provides a novel method of establishing the classical Strichartz estimates \eqref{e:fractiona}, \eqref{e:StrWave} and \eqref{e:StrKG} which completely avoids use of Littlewood--Paley theory.
\end{remark}

\subsection{Main new results}
As already mentioned above, our primary aim is to extend Theorems \ref{t:waveFS} and \ref{t:KGFS} to all cases where $(q,r)$ is sharp $\frac{d-1}{2}$-admissible or sharp $\frac d2$-admissible, as well as generalizing Theorem \ref{t:ONS_Schro} to the fractional Schr\"{o}dinger equation.

We first present the result for the fractional Schr\"{o}dinger equation which essentially gives the sharp value of $\beta$ in all cases. As far as we are aware, except for the case $\alpha = 2$ in Theorem \ref{t:ONS_Schro}, there are no estimates of the type \eqref{e:ONSgeneral} in the existing literature for the fractional Schr\"{o}dinger equation.  From \eqref{e:FracStri} we note that   $(f_j)_j$ should be in $ \dot{H}^s$ with  $s = \frac{d(2-\alpha)}{2}(\frac{1}{2} - \frac{1}{r})$ if $(q,r)$ is sharp $\frac{d}{2}$-admissible. 

\begin{theorem}
[The fractional Schr\"odinger equation and the sharp $\frac{d}{2}$-admissible]
 \label{t:fracSchrosharp}
Suppose $\alpha \in \mathbb{R} \setminus \{0,1\}$. Let $d\geq 1$ and suppose $(q,r)$ is sharp $\frac{d}{2}$-admissible with $\frac{d}{r} + \frac{\alpha}{q} < d$. 
\vspace{-7pt}
\begin{enumerate}
[leftmargin=.8cm, 
labelsep= 0.3 cm, topsep=0pt]
\item[$(i)\,$] If
$2\leq r<\frac{2(d+1)}{d-1}, $   for all families of orthonormal functions $(f_j)_j$ in $\dot{H}^s$, 
\begin{equation}
\label{FS-ortho}
\bigg\|   \sum_j  \nu_j |e^{it(-\Delta)^{\alpha/2}} f_j| ^2 \bigg\|_{L^{\frac q2}_tL^{\frac r2}_x} \lesssim \| \nu \|_{\ell^\beta}
\end{equation}
holds with $s = \frac{d(2-\alpha)}{2}(\frac{1}{2} - \frac{1}{r})$, and $\beta = \frac{2r}{r+2}$. This estimate is sharp in the sense that the estimate fails for $\beta > \frac{2r}{r+2}$.
\item[$(ii)$] If 
$d = 2$ and
$
6 \leq r < \infty,
$
or if
$d \geq 3$ and
$
\frac{2(d+1)}{d-1} \leq r \le \frac{2d}{d-2},
$
 then
\eqref{FS-ortho} 
holds for all families of orthonormal functions $(f_j)_j$ in $\dot{H}^s$, with $s = \frac{d(2-\alpha)}{2}(\frac{1}{2} - \frac{1}{r})$, and $\beta < \frac{q}{2}$. This estimate is sharp in the sense that the estimate fails for $\beta > \frac{q}{2}$.
\end{enumerate}
\end{theorem}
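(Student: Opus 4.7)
The plan is to follow the Frank--Sabin dual reformulation to reduce \eqref{FS-ortho} to a Schatten-class inequality, and then to prove the latter by Stein analytic interpolation, with the hard endpoint controlled by the new weighted oscillatory integral estimates of the paper. First, by the Frank--Sabin duality principle, \eqref{FS-ortho} with summability index $\beta$ is equivalent to an operator bound of the form
$$\bigl\| W \, e^{it(-\Delta)^{\alpha/2}} (-\Delta)^{-s} e^{-it(-\Delta)^{\alpha/2}} \, \overline{W} \bigr\|_{\mathfrak{S}^{2\beta'}(L^2_{t,x})} \lesssim \| W \|_{L^{q'}_t L^{r'}_x}^2,$$
valid uniformly over weights $W \in L^{q'}_t L^{r'}_x$, where $\mathfrak{S}^p$ denotes the Schatten $p$-class on $L^2_{t,x}$. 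For part (i), this amounts to reaching the Schatten exponent $2\beta' = 2r/(r-2)$.

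To obtain such a Schatten estimate I would embed the operator in an analytic family $\{T_z\}$ acting on $L^2_{t,x}$ whose integral kernel takes the form
$$K_z(t,x;s,y) = W(t,x)\,\overline{W(s,y)} \int_{\mathbb{R}^d} e^{i(x-y)\cdot \xi + i(t-s)|\xi|^\alpha} \, m_z(\xi) \, d\xi,$$
where $\{m_z\}$ is a holomorphic family of multipliers interpolating between a sufficiently negative power of $|\xi|$ at one boundary of the strip (where, by Plancherel, $T_z$ is Hilbert--Schmidt with the required quantitative control) and a choice at the other boundary for which $T_z$ is bounded on $L^2_{t,x}$. Stein's theorem then yields the targeted Schatten bound at the interior value of $z$ recovering the symbol $|\xi|^{-2s}$.

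The main obstacle is the $L^2_{t,x}\to L^2_{t,x}$ bound at the hard endpoint $\mathrm{Re}\,z = 1$. A $TT^\ast$/Schur-type reduction reduces this in turn to weighted oscillatory integral estimates of the shape
$$\Bigl| \int_{\mathbb{R}^d} e^{i(x\cdot\xi + t|\xi|^\alpha)} \, |\xi|^{\lambda + i\gamma} \chi_0(|\xi|) \, d\xi \Bigr| \lesssim \jb{\gamma}^N \, |x|^{-\mu_1} |t|^{-\mu_2},$$
together with appropriate low- and high-frequency variants, after which the bound follows by pairing these kernel estimates against $|W|^2$ via the Hardy--Littlewood--Sobolev inequality in $(t,x)$. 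The delicate point is that the decay exponents $(\mu_1,\mu_2)$ must saturate the natural homogeneity: any loss here immediately curtails the allowed range of $r$, and the full range $2\le r < 2(d+1)/(d-1)$ is reached only by invoking the sharp weighted oscillatory integral bounds that constitute the main technical novelty of the paper. Control of the imaginary parameter $\gamma$ with at most polynomial growth is also needed for Stein interpolation to apply.

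Part (ii) would then be derived from part (i) by interpolating, in the Schatten formulation, between the estimate at a point arbitrarily close to the endpoint $(q,r) = \bigl(\tfrac{2(d+1)}{d},\tfrac{2(d+1)}{d-1}\bigr)$ and the trivial estimate at the Keel--Tao endpoint $(q,r) = (2, \tfrac{2d}{d-2})$, where $\beta = 1$ follows from the classical Strichartz inequality together with the triangle inequality. Finally, the sharpness assertions are checked by testing against standard counterexamples: a Knapp-type orthonormal family supported in a cap of the $\alpha$-surface produces the obstruction $\beta \le 2r/(r+2)$ in (i), while a semiclassical, coherent-state construction analogous to the ones used in the Schr\"odinger case in \cite{FLLS, FS_Survey, BHLNS} forces $\beta \le q/2$ in (ii).
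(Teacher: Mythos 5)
Your overall roadmap matches the paper's: reduce \eqref{FS-ortho} to a Schatten bound via the Frank--Sabin duality principle, build an analytic family and apply Stein interpolation with one endpoint controlled by new weighted oscillatory integral estimates, derive part~$(ii)$ from part~$(i)$ and the Keel--Tao endpoint by interpolation, and obtain sharpness from counterexamples (the paper's Section~\ref{section:necesssary}).  However, there is a genuine gap in the construction of the analytic family, and a confusion between the two endpoints of the strip.

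\textbf{The missing factor.}  Your proposed kernel
\[
K_z(t,x;s,y) = W(t,x)\,\overline{W(s,y)} \int_{\mathbb{R}^d} e^{i(x-y)\cdot \xi + i(t-s)|\xi|^\alpha} \, m_z(\xi) \, \mathrm{d}\xi
\]
contains only a $\xi$-side multiplier family $m_z(\xi)$.  The paper's analytic family in Proposition~\ref{p:abstractdual} is given on the space-time Fourier side by the additional factor $\frac{(\tau-\phi(\xi))_+^z}{\Gamma(1+z)}$.  This is not cosmetic: without it, the Hilbert--Schmidt endpoint cannot close.  The $\xi$-integral alone decays at best like $|t-s|^{-d/2}$ (this is already the \emph{optimal} rate recoverable from the damped oscillatory integral estimate, and heavier frequency damping cannot improve it); squaring gives $|t-s|^{-d}$, which is far outside the one-dimensional HLS range $(0,1)$ for $d\ge 2$.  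Fourier-inverting $(\tau-\phi(\xi))_+^z$ contributes an extra $|t-s|^{\Re z - 1}$, so at $\Re z = -\tilde r/2$ the kernel decays like $|t-s|^{\tilde r/2 - 1 - d/2}$, squared $|t-s|^{\tilde r - 2 - d}$, giving an HLS exponent $\lambda = d + 2 - \tilde r \in (0,1)$ exactly when $d+1 < \tilde r < d+2$, i.e.\ $\frac{2(d+2)}{d}<r<\frac{2(d+1)}{d-1}$ (the remainder of $2\le r<\frac{2(d+1)}{d-1}$ then follows by interpolation with the trivial $(q,r,\beta)=(\infty,2,1)$).  A pure multiplier family $m_z(\xi)$ gives you nothing to interpolate with at the Hilbert--Schmidt boundary.

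\textbf{Endpoint roles reversed.}  You describe the Hilbert--Schmidt endpoint as the easy one (``by Plancherel'') and the $L^2_{t,x}\to L^2_{t,x}$ endpoint as the hard one (oscillatory integrals + Schur + HLS).  In the paper's Proposition~\ref{p:abstractdual} it is the opposite: the $\mathcal{C}^\infty$ endpoint ($\Re z = 0$) is trivial, being equivalent to $L^2$ boundedness of a bounded Fourier multiplier, while the $\mathcal{C}^2$ endpoint ($\Re z = -\tilde r/2$) is where the weighted oscillatory integral estimate is combined with HLS.  Moreover ``pairing against $|W|^2$ via HLS'' is a Hilbert--Schmidt (i.e.\ $L^2$ of the kernel) computation, not an $L^2\to L^2$ operator bound, so the two claims in that paragraph are internally inconsistent.

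\textbf{Minor points.}  The Schatten exponent should be $\mathfrak{S}^{\beta'}$ (with $\beta' = \frac{2r}{r-2}=\tilde r$), not $\mathfrak{S}^{2\beta'}$, when both weights $W,\overline W$ appear; and the weight norm should be $\|W\|_{L^{\tilde q}_t L^{\tilde r}_x}$ (with $\frac{1}{\tilde q}=\frac12-\frac1q$), not $\|W\|_{L^{q'}_t L^{r'}_x}$.  Finally, the paper proves a more general Theorem~\ref{t:radial} for almost homogeneous phases, uses the damping factor $|\det H\phi(\xi)|^{1/2+i\kappa}$ (rather than a pure power $|\xi|^{\lambda+i\gamma}$), and inserts compactly supported cutoffs $\chi(\varepsilon|\cdot|)$ or $\chi_\infty(\varepsilon|\cdot|)$ (depending on the sign of $\alpha$) that are removed by a limiting argument; you gesture at these with ``low- and high-frequency variants'' but the precise mechanism is not described.
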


We next give our result for the wave equation, which significantly improves Theorem \ref{t:waveFS}  when  $q\le r$.  In this case, we note from  \eqref{e:StrWave}
that  it is necessary to consider orthonormal families  $(f_j)_j$ in $\dot{H}^s$, $s = \frac{d+1}{2}(\frac{1}{2}-\frac{1}{r})$ whenever $(q,r)$ is sharp $\frac{d-1}{2}$-admissible. 

\begin{theorem}[The wave equation and the sharp $\frac{d-1}{2}$- admissible] 
\label{t:wave} Let $d\geq 2$ and suppose $(q,r)$ is sharp $\frac{d-1}{2}$-admissible.  
\vspace{-7pt}
\begin{enumerate}
[leftmargin=0.8cm, labelsep= 0.4 cm, topsep=0pt]
\item[$(i)\,$] If  $ 2\leq r<\frac{2d}{d-2}$,
then  \eqref{e:ONSwaveFS}
holds for all families of orthonormal functions $(f_j)_j$ in $\dot{H}^s$, with $s = \frac{d+1}{2}(\frac{1}{2}-\frac{1}{r})$, and $\beta = \frac{2r}{r+2}$. 
\item[$(ii)$] If $d = 3$ and $6 \leq r < \infty,$ or if
$d \geq 4$ and $\frac{2d}{d-2} \leq r \le \frac{2(d-1)}{d-3},$
then   \eqref{e:ONSwaveFS}
holds for all families of orthonormal functions $(f_j)_j$ in $\dot{H}^s$, with $s = \frac{d+1}{2}(\frac{1}{2}-\frac{1}{r})$, and $\beta < \frac{q}{2}$. This estimate is sharp in the sense that the estimate fails for $\beta > \frac{q}{2}$.
\end{enumerate}
\end{theorem}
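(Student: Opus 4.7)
My plan is to follow the Frank--Sabin \cite{FS_AJM} duality scheme and reformulate \eqref{e:ONSwaveFS} as a Schatten-norm bound for a space-time integral operator, then prove this Schatten bound by Stein's complex interpolation of an analytic family; the new input that breaks the $q=r$ barrier will be the weighted oscillatory integral estimate supplied by Section~\ref{section:oscillatoryintegrals}, which replaces the explicit-identity kernel estimate of \cite{Strichartz} that only suffices on the diagonal.

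Setting $s = \frac{d+1}{2}(\frac12 - \frac1r)$ and $\beta' = \frac{2r}{r-2}$, the standard orthonormal/Schatten duality recasts part~(i) as the operator inequality
\[
\bigl\| \overline{W}(t,x)\, e^{it\sqrt{-\Delta}}(-\Delta)^{-s} e^{-it'\sqrt{-\Delta}}\, W(t',x') \bigr\|_{\mathfrak{S}^{\beta'}(L^{2}_{t,x})} \lesssim \|W\|_{L^{(q/2)'}_{t}L^{(r/2)'}_{x}}^{2},
\]
i.e., a Schatten bound for the space-time integral operator with kernel $\overline{W}(t,x)\,W(t',x')\,K_{s}(t-t',x-x')$, where
\[
K_{s}(\tau,y) = \int_{\mathbb{R}^{d}} e^{i(y\cdot\xi + \tau|\xi|)}\, |\xi|^{-2s}\, d\xi
\]
(smoothly truncated near $\xi = 0$ and $\xi = \infty$). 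I would embed this operator in an analytic family $T_{z}$ obtained by replacing $|\xi|^{-2s}$ by $e^{z^{2}}|\xi|^{-2s(z)}$ with $s(z)$ affine and $s(z_{0}) = s$ at the target parameter. At one endpoint, where $s(z)$ is small, the operator-norm ($\mathfrak{S}^{\infty}$) bound on $\overline{W} T_{z} W$ reduces via the $TT^{*}$ identity and H\"older's inequality to the classical single-function Strichartz estimate \eqref{e:StrWave} at an auxiliary sharp admissible pair. At the other endpoint I aim for the Hilbert--Schmidt ($\mathfrak{S}^{2}$) bound, which expands as
\[
\iiiint |W(t,x)|^{2}\, |K_{s(z)}(t-t',x-x')|^{2}\, |W(t',x')|^{2}\, dt\, dx\, dt'\, dx'
\]
and reduces, by a mixed-norm Hardy--Littlewood--Sobolev inequality in $(\tau,y) = (t-t', x-x')$, to sharp pointwise decay estimates for $K_{s(z)}$ off the light cone. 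Stein's complex interpolation between these two endpoints will then yield the desired $\mathfrak{S}^{\beta'}$ bound and hence part~(i).

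The crux is the pointwise estimate at the Hilbert--Schmidt endpoint, of the form
\[
|K_{s(z)}(\tau,y)| \lesssim \bigl||y|-|\tau|\bigr|^{-\sigma_{1}(z)} (|y|+|\tau|)^{-\sigma_{2}(z)}
\]
with decay exponents dictated by scaling and analyticity. Since the weight $|\xi|^{-2s(z)}$ is strictly more singular than the $|\xi|^{-1}$ treated in \cite{Strichartz, FS_AJM} as soon as $r > \frac{2(d+1)}{d-1}$ (equivalently $s > \frac12$), no explicit formula seems available, and the optimal-decay ``damped-type'' weighted oscillatory integral estimates developed in Section~\ref{section:oscillatoryintegrals} are indispensable here; securing these with sharp exponents is the principal obstacle. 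Once part~(i) is in hand, part~(ii) will follow by real interpolation along the sharp $\frac{d-1}{2}$-admissible line: at the right endpoint $r = \frac{2d}{d-2}$ of part~(i), sharp admissibility forces $q = \frac{2d}{d-1}$ and $\beta = \frac{2r}{r+2} = \frac{d}{d-1} = \frac{q}{2}$, while at the Keel--Tao endpoint $(q,r) = \bigl(2,\frac{2(d-1)}{d-3}\bigr)$ (with a standard limiting argument when $d = 3$) the classical Strichartz estimate \eqref{e:StrWave} yields the density bound trivially with $\beta = 1 = \frac{q}{2}$. Interpolating between these two endpoints in the spirit of the Schr\"odinger analogue in \cite{FS_Survey} then produces \eqref{e:ONSwaveFS} with any $\beta < \frac{q}{2}$ throughout the stated range.
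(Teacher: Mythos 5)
Your overall architecture (Frank--Sabin duality, Stein interpolation of an analytic family between $\mathcal{C}^\infty$ and $\mathcal{C}^2$ endpoints, weighted oscillatory integrals, part $(ii)$ by interpolation of $(i)$ near $r=\tfrac{2d}{d-2}$ with the Keel--Tao endpoint) is the right one, but the specific analytic family $T_z$ obtained by replacing $|\xi|^{-2s}$ with $e^{z^2}|\xi|^{-2s(z)}$ cannot deliver the Hilbert--Schmidt endpoint, for two reasons. First, your family omits the Riesz-potential factor $\frac{(\tau-\phi(\xi))_+^z}{\Gamma(1+z)}$ acting on the full space-time symbol, which Proposition~\ref{p:abstractdual} inserts. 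Without it your $\mathcal{C}^2$ computation is forced to estimate the full space-time kernel $K_{s(z)}(\tau,y)$ by a two-weight pointwise bound near the light cone and then run HLS in both $t$ and $x$ --- exactly the Strichartz/Frank--Sabin diagonal route that the paper explains does \emph{not} extend past $q=r$, because no explicit kernel formula survives once the weight is more singular than $|\xi|^{-1}$. Inserting the Riesz factor separates the variables: the $\tau$-integral produces a clean $|t-s|^{\tr/2-1}$ factor, only a one-variable HLS in $t$ is needed, and the required oscillatory integral estimate becomes the \emph{uniform-in-$x$} bound $|\kappa\,\mathcal{W}_\kappa(x,t)|\lesssim_\kappa|t|^{-\frac{d-1}{2}}$ of Proposition~\ref{p:osc-wave} rather than the two-weight bound you propose.

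Second, and decisively: the damping $|e^{z^2}|=e^{(\Re z)^2-\kappa^2}$ stays \emph{bounded away from zero} as $\kappa=\Im z\to 0$, but the endpoint oscillatory integral estimate \eqref{e:osc-wave} carries the factor $\kappa$ on the left-hand side, and the paper remarks explicitly that the bound fails without it (at $\kappa=0$ the dyadic sum diverges logarithmically). Your $\mathcal{C}^2$-line constant therefore behaves like $e^{(\Re z)^2-\kappa^2}/|\kappa|$, which blows up at $\kappa=0$, so Stein interpolation cannot be run. The paper's multiplier $\Theta_z(\xi)=\frac{\tr+2z}{\tr-2}\,|\xi|^{\frac{d+1}{\tr}z}\chi^2(|\xi|)$ is engineered so that the prefactor $\frac{\tr+2z}{\tr-2}$ vanishes linearly at the critical abscissa $z=-\tr/2$, cancelling precisely this singularity; replacing your Gaussian $e^{z^2}$ by a multiplier with a zero at the $\mathcal{C}^2$ abscissa is the missing ingredient, not merely a stylistic choice.
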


Our main result regarding the Klein--Gordon equation in the cases $\sigma = \frac{d}{2}$ and $\sigma = \frac{d-1}{2}$ are described in the following two theorems.

\begin{theorem}[The Klein--Gordon equation and the sharp $\frac{d}{2}$-admissible] \label{t:KGSchro} 
Let $d\geq 1$ and suppose $(q,r)$ is sharp $\frac{d}{2}$-admissible. 
\vspace{-7pt}
\begin{enumerate}
[leftmargin=0.8cm, labelsep= 0.3 cm, topsep=0pt]
\item[$(i)\,$] 
If
$2\leq r<\frac{2(d+1)}{d-1}, $
  then  \eqref{KG-ortho} 
holds for all families of orthonormal functions $(f_j)_j$ in $H^s$, with $s\ge \frac{d+2}{2}(\frac{1}{2}-\frac{1}{r})$, and $\beta = \frac{2r}{r+2}$. This is sharp in the sense that the estimate fails if $\beta > \frac{2r}{r+2}$. 
\item[$(ii)$] If 
$d = 2$ and
$ 
6 \leq r < \infty,
$
or if 
$d \geq 3$ and
$ 
\frac{2(d+1)}{d-1} \leq r \leq \frac{2d}{d-2},
$
then
\eqref{KG-ortho} 
holds for all families of orthonormal functions $(f_j)_j$ in $H^s$, with $s \ge \frac{d+2}{2}(\frac{1}{2}-\frac{1}{r})$, and $\beta < \frac{q}{2}$. This estimate is sharp in the sense that the estimate fails for $\beta > \frac{q}{2}$.
\end{enumerate}
\end{theorem}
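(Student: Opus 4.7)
My plan is to execute the analytic-family-plus-Schatten-duality strategy introduced in the paper, adapted to the Klein--Gordon propagator $e^{it\sqrt{1-\Delta}}$. Since Littlewood--Paley decomposition is unavailable for orthonormal data (Remark \ref{remark-LP}), both the low-frequency (paraboloid-like) and high-frequency (cone-like) asymptotics of the hyperboloid $\Sigma = \{(\xi,\langle\xi\rangle) : \xi\in\mathbb{R}^d\}$ must be absorbed into a single analytic family whose kernel bounds reflect the genuinely mixed nature of $\phi(\xi) = \langle\xi\rangle$. It suffices to treat the critical regularity $s = \tfrac{d+2}{2}(\tfrac12 - \tfrac1r)$, since larger $s$ only strengthens the hypothesis on $(f_j)_j$.

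The first step is to reduce \eqref{KG-ortho} in part (i) at this critical $s$ to a Schatten-class bound via the Frank--Sabin duality principle. With $A := e^{it\sqrt{1-\Delta}}\langle D\rangle^{-s}$, the orthonormal estimate with summability $\beta = \tfrac{2r}{r+2}$ is equivalent to a bound of the form
\[
\|W A A^* \overline{W}\|_{\mathfrak{S}^{\alpha}} \lesssim \|W\|^2_{L^{\sigma}_t L^{\rho}_x},
\]
with $(\alpha,\sigma,\rho)$ dual to $(\beta,q,r)$ in the sense of \cite{FS_AJM}. I would then introduce an analytic family of operators $\{T_z\}$ whose Schwartz kernels take the form
\[
K_z(t,x;t',x') = W(t,x)\overline{W(t',x')}\int_{\mathbb{R}^d} e^{i(x-x')\cdot\xi + i(t-t')\langle\xi\rangle}\,m_z(\xi)\,d\xi,
\]
where $m_z$ is a holomorphic family of symbols adapted to the hyperboloid, normalised so that $T_{z_0} = W A A^* \overline{W}$ at the value $z_0$ of interest. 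On the line where $m_z$ is bounded, Plancherel yields $\mathfrak{S}^\infty$ control of $T_z$ by $\|W\|_\infty^2$. On the other line, the weighted oscillatory integral estimates developed in Section \ref{section:oscillatoryintegrals} give pointwise kernel bounds with decay adapted to both high- and low-frequency regimes of $\Sigma$; these translate, via a Hilbert--Schmidt computation, into $\mathfrak{S}^2$ control of $T_z$ by $\|W\|^2$ in an appropriate mixed Lebesgue space. Stein's complex interpolation applied to $\{T_z\}$ then delivers the required Schatten bound at $z_0$, and hence part (i) at the critical endpoint $r = \tfrac{2(d+1)}{d-1}$; smaller $r \in [2,\tfrac{2(d+1)}{d-1})$ follow by interpolating with the trivial energy-conservation estimate at $(q,r) = (\infty,2)$.

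For part (ii), the estimates with $\beta < q/2$ follow by real interpolation between the endpoint of part (i) at $(q,r) = (\tfrac{2(d+1)}{d},\tfrac{2(d+1)}{d-1})$ and the Keel--Tao endpoint $(q,r) = (2, \tfrac{2d}{d-2})$ for $d \geq 3$, where $\beta = q/2 = 1$ holds trivially from the classical single-function Strichartz estimate and the triangle inequality; for $d = 2$ one interpolates instead towards $(q,r)\to(\infty,2)$. The sharpness assertions follow by testing \eqref{KG-ortho} against Knapp-type wave packets concentrated near the asymptotic cone (ruling out $\beta > \tfrac{2r}{r+2}$ in part (i)) and against orthonormal families of modulated Gaussians (ruling out $\beta > q/2$ in part (ii)), in the spirit of the Schr\"odinger and wave analogues in \cite{FLLS, FS_AJM} and Theorems \ref{t:ONS_Schro} and \ref{t:wave}.

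The principal obstacle is the construction of $m_z$ together with verification of the corresponding weighted oscillatory integral bound with sharp decay rate across \emph{both} asymptotic regimes of $\Sigma$. A symbol yielding optimal decay at high frequency (where the cone geometry dominates) will generically give suboptimal decay at low frequency (where the nonvanishing Gaussian curvature produces Schr\"odinger-type dispersion), and vice versa. Balancing these two regimes, and ensuring that the Hilbert--Schmidt endpoint of the Stein interpolation lands precisely in $L^{\sigma}_t L^{\rho}_x$ so that the target exponent $\beta = \tfrac{2r}{r+2}$ is attained rather than a weaker value, is the technical heart of the argument.
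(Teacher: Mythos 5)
Your proposal follows the paper's strategy closely: reduce to critical regularity, convert via Frank--Sabin duality (Proposition~\ref{p:duality}) to a Schatten estimate, introduce an analytic family with a Plancherel/$\mathcal{C}^\infty$ line and a Hilbert--Schmidt/$\mathcal{C}^2$ line, and close via Stein interpolation. However, the ``principal obstacle'' you correctly flag at the end --- constructing a holomorphic weight whose dispersive decay is optimal \emph{simultaneously} in the parabolic regime near $\xi=0$ and the conical regime as $|\xi|\to\infty$ --- is precisely the new ingredient of the paper, and your proposal does not actually supply it. The resolution in the paper is to take
\[
\Theta_z(\xi)=\langle\xi\rangle^{\frac{d+2}{\tr}z}\chi^2(\varepsilon|\xi|),
\]
so that $\Theta_{-\tr/2+i\kappa}=\langle\xi\rangle^{-\frac{d+2}{2}-i(d+2)\kappa/\tr}\chi^2(\varepsilon|\xi|)$, and then to prove \emph{uniformly in $\varepsilon>0$ and polynomially in $\kappa$} the single estimate $|\mathcal{K}_{\kappa,\varepsilon}(x,t)|\lesssim_\kappa|t|^{-d/2}$ (Proposition~\ref{p:KGoscillatory_Schro}). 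That proof is itself delicate: it requires a dyadic decomposition, stationary phase in the low-frequency regime, Bessel-function asymptotics and a careful stationary-point analysis (with a further dyadic decomposition around $\rho_\ast$) in the high-frequency regime. Simply positing ``a holomorphic family $m_z$ adapted to the hyperboloid'' elides this, and it is exactly where the argument is non-routine.

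A few smaller issues. First, you claim to establish part~(i) ``at the critical endpoint $r=\tfrac{2(d+1)}{d-1}$'' directly and then interpolate down to $r=2$. That endpoint is not in part~(i) (the inequality is strict), and the abstract framework cannot reach it: in Proposition~\ref{p:abstractdual} the Hilbert--Schmidt kernel decays like $|t-s|^{-2\sigma-2+\tr}$, so the Hardy--Littlewood--Sobolev inequality \eqref{e:HLS} requires $2\sigma+1<\tr<2\sigma+2$; at $\sigma=d/2$, $\tr=2\sigma+1$ is exactly $r=\tfrac{2(d+1)}{d-1}$, where the HLS exponent $\lambda=1$ is forbidden. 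The paper obtains the open range $\tfrac{2(d+2)}{d}<r<\tfrac{2(d+1)}{d-1}$ directly and fills in $2\le r\le \tfrac{2(d+2)}{d}$ by interpolating with the trivial $(q,r,\beta)=(\infty,2,1)$ case. Second, your $d=2$ treatment of part~(ii) interpolates ``towards $(q,r)\to(\infty,2)$,'' which is the wrong direction: part~(ii) for $d=2$ concerns $6\le r<\infty$, so one must interpolate towards sharp $1$-admissible pairs with $(\tfrac1r,\tfrac1q)\to(0,\tfrac12)$, i.e.\ $r\to\infty$ and $q\to2$. Third, you describe the $\mathcal{C}^2$ endpoint as landing ``in $L^\sigma_t L^\rho_x$,'' but in fact the refined HLS places it in a Lorentz space $L^{4/(\tr-2\sigma),4}_tL^2_x$; that Lorentz improvement, carried through Proposition~\ref{p:duality} with $\tp=2\tr$, is what delivers $\beta=\tfrac{2r}{r+2}$ rather than something weaker, and it also underlies the Lorentz-space refinement noted in Remark~\ref{lorentz-improvement} that is needed later for the non-sharp range. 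Finally, the sharpness examples in Proposition~\ref{p:necessary} are not Knapp-type packets near the asymptotic cone; one is a lattice of unit-scale wave packets with spread frequency centers, and the other is a sequence of time-translates $f_j=ce^{-ij\phi(D)}g$ of a single radial bump adapted to the level sets of $\phi$.
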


\begin{theorem}[The Klein--Gordon equation and the sharp $\frac{d-1}{2}$-admissible] 
\label{t:KGwave} 
Let $d\geq 2$ and suppose $(q,r)$ is sharp $\frac{d-1}{2}$-admissible. 
\vspace{-7pt}
\begin{enumerate}
[leftmargin=0.8cm, labelsep= 0.3 cm, topsep=0pt]
\item[$(i)\,$] If
$2\leq r<\frac{2d}{d-2},$
then
\eqref{KG-ortho}
holds for all families of orthonormal functions $(f_j)_j$ in $H^s$, with $s \ge \frac{d+1}{2}(\frac{1}{2}-\frac{1}{r})$, and $\beta = \frac{2r}{r+2}$.
\item[$(ii)$] If 
$d = 3$ and $6 \leq r < \infty, $
or if
$d \geq 4$ and $\frac{2d}{d-2} \leq r \leq \frac{2(d-1)}{d-3}, $
 then
\eqref{KG-ortho}
 holds for all families of orthonormal functions $(f_j)_j$ in $H^s$, with $s\ge  \frac{d+1}{2}(\frac{1}{2}-\frac{1}{r})$, and $\beta < \frac{q}{2}$. This estimate is sharp in the sense that the estimate fails for $\beta > \frac{q}{2}$.
\end{enumerate}
\end{theorem}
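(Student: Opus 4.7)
The plan is to follow the same analytic framework used to prove Theorem \ref{t:wave} for the wave propagator, but with the cone $\tau = |\xi|$ replaced by the hyperboloid $\tau = \langle \xi \rangle$ and the damping factor $|\xi|^{-z}$ replaced by its inhomogeneous counterpart $\langle \xi \rangle^{-z}$. The two main ingredients will be the Frank--Sabin duality, which recasts the orthonormal Strichartz estimate as a Schatten class inequality, and the weighted oscillatory integral estimates for the hyperboloid developed as part of the main technical apparatus of the paper.

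First I would reduce the problem to the critical regularity $s_* = \tfrac{d+1}{2}(\tfrac{1}{2} - \tfrac{1}{r})$. Via Frank--Sabin duality, the orthonormal inequality is equivalent to the Schatten bound
\[
\Bigl\| \langle D \rangle^{-s} \, \Bigl( \int_{\mathbb{R}} e^{-it\sqrt{1-\Delta}} \, W(t,\cdot) \, e^{it\sqrt{1-\Delta}} \, dt \Bigr) \, \langle D \rangle^{-s} \Bigr\|_{\mathfrak{S}^{\beta'}} \lesssim \|W\|_{L^{(q/2)'}_t L^{(r/2)'}_x}.
\]
Since $\langle D\rangle^{s_*- s}$ is bounded on $L^2$ whenever $s \geq s_*$, the case $s \geq s_*$ follows from the case $s = s_*$ by inserting these operators into the Schatten bound, which only tightens the inequality.

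The critical-regularity bound will then be obtained by Stein's complex interpolation applied to an analytic family $T_z$ whose integral kernel is a meromorphic multiple of the Fourier transform of the weighted hyperboloid measure $\langle \xi \rangle^{-z} \, \delta(\tau - \langle \xi \rangle)\, d\xi\, d\tau$. For $\operatorname{Re} z$ close to zero this measure is essentially a bounded Fourier multiplier, giving an $L^2 \to L^2$ bound of $T_z$; for $\operatorname{Re} z$ sufficiently large, the weighted oscillatory integral estimates produce pointwise decay of the kernel, hence an $L^1 \to L^\infty$ bound. Interpolating between these and passing back through Frank--Sabin duality yields part (i) of the theorem at $\operatorname{Re} z = 2 s_*$. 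Part (ii) then follows by interpolating against the trivial $(q, r) = (\infty, 2)$ estimate, exactly as in the proofs of Theorems \ref{t:ONS_Schro} and \ref{t:wave}.

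The main obstacle will be the weighted oscillatory integral estimate itself. Unlike the cone, the hyperboloid has two qualitatively different geometric regimes: the Gaussian curvature vanishes in one direction as $|\xi| \to \infty$ (cone-like) and is non-degenerate near the origin (paraboloid-like). A single weighted estimate must deliver the optimal decay in $(t, x)$ uniformly across both regimes and across the full range of $\operatorname{Re} z$ corresponding to all sharp $\tfrac{d-1}{2}$-admissible pairs; it is precisely this uniformity, unavailable in the purely cone-based argument used by Frank and Sabin, that allows the result to extend beyond the diagonal $q = r$ to which Theorem \ref{t:KGFS} was limited. The sharpness claim in part (ii) will then follow from a standard Knapp-type example concentrated at high frequency, where $e^{it\sqrt{1-\Delta}}$ is well-approximated by the wave propagator.
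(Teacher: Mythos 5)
Your proposal tracks the paper's strategy: Frank--Sabin duality reduces the orthonormal Strichartz estimate to a Schatten-class bound, which is then obtained by Stein interpolation of an analytic family built from the weighted measure $\langle\xi\rangle^{-z}\,\delta(\tau-\langle\xi\rangle)$, with the essential new ingredient being the weighted oscillatory integral estimate for the hyperboloid (the paper's Proposition~\ref{p:KGoscillatory_wave}, giving $O(|t|^{-(d-1)/2})$ decay with the weight $\langle\xi\rangle^{-(d+1)/4-i\kappa}$). The reduction of $s\ge s_*$ to $s=s_*$ via boundedness of $\langle D\rangle^{s_*-s}$ on $L^2$ is a clean alternative to the paper's route (which uses Remark~\ref{trivial-extension} or the higher-damping estimate~\eqref{e:KGwave-more-damping}). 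You also correctly identify the dual-regime geometry of the hyperboloid as the crux of the oscillatory-integral lemma.

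Two points, however, need correction. First, the endpoint at large $\operatorname{Re}z$ in the Stein interpolation is \emph{not} an $L^1\to L^\infty$ bound for $T_z$. In the Frank--Sabin framework (Proposition~\ref{p:abstractdual}) one interpolates Schatten norms of the sandwiched operator $W_1 T_z W_2$: at $\operatorname{Re}z=0$ one has a $\mathcal{C}^\infty$ bound from $L^2$-boundedness of $T_z$, while at $\operatorname{Re}z=-\tr/2$ one needs a $\mathcal{C}^2$ (Hilbert--Schmidt) bound, obtained from the pointwise kernel decay of $T_z$ combined with the Hardy--Littlewood--Sobolev inequality in the time variables and the membership of $|W_i|^2$ in a Lorentz space. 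Merely establishing an $L^1\to L^\infty$ bound for $T_z$ and running the classical Strichartz $TT^*$ interpolation would yield the single-function estimate, i.e.\ only $\beta=1$; the sharp summability $\beta=\frac{2r}{r+2}$ is precisely what the Hilbert--Schmidt step buys. Second, your closing assertion that part~(ii) follows by interpolating against the trivial $(q,r)=(\infty,2)$ estimate is incorrect: interpolation with $(\infty,2,\beta)=(\infty,2,1)$ extends the analytic-family output (which initially covers $\frac{2(d+1)}{d-1}<r<\frac{2d}{d-2}$) \emph{down} to $r=2$, completing part~(i). Part~(ii) covers the opposite range $\frac{2d}{d-2}\le r\le \frac{2(d-1)}{d-3}$ and is obtained by interpolating part~(i) near $r=\frac{2d}{d-2}$ against the classical single-function Strichartz estimate at the Keel--Tao endpoint $(q,r,\beta)=(2,\frac{2(d-1)}{d-3},1)$, exactly as in the proofs of Theorems~\ref{t:ONS_Schro}$(ii)$ and~\ref{t:radial}$(ii)$. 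Finally, the necessity $\beta\le q/2$ in part~(ii) is obtained in the paper not from a Knapp example but from the time-translate construction $f_j=c\,e^{-ij\phi(D)}g$ in Proposition~\ref{p:necessary}; a Knapp example at high frequency would be the natural candidate for testing sharpness of the regularity $s$, not the summability $\beta$.
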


It is also possible to unify the cases $\sigma = \frac{d-1}{2}, \frac{d}{2}$ into the case $\sigma = \frac{d-1}{2} + \rho$, where $\rho \in [0,\frac{1}{2}]$. Indeed, Frank and Sabin \cite{FS_AJM} obtained a more general result than Theorem \ref{t:KGFS} corresponding to $\rho \in [0,\frac{1}{2}]$ ($d \geq 2$) and $\rho \in (0,\frac{1}{2}]$ ($d=1$). For simplicity of the exposition, we only consider the cases  $\rho = 0,\frac{1}{2}$ in Theorem \ref{t:KGFS}, Theorem \ref{t:KGSchro},  and Theorem \ref{t:KGwave}; we refer the reader forward to Section \ref{section:ONSsharp} for discussion of the more general case, including a result for the sharp $(\frac{d-1}{2} + \rho)$-admissible case which completely includes the result in \cite{FS_AJM}.

We shall prove two necessary conditions in Section \ref{section:necesssary} which justify the claims in the above theorems regarding the sharpness of the range of $\beta$. Left open is to show that the exponent $\beta = \frac{2r}{r+2}$ in $(i)$ of Theorems \ref{t:wave} and \ref{t:KGwave} is sharp. Our examples show that this exponent is sharp in the extreme cases $r=2$ and $r = \frac{2d}{d-2}$ and thus we expect that this is the sharp value for intermediate values of $r$ too. 

\subsection{Estimates for oscillatory integrals with weights}
As we have already mentioned, the new idea in the proof of our main theorems is to make use of estimates for oscillatory integrals with weights, or damping factors, which will be combined with an interpolation argument based on a suitable analytic family of operators.  See Proposition \ref{p:abstractdual} where we formalize our approach under  the assumption that we already have the desired oscillatory integral estimate.  In fact, oscillatory integrals with weights naturally arise by absorbing to new operators the multiplier operators $|D|^s$ and $\langle D\rangle^s$ defining the Sobolev spaces. In order to obtain the appropriate dispersive estimates for these operators we need to show  the optimal decay estimates for the associated oscillatory integrals with weights. Interestingly, in some cases, such oscillatory integrals turn out to be oscillatory integrals with damping factors (\cite{CDMM, KenigPonceVega, CarberyZiesler}).  

In Section \ref{section:oscillatoryintegrals}, we prove various estimates for oscillatory integrals of the form  
\begin{equation}
\label{osi}
I^\phi(w)=  \int_{\mathbb{R}^d} e^{i(x\cdot \xi + t \phi(\xi))}  a(\xi) \,w (\xi)\,   \mathrm{d}\xi, 
\end{equation}
for a suitable choice of the cutoff function $a$ and the weight function $w$ which we need to choose properly according to our particular purposes. Indeed, it will be crucial for proving Theorems \ref{t:fracSchrosharp}--\ref{t:KGwave} that our choice of $a$ and $w$ allows us to recover the optimal decay rate in $t$.  Oscillatory integrals of the form $I^\phi(w)$ are ubiquitous in analysis and often the weight is chosen to be a power of the determinant of the Hessian matrix of $\phi$, denoted by ${\rm det}\, H\phi$, which mitigates bad behavior near degeneracies. For the purpose of our applications to Strichartz estimates for orthonormal families of data, we will  choose $w(\xi) = |\xi|^{z}$ or $\langle \xi \rangle^z$, for certain $z\in \mathbb{C}$, which may not necessarily be the form of $|{\rm det}\, H\phi|^z$. Since the Hessian vanishes in the case of the wave equation, this case provides an example of the case where it will be \textit{necessary} to consider weights not belonging to the typical class of weights which are powers of the Hessian. Our new results concerning such estimates are Propositions \ref{p:osc-wave}, \ref{p:KGoscillatory_wave}, \ref{p:KGoscillatory_Schro} and \ref{p:OsciAlmostRad}. We believe that these weighted oscillatory estimates are of independent interest and we provide further contextual remarks in the beginning of Section \ref{section:oscillatoryintegrals}.

\begin{organisation}
In Section \ref{section:Prelims}, we introduce some notation and key facts that will be used throughout the paper. This includes the duality principle from \cite{FS_AJM} which rephrases Strichartz estimates for orthonormal families of initial data in terms of certain Schatten space estimates. In Section \ref{section:oscillatoryintegrals}, we state and prove the weighted oscillatory integral estimates which are key to our proof of those Schatten space estimates corresponding to the main theorems in this paper. In Section \ref{section:ONSsharp}, we prove the sufficiency claims in Theorems \ref{t:fracSchrosharp}, \ref{t:wave}, \ref{t:KGSchro} and \ref{t:KGwave}  (sharp admissible cases), and in Section \ref{section:nonsharp} we prove analogous results in the non-sharp admissible case. The necessity claims in both the sharp and non-sharp admissible cases all follow from the necessary conditions which we will establish in Section \ref{section:necesssary}. Finally, in Section \ref{section:applications} we present several applications of our new estimates to the theory of infinite systems of Hartree type, weighted velocity averaging lemmas for kinetic transport equations, and refined Strichartz estimates.
\end{organisation}

\section{Notation and preliminaries} \label{section:Prelims}

For $A, B>0$, and $\rho\in \mathbb R$, by 
$A \lesssim_\rho  B$  we mean that $A \le  C(1+|\rho|)^{c} B$ 
for some constants $C, c>0$.
We often use the notation $\langle x\rangle = (1+|x|^2)^{\frac12}$.  Also, the Hessian matrix of $\phi:\mathbb{R}^d \to \mathbb{C}$ at $\xi$ is given by $H\phi(\xi) = (\partial_i\partial_j \phi(\xi))_{1\le i,j \le d}$. 

\subsection{Function spaces} 
The spaces  $\dot{H}^s(\mathbb{R}^d)$,  ${H}^s(\mathbb{R}^d)$, respectively  denote  the  homogeneous and inhomogeneous Sobolev spaces based on $L^2(\mathbb{R}^d)$ which are equipped  with the norms
\[
\|f\|_{\dot{H}^s} = \||D|^s f \|_{L^2}, \  \    \|f\|_{{H}^s} = \|\jb D^s f \|_{L^2}.
\]
Here, $\varphi(D)$ will denote the Fourier multiplier operator given by
\[
\widehat{\varphi(D)}f(\xi) = \varphi(\xi)\widehat{f}(\xi),
\]
where the Fourier transform of a sufficiently nice function $f : \mathbb{R}^d \to \mathbb{C}$ is given by
\[
\mathcal{F}f(\xi) = \widehat{f}(\xi) = \int_{\mathbb{R}^d} f(x) e^{-ix \cdot \xi} \, \mathrm{d}x.
\]
Also, the inverse Fourier transform is defined by 
$$
\mathcal{F}^{-1}f(x) = {f}^{\vee}(x) = \frac{1}{(2\pi )^d} \int_{\mathbb{R}^d} f(\xi) e^{ix \cdot \xi} \, \mathrm{d}\xi. 
$$

For $p \in [1,\infty)$ and $r \in [1,\infty]$, we write $L^{p,r}(\mathbb{R}^d)$ for the Lorentz space with norm
\[
\|f\|_{L^{p,r}} = \bigg(\int_0^\infty (t^{1/p} f^*(t))^r \, \frac{\mathrm{d}t}{t} \bigg)^{1/r}
\]
for $r < \infty$, and
\[
\|f\|_{L^{p,\infty}} = \sup_{t > 0} t^{1/p}f^*(t).
\]
For $p \in (1,\infty)$ and $r \in [1,\infty]$, the space $L^{p,r}$ is normable. Strictly speaking, $\|\cdot\|_{L^{p,r}}$ is a true norm when $r \leq p$ and a quasi-norm otherwise; a true norm which is equivalent to $\|\cdot\|_{L^{p,r}}$ is obtained by replacing $f^*(t)$ with $\frac{1}{t} \int_0^t f^*$. For details on fundamental properties of Lorentz spaces, we refer the reader to \cite{SteinWeiss}. 

Since $L^{p,p} = L^p$ and $L^{p,r_1} \subseteq L^{p,r_2}$ if $r_1 \leq r_2$, Lorentz spaces provide a natural setting
to seek refinements of  certain classical inequalities for $L^p$ functions. 
An example is the refined version of the Hardy--Littlewood--Sobolev inequality for functions in Lorentz spaces:
\begin{equation} \label{e:HLS}
\bigg|\int_{\mathbb{R}} \int_{\mathbb{R}} \frac{g_1(t_1)g_2(t_2)}{|t_1-t_2|^\lambda} \, \mathrm{d}t_1\mathrm{d}t_2 \bigg| \lesssim \|g_1\|_{L^{p_1,r_1}}\|g_2\|_{L^{p_2,r_2}}
\end{equation}
where $\lambda \in (0,1)$, $p_1,p_2 \in (1,\infty)$ satisfy $\frac{1}{p_1} + \frac{1}{p_2} + \lambda = 2$, and $\frac{1}{r_1} + \frac{1}{r_2} \geq 1$. This can be found in \cite{ONeil}.

Finally, we introduce a convenient notation when handling mixed-norm estimates. For (space-time) functions $F : \mathbb{R}^d \times \mathbb{R} \to \mathbb{C}$ belonging to 
$L^{q}_t L^r_x$, we write
\[
\|F\|_{q,r} = \|F\|_{L^{q}_t L^r_x}.
\]
In the case of Lorentz spaces,  if $F\in L^{q,p}_t L^r_x$ or $F\in L^{q}_t L^{r,p}_x$, we write
\[
\|F\|_{(q,p),r} = \|F\|_{L^{q,p}_t L^r_x},    \  \  \|F\|_{q,(r,p)} = \|F\|_{L^{q}_t L^{r,p}_x},
\]
respectively.

\subsection{A duality principle}
We shall make use of the duality principle originating in the work of Frank and Sabin \cite[Lemma 3]{FS_AJM}\footnote{Strictly speaking, Lemma 3 in \cite{FS_AJM} is stated in terms of pure Lebesgue space norms; as noted in \cite{BHLNS}, the extension to the mixed-norm setting including Lorentz spaces follows with minimal modifications.}. In the following statement (and throughout the paper), given an exponent $q \geq 2$, we write $\tq$ for the exponent given by
\[
\frac{\tq}{2} = \bigg(\frac{q}{2}\bigg)',
\]
or equivalently
\[
\frac{1}{\tq} = \frac{1}{2} - \frac{1}{q}.
\]
For $\beta\in[1,\infty)$, $\mathcal{C}^\beta=\mathcal{C}^\beta(L^2(\mathbb{R}^d))$ denotes the Schatten space based on $L^2(\mathbb{R}^d)$ which is the space of all compact operators $A$ on $L^2(\mathbb{R}^d)$ such that ${\rm Tr}|A|^\beta<\infty$, where $|A|=\sqrt{A^*A}$, and its norm is defined by $\|A\|_{\mathcal{C}^\beta}=({\rm Tr}|A|^\beta)^\frac1\beta$. If $\beta=\infty$, we define $\|A\|_{\mathcal{C}^\infty} = \|A\|_{L^2\to L^2}$. Also, the case $\beta=2$ is special in the sense that $\mathcal{C}^2$ is the Hilbert--Schmidt class and the $\mathcal{C}^2$ norm is given by $\|A\|_{\mathcal{C}^2} = \|K_A\|_{L^2(\mathbb{R}^d\times \mathbb{R}^d)}$ where $K_A$ is the integral kernel of $A$. More details on the Schatten classes can be found in  \cite{simon}.

\begin{proposition}[Duality principle] \label{p:duality}
Suppose $T$ is a bounded operator from $L^2(\mathbb{R}^d)$ to $L^{q,p}_tL^r_x$ for some $q > 2$, $p,r \geq 2$ and $\beta \geq 1$. Then
\begin{equation*}
\bigg\|   \sum_j  \nu_j |Tf_j| ^2 \bigg\|_{(\frac q2, \frac p2), \frac r2 }\lesssim \| \nu \|_{\ell^\beta}
\end{equation*}
holds for all families of orthonormal functions $(f_j)_j$ in $L^2(\mathbb{R}^d)$ and all sequences $\nu \in \ell^\beta$ if and only if
\begin{equation*}
\|W TT^* \overline{W} \|_{\mathcal{C}^{\beta'}} \lesssim \|W\|^2_{(\tq, \tp), \tr}
\end{equation*}
holds for all $W \in L^{\tq,\tp}_tL^{\tr}_x$.
\end{proposition}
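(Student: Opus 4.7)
The plan is to combine Lebesgue--Lorentz duality in the space--time variables with Schatten duality on $L^2(\mathbb{R}^d)$, so that both inequalities get exhibited as bounds for the \emph{same} double supremum. The starting observation is that for an orthonormal family $(f_j)_j$ in $L^2(\mathbb{R}^d)$ and nonnegative weights $\nu_j$, the operator $\gamma=\sum_j \nu_j|f_j\rangle\langle f_j|$ is a positive element of $\mathcal{C}^\beta$ with $\|\gamma\|_{\mathcal{C}^\beta}=\|\nu\|_{\ell^\beta}$, and the diagonal of the integral kernel of $T\gamma T^*$ is exactly $\sum_j \nu_j |Tf_j(x,t)|^2$. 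By the spectral theorem, conversely, every positive $\gamma\in\mathcal{C}^\beta$ arises this way. Without loss of generality one can take $\nu_j\ge 0$, since the triangle inequality lets one replace $\nu_j$ by $|\nu_j|$ on the left-hand side.

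First I would use duality of the mixed Lorentz--Lebesgue space $L^{q/2,p/2}_tL^{r/2}_x$, which pairs with $L^{\tq/2,\tp/2}_tL^{\tr/2}_x$ under the hypotheses $q>2$ and $p,r\ge 2$ once one works with the equivalent true Lorentz norm recalled in the preliminaries. Since the density is nonnegative,
\[
\Bigl\|\sum_j \nu_j |Tf_j|^2\Bigr\|_{(\frac{q}{2},\frac{p}{2}),\frac{r}{2}}=\sup_{V\ge 0}\int V(x,t)\sum_j \nu_j|Tf_j(x,t)|^2\,dxdt,
\]
the supremum running over $V$ in the unit ball of the predual. The substitution $V=|W|^2$ then identifies this ball with $\{W:\|W\|_{(\tq,\tp),\tr}\le 1\}$, up to the squaring of the norm. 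The second step is the algebraic identity
\[
\int V\sum_j \nu_j|Tf_j|^2\,dxdt=\sum_j \nu_j\langle f_j,T^*M_VTf_j\rangle=\mathrm{Tr}\bigl(\gamma\,T^*M_VT\bigr),
\]
with $M_V$ multiplication by $V$. Since $T^*M_VT=(WT)^*(WT)$ is a positive operator, Schatten duality gives
\[
\|T^*M_VT\|_{\mathcal{C}^{\beta'}}=\sup\bigl\{\mathrm{Tr}(\gamma\,T^*M_VT):\gamma\ge 0,\ \|\gamma\|_{\mathcal{C}^\beta}\le 1\bigr\},
\]
and the $TT^*$ identity $\|(WT)^*(WT)\|_{\mathcal{C}^{\beta'}}=\|(WT)(WT)^*\|_{\mathcal{C}^{\beta'}}=\|WTT^*\overline{W}\|_{\mathcal{C}^{\beta'}}$ converts the left-hand side into the Schatten norm appearing in the proposition. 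Chaining these three displays realises both inequalities as the same double supremum, yielding the equivalence in both directions.

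The main technical care I expect is the Lorentz-space bookkeeping at the boundary cases $p=2$ or $r=2$, where $p/2$ or $r/2$ equals $1$ and the duality bracket involves the weak-type space $L^{(\cdot)',\infty}$. One must then verify the elementary identity $\||W|^2\|_{L^{\tq/2,\infty}}\simeq\|W\|_{L^{\tq,\infty}}^2$ (and similarly in $\tr$) to match the two sides of the proposition, which is a routine computation for the Lorentz quasi-norm. The assumption that $T$ maps $L^2$ boundedly into $L^{q,p}_tL^r_x$, combined with H\"older in mixed Lorentz spaces, ensures that $WT$ extends to a bounded operator on $L^2$ whenever $W\in L^{\tq,\tp}_tL^{\tr}_x$, so all Schatten norms are a priori finite and no approximation step is required. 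Beyond this, the argument is the natural Lorentz refinement of the duality principle of Frank--Sabin \cite{FS_AJM}, in the spirit of \cite{BHLNS}.
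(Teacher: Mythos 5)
Your argument is correct and follows the same double-duality computation as Frank--Sabin's Lemma 3, which the paper simply cites (together with the remark from \cite{BHLNS} that the Lorentz extension is routine) rather than reproving. Identifying the best constants on both sides of the equivalence with $\sup_{\gamma\ge 0,\,\|\gamma\|_{\mathcal{C}^\beta}\le 1}\sup_{V\ge 0}\mathrm{Tr}(\gamma\, T^*M_VT)$ via the spectral decomposition of $\gamma$, Lorentz space duality with the substitution $V=|W|^2$, Schatten duality for the positive operator $T^*M_VT$, and the $\mathcal{C}^{\beta'}$-norm identity for $A^*A$ versus $AA^*$ is exactly the intended proof, and your attention to the boundary Lorentz indices and to the a priori boundedness of $WT$ on $L^2$ covers the points one needs to check.
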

We shall apply the above duality principle in the case $\tp = 2\tr$. That is to say,  we will obtain estimates
\[
\bigg\|   \sum_j  \nu_j |Tf_j| ^2 \bigg\|_{(\frac q2, \beta), \frac r2 }\lesssim \| \nu \|_{\ell^\beta}, \qquad \beta = \frac{2r}{r+2}
\]
from estimates of the form
\[
\|W TT^* \overline{W} \|_{\mathcal{C}^{\tr}} \lesssim \|W\|^2_{(\tq, 2\tr), \tr}\,.
\]

\subsection{Dyadic decomposition} Throughout the paper, $\chi$ denotes a fixed function $C_c^\infty(-1,1)$ which satisfies $\chi=1$ on $(-\frac12, \frac12)$, and we define 
\[
\chi_0 = \chi(2^{-1}\cdot)-\chi. 
\]
For $j \in \mathbb Z$, we write $\chi_j=\chi_0(2^{-j}\cdot)$ so that $\chi_j$ is a smooth cutoff function supported in $(2^{j-1}, 2^{j+1})$. By construction
we have 
\begin{equation} \label{e:dyadicdecomp}
\chi =  \sum_{j \le - 1} \chi_j,   \ \ \  \sum_{j=-\infty}^\infty \chi_j=\chi+\sum_{j=0}^\infty  \chi_j=1.
\end{equation}

\subsection{Van der Corput's lemma}
The following proposition, often referred to as van der Corput's lemma, will be useful throughout Section \ref{section:oscillatoryintegrals} (see, for example, \cite[p. 334]{Stein}).
\begin{proposition} \label{p:vandercorput}
Let $k \geq 1$ and $\rho_1,\rho_2 \in \mathbb{R}$ with $\rho_1 < \rho_2$. Suppose $\theta : [\rho_1,\rho_2] \to \mathbb{R}$  satisfies 
\[
|\theta^{(k)}(\rho)| \geq 1
\]
for all $\rho \in [\rho_1,\rho_2]$. Suppose also that $a : [\rho_1,\rho_2] \to \mathbb{R}$ is differentiable and $a'$ is integrable on $[\rho_1,\rho_2]$. Then there exists a constant $C_k$, depending only on $k$, such that
\[
\bigg| \int_{\rho_1}^{\rho_2} e^{i \mu \theta(\rho)} a(\rho) \, \mathrm{d}\rho\bigg| \leq C_k\bigg(|a(\rho_2)| + \int_{\rho_1}^{\rho_2} |a'(\rho)| \, \mathrm{d}\rho \bigg) |\mu|^{-1/k}
\]
holds, when $k \geq 2$, or $k = 1$ and $\theta'$ is monotonic.
\end{proposition}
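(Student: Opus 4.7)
The plan is to prove Proposition \ref{p:vandercorput} by induction on $k$. The key observation that will be used throughout is that for any $\rho \in [\rho_1,\rho_2]$ one has $|a(\rho)| \leq |a(\rho_2)| + \int_{\rho_1}^{\rho_2} |a'(\rho)| \, \mathrm{d}\rho$, so controlling $\sup|a|$ is equivalent to controlling the right-hand side appearing in the statement. This lets me freely pass between the two formulations during the induction.

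For the base case $k=1$, I would integrate by parts using the identity $e^{i\mu\theta} = (i\mu\theta')^{-1} \frac{\mathrm{d}}{\mathrm{d}\rho} e^{i\mu\theta}$, which is legitimate since $|\theta'| \geq 1$. The boundary terms immediately give $|\mu|^{-1}(|a(\rho_1)| + |a(\rho_2)|)$, which is admissible. The interior term splits as $(i\mu)^{-1}\int e^{i\mu\theta}(a'/\theta' - a\theta''/(\theta')^2)\,\mathrm{d}\rho$. The first piece contributes $|\mu|^{-1}\int|a'|$, and for the second piece the monotonicity of $\theta'$ ensures $\theta''$ does not change sign, so $\int_{\rho_1}^{\rho_2}|\theta''/(\theta')^2|\,\mathrm{d}\rho = |1/\theta'(\rho_1) - 1/\theta'(\rho_2)| \leq 2$; bounding $|a|$ by $|a(\rho_2)| + \int|a'|$ closes the case.

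For the inductive step $k \geq 2$, assume the result for $k-1$. Since $|\theta^{(k)}| \geq 1$, the derivative $\theta^{(k-1)}$ is strictly monotone, hence vanishes at most at one point; let $\rho_0$ be this zero (if it exists, otherwise an endpoint). For a parameter $\delta > 0$ to be optimized, split $[\rho_1,\rho_2]$ into the small piece $J_\delta := [\rho_0 - \delta, \rho_0 + \delta] \cap [\rho_1,\rho_2]$ and its complement. On $J_\delta$ I estimate trivially by $2\delta \cdot \sup|a|$. On each component of the complement, say $[\rho_0 + \delta, \rho_2]$, the mean value theorem combined with $|\theta^{(k)}| \geq 1$ gives $|\theta^{(k-1)}| \geq \delta$; rescaling the phase as $\widetilde\theta := \theta/\delta$ one has $|\widetilde\theta^{(k-1)}| \geq 1$, and $\widetilde\theta^{(k-1)}$ is monotone if $k-1=1$, so the induction hypothesis applies with parameter $\mu\delta$. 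This yields a bound of $C_{k-1}(|a(\rho_2)| + \int|a'|)(\mu\delta)^{-1/(k-1)}$ on that piece. The left component is handled symmetrically, with the boundary term $|a(\rho_0 - \delta)|$ absorbed into $|a(\rho_2)| + \int|a'|$ via the observation in the first paragraph.

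Combining the two contributions gives a bound proportional to $\delta + (\mu\delta)^{-1/(k-1)}$ times $|a(\rho_2)| + \int|a'|$. Optimizing by choosing $\delta = |\mu|^{-1/k}$ balances the two terms and produces the desired $|\mu|^{-1/k}$ decay, completing the induction. The main subtlety, and the only delicate point in the argument, is arranging the one-sided boundary term $|a(\rho_2)|$ rather than the more natural $\sup|a|$ — this is handled uniformly throughout by the opening inequality, so no genuine obstacle arises; the rest is a careful bookkeeping exercise.
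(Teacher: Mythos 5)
Your proof is correct, but it takes a route that differs from the one in Stein's book that the paper cites. The standard argument is modular: one first proves the amplitude-free estimate $\big|\int e^{i\mu\theta}\,\mathrm{d}\rho\big| \leq c_k |\mu|^{-1/k}$ by an induction of precisely the type you describe (shrink to the window $J_\delta$ around the unique near-zero of $\theta^{(k-1)}$, apply the $(k-1)$-case off $J_\delta$, optimize $\delta$), and only afterwards reinstates the amplitude by a single summation by parts: setting $F(\rho)=\int_{\rho_1}^\rho e^{i\mu\theta}\,\mathrm{d}\rho$, one has $\int e^{i\mu\theta}a = F(\rho_2)a(\rho_2) - \int F a'$, which produces the factor $|a(\rho_2)| + \int |a'|$ in one stroke. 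You instead carry $a$ through the induction itself, which is why the boundary values $|a(\rho_0\pm\delta)|$ arising from the $(k-1)$-case on the two components of $[\rho_1,\rho_2]\setminus J_\delta$ must be dominated; your opening inequality $|a(\rho)|\leq |a(\rho_2)|+\int|a'|$ is exactly the right device and closes the loop. Both approaches work. Stein's buys a cleaner separation of concerns --- the decay mechanism is established once, with $a\equiv 1$, and the amplitude step is a trivial post-processing that never needs revisiting at each level of the induction --- while yours is a single self-contained induction at the cost of somewhat more delicate bookkeeping.

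One minor but genuine issue in your base case as written: after integrating by parts you estimate the term $\int a\,\theta''/(\theta')^2$, which presupposes that $\theta$ is twice differentiable. The hypothesis for $k=1$ is only that $\theta'$ is monotone, so $1/\theta'$ has bounded variation, but $\theta''$ need not exist pointwise. The fix is either to perform the integration by parts in the Riemann--Stieltjes sense against $\mathrm{d}(1/\theta')$ --- its total variation is still at most $2$ since $|\theta'|\geq 1$ and $\theta'$ is monotone --- or to adopt the two-step route above, in which the amplitude enters only via the antiderivative $F$ and the question of $\theta''$ never arises.
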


\section{Weighted oscillatory integral estimates} \label{section:oscillatoryintegrals}

In this section, we establish various new weighted oscillatory integral estimates, by which we mean decay estimates for oscillatory integrals of the form  $I^\phi(w)$
defined  by \eqref{osi}.  These estimates are related to the dispersive estimates for the associated propagators. 
If $a$ has compact support, $\phi$ is smooth on the support $a$,  and  $\phi$ is not degenerate, that is to say,  $\det H\phi \neq 0$, then  the stationary phase method gives 
\[ 
|I^\phi(1)|\lesssim |t|^{-\frac d2}.
\]
 This decay estimate is in general best possible under such a non-degeneracy assumption. However, if $\det H\phi$ vanishes, only weaker decay estimates are possible. 
 There have been attempts to recover optimal decay $O(|t|^{-\frac d2})$ by introducing a suitable damping weight.  A typical weight involves powers of the determinant of the Hessian matrix of $\phi$. In view of  the asymptotic expansion of the oscillatory integral  with non-degenerate phase (eg. see \cite[(7.7.12), p. 220]{Hormanderbook} and \cite[p. 360--361]{Stein}),   it is natural to use the damping factor $w(\xi)=|{\rm det} H\phi(\xi)|^{1/2}$ to recover the best possible decay  $O(|t|^{-\frac d2})$.   
 
 This type of  estimate for ${I}^{\phi}(|{\rm det} H\phi|^\gamma)$  has been studied by various authors.   Early results of this kind go back as far as work of Sogge and Stein \cite{SoggeStein}.   In work of Cowling \emph{et al.}\,\cite{CDMM}   the damped oscillatory integral estimates were studied with finite-type  convex  $\phi$ but  the weight was assumed to have sufficient smoothness and weights with  complex exponent $\gamma$ were not considered.  Establishing estimates of the form 
$$ 
| I^\phi(|{\rm det} H\phi|^{1/2 + i\kappa})|\lesssim_\kappa |t|^{-\frac d2}
$$ 
turned out to be a delicate problem, even without $\kappa$. 
 Until now only results for special classes of $\phi$  are
 known. Kenig, Ponce and Vega \cite{KenigPonceVega} obtained such an estimate with  {elliptic polynomial} $\phi$. In the radial case $\phi(\xi)=\phi_0(|\xi|)$, estimates were obtained by Carbery--Ziesler \cite{CarberyZiesler} under the assumption  that $\phi_0$ and $\phi_0'$ are convex.
  
Below, we present our weighted oscillatory integral estimates corresponding to the wave equation, the Klein--Gordon equation and the fractional Schr\"{o}dinger equation,  which we need for proof of the orthonormal Strichartz estimates. In the case of the wave and Klein--Gordon equations, our results are completely new. For the case of the fractional Schr\"{o}dinger equation, our estimates are new in the case $\alpha<2$ and overlap with work of Kenig, Ponce and Vega \cite{KenigPonceVega} when $\alpha\ge2$. See also the remark at the end of this section for further discussion about this point.   
Compared with the previous work,  our argument here is significantly simpler. This is mainly due to our efficient use of the bounds which are obtained by making use of the first order derivatives of the phase functions.

\subsection{The wave equation}

For each $\kappa \in \mathbb{R}$ and $(x,t) \in \mathbb{R}^d \times \mathbb{R}$, we define the oscillatory integral
\[ 
\mathcal W_{\kappa}(x,t)=\int e^{i(x\cdot \xi+ t|\xi|)} |\xi|^{-\frac{d+1}{2}+i\kappa} \, \chi(|\xi|) \,\mathrm{d}\xi.
\] 

\begin{proposition} \label{p:osc-wave} 
There exist constants $C < \infty$ and $N > 0$ such that 
\begin{equation} \label{e:osc-wave}
|\kappa  \mathcal W_{\kappa}(x,t)| \leq C(1+|\kappa|)^{N}|t|^{-\frac{d-1}2}
\end{equation} 
for all $\kappa \in \mathbb{R}$ and $(x,t) \in \mathbb{R}^d \times \mathbb{R}$.
\end{proposition}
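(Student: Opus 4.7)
The strategy is to pass to polar coordinates and reduce the $d$-dimensional integral to a one-dimensional oscillatory integral, which is then controlled by a combination of integration by parts and van der Corput's lemma (Proposition \ref{p:vandercorput}). Writing $\xi=\rho\omega$ and evaluating the angular integral via the Fourier transform of the surface measure on $S^{d-1}$ gives
\[
\mathcal{W}_\kappa(x,t) = c_d\int_0^\infty \rho^{\frac{d-3}{2}+i\kappa}\chi(\rho) e^{it\rho} F(\rho|x|)\,d\rho,
\]
where $F(r)=\int_{S^{d-1}} e^{ir\omega_1}\,d\sigma(\omega)$. Recall that $F$ is smooth and bounded near the origin, and for $r\geq 1$ admits the standard asymptotic $F(r)=\sum_\pm e^{\pm ir}a_\pm(r)$ with $a_\pm$ symbols of order $-\frac{d-1}{2}$.

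The case $|t|\leq 1$ is immediate from the absolutely convergent bound $|\mathcal{W}_\kappa|\lesssim \int_0^1 \rho^{\frac{d-3}{2}}\,d\rho=O(1)$ (valid since $d\geq 2$), giving $|\kappa\mathcal{W}_\kappa|\lesssim (1+|\kappa|)\lesssim(1+|\kappa|)|t|^{-\frac{d-1}{2}}$. When $|t|\geq 1$ and $|x|$ is far from $|t|$ (say $|x|\leq |t|/2$ or $|x|\geq 2|t|$), the gradient $|x+t\xi/|\xi||\gtrsim \max(|x|,|t|)$ is bounded below, so repeated integration by parts in $\xi$ applied directly in the $d$-dimensional integral against $e^{i(x\cdot\xi+t|\xi|)}$ produces arbitrarily rapid decay.

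The core case is $|x|\sim|t|\gg 1$. Substitute the asymptotic for $F$ in the regime $\rho|x|\geq 1$ to split the contribution into two pieces with phases $e^{i\rho(t\pm|x|)}$; the region $\rho|x|\leq 1$ (where $F$ has no oscillation) is bounded directly by $|x|^{-\frac{d-1}{2}}\sim|t|^{-\frac{d-1}{2}}$ via a routine size estimate. The $+$ piece has phase derivative of size $|t+|x||\sim|t|$ and yields the required decay by iterated integration by parts in $\rho$. The $-$ piece, where $t-|x|$ may be small, is the essential one. There we combine the symbol bounds on $a_-$ with a Mellin-type integration by parts on the weight, writing
\[
\rho^{\frac{d-3}{2}+i\kappa} = \frac{1}{\tfrac{d-1}{2}+i\kappa}\,\partial_\rho \rho^{\frac{d-1}{2}+i\kappa},
\]
which produces a factor $\bigl(\tfrac{d-1}{2}+i\kappa\bigr)^{-1}$ that is exactly compensated by the prefactor $\kappa$, together with van der Corput's lemma with $k=1$ applied to the remaining phase via its first derivative, as emphasized in the introductory remarks to this section. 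The combination is orchestrated so that the dangerous $\rho^{-1}$ singularity generated by the symbol $a_-$ is either absorbed into the Mellin gain or tamed by van der Corput.

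The main obstacle is precisely avoiding the logarithmic losses in the regime $|t-|x||\ll|t|$; the mechanism above, in which the Mellin integration by parts on $\rho^{i\kappa}$ exactly compensates the factor $\kappa$ in front of $\mathcal{W}_\kappa$ and is balanced against a van der Corput estimate using only the first derivative of the remaining phase, is the essential technical device beyond the classical dispersive estimate for the wave equation.
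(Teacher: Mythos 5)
Your overall architecture matches the paper's: reduce to $|t|\geq 1$, reduce to $|x|\sim|t|$ by non-stationary phase, pass to polar coordinates, insert the asymptotic expansion of $\widehat{\mathrm{d}\sigma}(\rho x)$, and handle the two resulting exponentials $e^{i(t\pm|x|)\rho}$ separately. The trivial case and the off-diagonal reduction are also fine.

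There is, however, a genuine gap in the mechanism you describe for the $-$ piece. The displayed Mellin identity
\[
\rho^{\frac{d-3}{2}+i\kappa} = \frac{1}{\frac{d-1}{2}+i\kappa}\,\partial_\rho \rho^{\frac{d-1}{2}+i\kappa}
\]
produces the factor $\bigl(\tfrac{d-1}{2}+i\kappa\bigr)^{-1}$, which is bounded uniformly in $\kappa$ but is \emph{not} of size $\kappa^{-1}$ as $\kappa\to 0$; hence it is not "exactly compensated" by the prefactor $\kappa$ in any nontrivial way. Worse, after integrating by parts, the $\partial_\rho$ that falls on the symbol $a_-(\rho|x|)$ returns $|x|\,a_-'(\rho|x|)\sim |x|^{-\frac{d-1}{2}}\rho^{-\frac{d+1}{2}}$, and paired with $\rho^{\frac{d-1}{2}+i\kappa}$ this reproduces exactly the $\rho^{-1+i\kappa}$ singularity you started with. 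In the critical regime $t=|x|$, the phase is constant, so van der Corput with $k=1$ (where the large parameter is $\mu=t-|x|$, which vanishes) gives nothing, and what remains is $\int_{c/|t|}^{c}\rho^{-1+i\kappa}\,\mathrm{d}\rho \sim \log|t|$ as $\kappa\to 0$, which is unbounded. The argument, as written, does not close.

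The correct device — which your final paragraph gestures at but your displayed formula does not implement — is to first absorb the factor $\rho^{-\frac{d-1}{2}}$ from the leading term of the asymptotic into the power of $\rho$, so that what remains is $\rho^{-1+i\kappa}$ against a slowly-varying amplitude, and only then exploit $\partial_\rho \rho^{i\kappa}=i\kappa\,\rho^{-1+i\kappa}$. This is what Lemma \ref{elementary} (and the derived estimate \eqref{mod}) does: the product $\kappa\rho^{-1+i\kappa}$ is an exact derivative, so $\kappa\int_\varepsilon^R\rho^{-1+i\kappa}\,\mathrm{d}\rho = \tfrac{1}{i}(R^{i\kappa}-\varepsilon^{i\kappa})$ is bounded by $2$ uniformly in $\varepsilon,R$; this is what kills the would-be $\log|t|$ loss when $t\pm|x|$ is small, with no appeal to van der Corput. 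If you replace your displayed formula by this identity (applied after extraction of the $\rho^{-\frac{d-1}{2}}$ from $a_\pm$) and use a direct argument in the spirit of Lemma \ref{elementary} rather than van der Corput with $k=1$, your proof coincides with the paper's.
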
 
To place this estimate in some context, recall the standard dispersive estimate 
\begin{equation*}
\bigg| \int e^{i(x\cdot \xi+ t|\xi|)} \chi_0(|\xi|) \,\mathrm{d}\xi \bigg| \lesssim \min\big\{1,|t|^{-\frac{d-1}{2}}\big\}
\end{equation*}
related to the wave equation
and the rate of decay $\frac{d-1}{2}$ is best possible. By scaling it easily follows that
$| \int e^{i(x\cdot \xi+ t|\xi|)} \chi_0(2^{-j}|\xi|) \,\mathrm{d}\xi | \lesssim \min\{ 2^{dj}, 2^{\frac{d+1}{2}j }|t|^{-\frac{d-1}{2} } \} 
$ 
for  each $j \in \mathbb{Z}$. Thus, for any $\lambda < \frac{d-1}{2}$, a standard dyadic decomposition yields
\begin{equation*}
\bigg| \int e^{i(x\cdot \xi+ t|\xi|)}\frac{1}{|\xi|^{d - \lambda}}  \,\mathrm{d}\xi \bigg| \lesssim \sum_{2^j \leq |t|^{-1}} 2^{\lambda j} + |t|^{-\frac{d-1}{2}} \sum_{2^j > |t|^{-1}} 2^{(\lambda - \frac{d-1}{2})j} \lesssim |t|^{-\lambda}.
\end{equation*}
So, the estimate \eqref{e:osc-wave}  may be regarded as the endpoint case of the above family of estimates. However, it appears to be difficult to establish Theorem \ref{t:wave} using these weighted estimates and they seem to be useful only for the sub-optimal decay rate.  The additional oscillation introduced by the factor $|\xi|^{i\kappa}$ allows us to recover the optimal decay rate $\frac{d-1}{2}$  and this is crucial for the approach taken in this paper. We remark that the extra $\kappa$ factor in the left-hand side of  \eqref{e:osc-wave}  is also important  and it is not difficult to show that  the estimate is not true without this factor (this becomes clear in the proofs of Lemma \ref{elementary} and Proposition \ref{p:osc-wave} below).

As preparation for the proof of Proposition \ref{p:osc-wave}, we establish the following elementary estimate.
\begin{lemma}\label{elementary} 
There exists a constant $C < \infty$, independent of $\kappa, \varepsilon,$ and $R$,  such that 
we have 
\[ 
\bigg |  \kappa \int_{\varepsilon}^R e^{\pm  i A\rho }    \rho^{-1+i\kappa}  \, \mathrm{d}\rho\,\bigg|\le    C  (1+|\kappa|)^{2}.
\]   
\end{lemma}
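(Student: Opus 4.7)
This is a uniform bound for a one-dimensional oscillatory integral whose phase is linear, and the key is that we can exploit \emph{either} the Taylor cancellation of $e^{\pm i A\rho}$ when $A\rho$ is small, \emph{or} the oscillation of $e^{\pm i A\rho}$ together with the decay of $\rho^{-1+i\kappa}$ when $A\rho$ is large. My plan is to split the domain of integration according to the threshold $\rho_0 := 1/|A|$ (with the degenerate case $A=0$ treated separately) and estimate each piece by whichever method is effective there.

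If $A=0$, then the integral evaluates exactly to $(i\kappa)^{-1}(R^{i\kappa}-\varepsilon^{i\kappa})$, giving the bound $2$ after multiplication by $\kappa$. For $A\neq 0$ one may assume $A>0$ without loss of generality, and decompose
\[
\int_\varepsilon^R = \int_\varepsilon^{\min(R,\rho_0)} + \int_{\min(R,\rho_0)}^R,
\]
where the second piece is taken to be zero if $R\le \rho_0$ and the first piece is taken to be zero if $\varepsilon\ge \rho_0$.

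On the first piece I would write $e^{\pm i A\rho} = 1 + (e^{\pm i A\rho}-1)$. The constant $1$ contributes $(i\kappa)^{-1}[\rho^{i\kappa}]_\varepsilon^{\min(R,\rho_0)}$, whose modulus times $|\kappa|$ is at most $2$. For the remainder I would use the elementary pointwise inequality $|e^{\pm i A\rho}-1|\le A\rho$, which reduces the integrand to the pointwise bound $A$ on an interval of length at most $\rho_0 = 1/A$; this produces $O(1)$, hence $O(|\kappa|)$ after the outer multiplication by $\kappa$. On the second piece, where $\rho\ge \rho_0$, I would integrate by parts, taking $(\pm iA)^{-1}e^{\pm iA\rho}$ as antiderivative of the exponential factor. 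The boundary contribution is bounded by $(A\rho)^{-1}\le 1$, and the remaining integral is
\[
\frac{-1+i\kappa}{\pm iA}\int_{\rho_0}^R e^{\pm iA\rho}\,\rho^{-2+i\kappa}\,\mathrm{d}\rho,
\]
whose absolute value is at most $A^{-1}(1+|\kappa|)\int_{\rho_0}^R \rho^{-2}\,\mathrm{d}\rho \le 1+|\kappa|$. Combining the two pieces and the outer factor $|\kappa|$ gives the desired bound $C(1+|\kappa|)^{2}$.

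This bound is uniform in $\varepsilon,R,A,\kappa$, as required. There is no real obstacle, merely a bit of case analysis; the quadratic dependence on $|\kappa|$ is unavoidable in this scheme, since the integration by parts on the large-$\rho$ piece produces the factor $-1+i\kappa$ from $\tfrac{d}{d\rho}\rho^{-1+i\kappa}$, which is then multiplied by the outer $\kappa$, matching the $(1+|\kappa|)^{2}$ bound stated in the lemma.
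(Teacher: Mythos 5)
Your proposal is correct and follows essentially the same route as the paper's proof: the paper first rescales $\rho \mapsto \rho/A$ to reduce to $A=1$ and then splits the integral at $\rho = 1$, applying the same Taylor-cancellation argument on $(\varepsilon,1)$ and the same integration by parts on $(1,R)$; your version simply keeps $A$ explicit and splits at $\rho_0 = 1/A$, which is the same idea after scaling. The only cosmetic slip is that the boundary contribution from integration by parts consists of two terms, each $\le 1$, so it is bounded by $2$ rather than $1$, but this does not affect the conclusion.
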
 
In fact, what we shall need in the proof of Proposition \ref{p:osc-wave} is the following estimate: 
\Be\label{mod} 
\bigg|  \kappa \int_0^R \psi(\rho) e^{\pm  i A\rho }    \rho^{-1+i\kappa}  \, \mathrm{d}\rho \bigg| \leq C(1+|\kappa|)^{2}(\|\psi'\|_1+\|\psi\|_\infty),
\Ee
where the constant $C < \infty$ is the same as in Lemma \ref{elementary} (and hence independent of $R > 0$, $\kappa \in \mathbb{R}$, $A \geq 0$ and $\psi \in C_c^\infty(0,\infty)$). We note that \eqref{mod} follows immediately from Lemma \ref{elementary}. Indeed, if we set 
\[
F(r)=\kappa\int^r_{\varepsilon}  e^{\pm  i A\rho }    \rho^{-1+i\kappa}  \, \mathrm{d}\rho,
\] 
where $\varepsilon>0$ is a sufficiently small number so that $\psi(\varepsilon)=0$, then, by integration by parts, we get
\begin{equation*}
   \bigg|  \kappa \int_0^R \psi(\rho) e^{\pm  i A\rho }    
        \rho^{-1+i\kappa}  \, \mathrm{d}\rho \bigg| 
          \leq |\psi(R)F(R)| + \bigg|\int_0^R \psi'(\rho) F(\rho)  \, \mathrm{d}\rho \bigg|  
\end{equation*}
and thus \eqref{mod} clearly follows from Lemma \ref{elementary}.

\begin{proof}[Proof of Lemma \ref{elementary}]  
Since $\frac{\mathrm{d}}{\mathrm{d}\rho}   \rho^{i\kappa} =i\kappa  \rho^{-1+i\kappa}$, we obviously have
\[
\bigg |  \kappa \int_{\varepsilon}^R  \rho^{-1+i\kappa}  \, \mathrm{d}\rho\,\bigg| \leq 2
\]
which gives the desired estimate when $A = 0$.

Next, note that the case $A \neq 0$ follows from the case $A = 1$ by rescaling $\rho\to \rho/A$, and thus it is sufficient to show that 
\[ 
\bigg |  \kappa \int_{\varepsilon}^R e^{\pm  i \rho }    \rho^{-1+i\kappa}  \, \mathrm{d}\rho\,\bigg| \le    C  (1+|\kappa|)^{2}
\] 
holds with $C$ independent of $R, \varepsilon>0$. We initially assume $\varepsilon\le 1\le R$ and split the integral
\[ 
\kappa \int_{\varepsilon}^R e^{\pm  i \rho }    \rho^{-1+i\kappa}  \, \mathrm{d}\rho  =  \kappa \Big(  \int_{\varepsilon}^1 +  \int_1^R \Big) e^{\pm  i \rho }    \rho^{-1+i\kappa}  \, \mathrm{d}\rho:= I+ I\!I. 
\]
By integration by parts it is easy to see $|I\!I|\le (1+|\kappa|)^{2}$. For $I$, if we write
\[
I=   \kappa   \int_{\varepsilon}^1 (e^{\pm  i \rho }  -1)  \rho^{-1+i\kappa}  \, \mathrm{d}\rho +\kappa   \int_{\varepsilon}^1  \rho^{-1+i\kappa}  \, \mathrm{d}\rho,
\] 
then it is clear that $|I|\le   C  (1+|\kappa|)$, thus completing the argument when $\varepsilon\le 1\le R$. If $\varepsilon \leq R \leq 1$ we only need the argument for $I$, and if $1 \leq \varepsilon \le R$ the argument for $I\!I$ is enough.
\end{proof}

\begin{proof}[Proof of Proposition \ref{p:osc-wave}]
The trivial estimate
\[
|\kappa  \mathcal W_{\kappa}(x,t)| \lesssim |\kappa| \int  |\xi|^{-\frac{d+1}{2}} \, \chi(|\xi|) \,\mathrm{d}\xi \lesssim |\kappa|
\]
means that we may reduce to the case where $|t| \geq 1$. Furthermore, in the case $|t| \geq 1$, we argue that it suffices to consider the case where
\begin{equation} \label{e:xliket}
2^{-1}|t|\le |x|\le 2|t|. 
\end{equation}
To see this, suppose either $|x|\ge 2 |t|$ or $2|x|\le |t|$. Using the dyadic decomposition \eqref{e:dyadicdecomp} and rescaling we get
\[
\mathcal W_{\kappa}(x,t) = \sum_{j \geq 1} 2^{-j(\frac{d-1}{2} + i\kappa)} \int e^{i\Phi_j(\xi)}  |\xi|^{-\frac{d+1}{2}+i\kappa} \,\chi_0(|\xi|) \,\mathrm{d}\xi,
\]
where
\[
\Phi_j(\xi) = 2^{-j}x \cdot \xi + 2^{-j}t|\xi|.
\]
In the case $|x|\ge 2 |t|$, we have $|\nabla \Phi_j(\xi)| \gtrsim 2^{-j}|x| $ on the support of $\chi_0(|\cdot|)$  and it follows by repeated integration by parts that
\[
\bigg| \int e^{i\Phi_j(\xi)}  |\xi|^{-\frac{d+1}{2}+i\kappa} \, \chi_0(|\xi|) \,\mathrm{d}\xi \bigg| \lesssim_\kappa \min(1, (2^{-j}|x|)^{-M})
\]
for any $M \geq 0$. From this, it follows that
\begin{align*}
|\mathcal W_{\kappa}(x,t)| & \lesssim \sum_{\substack{j \geq 1 \\ 2^j \leq |x|}} 2^{-j\frac{d-1}{2}} (2^{-j}|x|)^{-M} + \sum_{\substack{j \geq 1 \\ 2^j \geq |x|}} 2^{-j\frac{d-1}{2}} 
 \lesssim |x|^{-\frac{d-1}{2}},  
\end{align*}
if we choose $M$ sufficiently large, and hence
\[
|\mathcal W_{\kappa}(x,t)| \lesssim |t|^{-\frac{d-1}{2}}
\]
holds in the case $|x|\ge 2 |t|$.  It is easy to see the same estimate holds in the case $2|x|\le |t|$ by a similar argument. 
Since 
$$
\bigg| \int e^{i\Phi_j(\xi)}  |\xi|^{-\frac{d+1}{2}+i\kappa} \, \chi_0(|\xi|) \,\mathrm{d}\xi \bigg| \lesssim_\kappa \min(1, (2^{-j}|t|)^{-M})
$$
for any $M \geq 0$ if $2|x|\le |t|$, repeating the argument for the case  $|x|\ge 2 |t|$   yields the desired estimate.

For the remainder of the proof, we assume that \eqref{e:xliket} holds. Since we have
\[
\bigg|\int e^{i(x\cdot \xi+ t|\xi|)} |\xi|^{-\frac{d+1}{2}+i\kappa} \, \chi( |t \xi| ) \chi(|\xi|) \,\mathrm{d}\xi \bigg| \lesssim \int |\xi|^{-\frac{d+1}{2}} \chi(|t\xi|)  \,\mathrm{d}\xi \lesssim |t|^{-\frac{d-1}{2}},
\]
it suffices to consider
\begin{align*} 
\widetilde{\mathcal W}_{\kappa}(x,t) :=  \int e^{i(x\cdot \xi+ t|\xi|)} |\xi|^{-\frac{d+1}{2}+i\kappa} \,  (1- \chi(|t \xi|))\chi(|\xi|) \,\mathrm{d}\xi.
\end{align*}
To estimate this oscillatory integral, we use spherical  coordinates to write 
\[ 
\widetilde{\mathcal W}_{\kappa}(x,t)  =  \int  \widehat{\mathrm{d}\sigma}(\rho x)  \ e^{i t\rho} \rho^{\frac{d-3}{2}+i\kappa} \,  (1- \chi({ |t|} \rho))\chi(\rho) \,\mathrm{d}\rho,
 \] 
  where  $\mathrm{d}\sigma$ is the surface measure  on the sphere  $\{ x: |x|=1\}.$
 Since $\widehat{\mathrm{d}\sigma}(x) =C_d|x|^{-\frac{d-2}2} J_{\frac{d-2}2}(|x|) $, using an asymptotic expansion for the Bessel function we get 
\begin{equation} \label{fourier-sph}  
\widehat{\mathrm{d}\sigma}(x) = C_\pm |x|^{-\frac{d-1}{2}}e^{\pm i |x|}+ \sum_{j=1}^{N} C_{j, \pm} |x|^{-\frac{d-1}{2}-j}e^{\pm i |x|}+ \mathcal E(|x|),   \quad |x| \geq 1,
\end{equation}
where 
$ \big( \frac{d}{dr}\big)^k \mathcal E(r)= O(r^{-k-\frac{ N+d-1}2}).$
For more details regarding this, see \cite[{p. 338}]{Stein} and  also \cite[Proposition 3, {p. 334}]{Stein}.  Since we are assuming $|x|\sim |t|\gtrsim 1$, for the contribution from the leading term in this expansion,  it is enough to show that 
\[ 
\bigg| \kappa\int   \ e^{i (t\pm |x|)\rho} \rho^{-1+i\kappa} \,  (1- \chi({ |t|} \rho))\chi(\rho) \,\mathrm{d}\rho \bigg| \leq C(1+|\kappa|)^{2}.
\] 
However, this immediately follows from \eqref{mod} since $\|((1- \chi({ |t|} \cdot))\chi  )'\|_1\lesssim 1$ uniformly in $t$.
The other terms can be handled similarly but in an easier manner, so we omit the details. 
\end{proof}

\subsection{The Klein--Gordon equation} 
In this subsection we prove  the weighted oscillatory integral estimates which are needed for the proof of Theorems \ref{t:KGSchro} and \ref{t:KGwave}. The associated surface  $(\xi, \langle \xi\rangle)$ to the Klein--Gordon equation  has nonvanishing Gaussian curvature everywhere  but 
 its Gaussian curvature asymptotically vanishes at infinity.  
 This gives rise to significant complication in the argument for  obtaining  the sharp decay rate in the oscillatory integral estimates. 

For each $\kappa \in \mathbb{R}$, $\varepsilon > 0$ and $(x,t) \in \mathbb{R}^d \times \mathbb{R}$, we define the oscillatory integrals $\mathcal{J}_{\kappa,R}(x,t)$ and $\mathcal{K}_{\kappa,\varepsilon}(x,t)$ by
\[ 
\mathcal{J}_{\kappa,\varepsilon} (x,t)= \int e^{i(x\cdot \xi+t\langle \xi \rangle)}  \psi(\varepsilon \xi) ( 1+|\xi|^2)^{-\frac {d+1}4-i\kappa } \, \mathrm{d}\xi
\] 
and
\[ 
\mathcal K_{\kappa,\varepsilon}(x,t) = \int e^{i(x\cdot \xi+t\langle \xi \rangle)} \psi(\varepsilon \xi) ( 1+|\xi|^2)^{-\frac {d+2}4-i\kappa} \, \mathrm{d}\xi.
\] 
Here,  $\psi\in C_0^\infty (-1, 1)$. We insert $\psi(\varepsilon{|\xi|})$  to guarantee existence of the integral. However presence of the additional factor $\psi(\varepsilon |\xi|)$ does not have any significance and the overall argument below works as if there were no such factor. It is easy to see by integration by parts that, for $t\neq 0$,  
the limits $\lim_{\varepsilon \to 0}   \mathcal{J}_{\kappa,\varepsilon} (x,t)$ and   $\lim_{\varepsilon \to 0}   \mathcal{K}_{\kappa,\varepsilon} (x,t)$
 exist.  

The following  respectively give the optimal  wave-like and Schr\"odinger-like  dispersive estimates for the Klein-Gordon equation.

\begin{proposition} \label{p:KGoscillatory_wave} 
There exist constants  $C < \infty$ and $N > 0$ such that 
\begin{equation}
\label{KGwavedecay}
|\kappa \mathcal{J}_{\kappa,\varepsilon} (x,t)| \leq C(1+|\kappa|)^{N}|t|^{-\frac {d-1}2} 
\end{equation}
for all $\kappa \in \mathbb{R}$, $\varepsilon > 0$ and $(x,t) \in \mathbb{R}^d \times \mathbb{R}$.
\end{proposition}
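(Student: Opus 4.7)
The plan is to follow the template of the proof of Proposition \ref{p:osc-wave}, adjusting for (i) the Klein--Gordon dispersion $\langle\xi\rangle=\sqrt{1+|\xi|^2}$, whose associated surface has nonvanishing curvature but degenerates asymptotically at infinity, and (ii) the cutoff $\psi(\varepsilon\xi)$ which fails to be uniformly compactly supported. First I would reduce to the regime $|t|\geq 1$. For $|t|\leq 1$ the bound $|t|^{-(d-1)/2}\geq 1$ means it suffices to show the uniform estimate $|\kappa\mathcal{J}_{\kappa,\varepsilon}|\lesssim(1+|\kappa|)^{N}$; because the weight $(1+|\xi|^2)^{-(d+1)/4}$ is not integrable on $\mathbb{R}^d$ for $d\geq 1$, the triangle inequality is not available, and I would instead exploit the identity $\partial_\rho[(1+\rho^2)^{-i\kappa}]=-2i\kappa\rho(1+\rho^2)^{-i\kappa-1}$ in the radial variable, together with integration by parts, to produce the required factor of $\kappa$.

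For $|t|\geq 1$ I would decompose $\psi(\varepsilon\xi)=\chi(|\xi|)\psi(\varepsilon\xi)+(1-\chi(|\xi|))\psi(\varepsilon\xi)$. On the low-frequency piece (say $|\xi|\lesssim 1$) the weight is bounded and the Hessian $H(t\langle\xi\rangle)$ is non-degenerate of size $\sim|t|$, so standard stationary phase yields the stronger bound $|t|^{-d/2}$, which after multiplication by $\kappa$ is absorbed into $(1+|\kappa|)|t|^{-(d-1)/2}$. For the high-frequency piece I would further apply a dyadic decomposition in $|\xi|$, and as in Proposition \ref{p:osc-wave} dispose of the regions $|x|\geq 2|t|$ and $2|x|\leq|t|$ via repeated integration by parts in $\xi$, using the lower bound $|\nabla_\xi(x\cdot\xi+t\langle\xi\rangle)|=|x+t\xi/\langle\xi\rangle|\gtrsim ||x|-|t||$ together with the dyadic decay $2^{-j(d+1)/2}$ supplied by the weight, which ensures the sum over scales converges uniformly in $\varepsilon$.

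The remaining case $|t|/2\leq|x|\leq 2|t|$ with $|t|\geq 1$ is the heart of the argument. Passing to spherical coordinates and applying the Bessel asymptotic \eqref{fourier-sph} reduces matters to controlling one-dimensional oscillatory integrals of the form
\[
|x|^{-\frac{d-1}{2}}\int_0^\infty e^{i\theta_\pm(\rho)}\psi(\varepsilon\rho)(1+\rho^2)^{-\frac{d+1}{4}-i\kappa}\rho^{\frac{d-1}{2}}\,\mathrm{d}\rho,\qquad \theta_\pm(\rho)=t\langle\rho\rangle\pm|x|\rho.
\]
For large $\rho$ the integrand takes the form $e^{i(t\pm|x|)\rho}\rho^{-1-2i\kappa}$ times a bounded smooth factor arising from $(1+\rho^{-2})^{-(d+1)/4-i\kappa}$ and from the perturbation $t(\langle\rho\rangle-\rho)=O(t/\rho)$ of the wave phase; both can be absorbed into the amplitude when applying a Klein--Gordon analog of Lemma \ref{elementary}, which substitutes $(1+\rho^2)^{-i\kappa}$ for $\rho^{i\kappa}$ and uses the identity above to extract the $(1+|\kappa|)^N$ factor. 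The stated bound $|t|^{-(d-1)/2}$ then follows from $|x|\sim|t|$. I expect the main obstacle to be the preliminary uniform bound for $|t|\leq 1$, which, unlike the wave case, genuinely requires an oscillation-based argument over the full non-compact support of $\psi(\varepsilon\xi)$; a secondary technical point is the behaviour near the stationary point of $\theta_-$ when $|x|<|t|$, at which $\theta_-''(\rho_0)=(t^2-x^2)^{3/2}/t^2$ may be small, though this is harmless because we only need the wave-type decay $|t|^{-(d-1)/2}$ rather than the full $|t|^{-d/2}$.
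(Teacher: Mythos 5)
Your overall plan---Bessel asymptotics for $|x|\sim|t|$, integration by parts for $|x|\not\sim|t|$, and stationary phase for the low-frequency piece---matches the paper, but two of your reductions do not go through.

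First, the reduction to $|t|\geq 1$ is incorrect. You claim that for $|t|\leq 1$ it suffices to prove the uniform bound $|\kappa\mathcal J_{\kappa,\varepsilon}|\lesssim(1+|\kappa|)^N$, and that the identity $\partial_\rho[(1+\rho^2)^{-i\kappa}]=-2i\kappa\rho(1+\rho^2)^{-i\kappa-1}$ plus integration by parts will produce it. But that uniform bound is \emph{false}: take $x=0$ and $|t|$ small. On the range $1\le\rho\le 1/|t|$ the phase $t\langle\rho\rangle$ is $O(1)$ and provides no usable oscillation, and the oscillation from $(1+\rho^2)^{-i\kappa}$ alone cannot overcome the growth $\rho^{(d-3)/2}$ of the remaining integrand; the integration by parts you propose just returns the boundary term $\sim|t|^{-(d-1)/2}$, which diverges as $t\to 0$. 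So the $|t|<1$ regime is not a preliminary reduction but requires the same machinery as $|t|\geq 1$. The paper's Case $\mathbf B$ splits at the $t$-dependent scale $2^j\sim 1/|t|$, controls $2^j\leq 1/|t|$ by the dispersive estimate \eqref{disp-wave}, and only then reduces the tail $2^j>1/|t|$ to a one-dimensional integral.

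Second, for $|t|\geq 1$ you push the entire high-frequency part through the Bessel expansion and a single Lemma-\ref{elementary}-type argument. This fails in the intermediate range $1\lesssim\rho\lesssim t$: there the ``perturbation'' $t(\langle\rho\rangle-\rho)=O(t/\rho)$ is not a bounded amplitude---its derivative is $\sim t/\rho^2$, so its total variation on $[1,t]$ is $\sim t$, and it cannot be absorbed into the cutoff when invoking \eqref{mod}. The paper instead treats $2^j\leq t$ via the genuinely two-dimensional dispersive estimate of Lemma \ref{KGKG}, giving $|J_j|\lesssim_\kappa 2^{j/2}|t|^{-d/2}$ which sums to $|t|^{-(d-1)/2}$, and applies the Lemma \ref{elementary}/\eqref{mod} mechanism only to the tail $\rho\gtrsim t$, after the rescaling $\rho\to\rho/|t|$ which makes the wave-phase perturbation $e^{it(\sqrt{1+t^2\rho^2}-|t|\rho)}$ have uniformly bounded variation. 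Finally, your dismissal of the stationary point of $\theta_-$ as ``harmless because we only need $|t|^{-(d-1)/2}$'' is not a justification: a direct stationary-phase estimate near $\rho_*$ produces an extra factor $(t^2-|x|^2)^{-1/4}$, which is unbounded as $|x|\to t$. What actually neutralizes the stationary point in the tail is the extra $\kappa$-oscillation in $\rho^{-1-i\kappa}$ via Lemma \ref{elementary}, which is uniform in $A\geq 0$ and so insensitive to where $\theta'_-$ vanishes; this mechanism, not the weaker target decay rate, is the point and your sketch does not engage with it.
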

\begin{proposition} \label{p:KGoscillatory_Schro}
There exist constants $C < \infty$ and $N > 0$ such that 
 \begin{equation} \label{e:KGdispersivemain}
 |\mathcal{K}_{\kappa, \varepsilon} (x,t)| \leq C(1+|\kappa|)^{N} |t|^{-\frac d2}
 \end{equation}
for all $\kappa \in \mathbb{R}$, $\varepsilon>0$ and $(x,t) \in \mathbb{R}^d \times \mathbb{R}$.
\end{proposition}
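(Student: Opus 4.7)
The plan is to follow the strategy of Propositions \ref{p:osc-wave} and \ref{p:KGoscillatory_wave}, but now exploiting the non-vanishing radial curvature $\partial_\rho^2\langle\rho\rangle = \langle\rho\rangle^{-3}$ of the surface $(\xi,\langle\xi\rangle)$; this positive (though small at large $\rho$) radial curvature is precisely the source of the extra $|t|^{-1/2}$ gain over the wave-like decay $|t|^{-(d-1)/2}$ of Proposition \ref{p:KGoscillatory_wave}. Since $|\nabla_\xi\langle\xi\rangle| = |\xi|/\langle\xi\rangle<1$, the region $|x|\ge 2|t|$ is non-stationary and is dispatched by repeated integration by parts in $\xi$, while the case $|t|\le 1$ reduces to establishing the uniform-in-$\varepsilon$ bound $|\mathcal{K}_{\kappa,\varepsilon}|\lesssim (1+|\kappa|)^N$, which follows from a similar polar-coordinates analysis. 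I may therefore assume $|t|\ge 1$ and $|x|\le 2|t|$.

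Passing to polar coordinates $\xi = \rho\omega$ and substituting the asymptotic expansion \eqref{fourier-sph} of $\widehat{\mathrm{d}\sigma}$, the task reduces (modulo more favourable lower-order and boundary contributions) to estimating one-dimensional oscillatory integrals
\[
|x|^{-(d-1)/2}\int_0^\infty e^{i\theta(\rho)}\,a_\kappa(\rho)\,\mathrm{d}\rho,\qquad \theta(\rho) = t\langle\rho\rangle\pm|x|\rho,
\]
with amplitude $a_\kappa(\rho) = \rho^{(d-1)/2}(1+\rho^2)^{-(d+2)/4-i\kappa}\psi(\varepsilon\rho)$ appropriately truncated to $|x|\rho\gtrsim 1$. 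Since $\theta''(\rho) = t/\langle\rho\rangle^3$ is of a fixed sign, $\theta'$ is strictly monotone, and for the sign choice giving a critical point $\rho^* = |x|/\sqrt{t^2-x^2}$ (which exists when $|x|<|t|$), van der Corput's Lemma (Proposition \ref{p:vandercorput}) with $k=2$ gives a bound $\lesssim \langle\rho^*\rangle^{3/2}|t|^{-1/2}$ near $\rho^*$, while $k=1$ gives $\lesssim 1/(|t|-|x|)$ on regions where $|\theta'|\gtrsim |t|-|x|$.

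The main technical obstacle will be organising the dyadic-in-$\rho$ summation: at scale $\rho\sim 2^j$ in the $k=2$ regime, the amplitude contributes a factor $\sim 2^{-3j/2}$ while the van der Corput bound is $(1+2^j)^{3/2}|t|^{-1/2}$, so the two combine to produce a contribution of size $|t|^{-1/2}$ \emph{uniformly} in $j$ — a naive summation would therefore diverge and yield an $\varepsilon$-dependent bound. The resolution is to split at the critical scale $j^*\sim\log_2\rho^*$: on scales $j\le j^*$ use $k=2$ van der Corput, and on scales $j>j^*$ switch to $k=1$, exploiting the monotonicity of $\theta'$ and the lower bound $|\theta'(\rho)|\gtrsim |t|-|x|$ for $\rho\gg\rho^*$ to obtain geometric summability. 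Combining the resulting 1D bound $\lesssim (1+|\kappa|)^N|t|^{-1/2}$ with the prefactor $|x|^{-(d-1)/2}$ and using $|x|\sim|t|$ in the stationary regime delivers the desired $(1+|\kappa|)^N|t|^{-d/2}$. The polynomial $\kappa$-loss arises from differentiating $(1+\rho^2)^{-i\kappa} = e^{-i\kappa\log(1+\rho^2)}$, which introduces factors of $\kappa$ when bounding $\|a_\kappa'\|_{L^1}$.
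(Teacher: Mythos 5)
Your overall architecture matches the paper's: reduce to $|t|\ge 1$ and $|x|\sim |t|$ by integration by parts, pass to polar coordinates, insert the Bessel asymptotics for $\widehat{\mathrm{d}\sigma}$, and study the resulting one--dimensional oscillatory integral with phase $\theta(\rho) = \pm|x|\rho + t\langle\rho\rangle$ and amplitude $a_\kappa(\rho)=O(\rho^{-3/2})$. The place where I think the proposal actually fails is in the dyadic summation around the critical scale.

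You correctly observe that at scale $\rho\sim 2^j$, $\theta''(\rho)\sim t\,2^{-3j}$, so $k=2$ van der Corput combined with $\|a_\kappa\|_\infty\lesssim 2^{-3j/2}$ gives a contribution $\sim (1+|\kappa|)^N|t|^{-1/2}$ that is \emph{uniform} in $j$, and that a blind sum would therefore diverge. But the remedy you propose --- use $k=2$ for $j\le j^*\sim\log_2\rho_*$ and switch to $k=1$ only for $j> j^*$ --- does not repair this. It only cures the divergence on the $j>j^*$ side; on the $j\le j^*$ side you are still summing a bound that is uniform in $j$, which produces a factor of $j^*\sim\log\rho_*$, i.e.\ a bound $\lesssim (\log\rho_*)\,|t|^{-1/2}$. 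Since $\rho_*=|x|/\sqrt{t^2-|x|^2}$ blows up as $|x|\to |t|$ (and can be as large as $\sim|t|$), this is a genuine logarithmic loss, not just an $\varepsilon$-dependence, and it ruins the desired uniform estimate.

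The correct resolution is to recognise that $k=2$ van der Corput is wasteful except in the $O(1)$ dyadic scales with $\rho\sim\rho_*$, and that $k=1$ applies productively on \emph{both} sides of $\rho_*$. Using the identity
\[
\theta'(\rho) = t\int_{\rho_*}^\rho \frac{\mathrm{d}s}{(1+s^2)^{3/2}},
\]
one has $|\theta'(\rho)|\gtrsim t/\rho^2\sim t\,2^{-2j}$ whenever $\rho\sim 2^j\lesssim\rho_*/2$, so $k=1$ van der Corput gives $|\mathfrak K_j|\lesssim 2^{j/2}/t$, which sums geometrically over $2^j\lesssim\rho_*$ to $\lesssim\rho_*^{1/2}/t\le t^{-1/2}$. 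Similarly $|\theta'(\rho)|\gtrsim t/\rho_*^2$ for $\rho\gtrsim 2\rho_*$ yields geometric summability on the other side. The residual range $\rho\sim\rho_*$ comprises only boundedly many dyadic pieces, and it is precisely there that one needs finer information --- the paper in fact handles it entirely via a dyadic decomposition in the \emph{distance} $|\rho-\rho_*|\sim 2^\ell$, at each piece using $|\theta'(\rho)|\gtrsim t\,2^\ell/\rho_*^3$ together with the trivial bound $\rho_*^{-3/2}2^\ell$, and never invokes $k=2$ at all. So the key idea you are missing is that away from $\rho_*$ the first derivative is not merely nonzero but quantitatively large on each dyadic scale (in a way that produces geometric decay), and that the localization near $\rho_*$ must be organised in terms of $|\rho-\rho_*|$, not $\rho$, before any second--order stationary--phase input is brought in.
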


Before starting the proof,  we recall the following (see  \cite[Appendix]{GV}) which is an easy consequence of the stationary phase method.    

\begin{lemma}\label{KGKG}   Let $\widetilde \chi$ be a smooth function supported in $\{\xi: 2^{-1}\le |\xi|\le 2\}$. For $0<\rho \le  1$, let us set 
\[ I_\rho=\int e^{i(x\cdot \xi+ t\sqrt{\rho^2+|\xi|^2})}   \widetilde \chi (\xi) \, \mathrm{d}\xi .\] 
Then we have the estimate $| I_\rho|\lesssim \min (|t|^{-\frac{d-1}2}, \rho^{-1} |t|^{-\frac{d}2})$.
\end{lemma}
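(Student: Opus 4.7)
Write the phase as $\Phi(\xi) = x\cdot\xi + t\sqrt{\rho^2+|\xi|^2}$ and set $\langle\xi\rangle_\rho := \sqrt{\rho^2+|\xi|^2}$, so that on $\operatorname{supp}\widetilde\chi$ we have $\langle\xi\rangle_\rho \sim 1$ uniformly in $\rho\in(0,1]$. By replacing $(x,t)$ with $(-x,-t)$ if necessary we may assume $t>0$, and we may clearly assume $t\ge 1$ since both bounds are trivial for $|t|\le 1$. The plan is to split into the non-stationary regime and the stationary regime, and in the latter to derive the two bounds by two different realisations of the stationary phase method.

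\textbf{Non-stationary regime.} The gradient is $\nabla\Phi(\xi)=x+t\xi/\langle\xi\rangle_\rho$, and on $\operatorname{supp}\widetilde\chi$ the vector $\xi/\langle\xi\rangle_\rho$ has magnitude in a compact subset of $(0,1)$ independent of $\rho$. Hence if $|x/t|$ lies outside a suitable fixed compact interval, say $|x|\le c_1 t$ or $|x|\ge c_2 t$, then $|\nabla\Phi|\gtrsim t$ on the support and repeated integration by parts via the operator $L=|\nabla\Phi|^{-2}\nabla\Phi\cdot\nabla$ yields $|I_\rho|\lesssim_N t^{-N}$, which is better than both claimed bounds. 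It therefore remains to treat the range $|x|\sim t$.

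\textbf{Schr\"odinger-like bound $\rho^{-1}t^{-d/2}$.} A direct computation gives
\[
H\Phi(\xi) = t\Bigl(\frac{I_d}{\langle\xi\rangle_\rho} - \frac{\xi\otimes\xi}{\langle\xi\rangle_\rho^3}\Bigr),
\]
whose eigenvalues are $t\rho^2/\langle\xi\rangle_\rho^3$ in the radial direction and $t/\langle\xi\rangle_\rho$ with multiplicity $d-1$ in the tangential directions, so that $|\det H\Phi|\sim t^{\,d}\rho^2$ on $\operatorname{supp}\widetilde\chi$. Since the Hessian is nondegenerate throughout, the standard stationary phase lemma (e.g.\ H\"ormander) applied with at most one critical point in $\operatorname{supp}\widetilde\chi$ yields
\[
|I_\rho|\lesssim |\det H\Phi|^{-1/2}\lesssim \rho^{-1}t^{-d/2}.
\]

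\textbf{Wave-like bound $t^{-(d-1)/2}$.} Because one eigenvalue of $H\Phi$ degenerates like $\rho^2$, the previous bound is not uniform in $\rho$, and so for this bound the plan is to separate the degenerate (radial) direction from the non-degenerate (tangential/spherical) ones. Passing to spherical coordinates $\xi=r\omega$ with $r>0$ and $\omega\in S^{d-1}$,
\[
I_\rho=\int_0^\infty e^{it\sqrt{\rho^2+r^2}}\, r^{d-1}\,F_r(x)\,\mathrm{d}r,\qquad F_r(x):=\int_{S^{d-1}} e^{irx\cdot\omega}\,\widetilde\chi(r\omega)\,\mathrm{d}\sigma(\omega).
\]
For each $r\in[1/2,2]$, $F_r(x)$ is the Fourier transform on the sphere of the smooth function $\omega\mapsto\widetilde\chi(r\omega)$, whose $C^k$ norms are bounded uniformly in $r$. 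The standard stationary phase argument on $S^{d-1}$ (the two critical points being $\omega=\pm\hat x$) gives $|F_r(x)|\lesssim (r|x|)^{-(d-1)/2}$ uniformly in $r\in[1/2,2]$ and in $\rho$, and since $r\sim 1$ and $|x|\sim t$ this bound is $\lesssim t^{-(d-1)/2}$. The remaining $r$-integral is taken over a bounded interval with bounded amplitude, so
\[
|I_\rho|\lesssim t^{-(d-1)/2}\int_{1/2}^2\mathrm{d}r\lesssim t^{-(d-1)/2}.
\]

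\textbf{Main obstacle.} The only delicate point is the wave-like bound: the one-dimensional radial degeneracy of $H\Phi$ of size $\rho^2$ forces the Schr\"odinger bound to carry an uncontrollable factor of $\rho^{-1}$ as $\rho\to 0$, so obtaining the $\rho$-uniform bound $t^{-(d-1)/2}$ really requires separating off this bad direction. The spherical decomposition above does precisely that by extracting the decay only from the $d-1$ non-degenerate tangential directions, which persist uniformly as $\rho\to 0$ (they correspond to the non-vanishing principal curvatures of the limiting cone).
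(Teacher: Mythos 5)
Your proof is correct but takes a genuinely different route from the paper for the wave-like bound $|t|^{-(d-1)/2}$. The paper localizes $\xi$ to a small conic neighborhood of $e_d$ by a finite decomposition and rotation, expands $\sqrt{\rho^2+|\xi|^2}=\xi_d+\tfrac12(\rho^2+|\bar\xi|^2)/\xi_d+\cdots$, and applies $(d-1)$-dimensional stationary phase in $\bar\xi$ for fixed $\xi_d$; you instead pass to polar coordinates and invoke the classical $(r|x|)^{-(d-1)/2}$ decay of $\widehat{\mathrm{d}\sigma}$ and integrate trivially in $r$. Both exploit the same geometric fact (the $d-1$ angular curvatures survive the degeneration $\rho\to 0$), but your polar variant avoids the conic decomposition at the cost of needing the spherical stationary phase uniformly in $r$. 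For the Schr\"odinger-like bound $\rho^{-1}|t|^{-d/2}$ you follow the paper: identify the eigenvalues of $H\Phi$ as one of size $\sim t\rho^2$ and $d-1$ of size $\sim t$, and invoke stationary phase. One caution worth flagging: the off-the-shelf stationary phase lemma does not produce $|\det H\Phi|^{-1/2}$ with constants uniform as the smallest eigenvalue degenerates, so the bare citation is not rigorous uniformly in $\rho$. This does not affect the conclusion, since $\rho^{-1}|t|^{-d/2}$ only improves on $|t|^{-(d-1)/2}$ when $\rho\gtrsim|t|^{-1/2}$, i.e.\ $t\rho^2\gtrsim 1$, and in that regime the degeneracy is harmless; alternatively, do stationary phase in the $d-1$ tangential directions followed by van der Corput in the radial variable to sidestep the issue entirely. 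Finally, a minor slip in your non-stationary step: $|\xi|/\langle\xi\rangle_\rho\to 1$ as $\rho\to 0$, so it does not lie in a compact subset of $(0,1)$ uniformly in $\rho$; what holds (and suffices for your integration by parts) is that it lies in $[c,1]$ for some $c>0$ independent of $\rho$.
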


For convenience of the reader we briefly explain how to show this.  Via a finite decomposition and rotation  we may assume the cutoff function $\widetilde \chi
$ is  supported in   a small conic neighborhood of $e_d$ such that $\xi_d\in [2^{-2}, 2^2]$.  Thus  we may write 
\[   \sqrt{\rho^2+|\xi|^2}=\xi_d\Big(1+\frac{\rho^2+|\bar \xi|^2}{\xi_d^2}\Big)^\frac12=\xi_d+ \frac12\Big( \frac{\rho^2}{\xi_d}+\frac{|\bar \xi|^2}{\xi_d}\Big)  + \text{ error },  \] 
where $\xi=(\bar\xi, \xi_d)$. 
To get the  estimate $| I_\rho|\lesssim |t|^{-\frac{d-1}2}$, we may simply  ignore  $\xi_d$ and  apply the stationary phase method in $\bar \xi$. 
For the estimate  $| I_\rho|\lesssim  \rho^{-1} |t|^{-\frac{d}2}$, note that  one of the eigenvalues of the Hessian matrix of $\sqrt{\rho^2+|\xi|^2}$ is $\sim \rho^2$ while the other  $d-1$ eigenvalues are $\sim 1$. 

\begin{proof}[Proof of Proposition \ref{p:KGoscillatory_wave}]
As before, we break the integral dyadically.  
Using \eqref{e:dyadicdecomp},  we write 
\begin{align*}
\mathcal J_{\kappa, \varepsilon}=J_{\circ}+\sum_{j=0}^\infty J_j &:=  \int e^{i(x\cdot \xi+t\langle \xi \rangle)}  \frac{\psi(\varepsilon |\xi|)\chi(|\xi|)}{( 1+|\xi|^2)^{\frac {d+1}4+i\kappa}} \,   \mathrm{d}\xi  \\ \quad &+ \sum_{j=0}^\infty \int e^{i(x\cdot \xi+t\langle \xi \rangle)}   \frac{\psi(\varepsilon |\xi|) \chi_j(|\xi|)}{( 1+|\xi|^2)^{\frac {d+1}4+i\kappa}} \, \mathrm{d}\xi. 
\end{align*}
In what follows we assume $2^j \leq 2/\varepsilon$ even if it is not explicitly mentioned. Clearly $|J_\circ|\lesssim_\kappa  \min(1, |t|^{-d/2})$ since $\det H\langle\cdot\rangle \neq 0 $ on the support of $\chi(|\cdot|)$.  Thus, it is sufficient to show that  
\Be\label{decomposed} 
|\sum_{j\ge 0}  \kappa J_j|\lesssim_\kappa |t|^{-\frac{d-1}2}.
\Ee
After rescaling, we see that 
\Be 
J_j=  2^{(d-\frac {d+1}2-2\kappa i)j}  \int e^{i(2^j x\cdot \xi+2^j t\sqrt{2^{-2j}+|\xi|^2})}   \widetilde \chi_j(|\xi|) \, \mathrm{d}\xi,
\Ee
where $\widetilde\chi_j$ is  a smooth function  supported in $[1/2, 2]$ which satisfies
$  \|\widetilde\chi_j\|_{C^N}\lesssim (1+|\kappa|)^N $  
for any $N$.   As before we may also assume that  
\[   
2^{-1} |t|\le  |x|\le 2|t|.
 \]
 Otherwise,   $|\nabla_\xi(2^j x\cdot \xi+2^j t\sqrt{2^{-2j}+|\xi|^2)}| \gtrsim  2^j\max( |x|, |t|)$.  By integration by parts we see that, 
 for any $M$, 
 \[   
 |J_j|\lesssim_\kappa 2^{\frac{d-1}{2}j}  
                           \begin{cases} (2^j|x|)^{-M}  & \text{ if } |x|\ge 2|t| , \\  
                           (2^j|t|)^{-M}  & \text{ if } |t|\ge 2|x| .
                           \end{cases}    
                              \] 
This gives   $  |  \sum_{j\ge 0}  J_j|\lesssim_\kappa  \sum_{j\ge 0} 2^{\frac{d-1}{2}j} (2^j|t|)^{-M} \lesssim_{\kappa} |t|^{-\frac {d-1}2} $.

In order to show \eqref{decomposed}  we now consider the following cases, separately
\[  \mathbf A:   |t|\ge 1,   \  \   \mathbf B:   |t|< 1. \] 
From Lemma \ref{KGKG}  we have 
\Be \label{disp-wave}|J_j|\lesssim_\kappa 2^{\frac{d-1}{2}j}\min( 2^{ -\frac{d-2}2 j} |t|^{-\frac{d}2},\, (2^j |t|)^{-\frac{d-1}2},\, 1). \Ee

\subsection*{Case $\mathbf A$} Since  $|t|\ge 1$, from \eqref{disp-wave} we observe that
\[   
\sum_{ 2^0 \le 2^j \le t } |J_j|\lesssim_\kappa \sum_{2^0\le 2^j\le t}  2^{\frac12j} |t|^{-\frac{d}2}\lesssim  |t|^{-\frac {d-1}2} . 
\] 
Let us set 
\begin{equation}\label{chit} 
 \chi_{>t}:=\sum_{|t|<2^j}  \chi_j( \cdot) 
 \end{equation}
and set 
\[
 J_{>t}=\kappa \int e^{i(x\cdot \xi+t\langle \xi \rangle)} ( 1+|\xi|^2)^{-\frac{d+1}{4}-i\kappa} \chi_{>t} (|\xi|)\psi(\varepsilon |\xi|) \, \mathrm{d}\xi.
 \] 
 Then  to show \eqref{decomposed} we are reduced to showing that 
\[ |J_{>t}|\lesssim |t|^{-\frac{d-1}2}.\] 
As before, since $|x|\sim |t|\ge 1$,  using polar coordinates,  we may apply  the asymptotic expansion \eqref{fourier-sph}. 
Taking into account the main contribution from the leading terms in \eqref{fourier-sph}  it is sufficient to consider 
\[  
|x|^\frac{1-d}{2}\int e^{i(\pm |x|\rho+t\langle \rho \rangle)}  
( 1+\rho^2)^{-\frac{d+1}{4}-i\kappa} \rho^{\frac{d-1}2} \chi_{>t}(\rho) \psi(\varepsilon\rho)\, \mathrm{d}\rho. 
\] 
The other terms can be handled similarly but they are easier.   
Since we are assuming $|x|\sim |t|$, the matter reduces to showing  
\Be\label{aha}
  \left| \int e^{i(\pm |x|\rho+t\langle \rho \rangle)}  ( 1+\rho^2)^{-\frac{d+1}{4}-i\kappa} \rho^{\frac{d-1}2}  \chi_{>t} (\rho)\psi(\varepsilon \rho ) \, \mathrm{d}\rho \right|\lesssim_\kappa  1.
  \Ee 
Since $|t|\ge 1$, from the mean value theorem we note that 
\[ \int_0^\infty | ( 1+\rho^2)^{-\frac{d+1}{4}-i\kappa} -  (\rho^2)^{-\frac{d+1}{4}-i\kappa}|  \chi_{>t} (\rho) \rho^{\frac{d-1}2}  \, \mathrm{d}\rho\lesssim_\kappa 1.\] 
Let us set  
\begin{align*}
\mathcal J&= \kappa \int e^{i(\pm |x|\rho+t\langle \rho \rangle)}  \rho^{-1-i\kappa}  \chi_{>t}(\rho) \psi(\varepsilon\rho) \, \mathrm{d}\rho.
\end{align*}
Then \eqref{aha} follows  if we show that 
\[ |\mathcal J |\lesssim_\kappa 1.\]
Changing variables, we have 
\begin{align*}
\mathcal J=\kappa |t|^{-i\kappa} \int e^{i(\pm  {|x|}|t|\rho+t\sqrt{1+t^2\rho^2})}  \rho^{-1-i\kappa}  \chi_{>1}(\rho) \psi(\varepsilon|t|\rho) \, \mathrm{d}\rho.
\end{align*} 
Now let us set 
\begin{align}
\label{ff} F_{A,t}(r) &=\kappa  |t|^{-i\kappa} \int_0^r  e^{iA\rho}  \rho^{-1-i\kappa} \chi_{>1}(\rho) \psi(\varepsilon|t|\rho) \, \mathrm{d}\rho, \\ 
 \nonumber  G(r)&=e^{it(\sqrt{1+t^2r^2}- |t| r)} \chi_{>2^{-2}}(r). 
   \end{align}
Since $\chi_{>1}\chi_{>2^{-2}}= \chi_{>1}$, we note that 
 \[   \mathcal J=\int \frac{\mathrm d}{\mathrm d\rho} F_{|t|(t\pm |x|), |t|}(\rho) G(\rho) \,  \mathrm{d}\rho= -\int F_{|t|(t\pm |x|),| t|}(\rho) G'(\rho) \, \mathrm{d}\rho.\] 
Since $\|  (\chi_{>1} \psi(|t|\varepsilon \cdot) )'\|_1\lesssim 1$\footnote{Recall that $\chi_{>1}'$ is supported in  $[-1,1]$ because 
$1-\chi=\chi_{>1}$.}, by \eqref{mod} we have $|F_{A, t}(r)|\lesssim_\kappa 1$. Thus, it is sufficient to check $\|G'\|_1\lesssim 1$. This follows from 
\[  G'(r)= e^{it(\sqrt{1+t^2r^2}-|t|r)}\left(\chi_{>2^{-2}}(r) \frac{- i t|t|}{\sqrt{1+t^2 r^2}(|t|r+\sqrt{1+t^2r^2})}  +   \chi'_{>2^{-2}}(r)\right)\] 
and the fact that  $\chi'_{>2^{-2}}$ is supported in  $[-1,1]$.

\subsection*{Case $\mathbf B$} In this case we have $|t|<1$. To begin with,  from \eqref{disp-wave} we note that 
\[ \sum_{ 2^0 \le 2^j \le \frac1{|t|}} |J_j|\lesssim_\kappa \sum_{2^0\le 2^j \le \frac1{|t|}} 2^{\frac{d-1}{2}j}\lesssim  |t|^{-\frac {d-1}2}.  \]  
We set 
\[ J_{> t^{-1}}=\int e^{i(x\cdot \xi+t\langle \xi \rangle)} ( 1+|\xi|^2)^{-\frac{d+1}{4}-i\kappa} \chi_{>t^{-1}}(|\xi|) \psi(\varepsilon |\xi|) \, \mathrm{d}\xi,
 \]
where $\chi_{>t^{-1}}$ is given by \eqref{chit}.
From the above observation, for \eqref{decomposed} it is sufficient to  show that 
\[ |J_{> t^{-1}}|\lesssim_\kappa |t|^{-\frac{d-1}2}.\] 
 Similarly as before, since $|x|\sim |t|$, $|t||\xi|\sim |x||\xi|\gtrsim 1$ on the support of  $\chi_{>t^{-1}}(\cdot) $. Thus, 
 we may apply \eqref{fourier-sph} after using polar coordinates. 
 Considering the main contribution from the leading terms
in  \eqref{fourier-sph},  we need only  to handle 
\[  |x|^\frac{1-d}{2}\int e^{i(\pm |x|\rho+t\langle \rho \rangle)}  ( 1+\rho^2)^{-\frac{d+1}{4}-i\kappa} \rho^{\frac{d-1}2} \chi_{>t^{-1}}(\rho) \psi(\varepsilon\rho) \, \mathrm{d}\rho. \] 
Thus the desired estimate \eqref{decomposed}  follows from 
\[ \Big|  \int e^{i(\pm |x|\rho+t\langle \rho \rangle)}  ( 1+\rho^2)^{-\frac{d+1}{4}-i\kappa} \rho^{\frac{d-1}2} \chi_{>t^{-1}}(\rho) \psi(\varepsilon\rho) \, \mathrm{d}\rho\Big|\lesssim_\kappa 1. \]
Let us set 
\begin{align*}
\widetilde{\mathcal J} &=\tau   \int e^{i(\pm|x|\rho+t\langle \rho \rangle)}  \rho^{-1-i\kappa}\chi_{>t^{-1}}(\rho) \psi(\varepsilon\rho)\,  \mathrm{d}\rho.
\end{align*}
Proceedings along the lines of the case $\mathbf A$, we are further  reduced to showing that 
\Be\label{fin} 
 |\widetilde{\mathcal J}| \lesssim_\kappa 1. 
\Ee 
Changing variables $\rho\to \rho/|t|$, we see that 
\begin{align*}
\widetilde{\mathcal J}  &= \kappa |t|^{i\kappa}   \int e^{i(\pm\frac {|x|} {|t|}\rho+ \text{sgn}\,t \sqrt{t^2+\rho^2})}  \rho^{-1-i\kappa}\chi_{>1}(\rho) \psi(\varepsilon \rho/|t| )\, \mathrm{d}\rho. 
\end{align*} 
 Let us set 
\[  H(r)=e^{i \text{sgn}\,t(\sqrt{t^2+r^2}-r)}  \chi_{> 2^{-2}}(r). \]
Then, 
 \[   \widetilde{\mathcal J}=\int F_{\text{sgn}\,t\pm \frac {|x|} {|t|}, |t|^{-1}}'(\rho) H(\rho)  \, \mathrm{d}\rho=
  -\int F_{\text{sgn}\,t\pm\frac {|x|} {|t|}, |t|^{-1}}(\rho) H'(\rho) \, \mathrm{d}\rho,\] 
 where $F_{A, t}$ is given by \eqref{ff}.   Since $\|  ( \chi_{>1} \psi(\varepsilon/|t| \cdot))'\|_1\lesssim 1$, 
 by \eqref{mod} $|F_{A, |t|^{-1}}(r)|\lesssim_\kappa 1$.
Finally, we note that 
\[  H'(r)= e^{i\text{sgn}\,t(\sqrt{t^2+r^2}- r)}\left(  \chi_{> 2^{-2}}(r) \frac{- i\,\text{sgn}\,t\, t^2}{\sqrt{t^2+ r^2}(r+\sqrt{t^2+r^2})}  +  \chi_{> 2^{-2}}'(r)\right).\] 
Since we are assuming $|t|< 1$, it follows  that $\|H'\|_1\lesssim 1$.  Therefore we get \eqref{fin}. 
This completes the proof of Proposition \ref{p:KGoscillatory_wave}. 
\end{proof}

\begin{proof}[Proof of Proposition \ref{p:KGoscillatory_Schro}] Compared with the proof of Proposition \ref{p:KGoscillatory_wave},  that of Proposition \ref{p:KGoscillatory_Schro} is much more involved since we  show damped oscillatory integral estimate to recover the best possible decay. We provide  the proof  by dividing it  into several steps. 

\subsection*{Step 1: Reduction to the case $|t| \gtrsim 1$ and $|x| \sim |t|$} As before, we begin with a dyadic decomposition of the integral. Using \eqref{e:dyadicdecomp}, we have 
\[
\mathcal{K}{\kappa,\varepsilon} = \mathcal{K}_{\circ} + \sum_{j \geq 0}\mathcal{K}_{j},
\]
where
\begin{align*}
\mathcal{K}_{\circ}(x,t) & := \int e^{i(x\cdot \xi+t\langle \xi \rangle)} \frac{\chi(|\xi|)\psi(\varepsilon |\xi|)}{(1+|\xi|^2)^{\frac {d+2}4+i\kappa}} \, \mathrm{d}\xi  , \\
\mathcal{K}_{j}(x,t) & := \int e^{i(x\cdot \xi+t\langle \xi \rangle)} \frac{ \chi_j(|\xi|) \psi(\varepsilon |\xi|)}{(1+|\xi|^2)^{\frac {d+2}4+i\kappa}}  \, \mathrm{d}\xi , \qquad j \geq 0.
\end{align*}
From the stationary phase method  it follows that  $
|\mathcal{K}_{\circ}(x,t)|\lesssim_\kappa \min( |t|^{-\frac{d}2},\, 1).
$
Thus, to show  \eqref{e:KGdispersivemain} it is enough  to show 
\Be
\label{step1}
\Big| \sum_{j \geq 0}\mathcal{K}_{j}(x,t)\Big| \lesssim_\kappa |t|^{-\frac d2}.
\Ee
After rescaling, we see that 
\begin{equation*} \label{ker} 
\mathcal{K}_{j}(x,t)  =  2^{(\frac{d-2}2-2i\kappa )j}  \int e^{i(2^j x\cdot \xi+2^j t\sqrt{2^{-2j}+|\xi|^2})}   \widetilde \chi_j(|\xi|) \, \mathrm{d}\xi,
\end{equation*}
where $\widetilde\chi_j$ is  a smooth function  supported in $[1/2, 2]$ which satisfies
$  \|\widetilde\chi_j \|_{C^N}\lesssim (1+|\kappa|)^N $  
for any $N$. Thus,  from Lemma \ref{KGKG} we have the following estimate, for $j \geq 1$, 
\Be 
\label{e:dispersiveKG}
|\mathcal{K}_{j}(x,t)|\lesssim_\kappa 2^{\frac{d-2}{2}j}\min( 2^{ -\frac{d-2}2 j} |t|^{-\frac{d}2},\, (2^j |t|)^{-\frac{d-1}2},\, 1).
\Ee

With the estimate \eqref{e:dispersiveKG}, 
we justify that it suffices to prove \eqref{step1} in the case where 
\begin{equation} \label{e:KGxtconditions}
|t| > 1 \qquad \text{and} \qquad 2^{-1}|t| \leq |x| \leq 2|t|.
\end{equation}
Indeed, in the case $|t| \le 1$, from \eqref{e:dispersiveKG} we have
\[
\Big| \sum_{j \geq 1}\mathcal{K}_{j}(x,t)\Big|  \lesssim_\kappa 
\sum_{j\ge 1} 2^{-\frac12j} |t|^{-\frac{d-1}2}\lesssim |t|^{-\frac{d-1}2} \le  |t|^{-\frac d2}.
\]
Now we assume $|t| > 1$ and $|x| \geq 2|t|$, in which case, we have
$
|\nabla_\xi(2^j x\cdot \xi+2^j t\sqrt{2^{-2l}+|\xi|^2}| \gtrsim  2^j |x|
$
on the support of $ \widetilde \chi_j(|\cdot|)$ and thus 
\[
|\mathcal{K}_{j}(x,t)|\lesssim_\kappa 2^{\frac{d-2}{2}j} (2^j |x|)^{-M}  \le 2^{\frac{d-2}{2}j} (2^j |t|)^{-M},
\]
which follows by repeated integration by parts. Applying this with $M > \frac{d}{2}$, we obtain
\[
|\sum_{j \ge 1} \mathcal{K}_{j} (x,t)| \lesssim_\kappa   \sum_{j \geq 1} 2^{\frac{d-2}{2}j} (2^j|t|)^{-M}  \lesssim |t|^{-M} \lesssim |t|^{-\frac d2}. 
\]
By similar arguments, we may handle the case $|t| > 1$ and $|t| \geq 2|x|$ where  $
|\nabla_\xi(2^j x\cdot \xi+2^j t\sqrt{2^{-2l}+|\xi|^2}| \gtrsim  2^j |t|$. We omit  the details. 

{\bf Step 2: Reduction to a one-dimensional oscillatory integral.}
For the remainder of the proof of Proposition \ref{p:osc-wave}, we assume that $x$ and $t$ satisfy \eqref{e:KGxtconditions}. 
Since $\sum_{j\ge 0}  \chi_j(\cdot)=1-\chi$,  to show \eqref{step1} it is sufficient to show that 
\Be  \label{jj}
|\widetilde{\mathcal{K}} (x,t)| \leq C(1 + |\kappa|)^N |t|^{-\frac d2},
\Ee
where
\[
\widetilde{\mathcal{K}} (x,t) =\int e^{i(x\cdot \xi+t\langle \xi \rangle)}  (1-\chi(|\xi|)) \frac{\psi(\varepsilon |\xi|)\mathrm{d}\xi}{( 1+|\xi|^2)^{\frac {d+2}4+i\kappa}}.
\]
By changing to spherical coordinates, we write this expression as
\[
\widetilde{\mathcal{K}} (x,t) = \int \widehat{\mathrm{d}\sigma}(\rho x) e^{it\langle \rho \rangle}  (1-\chi(\rho)) \rho^{d-1} \frac{\psi(\varepsilon \rho)\mathrm{d}\rho}{( 1+\rho^2)^{^{\frac {d+2}4+i\kappa}} }.
\]
Since $\rho \gtrsim 1$ on the support of the integrand and $|x| \gtrsim 1$, we may use the asymptotic expansion 
\eqref{fourier-sph}.  In fact, as before it is sufficient to consider the contributions from the leading terms, which take the form 
\begin{equation*} \label{e:KGafterpolars}
|x|^\frac{1-d}{2}\int e^{i(\pm |x|\rho+t\langle \rho \rangle)}  a(\rho) \, \mathrm{d}\rho,
\end{equation*}
where 
\[  
a(\rho)= C_\pm (1-\chi(\rho))\rho^{\frac{d-1}{2}} \psi(\varepsilon \rho)( 1+\rho ^2)^{-{\frac {d+2}4-i\kappa}} =O(\rho^{-\frac 32})
\]
for some constant $C_\pm$. 
We note that $a$ satisfies, for  $\ell \geq 0$, 
\Be \label{e:a'decay} 
\bigg|\bigg(\frac{\mathrm{d}}{\mathrm{d}\rho}\bigg)^\ell a(\rho)\bigg|\leq C_\ell(1 + |\kappa|)^{N_\ell} \rho^{-\frac 32-\ell}.
\Ee
Since we are considering the case $|x| \sim |t| \gtrsim 1$, in order to show the desired estimate \eqref{jj}  we are reduced to showing 
that 
\Be 
\label{e:KGreduced}
\bigg|\int e^{i(r\rho+t\langle \rho \rangle)}  a(\rho) \, \mathrm{d}\rho\bigg|  \leq C(1 + |\kappa|)^{N} |t|^{-\frac12}, \quad |r| \sim |t| \gtrsim 1
 \Ee
 for some $N$. If $r$ and $t$ have the same sign, then
$
| \frac{\mathrm{d}}{\mathrm{d}\rho}(r\rho+t\langle \rho \rangle)| \gtrsim |t|
$
holds and therefore \eqref{e:KGreduced} follows easily by integration by parts. 
Thus it is enough to consider  \eqref{e:KGreduced} for the case $r < 0$ and $t > 0$, and the case $r > 0$ and $t < 0$. 
We provide the details of \eqref{e:KGreduced} when $r < 0$ and $t > 0$ since the case $r > 0$ and $t < 0$ will follow by essentially the same argument.  

For $r, t\gtrsim 1$ we set $\widetilde{\chi}_{ <t}(\rho)=\sum_{1\le 2^j<t}  \chi_j( \cdot)$ and set 
\begin{align*}
 \mathfrak{K}(r,t)&= \int e^{i(-r\rho+t\langle \rho \rangle)}   \widetilde{\chi}_{ <t}(\rho) a(\rho)
 \mathrm{d}\rho, \\ \mathfrak{K}_j(r,t) &= \int e^{i(-r\rho+t\langle \rho \rangle)}   \chi_j(\rho)a(\rho) \, \mathrm{d}\rho.
 \end{align*}
From \eqref{e:dyadicdecomp}  we have
\[
\int e^{i(-r\rho+t\langle \rho \rangle)}  a(\rho) \, \mathrm{d}\rho  = \mathfrak{K}(r,t) +  \sum_{2^j \ge  t} \mathfrak{K}_j(r,t).
\]
From \eqref{e:a'decay}, we have trivial estimates $
|\mathfrak{K}_j(r,t)| \lesssim \int_{\rho \sim  2^j} |a(\rho)| \, \mathrm{d}\rho \lesssim 2^{-{j}/{2}}
$. Hence
\[
\sum_{2^j \ge t} |\mathfrak{K}_j(r,t)| \lesssim t^{-\frac{1}{2}}.
\]
Therefore, to show \eqref{e:KGreduced} for the case  $r < 0$ and $t > 0$ we need only to show that 
\Be
\label{e:step3goal} | \mathfrak{K}(r,t)| \leq C(1 + |\kappa|)^{N} t^{-\frac12}, \quad r \sim t \gtrsim 1.
\Ee
{ This will be taken up in Steps 3 and 4 below, corresponding to the cases $t \leq r$ and $t \geq r$.
The first case  is easier since the phase function $-r\rho+t\langle \rho \rangle$ dose not have any stationary point. However, in the latter case the phase may have stationary point, so we make use of additional  dyadic decomposition away from the stationary point. }

\subsection*{Step 3: Proof of \eqref{e:step3goal} when $t \leq r$} 
From \eqref{e:dyadicdecomp} we note that 
\begin{align}
\label{ksum}
\mathfrak{K}(r,t)=\sum_{1\le 2^j<t} \mathfrak{K}_j(r,t).
\end{align}
Note that, for $\rho \sim 2^j$, we have
$
| \frac{\mathrm{d}}{\mathrm{d}\rho}(  - r\rho+t\langle \rho \rangle)| \geq t(1 - \frac{\rho}{\langle \rho \rangle}) \gtrsim \frac{t}{2^{2j}}
$
and from \eqref{e:a'decay}   it follows that  
\Be\label{amp}
\Big\|  \frac{\mathrm d}{\mathrm d\rho} ( \chi_j(\cdot)a)\Big\|_1 \lesssim_\kappa 2^{-\frac 32j},  \quad
\Big\|  \chi_j(\cdot)a\Big\|_\infty \lesssim_\kappa 2^{-\frac 32j}.
\Ee Thus, by Proposition \ref{p:vandercorput}, we have
\Be
\label{ahha}
|\mathfrak{K}_j(r,t)| \lesssim_\kappa  \frac{2^{{j}/{2}}}{t}.
\Ee
From this  \eqref{e:step3goal} immediately follows.

\subsection*{Step 4: Proof of \eqref{e:step3goal} when $t> r$}
We set
\[  
\rho_\ast =\frac{r}{\sqrt{t^2-r^2}} \,
\] 
which is the stationary point of the phase function $-r\rho+t\langle \rho \rangle$,
and note that $\rho_\ast \ge  1/\sqrt 3$ since $2r \geq t$. Also, we may write 
\Be \label{deriv} 
\frac {\mathrm{d}}{\mathrm{d}\rho}(  - r\rho+t\langle \rho \rangle)= -r +\frac{t\rho}{\langle \rho \rangle}=  t\int^\rho_{\rho_\ast} \frac{\mathrm{d}s}{(1+s^2)^{\frac 32}}.
\Ee
Now we distinguish  the cases: 
\[
\text{($i$) $\rho_*\ge  2^2t$ \quad and \quad ($ii$) $\rho_* \in ( 2^{-1},2^2t)$}. 
\]

{\bf Case $(i)$.}  This case is easier to handle since  the stationary point $\rho_\ast$ does not appear in the region of integration. 
 We estimate each of $\mathfrak K_j$ in \eqref{ksum}. For each $\mathfrak K_j$ we may clearly  assume $\rho \leq 2t$. 
If $\rho \sim 2^j$ and $\rho \leq 2t$, then
\begin{align*}
\bigg|\frac {\mathrm{d}}{\mathrm{d}\rho}(- r\rho+t\langle \rho \rangle)\bigg| \sim t  \int^{\rho_\ast}_\rho \frac{\mathrm{d}s}{s^3} \gtrsim \frac{t}{\rho^2} \sim 2^{-2j} t.
\end{align*}
Therefore, by Proposition \ref{p:vandercorput} and \eqref{amp}, we again obtain
\eqref{ahha}, from which  \eqref{e:step3goal} follows.

{\bf Case $(ii)$.}  The remainder of Step 4 is devoted to proof of   \eqref{e:step3goal} for Case $(ii)$ and here we split
\[
\mathfrak{K}(r,t) = \int_{\rho_\ast}^\infty  e^{i(-r\rho+t\langle \rho \rangle)}   \widetilde{\chi}_{<t}(\rho ) a(\rho)\, \mathrm{d}\rho + \int_{0}^{\rho_\ast}  e^{i(-r\rho+t\langle \rho \rangle)}   \widetilde{\chi}_{<t}(\rho ) a(\rho)\, \mathrm{d}\rho,
\]
and make decomposition away from the stationary point $\rho_\ast$. 
For the integral over $(\rho_\ast,\infty)$, we make use of a dyadic decomposition to write
\begin{align*}
 \int_{\rho_\ast}^\infty  e^{i(-r\rho+t\langle \rho \rangle)}  \widetilde{\chi}_{<t}(\rho ) a(\rho)\, \mathrm{d}\rho  = \sum_{ 2^j \geq \rho_\ast }  \mathfrak K_j^{\rho_\ast}
 +  \sum_{2^j < \rho_\ast} \mathfrak K_j^{\rho_\ast}, 
\end{align*}
where 
\[ \mathfrak K_j^{\rho_\ast}(r,t)=  \int_{\rho_\ast}^\infty  e^{i(-r\rho+t\langle \rho \rangle)}   \widetilde{\chi}_{<t}(\rho ) a(\rho) \chi_j(\rho - \rho_\ast)\, \mathrm{d}\rho.\] 
We first consider the former sum $\sum_{ 2^j \geq \rho_\ast } \mathfrak K_j^{\rho_\ast}$, which is easier to handle. 
If $2^j \geq \rho_\ast$ and $\rho - \rho_\ast \sim 2^j$, from \eqref{deriv} it follows that
\[
\bigg|\frac {\mathrm{d}}{\mathrm{d}\rho}(- r\rho+t\langle \rho \rangle)\bigg| \gtrsim  t \int_\rho^{\rho^\ast}  \frac{{\mathrm d}s}{s^3} \gtrsim  \frac{t2^j}{\rho \rho_\ast^2} \gtrsim t 2^{-2j}.
\]
Since $\rho\sim 2^{j}$ on the support of integrand, from \eqref{e:a'decay} it is easy to see that 
$\|   \widetilde{\chi}_{<t}\, a\,\chi_j(\cdot - \rho_\ast)\|_\infty\lesssim 2^{-3j/2}$ and $\|\frac{\mathrm d}{\mathrm d\rho}( \widetilde{\chi}_{<t}\, a\, \chi_j(\cdot - \rho_\ast))  \|_1\lesssim_\kappa 2^{-3j/2}$. Thus, by Proposition \ref{p:vandercorput} we have 
$ $
\[|\mathfrak K_j^{\rho_\ast}(r,t)| \lesssim 2^{{j}/{2}}t^{-1}.
\]
Therefore
\Be\label{bigj}
\sum_{ 2^j \geq \rho_\ast} |\mathfrak K_j^{\rho_\ast}(r,t)|\lesssim t^{-\frac{1}{2}}.
\Ee
Here we also use $2^j\lesssim  t$ (otherwise, the integral is zero).

Now we consider the second sum $\sum_{ 2^j < \rho_\ast } \mathfrak K_j^{\rho_\ast}$. 
Since $2^j < \rho_\ast$ and $\rho - \rho_\ast \sim 2^j$, using \eqref{deriv}  we have
\[
\bigg|\frac {\mathrm{d}}{\mathrm{d}\rho}(- r\rho+t\langle \rho \rangle)\bigg| \sim \frac{t2^j}{\rho \rho_\ast^2} \gtrsim \frac{t2^j}{\rho_\ast^3}.
\]
We also note that $\rho\sim \rho_\ast$ on the support of integrand. From \eqref{e:a'decay} it follows that 
\Be 
\label{rho*}
\|   \widetilde{\chi}_{<t}\, a\,\chi_j(\cdot - \rho_\ast)\|_\infty\lesssim \rho_\ast^{-3/2},   \   \  \Big\|\frac{\mathrm d}{\mathrm d\rho}( \widetilde{\chi}_{<t}\, a\, \chi_j(\cdot - \rho_\ast)) \Big \|_1\lesssim_\kappa \rho_\ast^{-3/2} .
\Ee Hence, Proposition \ref{p:vandercorput} and the trivial estimate $|\mathfrak K_j^{\rho_\ast}(r,t)| \lesssim \rho_\ast^{-\frac{3}{2}} 2^{j}$ yield 
\[
|\mathfrak K_j^{\rho_\ast}(r,t)| \lesssim_\kappa \min(\rho_\ast^{\frac{3}{2}} 2^{-j}t^{-1}, \rho_\ast^{-\frac{3}{2}} 2^{j}).
\]
Therefore, by splitting the summation further into the cases $2^j \geq \rho_\ast^{\frac{3}{2}}t^{-\frac{1}{2}}$ and $2^j < \rho_\ast^{\frac{3}{2}}t^{-\frac{1}{2}}$, we obtain \begin{align*} \sum_{ 2^j <\rho_\ast} |\mathfrak K_j^{\rho_\ast}(r,t)|\lesssim_\kappa t^{-\frac{1}{2}}.\end{align*}
Thus, combining this with \eqref{bigj}  we get 
\[
\bigg|\int_{\rho_\ast}^\infty  e^{i(-r\rho+t\langle \rho \rangle)}   \widetilde{\chi}_{<t}(\rho ) a(\rho)\, \mathrm{d}\rho \bigg| \lesssim_\kappa t^{-\frac{1}{2}}.
\]

To complete the proof, it remains to show
\begin{equation}
\label{e:desired}
\bigg|\int_0^{\rho_\ast}  e^{i(-r\rho+t\langle \rho \rangle)}   \widetilde{\chi}_{<t}(\rho ) a(\rho)\, \mathrm{d}\rho \bigg| \lesssim_\kappa t^{-\frac{1}{2}}.
\end{equation}
Let set  
$
\widetilde{\chi}_{\rho_\ast}(\rho) = 1 - \sum_{2^j \le 2^{-2} \rho_\ast}  \chi_j(\rho_\ast - \rho).
$
As before we break the integral 
\[\int^{\rho_\ast}_0  e^{i(-r\rho+t\langle \rho \rangle)}   \widetilde{\chi}_{<t}(\rho ) a(\rho)\, \mathrm{d}\rho
=\sum_{2^j\le 2^{-2}\rho_\ast}  \mathfrak K_{\rho_\ast, j}(r,t) + \mathfrak K_{<\rho_\ast}(r,t), \]
where 
\begin{align*}
\mathfrak K_{\rho_\ast, j}(r,t)&=  \int_0^{\rho_\ast} e^{i(-r\rho+t\langle \rho \rangle)}   \widetilde{\chi}_{<t}(\rho ) a(\rho) \chi_j(\rho - \rho_\ast)\, \mathrm{d}\rho,\\
\mathfrak K_{< \rho_\ast}(r,t)&=  \int_0^{\rho_\ast} e^{i(-r\rho+t\langle \rho \rangle)}   \widetilde{\chi}_{<t}(\rho ) a(\rho) \widetilde  \chi_{\rho_\ast}(\rho)\, \mathrm{d}\rho.
\end{align*}

For $2^j\le 2^{-2}\rho_\ast$, $\rho\in [ 2^{-1}\rho_\ast, \rho_\ast]$ whenever $\rho$ is contained in the support of the integrand of the integral $\mathfrak K_{\rho_\ast, j}(r,t)$.  Thus the sum $\sum_{2^j\le 2^{-2}\rho_\ast}  \mathfrak K_{\rho_\ast, j}(r,t)$ can be handled as it was done for  $\sum_{ 2^j < \rho_\ast } \mathfrak K_j^{\rho_\ast}$.  Indeed,  note that $|\frac {\mathrm{d}}{\mathrm{d}\rho}(- r\rho+t\langle \rho \rangle)| \sim \frac{t2^j}{\rho^2 \rho_\ast} \gtrsim \frac{t2^j}{\rho_\ast^3}$ and we also have \eqref{rho*}. Thus, similarly as before,  by Proposition \ref{p:vandercorput} and the trivial estimate   it follows that $
|\mathfrak K_{\rho_\ast,j}(r,t)| \lesssim_\kappa\min(\rho_\ast^{\frac{3}{2}} 2^{-j}t^{-1}, \rho_\ast^{-\frac{3}{2}} 2^{j}).
$
This gives 
\[\sum_{2^j\le 2^{-2}\rho_\ast} | \mathfrak K_{\rho_\ast, j}(r,t)|\lesssim_\kappa |t|^{-\frac12}.\] 
Thus,  to show \eqref{e:desired}  it remains to show that 
\begin{equation} \label{e:finalcaseKGwave}
|\mathfrak K_{<\rho_\ast}(r,t)| \lesssim_\kappa t^{-\frac{1}{2}}.
\end{equation}
 We make a dyadic decomposition away from the origin. Let us set 
\[ \mathcal A_\ell =\widetilde{\chi}_{<t}(\rho ) a(\rho)\widetilde{\chi}_{\rho_\ast}(\rho) \chi_\ell({\rho}).\]
We may write 
\Be
\label{ahaha} 
\mathfrak K_{<\rho_\ast}(r,t)= \sum_{1\lesssim 2^\ell \lesssim \rho_\ast} \int_{0}^{\rho_\ast}  e^{i(-r\rho+t\langle \rho \rangle)} \mathcal A_\ell (\rho)\,   \mathrm{d}\rho.
\Ee
We note that $\widetilde{\chi}_{<t}\widetilde{\chi}_{\rho_\ast}\chi_{(0,\rho_\ast)}$ is supported in the interval  $[2^{-1},(1-2^{-4})\rho_\ast]$.
Since $\rho\sim 2^{l} $ and $\rho_\ast-\rho\sim \rho_\ast$ on the support of $\mathcal A_\ell$,     by \eqref{deriv}, we have 
\[
\bigg|\frac {\mathrm{d}}{\mathrm{d}\rho}(- r\rho+t\langle \rho \rangle)\bigg| \gtrsim \int_{\rho}^{\rho_\ast} \frac{{\mathrm{d}} s}{s^3}\sim \frac{t}{\rho^2} \sim \frac{t}{2^{2\ell}}
\]
provided $\rho$ is contained in the support of $\mathcal A_\ell$. Also  from  \eqref{e:a'decay} it follows that 
$\| \mathcal A_\ell\|_\infty \lesssim 2^{-3\ell/2}$ and $\| \frac {\mathrm{d}}{\mathrm{d}\rho} \mathcal A_\ell\|_1\lesssim 2^{-3\ell/2}$. By making use of Proposition \ref{p:vandercorput} we get 
\[
\bigg|\int_{0}^{\rho_\ast}  e^{i(-r\rho+t\langle \rho \rangle)} \mathcal A_\ell (\rho)\,   \mathrm{d}\rho\bigg| \lesssim 2^{\frac{1}{2}\ell}t^{-1}.
\]
Since $2^\ell \le \rho_\ast \lesssim t$, by this and \eqref{ahaha} we now obtain \eqref{e:finalcaseKGwave}. This completes our proof of Step 4 and thus Proposition \ref{p:KGoscillatory_Schro}.
\end{proof}

\begin{remark}
\label{more-damping} The estimates \eqref{KGwavedecay} and  \eqref{e:KGdispersivemain} continue to be valid with higher order of damping.
Such estimates can be shown  without difficulty by following the argument 
in the proofs of Proposition \ref{p:KGoscillatory_wave} and Proposition \ref{p:KGoscillatory_Schro}, so we  state the estimates without proof.  For $\gamma> 0$, 
there exist constants  $C < \infty$ and $N > 0$ such that 
\begin{align}
\label{e:KGwave-more-damping}
\Big| \int e^{i(x\cdot \xi+t\langle \xi \rangle)}  \psi(\varepsilon \xi) ( 1+|\xi|^2)^{-\frac {d+1}4-\gamma -i\kappa } \, \mathrm{d}\xi\Big| & \leq C(1+|\kappa|)^{N}|t|^{-\frac {d-1}2}, 
\\
\label{e:KGschro-more-damping}
\Big| \int e^{i(x\cdot \xi+t\langle \xi \rangle)}  \psi(\varepsilon \xi) ( 1+|\xi|^2)^{-\frac {d+2}4-\gamma -i\kappa } \, \mathrm{d}\xi\Big| & \leq C(1+|\kappa|)^{N}|t|^{-\frac {d}2}
\end{align}
for all $\kappa \in \mathbb{R}$, $\varepsilon>0$ and $(x,t) \in \mathbb{R}^d \times \mathbb{R}$. 
\end{remark}

\subsection{The fractional Schr\"odinger equation} 
We now provide the damped oscillatory integral estimates which are key to the proof of Theorem \ref{t:fracSchrosharp}. 
For our purpose we only need to consider the oscillatory integrals with phases of the form $x\cdot\xi+t|\xi|^\alpha$ but our method here admits extension  of $|\xi|^\alpha$ to more general symbols $\phi$ which are basically perturbations of a homogeneous function. To this end, we first introduce the notion of an \emph{almost homogeneous phase}   which is inspired by the condition introduced by Kenig--Ponce--Vega \cite[Lemma 3.4] {KenigPonceVega}.

\begin{definition}
Let $\alpha\in\mathbb{R}\setminus\{0,1\}$. We say $\phi\in \mathfrak R(\alpha, B, \lambda,  d_1, d_2)$ if   $\psi\in C^{d+2}(\mathbb R^d\setminus\{0\}) $ and  there
are positive  constants $d_1, d_2, \lambda$, and $B$, such that, for all $\xi\in \mathbb R^d\setminus\{0\}$,
\begin{align}
d_1|\xi|^{\alpha-1} &\le |\nabla\phi(\xi)| \le d_2|\xi|^{\alpha-1}, \label{e:first} 
\\
|\partial^\gamma \phi(\xi)| &\le B |\xi|^{\alpha-|\gamma|},\quad   | \gamma|\le d+2, \label{e:second} 
\\
\lambda |\xi|^{\alpha-2} &\le |v^{\rm t} H\phi (\xi) v|,    
\quad   \forall  v\in \mathbb{S}^{d-1} \label{e:third}.
\end{align}
We also say  $\phi$ is \emph{almost homogeneous of order $\alpha$} if $\phi\in \mathfrak R(\alpha, B, \lambda,  d_1, d_2)$ for some positive constants $d_1, d_2, \lambda$, and $B$.
\end{definition} 

Kenig--Ponce--Vega \cite{KenigPonceVega} considered phase functions $\phi$ satisfying \eqref{e:first}, \eqref{e:second} and 
\begin{equation}\label{e:KPVcondi}
 \lambda_1 |\xi|^{d(\alpha-2)} \le  | {\rm det} H\phi (\xi)| \le \lambda_2 |\xi|^{d(\alpha-2)}
\end{equation}
for some $\lambda_1, \lambda_2$ instead of \eqref{e:third}. It is easy to see that  \eqref{e:second} and  \eqref{e:third} imply  \eqref{e:KPVcondi}. 
Indeed,    \eqref{e:second}    implies that all the absolute values of the eigenvalues of ${\rm det} H\phi (\xi)$ are at most $d^2 B|\xi |^{\alpha-2}$ and  from  \eqref{e:third}  we 
see the absolute values of all eigenvalues of ${\rm det} H\phi (\xi)$ are at least $\lambda |\xi|^{\alpha-2}.$ Thus we get \eqref{e:KPVcondi} 
with $\lambda_2=(d^2B)^d$ and $\lambda_1=\lambda^d$.  The converse is also not difficult to see.  For this we use  the following  
simple observation which is also useful in what follows. This can be shown by direct computation, so we omit the proof.

\begin{lemma}\label{l:phaseScale}  Let $\alpha \in \mathbb{R}\setminus\{0,1\}$ and $\rho>0$. Set 
\[
\phi_\rho^{\alpha}(\xi) = \rho^{-\alpha} \phi(\rho \xi).
\]
 Then, if $\phi$ satisfies \eqref{e:first}, then so does $\phi_\rho^{\alpha}$. The same is true with the conditions 
  \eqref{e:second},
 \eqref{e:third}, 
\eqref{e:KPVcondi}. That is to say, for  any $\rho>0$, $\phi\in \mathfrak R(\alpha, B, \lambda,  d_1, d_2)$ if and only if $\phi_\rho^{\alpha} \in \mathfrak R(\alpha, B, \lambda,  d_1, d_2)$.
\end{lemma}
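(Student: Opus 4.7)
The statement is a pure scaling assertion, so my plan is to compute how each of the four conditions transforms under the rescaling $\phi \mapsto \phi_\rho^\alpha$. The proof should be essentially one page of bookkeeping, and I do not expect any genuine obstacle; the only point requiring care is to get all the powers of $\rho$ to cancel correctly, which is precisely the reason the normalization $\rho^{-\alpha}\phi(\rho\xi)$ (rather than, say, $\phi(\rho\xi)$) is chosen.

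The first step is to record the elementary scaling identities
\[
\partial^\gamma \phi_\rho^\alpha(\xi) = \rho^{|\gamma|-\alpha}(\partial^\gamma\phi)(\rho\xi), \qquad H\phi_\rho^\alpha(\xi) = \rho^{2-\alpha}(H\phi)(\rho\xi),
\]
and in particular $\nabla\phi_\rho^\alpha(\xi)=\rho^{1-\alpha}(\nabla\phi)(\rho\xi)$ and $\det H\phi_\rho^\alpha(\xi) = \rho^{d(2-\alpha)}\det (H\phi)(\rho\xi)$. These follow from the chain rule applied to $\phi_\rho^\alpha(\xi)=\rho^{-\alpha}\phi(\rho\xi)$.

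Given these identities, each condition transforms as follows. For \eqref{e:first}, combine $|\nabla\phi_\rho^\alpha(\xi)|=\rho^{1-\alpha}|(\nabla\phi)(\rho\xi)|$ with the two-sided bound $d_1|\rho\xi|^{\alpha-1}\le|(\nabla\phi)(\rho\xi)|\le d_2|\rho\xi|^{\alpha-1}$ and note that the powers of $\rho$ cancel, yielding $d_1|\xi|^{\alpha-1}\le|\nabla\phi_\rho^\alpha(\xi)|\le d_2|\xi|^{\alpha-1}$. For \eqref{e:second}, the analogous cancellation gives $|\partial^\gamma\phi_\rho^\alpha(\xi)|=\rho^{|\gamma|-\alpha}|(\partial^\gamma\phi)(\rho\xi)|\le \rho^{|\gamma|-\alpha}B|\rho\xi|^{\alpha-|\gamma|}=B|\xi|^{\alpha-|\gamma|}$. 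For \eqref{e:third}, apply the quadratic form identity $v^{\mathrm t}H\phi_\rho^\alpha(\xi)v=\rho^{2-\alpha}\,v^{\mathrm t}(H\phi)(\rho\xi)v$ and the lower bound on $\phi$ to obtain $|v^{\mathrm t}H\phi_\rho^\alpha(\xi)v|\ge \lambda|\xi|^{\alpha-2}$ for every unit vector $v$. Condition \eqref{e:KPVcondi} follows from the determinant identity above by the same cancellation, with constants $\lambda_1,\lambda_2$ preserved.

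Since each of the four conditions holds for $\phi$ at the point $\rho\xi$ with given constants if and only if it holds for $\phi_\rho^\alpha$ at $\xi$ with the same constants, and since the transformation $\phi\mapsto \phi_\rho^\alpha$ is invertible (with inverse $\phi\mapsto \phi_{1/\rho}^\alpha$), the equivalence $\phi\in\mathfrak R(\alpha,B,\lambda,d_1,d_2)\Leftrightarrow \phi_\rho^\alpha\in\mathfrak R(\alpha,B,\lambda,d_1,d_2)$ follows immediately, completing the proof.
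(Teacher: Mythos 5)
Your proof is correct and is exactly the direct computation the paper has in mind (it omits the proof, saying only that it follows from direct computation). The scaling identities, the cancellation of $\rho$-powers in each of the four conditions, and the remark that $\phi\mapsto\phi_\rho^\alpha$ is inverted by $\phi\mapsto\phi_{1/\rho}^\alpha$ together give the stated equivalence, and the smoothness of $\phi_\rho^\alpha$ on $\mathbb{R}^d\setminus\{0\}$ is clearly preserved by the rescaling.
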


\subsubsection*{Proof of implication from \eqref{e:KPVcondi} to \eqref{e:third}}
Now we show \eqref{e:KPVcondi} implies \eqref{e:third} under the condition \eqref{e:second}. Let $\xi\in \mathbb R^d\setminus \{0\}$ and
suppose $|\xi|\in (\lambda, 2\lambda]$ for some $\lambda>0$, then the conditions \eqref{e:second}, \eqref{e:third} and  \eqref{e:KPVcondi} are invariant under $\phi\to \phi_\lambda^\alpha$  by Lemma \ref{l:phaseScale}. Thus it is enough to show that, 
for $|\xi|\in (1,2]$,  the conditions \eqref{e:second} and \eqref{e:KPVcondi} imply 
\eqref{e:third}.  
By the condition, \eqref{e:second} we see the eigenvalues  $\mu_1, \mu_2,\dots, \mu_d$ of the matrix $H\phi(\xi)$ satisfy $|\mu_i|\le b:=d^2 B\max(2^{\alpha-2},1)$ for $i=1,\dots,d$.  Since $\mu_1\mu_2\cdots\mu_d=\det H\phi(\xi)$ and $\lambda_1\min(2^{\alpha-2},1)\le |\det H\phi(\xi)|$, it follows that
\[ b^{1-d} \lambda_1\min(2^{\alpha-2},1) \le  |\mu_i|.\]
 Since $\min \mu_i\le v^{\rm t} H\phi (\xi) v\le \max \mu_i$ for $v\in \mathbb S^{d-1}$, we see that 
 \eqref{e:third} holds with $\lambda=b^{1-d} \lambda_1\min(2^{\alpha-2},1)(\max(2^{\alpha-2},1))^{-1}$. This completes the proof of the implication. \qed

For a given $\phi$ which is almost homogeneous of order $\alpha \in \mathbb{R} \setminus \{0,1\}$, the definition of the relevant oscillatory integral depends on the sign of $\alpha$. If $\alpha > 0$,  for each $\kappa \in \mathbb{R}$ and $(x,t) \in \mathbb{R}^d \times \mathbb{R}$, we define the oscillatory integral 
\begin{equation*}
\mathcal{I}^{\phi,+}_{\varepsilon,\kappa}(x,t) = \int_{\mathbb{R}^{d}} e^{i(x\cdot\xi + t\phi(\xi))} \chi(\varepsilon \xi) |{\rm det} H\phi(\xi)|^{1/2 + i\kappa}\, \mathrm{d}\xi.
\end{equation*}
If $\alpha < 0$, for each $\kappa \in \mathbb{R}$ and $(x,t) \in \mathbb{R}^d \times \mathbb{R}$, 
we define the oscillatory integral
\begin{equation*}
\mathcal{I}^{\phi,-}_{\varepsilon,\kappa}(x,t) =  
    \int_{\mathbb{R}^{d}} e^{i(x\cdot\xi + t\phi(\xi))} \chi_\infty (\varepsilon^{-1}\xi) |{\rm det} H\phi(\xi)|^{1/2 + i\kappa}\, \mathrm{d}\xi,
\end{equation*}
where $\chi_\infty := 1-\chi$.  
Note that the cutoff function $\chi$ has compact support  and $\chi_\infty$ is supported away from the origin. 
Observe also that 
\Be\label{hf}
|{\rm det} H\phi(\xi)|^{1/2} 
\sim |\xi|^{\frac{\alpha-2}{2}d}
\Ee
for $\alpha \neq 0,1$. Thus,  $\mathcal{I}^{\phi,+}_{\varepsilon,\kappa}$,  $\mathcal{I}^{\phi,-}_{\varepsilon,\kappa}$  are well defined for the cases $\alpha>0$, $\alpha<0$, respectively.

 Our main oscillatory integral estimate is as follows.

\begin{proposition}\label{p:OsciAlmostRad}
Let $\alpha\in\mathbb{R}\setminus\{0,1\}$ and $\phi$ be almost homogeneous of order $\alpha$. Then there exist $C < \infty$ and $N > 0$ such that 
\begin{align}
\label{near-away}
|\mathcal{I}^{\phi,\pm }_{\varepsilon,\kappa}(x,t)| &\le C(1+|\kappa|)^N |t|^{-d/2},   \   \pm \alpha>0,
\end{align}
 for all $\varepsilon > 0$, $\kappa\in\mathbb{R}$ and $(x,t) \in \mathbb{R}^d \times \mathbb{R}$.
\end{proposition}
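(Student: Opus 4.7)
The plan is to establish this by a dyadic decomposition in frequency combined with rescaling via Lemma~\ref{l:phaseScale}, so that on each dyadic annulus we reduce to a unit-scale oscillatory integral whose phase is almost homogeneous of order $\alpha$ \emph{uniformly} in the dyadic parameter. I would treat both signs of $\alpha$ simultaneously, since after rescaling the two cases differ only in the range of the dyadic index $j$ that is activated by the cutoffs $\chi(\varepsilon\xi)$ (for $\alpha>0$) and $\chi_\infty(\varepsilon^{-1}\xi)$ (for $\alpha<0$); we may then replace these cutoffs by the trivial one $1$ and prove the bound uniformly in $j\in\mathbb{Z}$.

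Using the partition \eqref{e:dyadicdecomp} and the substitution $\xi=2^j\eta$, the $j$-th piece takes the form
\[
\mathcal{I}^{\phi,\pm}_{\varepsilon,\kappa,j}(x,t) = 2^{jd\alpha/2}\,e^{i\kappa c_j}\! \int e^{it 2^{j\alpha}(y_j\cdot\eta + \Phi_j(\eta))} \chi_0(|\eta|) |\det H\Phi_j(\eta)|^{1/2+i\kappa}\,\mathrm{d}\eta,
\]
where $\Phi_j:=\phi^{\alpha}_{2^j}$, $y_j:=x/(t\,2^{j(\alpha-1)})$, and $c_j\in\mathbb{R}$ is a bounded multiple of $j$; the prefactor $2^{jd\alpha/2}$ arises from combining the Jacobian $2^{jd}$ with the scaling $|\det H\phi(2^j\eta)|^{1/2}\sim 2^{jd(\alpha-2)/2}$ coming from \eqref{hf}. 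By Lemma~\ref{l:phaseScale}, the rescaled phase $\Phi_j$ lies in the same class $\mathfrak{R}(\alpha,B,\lambda,d_1,d_2)$ as $\phi$, so \eqref{e:first}--\eqref{e:third} hold uniformly in $j$ on the unit annulus. In particular, $|\nabla\Phi_j(\eta)|\in[d_1,d_2]$ on $\mathrm{supp}\,\chi_0$, and the Hessian has all eigenvalues of magnitude $\sim 1$ there.

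For fixed $(x,t)$, I would locate the critical scale $j_\ast$ determined by $|y_{j_\ast}|\sim 1$ and split the sum accordingly. On the stationary regime $|y_j|\in[d_1/2,2d_2]$, which corresponds to $j$ in a bounded window around $j_\ast$ (of size depending only on $\alpha$, $d_1$, $d_2$), the phase $y_j\cdot\eta+\Phi_j(\eta)$ has a unique nondegenerate critical point $\eta^\ast$ on the support, and the large parameter is $\mu_j:=t\,2^{j\alpha}$. Applying the stationary phase expansion with amplitude $|\det H\Phi_j|^{1/2+i\kappa}$, the leading term equals $\mu_j^{-d/2}|\det H\Phi_j(\eta^\ast)|^{-1/2}|\det H\Phi_j(\eta^\ast)|^{1/2+i\kappa}$ times a modulus-one factor, and the $|\det H\Phi_j(\eta^\ast)|$ factors cancel up to a phase. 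Thus the main term contributes $O(\mu_j^{-d/2})$, and the error terms are controlled by bounded (in $j$) $C^{d+2}$-norms of the amplitude, at the cost of a polynomial factor $(1+|\kappa|)^N$ from differentiating $|\det H\Phi_j|^{i\kappa}$. Multiplying by the prefactor $2^{jd\alpha/2}$ gives exactly $(1+|\kappa|)^N|t|^{-d/2}$, as desired.

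Off the stationary window, I would split into the two subcases $|y_j|\geq 2d_2$ and $|y_j|\leq d_1/2$ and use repeated integration by parts against $\nabla\Psi_j/|\nabla\Psi_j|^2$, where $\Psi_j(\eta):=t\,2^{j\alpha}(y_j\cdot\eta+\Phi_j(\eta))$. In the first subcase $|\nabla\Psi_j|\gtrsim 2^j|x|$, and in the second $|\nabla\Psi_j|\gtrsim t\,2^{j\alpha}$; each integration by parts gains a factor $(2^j|x|)^{-1}$ or $(t\,2^{j\alpha})^{-1}$, respectively, and at most a multiplicative $(1+|\kappa|)$ from the amplitude. The main obstacle is to verify that the resulting geometric sums, once combined with the prefactor $2^{jd\alpha/2}$, still total $(1+|\kappa|)^N|t|^{-d/2}$: for $\alpha>1$, the critical scale is $2^{j_\ast}\sim(|x|/|t|)^{1/(\alpha-1)}$ and the two non-stationary subcases correspond respectively to $j\ll j_\ast$ and $j\gg j_\ast$, and one checks (after choosing the number of integrations by parts large enough depending on $d$ and $\alpha$) that both tails sum geometrically to a multiple of $|t|^{-d/2}$, using $t\,2^{j_\ast\alpha}\sim |x|\cdot 2^{j_\ast}$ at the transition. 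The case $\alpha<1$ (in particular $\alpha<0$) is analogous after swapping the roles of the two tails. The bookkeeping of these summations, and the uniformity of all constants in $j$ (justified by Lemma~\ref{l:phaseScale}) and in $\varepsilon$, is the only delicate point; the reduction to standard stationary/non-stationary phase on a unit annulus is otherwise routine.
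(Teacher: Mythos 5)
Your proof is correct and follows essentially the same route as the paper's: a dyadic decomposition in frequency, rescaling each piece to a unit annulus via Lemma~\ref{l:phaseScale}, stationary phase on the critical dyadic window, and integration by parts on the two non-stationary tails followed by geometric summation. The only organisational differences are that the paper first normalises to $\varepsilon=1$ by a preliminary rescaling, and packages the stationary-window bound as a uniform stability estimate (Lemma~\ref{l:station}) rather than extracting the leading stationary-phase term and observing the cancellation of the Hessian-determinant factors; the two non-stationary tails are then handled exactly as you sketch (the paper's cases $\mathbf A$ and $\mathbf C$).
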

 
\begin{remark} As is to be clearly seen from the proof,  for the estimate \eqref{near-away} with 
 $+$ it is enough to have the conditions  
 \eqref{e:first}, \eqref{e:second}, and \eqref{e:third} for $|\xi|\le 2\varepsilon^{-1}$. Similarly, for 
 \eqref{near-away} with $-$,  we only need consider $|\xi|\ge 2\varepsilon$. Also, we remark that Proposition \ref{p:OsciAlmostRad} has been already  proved by Kenig, Ponce and Vega \cite [Lemma 3.4]{KenigPonceVega} when $\alpha\ge2$.
 \end{remark}

 The proof of  Proposition \ref{p:OsciAlmostRad}  heavily relies on 
 stability of the oscillatory integral estimates which are obtained by the stationary phase method.  For example, 
see  \cite[p. 220, Theorem 7.7.5]{Hormanderbook} and \cite[Theorem 1]{ABZ}.

\begin{lemma}\label{l:station}  Let $\alpha \in \mathbb{R}\setminus\{0,1\}$. Suppose $\phi\in\mathfrak R(\alpha,  B, \lambda, d_1, d_2)$ for some positive constants $  \lambda, d_1, d_2$ and  $\psi$ is a smooth function supported in the set $\{\xi:  2^{-1}\le |\xi|\le 2\}$. Then there exists a constant $C < \infty$ depending only on $\lambda,  B, d_1, d_2$, such that 
\Be
\label{stationary}
\bigg| \int_{\mathbb{R}^d} e^{i(x\cdot\xi + t \phi(\xi))} \psi(\xi)\, \mathrm{d}\xi \bigg| \le C\|\psi\|_{C^{d+1}} |t|^{-d/2}.
\Ee
\end{lemma}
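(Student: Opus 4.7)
The plan is to prove the bound by appealing to a stability form of the classical stationary phase lemma (e.g. \cite[Theorem 7.7.5]{Hormanderbook} or \cite[Theorem 1]{ABZ}), after splitting the analysis according to whether the phase has a critical point on the support of $\psi$. Factoring $t$ out of the phase, the integral reads
\[
\int_{\mathbb{R}^d} e^{it\Phi_y(\xi)} \psi(\xi) \, \mathrm{d}\xi, \qquad \Phi_y(\xi) := y\cdot \xi + \phi(\xi), \quad y := x/t,
\]
and I aim for $O(|t|^{-d/2})$ uniformly in $y$. On the support of $\psi$ (where $|\xi|\in[1/2,2]$), condition \eqref{e:first} gives $|\nabla \phi(\xi)| \in [c_1, c_2]$ with $c_1,c_2$ depending only on $\alpha, d_1, d_2$; condition \eqref{e:second} yields uniform $C^{d+2}$-bounds on $\phi$ depending only on $\alpha, B$; and condition \eqref{e:third} yields the uniform non-degeneracy $|v^{\mathrm t} H\phi(\xi) v| \ge \lambda'$ for every $v \in \mathbb{S}^{d-1}$, with $\lambda'$ depending only on $\alpha, \lambda$. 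All these constants are independent of the specific $\phi \in \mathfrak R(\alpha, B, \lambda, d_1, d_2)$.

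In the non-stationary regime $|y| \notin [c_1/2,\, 2c_2]$, one has $|\nabla_\xi \Phi_y(\xi)| = |y + \nabla\phi(\xi)| \gtrsim 1 + |y|$ uniformly on the support of $\psi$. Standard repeated integration by parts via the operator $L = -i|\nabla_\xi\Phi_y|^{-2} \overline{\nabla_\xi \Phi_y} \cdot \nabla_\xi$ (so that $L e^{it\Phi_y} = t e^{it\Phi_y}$), together with the uniform bounds from \eqref{e:second}, yields for any integer $N \le d+1$
\[
\bigg| \int e^{it\Phi_y(\xi)}\psi(\xi)\,\mathrm{d}\xi \bigg| \le C_N \|\psi\|_{C^N} \big((1+|y|)|t|\big)^{-N},
\]
which is far stronger than the required $|t|^{-d/2}$.

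In the stationary regime $|y| \in [c_1/2,\, 2c_2]$, the phase $\Phi_y$ may possess a critical point on (a neighborhood of) $\mathrm{supp}\,\psi$. Since $H\Phi_y = H\phi$, non-degeneracy of any such critical point is guaranteed uniformly in $y$ by \eqref{e:third}, and all derivatives of $\Phi_y$ up to order $d+2$ are uniformly bounded by \eqref{e:second}. The stability form of the stationary phase lemma then gives
\[
\bigg| \int e^{it\Phi_y(\xi)}\psi(\xi)\,\mathrm{d}\xi \bigg| \le C \|\psi\|_{C^{d+1}} |t|^{-d/2},
\]
with $C$ depending only on these uniform derivative and non-degeneracy bounds, hence only on $\alpha, B, \lambda, d_1, d_2$. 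In the subregion of $\mathrm{supp}\,\psi$ where no critical point is present, a further integration-by-parts argument delivers much stronger decay than required.

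The principal technical point is to ensure uniformity in $y$ of the stationary-phase constant in the second regime. Standard formulations of the stability stationary phase theorem are local: they work in a neighborhood of a unique non-degenerate critical point. However, \eqref{e:third} forces the continuous function $v \mapsto v^{\mathrm t}H\phi(\xi)v$ on $\mathbb{S}^{d-1}$ to be bounded away from zero, and hence to have constant sign, so $H\phi(\xi)$ is definite at each $\xi$; consequently $\nabla\phi$ is locally a diffeomorphism and critical points of $\Phi_y$ are isolated. A finite partition of unity subordinate to a fixed cover of a neighborhood of $\mathrm{supp}\,\psi$ by sufficiently small balls therefore reduces the estimate to a bounded number of local applications of the theorem, each with constants controlled by $\alpha, B, \lambda, d_1, d_2$ and $\|\psi\|_{C^{d+1}}$. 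Since $y$ enters only through the linear part of $\Phi_y$, continuity in $y$ together with compactness of the range $[c_1/2, 2c_2]$ yields the desired uniform bound.
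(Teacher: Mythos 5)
Your proposal is correct and follows essentially the same route as the paper: reduce to the regime $|x|\sim|t|$ by integration by parts using \eqref{e:first} and \eqref{e:second}, then apply the stability form of stationary phase (\cite[Theorem 1]{ABZ}, or \cite[Theorem 7.7.5]{Hormanderbook}) to the normalized phase $t^{-1}x\cdot\xi+\phi(\xi)$, whose $C^{d+2}$ bounds and Hessian-determinant lower bound are uniform by \eqref{e:second}--\eqref{e:third}. Your extra partition-of-unity/compactness-in-$y$ discussion is a reasonable way to patch the local formulation of H\"ormander's theorem, but it is not needed if one invokes \cite[Theorem 1]{ABZ} directly, which is already a uniform statement requiring only the derivative bounds and a Hessian-determinant lower bound on the support of the amplitude.
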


This may be seen as a simple consequence  of \cite[Theorem 1]{ABZ}. Since \eqref{e:third} gives uniform lower bounds for the eigenvalues of the Hessian matrix, the lemma  also can be proved by making use of the standard argument which relies on the Morse Lemma. 

\begin{proof}  Since $\psi$ is supported in the set $\{\xi:  2^{-1}\le |\xi|\le 2\}$, from \eqref{e:first} it is easy to see that 
there are  $c_1, c_2$, $b_1,$ and $b_2>0$, depending only on $d_1, d_2$,  such that  $|\nabla_\xi (x\cdot\xi + t \phi(\xi))|\ge  c_1|t|$ 
if $|t|>b_1|x|$ and $|\nabla_\xi (x\cdot\xi + t \phi(\xi))|\ge  c_2|x|$ if $|x|>b_2|t|$. Thus, in either case, by integration by parts we get 
\eqref{stationary} with $C$ depending only on  $c_1, c_2$, and $B$. Thus we may assume $ b_2^{-1} |x|\le |t|\le  b_1|x|$.
We may rewrite the estimate as
$$
\bigg| \int_{\mathbb{R}^d} e^{it\Phi_{x,t}(\xi)}\psi(\xi)\, \mathrm{d}\xi \bigg| \le C |t|^{-d/2}
$$
with  $\Phi_{x,t}(\xi) =t^{-1}x\cdot \xi+\phi(\xi)$. We note that $\Phi_{x,t}$ is uniformly bounded in  $C^{d+2}$ and $|\!\det(H\Phi_{x,t})|$ has a nonzero uniform lower bound. 
Once we have ensured these conditions, then we may employ the stationary phase method for the oscillatory integral (see \cite[Theorem 1]{ABZ}\footnote{Here we need the condition
$\phi\in C^{d+2}$.}, or alternatively Theorem 7.7.5 in \cite{Hormanderbook}, and Theorem 3 and Example 4 in  \cite{ABZ}, for more explicit statements), to have the desired uniform bound.  
\end{proof} 
 
 \begin{proof}[Proof of Proposition \ref{p:OsciAlmostRad}]
First of all, we observe that it suffices to show the estimate when $\varepsilon =1$.
Let $\phi\in  \mathfrak R(\alpha, B_,\lambda, d_1, d_2)$ for some positive constants $\alpha, B_,\lambda, d_1, d_2$. 
Since $\det H\phi_\rho^\alpha(\xi)= \rho^{d(2-\alpha)}\det H\phi( \rho \xi)$ for any $\rho>0$,
after rescaling $\xi\to \varepsilon^{\mp 1}\xi$,  we have
\[\mathcal{I}^{\phi,\pm}_{\varepsilon,\kappa}(x,t)=  \varepsilon^{\pm [-\frac{\alpha d}{2}+ i{d(2-\alpha)\kappa}]}
\mathcal{I}^{\phi_{\varepsilon^{\mp 1}}^\alpha,\pm}_{1,\kappa}(\varepsilon^{\mp 1} x,\varepsilon^{\mp \alpha} t), \,\, \pm\alpha >0.\]
By Lemma  \ref{l:phaseScale}, $\phi_{\varepsilon^{\mp 1}}^\alpha \in \mathfrak R(\alpha, B,\lambda, d_1, d_2)$.  
Thus, the desired estimate follows if we show 
\Be 
\label{normalized}  |\mathcal{I}^{\phi,\pm}_{1,\kappa}(x,t)|\lesssim_\kappa |t|^{-d/2},
\Ee whenever  $\phi\in  \mathfrak R(\alpha, B_,\lambda, d_1, d_2)$.    
Since $|\mathcal{I}^{\phi,\pm}_{1,\kappa}|\lesssim 1$ by \eqref{hf},  we may assume $|t|>1$, otherwise the estimate is trivial.  
To show  \eqref{normalized}  we need to consider two cases, $\alpha>0$ and $\alpha<0$, separately. 

\subsection*{The case $\alpha > 0$: estimate for $\mathcal{I}^{\phi,+}_{1,\kappa}$}  We use the dyadic decomposition $\chi = \sum_{j=1}^\infty \chi_0(2^{j}\cdot)$ (recall \eqref{e:dyadicdecomp}). By changing variables,  we write  
\[
\mathcal{I}^{\phi,+}_{1,\kappa}=\sum_{j=1}^\infty I_{j}:=\sum_{j=1}^\infty  C_{\kappa,j} 2^{-\frac{d\alpha}{2}j}  \int e^{i(2^{-j}x\cdot \xi+ 2^{-\alpha j} t  \phi_j(\xi) )
}  |{\rm det} H\phi_j(\xi)|^{1/2 + i\kappa} \chi_0(|\xi|) \,\mathrm{d}\xi, 
\]
where $\phi_j := \phi^\alpha_{2^{-j}}$ (recall the notation in Lemma \ref{l:phaseScale}) and $C_{\kappa,j} $ is  a complex number with $|C_{\kappa,j} |=1$. Let us set 
\[ 
a_1=2^{-1} d_1 \min(2^{1-\alpha},2^{\alpha-1}),\quad   a_2=2 d_2 \max(2^{1-\alpha},2^{\alpha-1}).   
\]                                           
We consider the following three cases:
\begin{align*}
\mathbf {A}&: \quad 2^{(\alpha-1)j}|x|\ge a_2 |t |, 
\\
 \mathbf {B}&: \quad  a_1 |t|<  2^{(\alpha-1)j}|x| <a_2 |t |, 
 \\
 \mathbf {C}&:  \quad 2^{(\alpha-1)j}|x|\le a_1 |t |.
  \end{align*}
In case $\mathbf B$, there are only  finitely many  $j$.  By Lemma \ref{l:phaseScale},  $\phi_j\in  \mathfrak R(\alpha, B_,\lambda, d_1, d_2)$. Thus,  applying 
Lemma \ref{l:station}   to each $I_j$,  we see 
$$
\sum_{j\in \mathbf B} |I_j| \le C \sum_{j\in \mathbf B} 2^{-\frac{d\alpha}{2}j} |2^{-\alpha j}t|^{-d/2} \sim |t|^{-d/2}. 
$$
For case $\mathbf C$, 
since  $ \phi_j$ satisfies \eqref{e:first}, we have 
\begin{align*}
|\nabla_\xi  (2^{-j}x\cdot \xi+ 2^{-\alpha j} t  \phi_j(\xi))| 
&\ge    2^{-\alpha j} d_1 \min(2^{1-\alpha},2^{\alpha-1}) | t|  -
2^{-j}|x|
\ge  {a_1}
2^{-\alpha j} |t|
\end{align*}
on the support of $\chi_0$. So,  integration by parts gives  
$|I_j| \lesssim_\kappa 2^{-\frac{d\alpha}2 j}{\rm min}\,(|2^{-\alpha j}t|^{-N},1)$ for $N\le d+2$. Since $\alpha>0$, taking $N>d/2$,  we get  
$$
\sum_{j\in \mathbf C} |I_j| \lesssim_\kappa \sum_{j\in \mathbf C} 2^{-\frac{d\alpha}{2} j}{\rm min}\,(|2^{-\alpha j}t|^{-N},1) \lesssim |t|^{-d/2}.
$$

We now consider case $\mathbf A$. 
As before, we know from \eqref{e:first}, Lemma \ref{l:phaseScale} and assumption $j\in \mathbf A$ that 
$$
|\nabla_\xi  (2^{-j}x\cdot \xi+ 2^{-\alpha j} t  \phi_j(\xi))| 
\ge 
2^{-j}|x|-2^{-\alpha j} d_2\max (2^{1-\alpha},2^{\alpha-1})  |t|
\ge 
2^{-1}2^{-j} |x|.
$$  
Integration by parts yields $ |I_j| \lesssim_\kappa 2^{-\frac {\alpha d}2 j} {\rm min}\, (|2^{-j}x|^{-M},1)$ for any $M\le d+2$. 
In particular, taking $M=d/2$ we get
\begin{equation}\label{e:NonStational} 
|I_j| \lesssim_\kappa  {\rm min}\, (2^{\frac d2 (1-\alpha)j} |x|^{-\frac d2}, 2^{-\frac {\alpha d}2 j}).
\end{equation}
To sum each estimate, we need to separate two cases $0<\alpha<1$ and $\alpha>1$.
First, consider $0<\alpha<1$. In this case, $ 2^j \lesssim  (|x|/|t|)^\frac{1}{1-\alpha}$ and
using  \eqref{e:NonStational} we have
\begin{align*}
\sum_{j\in \mathbf A} |I_j| \lesssim_\kappa\, &
|x|^{-\frac d2} \sum_{ 1\le 2^j\lesssim (|x|/|t|)^\frac{1}{1-\alpha}}2^{\frac d2 j(1-\alpha)} 
\sim |t|^{-\frac d2}.
\end{align*}
For the case $\alpha>1$, $2^j\gtrsim   (|t|/|x|)^{1/(\alpha-1)}$. Thus, by \eqref{e:NonStational},   we have 
\begin{align*}
\sum_{j\in \mathbf A}|I_j| \lesssim_\kappa \sum_{2^j\gtrsim   
(|t|/|x|)^{1/(\alpha-1)}}^\infty  2^{\frac d2 j(1-\alpha)} |x|^{-\frac d2}\lesssim |t|^{-\frac d2}.
\end{align*}
This completes the proof of \eqref{normalized} with $+$ for the case $\alpha>0$.

\subsection*{The case $\alpha < 0$: estimate for  $\mathcal{I}^{\phi,-}_{1,\kappa}$ }  This can be handled in a similar manner so we shall be brief.  Using the dyadic decomposition $\chi_\infty = \sum_{j=0}^\infty  \chi_j(\cdot)$ and changing variables,  we write
\[
\mathcal{I}^{\phi,-}_{1,\kappa}=\sum_{j=0}^\infty \widetilde{I_{j}}:=\sum_{j=0}^\infty \widetilde C_{\kappa,j} 2^{\frac{d\alpha}{2}j}   \int e^{i(2^{j}x\cdot \xi+ 2^{\alpha j} t  {\widetilde \phi_j}(\xi) )  
}  |{\rm det} H{\widetilde \phi_j}(\xi)|^{1/2 + i\kappa} \chi_0(|\xi|) \,\mathrm{d}\xi, 
\]
where ${\widetilde \phi_j} = \phi^\alpha_{2^j}$ and $\widetilde C_{\kappa,j} $ is  a complex number with $|\widetilde C_{\kappa,j} |=1$. As before, we consider the following three cases:
\begin{align*}
\mathbf {\widetilde A}&: \quad 2^{(1-\alpha)j}|x|\ge a_2 |t |, 
\\
 \mathbf {\widetilde B}&: \quad  a_1 |t|<  2^{(1-\alpha)j}|x| <a_2 |t |, 
 \\
 \mathbf {\widetilde C}&:  \quad 2^{(1-\alpha)j}|x|\le a_1 |t |.
  \end{align*}

For case $\mathbf {\widetilde B}$ there are only finitely many $j$.  By  Lemma \ref{l:phaseScale} and Lemma \ref{l:station} it follows that 
$$
\sum_{j \in \mathbf{\widetilde B}} |\widetilde{I_j}| \le C \sum_{j\in \mathbf{\widetilde B}} 2^{\frac{d\alpha}{2}j} |2^{\alpha j}t|^{-d/2} \sim |t|^{-d/2}.
$$
In case $\mathbf{\widetilde C}$, similarly as before   we see
$
|\nabla_\xi  (2^{j}x\cdot \xi+ 2^{\alpha j} t  \widetilde{\phi}_j(\xi))| 
\gtrsim 
2^{\alpha j} |t|.
$
Hence, integration by parts gives  $ |\widetilde{I_j}| \lesssim  2^{\frac{d\alpha}{2}j} {\rm min}\, (|2^{\alpha  j}t|^{-N},1)$ for $N\le d+2$.  Since $\alpha<0$,  again taking $N>\tfrac d2$ and splitting the sum into the cases  $2^j\le |t|^{-1/\alpha}$ and $2^j> |t|^{-1/\alpha}$, we see 
$$
\sum_{j\in \mathbf { \widetilde C}} |\widetilde{I_j}| \lesssim_\kappa \sum_{j=0}^\infty 2^{\frac{d\alpha}{2}j} {\rm min}\, (|2^{\alpha  j}t|^{-N},1) \lesssim |t|^{-d/2}.
$$
Finally, for case $\mathbf { \widetilde A}$,  we have
$|\nabla_\xi  (2^{j}x\cdot \xi+ 2^{\alpha j} t  \widetilde\phi_j(\xi))| \gtrsim 2^{ j} |x|.$
Integration by parts yields 
$ |\widetilde{I_j}| \lesssim  2^{\frac{d\alpha}{2}j} {\rm min}\, ((2^{  j}|x|)^{-N},1)$ for $N\le d+2$. In particular, taking $N=d/2$,  
we see  
$$
\sum_{j\in \mathbf { \widetilde A}} |\widetilde{I_j}| \le C |x|^{-d/2} 
\sum_{2^j\gtrsim (|t|/|x|)^{1/(1-\alpha)}}2^{\frac{d}{2}(\alpha-1)j}  \lesssim  |t|^{-d/2} .
$$
Combining these estimates for the cases $\mathbf{\widetilde A}$, $ \mathbf{\widetilde B}$, and $\mathbf{\widetilde C}$ gives \eqref{near-away} with $-$.  This completes the proof. 
\end{proof}

We conclude this section with some remarks on our results by comparing them with previously known results.
\begin{remark}
First, the estimate for the wave case (Proposition \ref{p:osc-wave})  is of a different nature from the standard  context of damped oscillatory integral estimates since the determinant of the Hessian matrix is identically zero. As far as the authors  are aware, no such result has previously appeared in the literature. The same also applies to 
Proposition \ref{p:KGoscillatory_wave}.  Secondly, 
 concerning Proposition \ref{p:KGoscillatory_Schro}, note that  if  $|\xi|$ is large, then $|\nabla \langle  \xi\rangle |\sim 1$  and 
\[  \det H\langle  \xi\rangle = \langle  \xi\rangle^{-3d}|\xi|^{2d-2}\sim |\xi|^{-d-2}, \] 
which means $\jb \xi$ is not almost homogeneous. Thus, the estimate \eqref{e:KGdispersivemain} can not be covered by Proposition \ref{p:OsciAlmostRad} and we extend the result in Kenig--Ponce--Vega \cite{KenigPonceVega}.   Also,  \eqref{e:KGdispersivemain} can not be deduced from Carbery and Ziesler's work \cite{CarberyZiesler} since the result in \cite{CarberyZiesler}  is local in its nature and  the basic convexity assumption  is not satisfied (precisely,  $\phi(r)=\langle r \rangle$ is convex but $\phi'$ is not convex). 
 \end{remark}
 
 \section{Strichartz estimates for orthonormal families: the sharp admissible case} \label{section:ONSsharp}

In this section, making use of  the weighted oscillatory integral estimates in the previous section,  
we prove the sufficiency parts of Theorems \ref{t:wave}, \ref{t:KGSchro}, \ref{t:KGwave},  and \ref{t:fracSchrosharp} (in this order). 
The necessity part is to be discussed later in Section \ref{section:necesssary}.  

We begin by stating the following result which formalizes the argument we use to establish the orthonormal Strichartz estimates.
 
\begin{proposition} \label{p:abstractdual}
Let $\sigma > 0$ and assume that $(q,r)$ is sharp $\sigma$-admissible such that\,\footnote{Equivalently, $\frac{2(\sigma + 1)}{\sigma} < r < \infty$ if $0 < \sigma \leq \frac{1}{2}$, or $\frac{2(\sigma + 1)}{\sigma} < r < \frac{2(2\sigma+1)}{2\sigma-1}$ if $\sigma > \frac{1}{2}$.  }
\begin{equation} \label{e:rtilderange}
\max\{1 + 2\sigma,2\} < \tr < 2 + 2\sigma.
\end{equation}
Let $S$ be a domain in the complex plane which contains the strip $\{z\in \mathbb C: -\widetilde r/2\le  \Re z \le 0 \}$. 
Suppose that $(\Theta_z)_{z}$ is  an analytic family of  functions for $z\in S$\,\footnote{This means the map $z\mapsto \Theta_z(\xi)$ is an analytic function on $S$ for each $\xi\in\mathbb{R}^d$. }  
which satisfies the following: 
For some constants $A_0, A_1$ and $N > 0$,
\begin{align} \label{e:abstract0}
\sup_{\xi \in \mathbb{R}^d}|&\Theta_{i\kappa} (\xi)| \leq (1 + |\kappa|)^N A_0, 
\\
\label{e:abstractosc}
\bigg| \int e^{i(x \cdot \xi + t\phi(\xi))} &\Theta_{-\frac{\tr}2 + i\kappa}(\xi) \, \mathrm{d}\xi  \bigg| \leq (1 + |\kappa|)^N A_1 |t|^{-\sigma}.
\end{align}
Additionally, if $\Theta_{-1}$ is nonnegative, then
\begin{equation*}
\bigg\|   \sum_j  \nu_j |U_\phi \sqrt{\Theta_{-1}(D)} f_j| ^2 \bigg\|_{L^{\frac q2,\beta}_tL^{\frac r2}_x} \lesssim A_0^{\frac{2}{r}} A_1^{1 - \frac{2}{r}} \| \nu \|_{\ell^\beta}
\end{equation*}
holds for all families of orthonormal functions $(f_j)_j$ in $L^2$ and $\beta = \frac{2r}{r+2}$.
\end{proposition}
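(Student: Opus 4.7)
The plan is to combine the duality principle of Proposition \ref{p:duality} with Stein's complex interpolation theorem for Schatten classes, applied to a carefully chosen analytic family of Schatten-class operators.

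\textbf{Step 1 (Duality reduction).} Note that $\beta = \tfrac{2r}{r+2}$ satisfies $\beta' = \tr$, and with $p = 2\beta$ one has $\tp = 2\tr$. By Proposition \ref{p:duality} the desired orthonormal estimate is therefore equivalent to the Schatten-class bound
\[
\|M_W U_\phi \Theta_{-1}(D) U_\phi^* M_{\overline{W}}\|_{\mathcal{C}^{\tr}} \lesssim A_0^{2/r} A_1^{1-2/r} \|W\|^2_{L^{\tq,2\tr}_t L^{\tr}_x},
\]
and this is what I would set out to prove.

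\textbf{Step 2 (Analytic family).} I would introduce an analytic family $\{T_z\}$ on $L^2_{t,x}$, indexed by $z$ in the strip $\{-\tr/2 \le \Re z \le 0\}$, of the form
\[
T_z = M_{\widetilde{W}_z}\, U_\phi\, \Theta_z(D)\, U_\phi^*\, M_{\overline{\widetilde{W}_z}},
\]
where $\widetilde{W}_z$ is an analytic modification of $W$ by a suitable power of $|W|$, chosen so that $\widetilde{W}_{-1}=W$ (hence $T_{-1}$ is the target operator) and so that both endpoint estimates below involve the same $W$-norm $\|W\|^2_{L^{\tq,2\tr}_t L^{\tr}_x}$.

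\textbf{Step 3 (Endpoint at $\Re z = 0$).} Factoring $T_{i\kappa} = (M_{\widetilde W_{i\kappa}} U_\phi)\,\Theta_{i\kappa}(D)\,(M_{\widetilde W_{i\kappa}} U_\phi)^*$ gives
\[
\|T_{i\kappa}\|_{\mathcal{C}^\infty} \le \|\Theta_{i\kappa}\|_{L^\infty}\, \|M_{\widetilde W_{i\kappa}} U_\phi\|^2_{L^2_\xi\to L^2_{t,x}}.
\]
The first factor is controlled by hypothesis \eqref{e:abstract0}, while the second is estimated via the Lorentz-refined Strichartz estimate $\|U_\phi f\|_{L^{q,2}_t L^r_x}\lesssim \|f\|_{L^2}$ (Keel--Tao) combined with Hölder's inequality in Lorentz spaces---the pairings $1/\tq+1/q=1/2$ and $1/(2\tr)+1/2\ge 1/2$ are what cause the target norm $\|W\|^2_{L^{\tq,2\tr}_t L^\tr_x}$ to appear.

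\textbf{Step 4 (Endpoint at $\Re z = -\tr/2$).} Hypothesis \eqref{e:abstractosc} gives the pointwise kernel estimate $|K_{-\tr/2+i\kappa}(x-y,t-s)|\lesssim (1+|\kappa|)^N A_1 |t-s|^{-\sigma}$ with $K_z(x,t)=\int e^{i(x\cdot\xi+t\phi(\xi))}\Theta_z(\xi)\,\mathrm{d}\xi$. Inserting this into the Hilbert--Schmidt formula and integrating in $x,y$ (here the weight modification of Step 2 ensures that the resulting spatial norm of $W$ is $L^\tr_x$ rather than the naive $L^2_x$) reduces to a one-dimensional convolution estimate
\[
\iint h(t)\,h(s)\,|t-s|^{-2\sigma}\,\mathrm{d}t\,\mathrm{d}s \lesssim \|W\|^{4}_{L^{\tq,2\tr}_t L^\tr_x},
\]
which follows from the Hardy--Littlewood--Sobolev inequality in Lorentz form (equation \eqref{e:HLS}); the constraint $\tr < 2+2\sigma$ is precisely what places the HLS exponents in the admissible range when $\lambda = 2\sigma$.

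\textbf{Step 5 (Stein interpolation).} Applying Stein's complex interpolation theorem for Schatten classes on the strip with $\theta = 2/\tr$ (so that $z(\theta) = -\theta\tr/2 = -1$ and $1/\tr = \theta/2$) gives
\[
\|T_{-1}\|_{\mathcal{C}^{\tr}} \lesssim A_0^{1-2/\tr} A_1^{2/\tr}\, \|W\|^2_{L^{\tq,2\tr}_t L^\tr_x}.
\]
Using $1/\tr = 1/2 - 1/r$ one has $1-2/\tr = 2/r$ and $2/\tr = 1-2/r$, matching the claimed dependence on $A_0, A_1$.

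\textbf{Main obstacle.} The principal technical difficulty lies in Step 2, namely the choice of $\widetilde{W}_z$. With the naive choice $\widetilde{W}_z \equiv W$, the Hilbert--Schmidt computation in Step 4 produces $\|W(\cdot,t)\|_{L^2_x}$ in place of $\|W(\cdot,t)\|_{L^\tr_x}$, and for $\tr\ne 2$ the two spatial norms are not comparable, so the interpolation of endpoints fails to deliver the desired exponent. Introducing an auxiliary analytic power of $|W|$ whose real part interpolates between the values required at the two endpoints rectifies this mismatch. Verifying that such an interpolation is consistent simultaneously with the operator-norm endpoint (which requires positive $|W|$-exponent for the Strichartz pairing to apply) and the Hilbert--Schmidt endpoint (which requires the $|W|^{\tr/2}$-power to emerge) is the technical heart of the argument, and it is precisely here that the hypothesis $\max\{1+2\sigma,2\}<\tr<2+2\sigma$ together with the Lorentz refinement of HLS come into play.
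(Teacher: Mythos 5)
Your overall architecture---duality to a Schatten bound, then Stein-type analytic interpolation between an operator-norm endpoint and a Hilbert--Schmidt endpoint, with the constant $A_0^{2/r}A_1^{1-2/r}$ emerging at $\theta=2/\tr$---is correct. But the analytic family you propose differs from the paper's in a way that breaks both endpoints, and your ``main obstacle'' paragraph mislocates the difficulty.

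The paper's analytic family is a space--time Fourier multiplier
\[
\mathcal{F}(T_z G)(\xi,\tau) = \frac{(\tau-\phi(\xi))_+^z}{\Gamma(1+z)}\, A_0^{-1-\frac{2}{\tr}z}A_1^{\frac{2}{\tr}z}\, \Theta_z(\xi)\, \widehat{G}(\xi,\tau),
\]
with the weights $W_1,W_2$ held fixed; the distributional identity $\tau_+^z/\Gamma(1+z)\to\delta$ as $z\to-1$ recovers $T_{-1}=c\,U_\phi\Theta_{-1}(D)U_\phi^*$. The $\Gamma$-normalized power of $\tau-\phi(\xi)$---the device going back to Strichartz's original paper \cite{Strichartz}---does two things your proposal loses. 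At $\Re z=0$ it makes $T_{i\kappa}$ an honest bounded multiplier on $L^2_{t,x}$, so the $\mathcal{C}^\infty$ endpoint follows from Plancherel and \eqref{e:abstract0} alone. Your Step 3 instead invokes the Lorentz-refined Strichartz estimate $\|U_\phi f\|_{L^{q,2}_tL^r_x}\lesssim\|f\|_{L^2}$. Besides being circular in spirit (Remark \ref{remark-LP} emphasizes that this scheme is meant to \emph{prove} Strichartz, not assume it), that estimate is false in the generality of the abstract hypotheses: for $\phi(\xi)=|\xi|$ or $\langle\xi\rangle$ the unweighted $L^2\to L^{q,2}_tL^r_x$ bound requires derivative losses, and the only dispersive information provided by \eqref{e:abstractosc} concerns the damped operator at $\Re z=-\tr/2$, not $U_\phi$ itself.

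The decisive failure is at $\Re z=-\tr/2$. The temporal Fourier inverse of $\tau_+^z/\Gamma(1+z)$ contributes an extra factor $\sim|t-s|^{-z-1}$, so the paper's kernel decays like $|t-s|^{-(\sigma+1-\tr/2)}$ rather than $|t-s|^{-\sigma}$. In your Step 4 the Hardy--Littlewood--Sobolev exponent is $\lambda=2\sigma$, and \eqref{e:HLS} requires $\lambda\in(0,1)$, i.e.\ $\sigma<\tfrac12$; the constraint $\tr<2+2\sigma$ you cite does nothing to enforce this. For every equation treated here ($\sigma=\frac{d-1}{2}$ or $\frac d2$) one has $\sigma\ge\frac12$, so the kernel $|t-s|^{-2\sigma}$ is not even locally integrable and the Hilbert--Schmidt integral diverges. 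With the $(\tau-\phi(\xi))_+^z/\Gamma(1+z)$ factor present, the doubled exponent becomes $\lambda=2\sigma+2-\tr$, and \eqref{e:rtilderange} is exactly the condition placing this in $(0,\min\{1,2\sigma\})$. No choice of varying weight can repair this: modifying $W$ by an analytic power changes which Lorentz norms of $W$ appear at the endpoints, but not the kernel's singularity in $|t-s|$. The weight bookkeeping is the routine part---the paper gets $\|W_j\|_{\infty,\infty}$ at $\Re z=0$ and $\|W_j\|_{(\frac{4}{\tr-2\sigma},4),2}$ at $\Re z=-\tr/2$, interpolating to $\|W_j\|_{(\tq,2\tr),\tr}$ at $\theta=2/\tr$---and the genuine missing idea in your proposal is the analytic multiplier $(\tau-\phi(\xi))_+^z/\Gamma(1+z)$ itself.
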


The assumption that  $\Theta_{-1}$ is nonnegative is not essential. As long as $\sqrt{\Theta_{-1}(D)}$ can be properly defined, Proposition \ref{p:abstractdual} is valid.

\begin{proof}
Consider the analytic family  operators $(T_z)_z$ which are given by 
\[ 
\mathcal F(T_z G)(\xi,\tau) =  \frac{ (\tau-\phi(\xi))_+^z}{\Gamma(1+z)} A_0^{- 1 - \frac{2}{\tr}z}A_1^{\frac{2}{\tr}z} \Theta_z(\xi) \widehat G(\xi,\tau).
\]
Note that
\[ 
T_{-1}= \frac{1}{2\pi}A_0^{-1 + \frac{2}{\tr}}A_1^{-\frac{2}{\tr}} U_\phi \Theta_{-1}(D) U_\phi^*.
\] 
Therefore, by Proposition \ref{p:duality} it suffices to prove
\begin{equation*}
\|W_1T_{-1} W_2 \|_{\mathcal{C}^{\tr}} \lesssim \|W_1\|_{(\tq,2\tr),\tr}\|W_2\|_{(\tq,2\tr),\tr}\,.
\end{equation*}
Since $(q,r)$ is sharp $\sigma$-admissible,  we note that $\frac{\widetilde r}{2\widetilde q}=\frac{\widetilde r-2\sigma}{4}$. 
By analytic interpolation, this follows from  the estimates
\begin{equation} \label{e:abstractinfty}
\| W_1 T_{i\kappa} W_2\|_{\mathcal{C}^\infty} \leq C_\kappa \|W_1\|_{\infty, \infty}\|W_2\|_{\infty, \infty}
\end{equation}
and
\begin{equation} \label{e:abstract2}
\|W_1 T_{-\frac{\tr}2 + i\kappa} W_2\|_{\mathcal{C}^2} \leq C_\kappa \| W_1\|_{(\frac{4}{\tr-2\sigma},  4),  2}  \| W_2\|_{(\frac{4}{\tr-2\sigma},4), 2}
\end{equation}
for constants $C_\kappa$ which grow at most exponentially with $\kappa$. Since \eqref{e:abstractinfty} is equivalent to the $L^2$ boundedness of $T_{i\kappa}$, 
the estimate in \eqref{e:abstractinfty} is an easy consequence of Plancherel's theorem and the assumption \eqref{e:abstract0}. 

For \eqref{e:abstract2}, let $z = -\frac{\tr}2 + i\kappa$, in which case the kernel of $T_z$ is given by 
\[ 
\frac{C}{\Gamma(1- \frac{\tr}{2} + i\kappa)} \int  \tau_+^z e^{i(t-s)\tau} \left( \int  e^{i((x-y) \cdot \xi+(t-s)\phi(\xi))} \Theta_{-\frac{\tr}2 + i\kappa}(\xi) \,\mathrm{d}\xi \right) \mathrm{d}\tau.
\] 
Thus, by the assumption \eqref{e:abstractosc} we see\footnote{Here we are using the fact that $\mathcal F(\frac{t^z_+}{\Gamma(z+1)})(\tau) =   i e^{iz\pi/2} (\tau+i0)^{-z-1}=  i (    e^{iz\pi/2} \tau_+^{-z-1}-e^{-iz\pi/2} \tau_-^{-z-1})$, where the latter equality is true if $z\notin \mathbb Z$ (see \cite[p. 172]{GS}).}  that the kernel $K_{-\frac{\tr}2 + i\kappa}$ of the operator $W_1 T_{-\frac{\tr}2 + i\kappa} W_2$ satisfies the bound
\[ 
| K_{-\frac{\tr}2 + i\kappa}(x,t, y,s) | \leq C_\kappa |W_1(x,t)||t-s|^{-\sigma -1+\frac{\tr}2} |W_2(y,s)|
\]
for some constant $C_\kappa$ which grows at most exponentially with $\kappa$.
Using \eqref{e:HLS} and the assumption \eqref{e:rtilderange}, we obtain
\begin{align*} 
\|W_1 T_{-\frac{\tr}2 + i\kappa} W_2\|_{\mathcal{C}^2}^2 & \leq C_\kappa \iiiint   |W_1(x,t)|^2|t-s|^{-2\sigma-2+{\tr}} |W_2(y,s)|^2 \, \mathrm{d}x\mathrm{d}y\mathrm{d}s\mathrm{d}t 
\\
&  \leq C_\kappa \| W_1^2\|_{(u,2), 1}  \| W_2^2\|_{(u,2), 1}, 
\end{align*}
where $\frac2u =\tr - 2\sigma$. The estimate in \eqref{e:abstract2} follows.
\end{proof}

\subsection{The wave equation}
Since we now have Proposition \ref{p:abstractdual}, to show sufficiency part of Theorem \ref{t:wave}  we need to choose an appropriate analytic family $\Theta_z(\xi)$. However 
it is already more or less  clear from the perspective  of  Proposition  \ref{p:osc-wave}. 

\begin{proof}[Proof of Theorem \ref{t:wave} (sufficiency part)]
It suffices to prove
\begin{equation} \label{e:wavefinalgoal}
\bigg\|   \sum_j  \nu_j |e^{it\sqrt{-\Delta}} f_j| ^2 \bigg\|_{(\frac q2, \beta), \frac r2 }\lesssim \| \nu\|_{\ell^\beta}
\end{equation}
for $(q,r)$ which are sharp $\frac{d-1}{2}$-admissible, $\beta = \frac{2r}{r+2}$,  and
\begin{equation} \label{e:waverestricted}
\frac{2(d+1)}{d-1} < r<\frac{2d}{d-2},  
\end{equation}
 where $(f_j)_j$ is an orthonormal family in $\dot{H}^s$ and $s = \frac{d+1}{2}(\frac{1}{2} - \frac{1}{r})$. Indeed, for such $(q,r)$ we have $\frac{q}{2} > \beta$ and thus   
 the strong type estimate \eqref{e:ONSwaveFS} follows by embedding between Lorentz spaces. Since this estimate is trivially valid when $(q,r,\beta)=(\infty,2,1)$, by interpolation we may extend the range to the full range 
$
2\leq r<\frac{2d}{d-2}.
$

Let us set 
\[
g_j= |D|^{\frac{d+1}2(\frac{1}{2} -\frac{1}{r})} f_j. 
\]
Clearly, $(g_j)_j$ is an orthonormal family in $L^2$. 
To prove \eqref{e:wavefinalgoal} under the condition \eqref{e:waverestricted}, we use Proposition \ref{p:abstractdual} with
\[
\Theta_z(\xi) = \frac{{\tr}+2z}{\tr-2}  |\xi|^{\frac{d+1}{\tr}z} \chi^2(|\xi|).
\]
Then,  \eqref{e:abstract0} holds and, from Proposition \ref{p:osc-wave}, we have  \eqref{e:abstractosc} with $\sigma = \frac{d-1}{2}$. 
Also, note  that $d < \tr < d+1$ from \eqref{e:waverestricted}. 
Since 
$
\Theta_{-1}(\xi) = |\xi|^{- (d+1)(\frac{1}{2} - \frac{1}{r})}\chi^2(|\xi|),
$
 it follows from Proposition \ref{p:abstractdual} that we have the estimate
\[
\bigg\|   \sum_j  \nu_j \Big|e^{it\sqrt{-\Delta}} \chi(|D|)  |D|^{-s} g_j\Big| ^2 \bigg\|_{(\frac q2, \beta), \frac r2 }\lesssim \| \nu \|_{\ell^\beta}
\]
with $s=\frac{d+1}{2}(\frac{1}{2}-\frac{1}{r})$ and $\beta = \frac{2r}{r+2}$.  This immediately yields 
\[
\bigg\|   \sum_j  \nu_j |e^{it\sqrt{-\Delta}} \chi(|D|) f_j| ^2 \bigg\|_{(\frac q2, \beta), \frac r2 }\lesssim \| \nu \|_{\ell^\beta}
\]
for all families of orthonormal functions $(f_j)_j$ in $\dot{H}^s$, with $s=\frac{d+1}{2}(\frac{1}{2}-\frac{1}{r})$ and $\beta = \frac{2r}{r+2}$. Note that the  sharp $\frac{d-1}2$-admissible estimate \eqref{e:wavefinalgoal} with $s=\frac{d+1}{2}(\frac{1}{2}-\frac{1}{r})$  is invariant under rescaling.    Thus, it is easy to see that  rescaling the above  estimate gives 
\[
\bigg\|   \sum_j  \nu_j |e^{it\sqrt{-\Delta}} \chi(\varepsilon |D|)^{1/2}f_j| ^2 \bigg\|_{(\frac q2, \beta), \frac r2 }\lesssim \|  \nu \|_{\ell^\beta}
\]
uniformly in $\varepsilon > 0$, and letting $\varepsilon$ tend to zero, we obtain \eqref{e:wavefinalgoal}.
\end{proof}

\subsection{The Klein--Gordon equation}   
As mentioned in the introduction, we separately handle the Schr\"odinger-like  case   and the wave-like case. As before, the proof is rather straightforward once we have the right analytic family, so we shall be brief.

\begin{proof}[Proof of Theorem \ref{t:KGwave} (sufficiency part)] We only hand the critical case $s = \frac{d+1}{2}(\frac{1}{2} - \frac{1}{r})$ since the other case $s > \frac{d+1}{2}(\frac{1}{2} - \frac{1}{r})$ can be shown in exactly same manner by making use of the estimate \eqref{e:KGwave-more-damping}
   instead of \eqref{KGwavedecay}. 
   
In a similar manner to the proof of Theorem \ref{t:wave}, it suffices to prove the estimate \begin{equation}
\label{KGest-epsilon}
\bigg\|   \sum_j  \nu_j |e^{it\sqrt{1-\Delta}} \chi(\varepsilon |D|)f_j| ^2 \bigg\|_{(\frac q2, \beta), \frac r2 }\lesssim \|  \nu \|_{\ell^\beta}
\end{equation}
uniformly in $\varepsilon > 0$. Here, $(q,r)$ is sharp $\frac{d-1}{2}$-admissible with $r$ satisfying \eqref{e:waverestricted}, 
$\beta = \frac{2r}{r+2}$, and an orthonormal family $(f_j)_j$  in $H^s$, $s = \frac{d+1}{2}(\frac{1}{2} - \frac{1}{r})$. 
To this end, we use Proposition \ref{p:abstractdual} with
\[
\Theta_z(\xi) = \frac{{\tr}+2z}{\tr-2}  \langle\xi\rangle^{\frac{d+1}{\tr}z} \chi^2(\varepsilon |\xi|), 
\]
and  we see  \eqref{e:abstract0} holds with $A_0$ independent of $\varepsilon$ and, from the estimate \eqref{KGwavedecay} in Proposition \ref{p:KGoscillatory_wave}, we obtain \eqref{e:abstractosc} with $\sigma = \frac{d-1}{2}$ and $A_1$ independent of $\varepsilon$. Since
$
\Theta_{-1}(\xi) = \langle\xi\rangle^{-2s}\chi^2(\varepsilon |\xi|)
$
and   $( \langle  D \rangle^{-s} f_j )_j $ forms an orthonormal family in $L^2$, 
 the desired uniform estimate \eqref{KGest-epsilon} follows from Proposition \ref{p:abstractdual}.
\end{proof}

\begin{proof}[Proof of Theorem \ref{t:KGSchro} (sufficiency part)] As before we only hand the critical case $s = \frac{d+2}{2}(\frac{1}{2} - \frac{1}{r})$  since the other case $s > \frac{d+2}{2}(\frac{1}{2} - \frac{1}{r})$ can be shown by following the argument below  if we use the estimate \eqref{e:KGwave-more-damping}   instead of \eqref{KGwavedecay}.

It is enough to show that  \eqref{KGest-epsilon} holds uniformly in $\varepsilon > 0$ for  sharp $\frac{d}{2}$-admissible  $(q,r)$ with $2\le  r<\frac{2(d+1)}{d-1}, $  $\beta = \frac{2r}{r+2}$, and an orthonormal family $(f_j)_j$  in $H^s$, $s = \frac{d+2}{2}(\frac{1}{2} - \frac{1}{r})$. 
We use Proposition \ref{p:abstractdual} with
\[
\Theta_z(\xi) =  \langle\xi\rangle^{\frac{d+2}{\tr}z} \chi^2(\varepsilon |\xi|).
\]
Then the assumption \eqref{e:abstract0} trivially holds  and, from Proposition \ref{p:KGoscillatory_Schro}, we have \eqref{e:abstractosc} with $\sigma = \frac{d}{2}$ which is uniform in $\varepsilon$.  Since
$
\Theta_{-1}(\xi) = \langle\xi\rangle^{-2s} \chi^2(\varepsilon |\xi|)
$
and $( \langle  D \rangle^{-s} f_j )_j $ is  an orthonormal family in $L^2$, 
we deduce from Proposition \ref{p:abstractdual} that
\eqref{KGest-epsilon}
holds for all families of orthonormal functions $(f_j)_j$ in $H^s$, $\beta = \frac{2r}{r+2}$ and where $(q,r)$ is sharp $\frac{d}{2}$-admissible with
\begin{equation} \label{e:schrorestricted}
\frac{2(d+2)}{d} < r<\frac{2(d+1)}{d-1}.
\end{equation}
Since this estimate holds trivially when $(q,r,\beta)=(\infty,2,1)$, by interpolation we may extend the range to 
$
2\leq r<\frac{2(d+1)}{d-1},
$
and this completes the proof.
\end{proof}

\begin{remark} \label{trivial-extension} In the proofs of  Theorem \ref{t:KGwave} and  Theorem \ref{t:KGSchro} the  noncritical cases $s > \frac{d+1}{2}(\frac{1}{2} - \frac{1}{r})$ and  $s > \frac{d+2}{2}(\frac{1}{2} - \frac{1}{r})$  can also be deduced from the critical cases by making use of the  inequality 
\[ \Big\|  \Big( \sum_j \big|\langle D \rangle^{-\beta} g_j\big | ^2\Big)^\frac12 \Big \|_{p}\lesssim \Big\|  \Big( \sum_j \big| g_j\big | ^2\Big)^\frac12 \Big \|_{p}\] 
which is valid for $\beta\ge 0$ and $1\le p< \infty$ with any  $(g_j)_j$ not necessarily orthogonal. This inequality may  be shown from the trivial inequality 
$\|\langle D \rangle^{-\beta} f\|_p\lesssim \|f\|_p$, $1\le p\le \infty$, and randomization (Kintchin's inequality).    
\end{remark}

\subsection{The fractional Schr\"{o}dinger equation}
Clearly, our phase function $\phi_\alpha(\xi) = |\xi|^\alpha$, $\alpha\in\mathbb{R}\setminus\{0,1\}$ is almost homogeneous of order $\alpha$ and  $|\!\det H \phi_\alpha(\xi)|=C_{d,\alpha} |\xi|^{d(\alpha-2)}$ for some constant $C_{d,\alpha}$.  Hence, Theorem \ref{t:fracSchrosharp} may be deduced from the following theorem. 
\begin{theorem} \label{t:radial}
Let $d\geq 1$, $\alpha\in \mathbb R\setminus\{0,1\}$ and $\phi$ be almost homogeneous of order $\alpha$. 
Suppose $(q,r)$ is sharp $\frac{d}{2}$-admissible. Then, the following hold for all families of orthonormal functions $(f_j)_j$ in $L^2$.
\vspace{-7pt}
\begin{enumerate}
[leftmargin=0.8cm, labelsep= 0.4 cm, topsep=0pt]
\item[$(i)\,$] If $2\leq r<\frac{2(d+1)}{d-1}$ and $\beta = \frac{2r}{r+2}$, 
then we have the estimate
\begin{equation}
\label{h-ortho}
\bigg\|   \sum_j  \nu_j \Big|U_\phi (|\det H\phi(D)|^{\frac{r-2}{4r}}  f_j)\Big| ^2 \bigg\|_{L^{\frac q2}_tL^{\frac r2}_x} \lesssim \| \nu \|_{\ell^\beta} .
\end{equation}
\item[$(ii)$] If $d = 2$ and $6 \leq r < \infty,$
or if
$d \geq 3$ and $\frac{2(d+1)}{d-1} \leq r \le \frac{2d}{d-2},$ then \eqref{h-ortho}
holds with $\beta < \frac{q}{2}$.  
\end{enumerate}
\end{theorem}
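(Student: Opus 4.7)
The strategy is to apply the abstract duality--interpolation framework of Proposition \ref{p:abstractdual} with a carefully engineered analytic family $\Theta_z$, using Proposition \ref{p:OsciAlmostRad} as the key dispersive input on the critical line $\Re z = -\tr/2$, and then to extend the range of $r$ by complex interpolation.

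The natural choice is
\[
\Theta_z^\varepsilon(\xi) = |\det H\phi(\xi)|^{-z/\tr}\,\rho_\varepsilon(\xi),
\]
where I take $\rho_\varepsilon(\xi) = \chi(\varepsilon|\xi|)$ when $\alpha>0$ and $\rho_\varepsilon(\xi) = \chi_\infty(\varepsilon^{-1}|\xi|)$ when $\alpha<0$; this cutoff is inserted to tame the blow-up or decay of $|\det H\phi|\sim|\xi|^{d(\alpha-2)}$ at one end of the spectrum. The exponent $-1/\tr$ is forced by matching both ends of the interpolation strip: since $1/\tr = (r-2)/(2r)$, one has $\Theta^\varepsilon_{-1} = |\det H\phi|^{1/\tr}\rho_\varepsilon$ whose square root is precisely the weight appearing in \eqref{h-ortho} (modulo $\rho_\varepsilon^{1/2}$), while $\Theta^\varepsilon_{-\tr/2+i\kappa} = |\det H\phi|^{1/2-i\kappa/\tr}\rho_\varepsilon$ turns the integrand in \eqref{e:abstractosc} into $\mathcal I^{\phi,\pm}_{\varepsilon,-\kappa/\tr}(x,t)$, which Proposition \ref{p:OsciAlmostRad} bounds by $C(1+|\kappa|)^N|t|^{-d/2}$ uniformly in $\varepsilon$. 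The assumption \eqref{e:abstract0} is immediate since $|\Theta^\varepsilon_{i\kappa}|\le\rho_\varepsilon\le 1$. With $\sigma = d/2$, the admissibility window $d+1<\tr<d+2$ of Proposition \ref{p:abstractdual} translates to the intermediate range $\frac{2(d+2)}{d}<r<\frac{2(d+1)}{d-1}$, and on this range the proposition delivers the sought estimate with an additional factor $\rho_\varepsilon^{1/2}(D)$ inserted alongside the weight.

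To finish $(i)$, I first remove the cutoff by sending $\varepsilon \to 0$: since $\rho_\varepsilon^{1/2}(D)\to\mathrm{Id}$ strongly on $L^2$ and the estimate is uniform in $\varepsilon$, Fatou's lemma applied pointwise inside the space-time integrals (equivalently, passing to the limit in the Schatten-class reformulation of Proposition \ref{p:duality}) yields \eqref{h-ortho} on the open intermediate range. At the trivial endpoint $(q,r,\beta)=(\infty,2,1)$ the weight collapses to $1$ and \eqref{h-ortho} reduces to $\|\sum_j\nu_j|U_\phi f_j|^2\|_{L^\infty_t L^1_x}\le\|\nu\|_{\ell^1}$, which follows from $L^2$-conservation. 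Complex interpolation between this endpoint and the intermediate estimate, carried out through the $r$-dependent analytic family of weights $|\det H\phi|^{(r-2)/(4r)}$, then extends \eqref{h-ortho} to the full range $2\le r<\frac{2(d+1)}{d-1}$, completing case $(i)$. For case $(ii)$, the crucial observation is that $\beta=\frac{2r}{r+2}\to\frac{d+1}{d}=q/2$ as $r$ approaches $\frac{2(d+1)}{d-1}$ from below; interpolating case $(i)$ estimates at interior points approaching this boundary with the Keel--Tao endpoint $(q,r)=(2,\frac{2d}{d-2})$ of classical Strichartz (where $\beta=1$ is trivially attained) then produces $\beta<q/2$ throughout the range of $(ii)$, as required.

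The main obstacle I anticipate lies not in the abstract structure, which is furnished wholesale by Propositions \ref{p:abstractdual} and \ref{p:OsciAlmostRad}, but in handling the $\alpha$-dependent bifurcation of the cutoff $\rho_\varepsilon$ so that both the uniform oscillatory bound and the strong limit $\rho_\varepsilon^{1/2}(D)\to\mathrm{Id}$ are simultaneously valid. Since Proposition \ref{p:OsciAlmostRad} has been formulated to cover both sign regimes for $\alpha$ with constants uniform in $\varepsilon$, this is a matter of careful bookkeeping rather than a genuine analytical difficulty.
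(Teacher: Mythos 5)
Your proposal matches the paper's argument in virtually every respect: the choice of analytic family $\Theta_z^\varepsilon = |\det H\phi|^{-z/\tr}\rho_\varepsilon$, the identification of the oscillatory integral $\mathcal I^{\phi,\pm}_{\varepsilon,\cdot}$ on the critical line $\Re z = -\tr/2$, the $\alpha$-dependent bifurcation of the cutoff $\rho_\varepsilon$, the intermediate range $\frac{2(d+2)}{d}<r<\frac{2(d+1)}{d-1}$ produced by Proposition \ref{p:abstractdual}, the limit $\varepsilon\to 0$, and the interpolations to complete both parts. (The paper places $\chi^2(\varepsilon|\xi|)$ rather than $\chi(\varepsilon|\xi|)$ in $\Theta_z$; this is cosmetic, and your choice in fact matches the statement of Proposition \ref{p:OsciAlmostRad} more exactly.)

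There is one genuine, if small, gap: your interpolation for part $(ii)$ anchors on the Keel--Tao endpoint $(q,r)=(2,\tfrac{2d}{d-2})$, which exists only for $d\geq3$. When $d=2$ this degenerates to $(2,\infty)$, which is excluded from the admissible pairs, and in any case the classical $L^2_tL^\infty_x$ Strichartz estimate fails there. The paper handles $d=2$ by instead interpolating the estimates of $(i)$ with $\beta=1$ (i.e.\ classical single-function Strichartz) bounds at sharp $1$-admissible $(q,r)$ for which $(\tfrac1r,\tfrac1q)$ is arbitrarily close to $(0,\tfrac12)$, which fills in the full range $6\leq r<\infty$ claimed in $(ii)$ for $d=2$. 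You need to add this case; the rest is sound.
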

\begin{proof}[Proof of Theorem \ref{t:radial}]
It is enough to prove $(i)$. Indeed, the standard argument combined with the Littlewood--Paley inequality \footnote{After a  Littlewood--Paley decomposition, one may use a  rescaling argument with Lemma \ref{l:phaseScale} to get uniform bounds for each dyadic piece.}  gives the estimate 
\[\Big\|U_\phi (|\det H\phi(D)|^{\frac{r-2}{4r}}  f)\Big\|_{L^{ q}_tL^{ r}_x} \lesssim \|f\|_2,\] 
for all $\frac d2$-admissible $(q,r)$. For example, see \cite[p. 978]{KeelTao}. 
The estimate is trivially equivalent to \eqref{h-ortho} with $\beta=1$.  In particular, for $d \geq 3$, interpolation between the estimate in $(i)$ and \eqref{h-ortho} with $(q,r,\beta) = (2,\frac{2d}{d-2},1)$ proves $(ii)$. When $d=2$, a similar argument works except that we must interpolate between $(i)$ and \eqref{h-ortho} with $\beta = 1$ and $(q,r)$ sharp $1$-admissible with $(\frac{1}{r},\frac{1}{q})$ arbitrarily close to $(0,\frac{1}{2})$.

In order to show $(i)$, first let us consider $\alpha > 0$, in which case it suffices to prove the estimate
\begin{equation} \label{e:radialfinalgoal}
\bigg\| \sum_j  \nu_j \Big|U_\phi (|\det H\phi(D)|^{\frac{r-2}{4r}} \chi(\varepsilon |D|) f_j)\Big| ^2 \bigg\|_{L^{\frac q2}_tL^{\frac r2}_x} \lesssim \| \nu\|_{\ell^\beta} 
\end{equation}
uniformly in $\varepsilon > 0$. Here, $(f_j)_j$ is an orthonormal family in $L^2$, $\beta = \frac{2r}{r+2}$, and $(q,r)$ is sharp $\frac{d}{2}$-admissible satisfying 
 \eqref{e:schrorestricted}. 
Indeed, once this is established, we take the limit $\varepsilon \to 0$ and then interpolate the resulting bound with the case $(q,r,\beta) = (\infty,2,1)$ to obtain the desired estimates for the range  $2\le r \le \frac{2d}{d-2}$.

To prove \eqref{e:radialfinalgoal}, we consider 
\[
\Theta_z(\xi) = |\det H\phi(\xi)|^{-z/\tr} \chi^2(\varepsilon|\xi|).
\]
Obviously \eqref{e:abstract0} holds and we use Proposition \ref{p:OsciAlmostRad} to verify \eqref{e:abstractosc} with $\sigma = \frac{d}{2}$. Since $
\Theta_{-1}(\xi) = |\det H\phi(\xi)|^{\frac{r-2}{2r}} \chi^2(\varepsilon|\xi|)$, 
we obtain \eqref{e:radialfinalgoal} from Proposition \ref{p:abstractdual}.

The case $\alpha < 0$ can be proved in a very similar manner. It suffices to prove the uniform bound \eqref{e:radialfinalgoal} with $\chi$ replaced by $\chi_\infty$, upon which we take the limit $\varepsilon \to \infty$. To this end, we apply Proposition \ref{p:abstractdual} to 
\[
\Theta_z(\xi) = |\det H\phi(\xi)|^{-z/\tr} \chi_\infty^2(\varepsilon |\xi|)
\]
and again use Proposition \ref{p:OsciAlmostRad} to verify \eqref{e:abstractosc}.  The remainder is identical to the previous case, so we omit the details. 
\end{proof}

\begin{remark}
\label{lorentz-improvement}
Since the proofs of Theorems \ref{t:fracSchrosharp}--\ref{t:KGwave} all rely on Proposition \ref{p:abstractdual}, it is clear that the estimates in part $(i)$ of Theorems \ref{t:wave} and \ref{t:KGwave} are true with the Lorentz norm $\| \cdot \|_{(\frac{q}{2}, \frac{2r}{r+2} ),\frac{r}{2}}$ on the left-hand side for sharp $\frac{d-1}2$--admissible $(q,r)$ satisfying \eqref{e:waverestricted}, and  similarly the same is also true for  the estimates  in part $(i)$ of Theorems \ref{t:fracSchrosharp}  and  \ref{t:KGSchro} if $(q,r)$ is $\frac{d}2$--admissible  and  satisfies \eqref{e:schrorestricted}.
\end{remark}

\section{Strichartz estimates for orthonormal families: the non-sharp admissible case} \label{section:nonsharp}

In this section, we present extensions of Theorems \ref{t:fracSchrosharp}--\ref{t:KGwave} to the corresponding non-sharp admissible cases.  In this case,  emphasis lies on  proving the estimates with initial data of the sharp regularity. If the dispersion relation is homogeneous, the optimal regularity is naturally determined by the homogeneous degree of the dispersion relation. In case of the classical Strichartz estimates the sharp regularity estimates are well known for the non-sharp admissible  case. However, in  contrast to the classical Strichartz estimates, generalizing to the estimates for orthonormal families with the optimal summability exponent $\beta$ is no longer trivial.  For a discussion on why  more elementary arguments, such as those based on Littlewood--Paley type considerations do not seem to yield the desired estimates, we refer the reader to \cite{BHLNS}. However, for the Schr\"odinger equation this issue was  resolved in \cite{BHLNS} with an argument which made use of improved estimates in the scale of Lorentz spaces (see Remark \ref{lorentz-improvement}). The basic strategy devised in \cite{BHLNS} also works for the propagators under consideration here to recover the non-sharp admissible bounds. However, here we provide a somewhat more straightforward proof based on Lieb's version of the Sobolev inequality for families of orthonormal functions \cite{Lieb_Sobolev}.

In what follows the estimates we have already obtained in the sharp case play an important role in establishing the non-sharp case and, to a certain degree, this ``deduction" can be captured in an abstract framework. Thus, prior to the statements for each particular equation, we present some results which hold in a level of generality. 

In order to facilitate our presentation we introduce some notations. 
For $\sigma \geq \frac{1}{2}$, we introduce the points $A_\sigma$ and $B_\sigma$ in $[0,\frac{1}{2}] \times [0,\frac{1}{2}]$ given by
\begin{align*}
A_\sigma = \bigg(\frac{2\sigma - 1}{2(2\sigma + 1)}, \frac{\sigma}{2\sigma + 1}\bigg), \quad 
B_\sigma  = \bigg(\frac{\sigma}{2(\sigma+1)},\frac{\sigma}{2(\sigma + 1)}\bigg).
\end{align*}
For $q,r\ge 2$, define $\beta_\sigma(q,r) \in [1,\infty]$  by 
\[
\frac{\sigma}{\beta_\sigma(q,r)} = \frac{1}{q} + \frac{2\sigma}{r}. 
\]
We also introduce the points
\begin{align*}
C = \bigg(\frac{1}{2},0\bigg), \qquad D = \bigg(0,\frac{1}{2}\bigg)
\end{align*}
and, for $\sigma \geq 1$, the point $E_\sigma$ by
\begin{align*}
E_\sigma = \bigg(\frac{\sigma-1}{2\sigma},\frac{1}{2}\bigg).
\end{align*}
Additionally,  if  $P_1, P_2, \dots, P_n$ are points in $[0,\frac{1}{2}]\times [0,\frac{1}{2}]$, by $\mathrm{int} (P_1P_2 \cdots P_n)$ we denote the interior of the  convex hull of the set $\{P_1, P_2, \dots, P_n\}$.

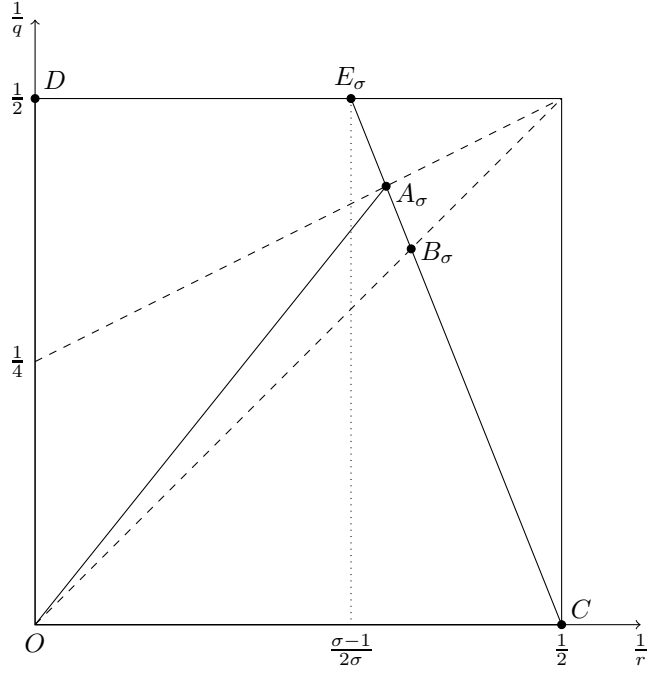
\begin{figure} 
\label{fig:OtoG}
\begin{center}
\begin{tikzpicture}[scale=3.5]
\draw [<->] (0,2.3) node (yaxis) [left] {$\tfrac{1}{q}$}
|- (2.3,0) node (xaxis) [below] {$\tfrac{1}{r}$};
\draw (0, 0) rectangle (2, 2);
\draw (0,0)--(4/3,5/3)--(2,0)--(0,0);

\node [above] at (1.2,2) {$E_\sigma$};
\node [above] at (0.075,2) {$D$}; 
\node [right]at (4/3,49/30) {$A_\sigma$};
\node [right] at (2, 0.06) {$C$};
\node [right] at (10/7,99/70) {$B_\sigma$};
\node [below] at (1.2,0) {$\frac{\sigma-1}{2\sigma}$};

\draw [dashed] (0,1)--(2,2);
\draw [dashed] (0,0)--(2,2);
\draw [dotted] (1.2,2)--(1.2,0);
\node [left]at (0,2) {$\frac{1}{2}$};
\node [left]at (0,1) {$\frac14$};
\node[below] at (0,0) {$O$};
\node[below] at (2,0) {$\frac{1}{2}$};
\draw (4/3,5/3)--(1.2,2);

\filldraw[fill=black] (4/3,5/3) circle[radius=0.15mm];
\filldraw[fill=black] (10/7,10/7)  circle[radius=0.15mm];
\filldraw[fill=black] (0,2)  circle[radius=0.15mm];
\filldraw[fill=black] (2,0)  circle[radius=0.15mm];
\filldraw[fill=black] (1.2,2)  circle[radius=0.15mm];
\end{tikzpicture}
\end{center}
\caption{The points $A_\sigma, B_\sigma, C, D, E_\sigma$ in the case $\sigma > 1$.}
\end{figure}

\begin{proposition} \label{p:nonsharp}
Let $\sigma \geq \frac{1}{2}$, $s(q,r) = \frac{a}{q} + d(\frac{1}{2} - \frac{1}{r})$ for some $a \in \mathbb{R}$, and $\Psi(\xi)$ be  $|\xi|$ or $\langle \xi \rangle$. Assume that 
for  sharp $\sigma$-admissible $(q,r)$  satisfying  \eqref{e:rtilderange} and,  for all families of orthonormal functions $(f_j)_j$ in $L^2$,
 the estimate
\begin{equation} \label{e:ONSrefinedsharp}
\bigg\| \sum_j \nu_j |U_\phi \Psi(D)^{-s(q,r) } f_j|^2 \bigg\|_{\frac{q}{2},\frac{r}{2}} \lesssim \|\nu\|_{\ell^{\beta}}
\end{equation}
holds with $\beta = \beta_\sigma(q,r)$. 
Then, for all $(q,r)$ which are non-sharp $\sigma$-admissible with
$
q > \frac{2\sigma - 1}{2\sigma}r,
$
the estimate
\begin{equation} \label{e:ONSintgoal}
\bigg\| \sum_j \nu_j |U_\phi \Psi(D)^{-s(q,r) } f_j|^2 \bigg\|_{\frac{q}{2},\frac{r}{2}} \lesssim \|\nu\|_{\ell^{\beta}}
\end{equation}
holds for all families of orthonormal functions $(f_j)_j$ in $L^2$ with $\beta = \beta_\sigma(q,r)$.
\end{proposition}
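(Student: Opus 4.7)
The plan is to deduce the non-sharp admissible estimate \eqref{e:ONSintgoal} from the sharp admissible hypothesis \eqref{e:ONSrefinedsharp} combined with Lieb's Sobolev inequality for orthonormal families from \cite{Lieb_Sobolev}, in analogy with the classical derivation of non-sharp admissible Strichartz estimates from sharp ones via Sobolev embedding in the spatial variable.

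Given non-sharp $\sigma$-admissible $(q,r)$ with $q>\frac{2\sigma-1}{2\sigma}r$, I would first select an auxiliary sharp $\sigma$-admissible pair $(q_0,r_0)$ to which the hypothesis \eqref{e:ONSrefinedsharp} applies, i.e.\ $(q_0,r_0)$ satisfies \eqref{e:rtilderange}, and for which the scaling indices satisfy $\gamma:=s(q,r)-s(q_0,r_0)>0$. The hypothesis $q>\frac{2\sigma-1}{2\sigma}r$ is precisely what guarantees such a choice is available. Using that $\Psi(D)$ commutes with $U_\phi$, one factors
\[
U_\phi\Psi(D)^{-s(q,r)}f_j=\Psi(D)^{-\gamma}\,U_\phi\Psi(D)^{-s(q_0,r_0)}f_j,
\]
and the sharp admissible hypothesis \eqref{e:ONSrefinedsharp} controls the square sum of the inner family $U_\phi\Psi(D)^{-s(q_0,r_0)}f_j$ in $L^{q_0/2}_tL^{r_0/2}_x$ with summability $\beta_\sigma(q_0,r_0)$. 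The task then is to promote this to a bound of the same quantity after the application of the Sobolev-type operator $\Psi(D)^{-\gamma}$, in $L^{q/2}_tL^{r/2}_x$, with the correct summability $\beta_\sigma(q,r)$.

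This upgrade is carried out on the Schatten side via the Frank--Sabin duality in Proposition~\ref{p:duality}. The target estimate is equivalent to
\[
\bigl\|W\,U_\phi\Psi(D)^{-2s(q,r)}U_\phi^*\,\overline W\bigr\|_{\mathcal{C}^{\beta_\sigma(q,r)'}}\lesssim\|W\|_{L^{\tq}_tL^{\tr}_x}^{2},
\]
and, since $\Psi(D)$ commutes with $U_\phi$, we have
\[
U_\phi\Psi(D)^{-2s(q,r)}U_\phi^*=\Psi(D)^{-\gamma}\bigl(U_\phi\Psi(D)^{-2s(q_0,r_0)}U_\phi^*\bigr)\Psi(D)^{-\gamma}.
\]
A H\"older inequality in Schatten classes then splits the estimate: the middle factor is controlled by the dual Schatten form of \eqref{e:ONSrefinedsharp}, while the outer factors $W\Psi(D)^{-\gamma}$ are controlled by the dual form of Lieb's Sobolev inequality for orthonormal families.

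\textbf{Main obstacle.} The principal technical point is the exponent bookkeeping: one must arrange the Schatten--H\"older exponents and the Lebesgue exponents of $W$ so that the combination delivers exactly the sharp summability $\beta_\sigma(q,r)$ rather than the weaker exponent produced by a naive interpolation between the sharp admissible estimate and Lieb's Sobolev inequality. The precise restriction $q>\frac{2\sigma-1}{2\sigma}r$ is what ensures that $\gamma$ and $(q_0,r_0)$ can be chosen in the admitted ranges so that both ingredients apply and the arithmetic of exponents closes to give $\beta_\sigma(q,r)$.
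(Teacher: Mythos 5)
Your proposal takes a different route from the paper, and there is a genuine gap in the key step. The paper applies Lieb's Sobolev inequality for orthonormal families directly at the endpoint $(q,r) = (\infty,r)$ to produce a bound of the form \eqref{e:ONS_abstract} with an $L^\infty_t$ outer norm and $\ell^{r/2,1}$ on the right, then complex-interpolates this with the sharp admissible hypothesis \eqref{e:ONSrefinedsharp}, and finally uses a real interpolation step (between two choices $(q_0,r_0)$, $(q_1,r_1)$ with the same $s$) to upgrade the resulting Lorentz sequence norm $\ell^{\beta,1}$ to the strong norm $\ell^\beta$. You instead try to factor the dual Schatten estimate.

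The factorization step does not go through as written. After dualizing, the object you need to control is $W\,TT^*\,\overline{W}$ as an operator on $L^2(\mathbb{R}^{d+1})$, where $T=U_\phi\Psi(D)^{-s(q,r)}$. You write $TT^* = \Psi(D)^{-\gamma}\bigl(U_\phi \Psi(D)^{-2s(q_0,r_0)}U_\phi^*\bigr)\Psi(D)^{-\gamma}$ and then propose Schatten--H\"older with the middle factor $U_\phi\Psi(D)^{-2s(q_0,r_0)}U_\phi^*$ and outer factors $W\Psi(D)^{-\gamma}$. Neither of those pieces belongs to any Schatten class of $L^2(\mathbb{R}^{d+1})$. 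The operator $U_\phi\Psi(D)^{-2s(q_0,r_0)}U_\phi^*$ is of the form $T_0T_0^*$ and it is not even bounded on $L^2(\mathbb{R}^{d+1})$; the dual form of \eqref{e:ONSrefinedsharp} controls the full sandwiched product $W_1 T_0 T_0^* W_2$, not $T_0T_0^*$ in isolation, so it gives no information usable after your split. Likewise, $W\Psi(D)^{-\gamma}$ acts fiberwise in $t$ (multiplication by $W(x,t)$ composed with a Fourier multiplier in $x$ only); as soon as $W$ has temporal support of positive measure this operator is not compact on $L^2(\mathbb{R}^{d+1})$. The dual of Lieb's inequality is a Schatten estimate on $L^2(\mathbb{R}^d)$ with no time variable present, so it cannot control this object either. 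Finally, even if the H\"older split could be made rigorous, you have not addressed the Lorentz-to-strong-type upgrade for the summability exponent, which is an essential part of getting $\ell^\beta$ rather than $\ell^{\beta,1}$ and which the paper handles by real interpolation over a one-parameter family of sharp admissible pairs with a common regularity $s$.
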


\begin{remark} \label{remark:afterP5.1}
(1) Suppose $\sigma > 1$ and, in addition to the assumptions in Proposition \ref{p:nonsharp}, we assume that the (single-function) Strichartz estimate
\[
\| U_\phi \Psi(D)^{-s(2,r)} f \|_{2,r} \lesssim \|f\|_2
\]
holds for $\frac{\sigma - 1}{\sigma} < r < \infty$; that is, $(\frac{1}{r},\frac{1}{2})$ lies on the line segment $(D,E_\sigma)$. Then, for such $r$, clearly \eqref{e:ONSintgoal} holds with $(q,\beta) = (2,1)$. We also note that $\beta_\sigma(q,r) = \frac{q}{2}$ whenever $(\frac{1}{r},\frac{1}{q})$ belongs to the line segment $(O,A_\sigma)$. Thus, by complex interpolation between estimates \eqref{e:ONSintgoal} with $(\frac1r, \frac1q)$ on $(D,E_\sigma)$  and points in the region $\mathrm{int} (OA_\sigma C)$ arbitrarily close to the line segment $(O,A_\sigma)$, we deduce that \eqref{e:ONSintgoal} holds if $(\frac1r, \frac1q)$ belongs to $\mathrm{int} (ODE_\sigma A_\sigma)$ and $\beta < \frac{q}{2}$.

(2) If $(q,r)$ is sharp $\sigma$-admissible then
$
\beta_\sigma(q,r) = \frac{2r}{r+2}.
$
As we have seen, $\beta = \frac{2r}{r+2}$ is the sharp summability exponent appearing on the right-hand side of the estimates in Theorems \ref{t:fracSchrosharp} and \ref{t:KGSchro} for $(\frac{1}{r},\frac{1}{q})$ lying on the line segment $(A_\frac{d}{2}, C]$. 
\end{remark}

\begin{proof}[Proof of Proposition \ref{p:nonsharp}]
The key estimate is the following version of the Sobolev inequality for orthonormal functions, due to Lieb \cite{Lieb_Sobolev}. For all $1<p<\infty$, 
\begin{equation}\label{e:ONSobolev}
\bigg\| \sum_j \nu_j |\Psi(D)^{- \frac{d}{2p'}}f_j|^2 \bigg\|_{L^p(\mathbb{R}^d)} \lesssim \|\nu\|_{\ell^{p,1}} 
\end{equation}
holds for all families of orthonormal functions $(f_j)_j$ in $L^2(\mathbb{R}^d)$ and sequences $\nu$ in $\ell^{p,1}$. 
Notice that for each fixed $t\in\mathbb{R}$, $(U_\phi f_j (t,\cdot))_j$ remains to be an orthonormal family in $L^2(\mathbb{R}^d)$. Also, note that $s(\infty,r) = \frac{d}{2(r/2)'}$. So, \eqref{e:ONSobolev} implies 
\begin{equation}\label{e:0822-1}
\bigg\| \sum_j \nu_j |U_\phi \Psi(D)^{- s(\infty,r)}f_j|^2 \bigg\|_{\infty, \frac r2} \lesssim \|\nu\|_{\ell^{\frac r2,1}}.  
\end{equation}
In view of $\beta_\sigma(\infty, r) = \frac r2$, complex interpolation between \eqref{e:0822-1} and \eqref{e:ONSrefinedsharp} implies 
\begin{equation} \label{e:inftygone}
\bigg\| \sum_j \nu_j |U_\phi \Psi(D)^{-s(q,r) } f_j|^2 \bigg\|_{\frac{q}{2},\frac{r}{2}} \lesssim \|\nu\|_{\ell^{\beta,1}}
\end{equation}
for $(\frac{1}{r},\frac{1}{q})$ belonging to $\mathrm{int} (OA_\sigma C)$, with $\beta = \beta_\sigma(q,r)$; we shall, in fact, only need to make use of this estimate for   $(\frac{1}{r},\frac{1}{q})$ in $\mathrm{int} (OA_\sigma B_\sigma)$.

The estimate \eqref{e:inftygone} already provides the sharp estimates for various equations if we accept 
the weaker Lorentz space norm on the right-hand side; however, we need to strengthen \eqref{e:inftygone} to the desired strong-type estimate \eqref{e:ONSintgoal} and this can be done by using real interpolation.
 We fix $(\frac{1}{r},\frac{1}{q})$ belonging to $\mathrm{int} (OA_\sigma B_\sigma)$ and choose two distinct points $(q_0,r_0),$ $(q_1,r_0)$ such that $(\frac{1}{r_j},\frac{1}{q_j})$ belongs to $\mathrm{int} (OA_\sigma B_\sigma)$ and $s(q,r) = s(q_j,r_j)$ for $j=0,1$. By real interpolation, this\footnote{Here, we are using the fact that $
(L^{q_0}(L^{r_0}),L^{q_1}(L^{r_1}))_{\theta,q}=L^q(L^{r,q})
$ holds whenever $q_0,q_1,r_0,r_1 \in [1,\infty)$, $\frac{1}{q}=\frac{1-\theta}{q_0}+\frac{\theta}{q_1}$, $\frac{1}{r}=\frac{1-\theta}{r_0}+\frac{\theta}{r_1}$, $\theta\in(0,1)$ (see \cite{Lions-Peetre} and \cite{Cwikel74}).} yields
\begin{equation*} 
\bigg\| \sum_j \nu_j |U_\phi \Psi(D)^{-s(q,r) } f_j|^2 \bigg\|_{\frac{q}{2},(\frac{r}{2},\frac{q}{2})} \lesssim \|\nu\|_{\ell^{\beta,\frac{q}{2}}},
\end{equation*}
where $\beta = \beta_\sigma(q,r)$.  Now, notice that 
\begin{equation} \label{e:lucky}
\frac{q}{2} > \beta_\sigma(q,r)
\end{equation}
holds whenever $(\frac{1}{r},\frac{1}{q})$ belongs to $\mathrm{int} (OA_\sigma B_\sigma)$. Since we have both \eqref{e:lucky} and $q \leq r$ when $(\frac{1}{r},\frac{1}{q})$ belongs to $\mathrm{int} (OA_\sigma B_\sigma)$, we obtain \eqref{e:ONSintgoal} whenever $(\frac{1}{r},\frac{1}{q})$ lies in $\mathrm{int} (OA_\sigma B_\sigma)$, from the embeddings $L^{\frac{q}{2}}_tL^{\frac{r}{2},\frac{q}{2}}_x \subseteq L^{\frac{q}{2}}_tL^{\frac{r}{2}}_x$ and $\ell^{\beta} \subseteq \ell^{\beta,\frac{q}{2}}$. 

Finally, we observe that \eqref{e:ONSintgoal} trivially holds when $(q,r,\beta) = (\infty,2,1)$. Indeed, from the trivial estimate
\[
\| U_\phi f \|_{\infty,2} \lesssim \|f\|_2
\]
and since $s(\infty,2) = 0$, we see that \eqref{e:ONSintgoal} follows by the triangle inequality. By interpolation, we obtain the desired estimate \eqref{e:ONSintgoal} whenever $(\frac{1}{r},\frac{1}{q})$ belongs to $\mathrm{int} (OA_\sigma C)$.
\end{proof}

Once we have Proposition \ref{p:nonsharp}, obtaining the desired estimates for non-sharp admissible $(q,r)$ is rather straightforward. 
We prove sufficiency parts of the theorems stated below and the necessity parts will  be shown in Section \ref{section:necesssary}.

\begin{theorem}
[The wave equation and the non-sharp $\frac{d-1}2$-admissible] 
\label{t:wavenonsharp} Let  $d\geq 2$ and suppose $(q,r)$ is non-sharp $\frac{d-1}{2}$-admissible. 
\vspace{-7pt}
\begin{enumerate}
[leftmargin=0.8cm, labelsep= 0.3cm, topsep=0pt]
\item[$(i)\,$] If $(\frac{1}{r},\frac{1}{q})$ belongs to $\mathrm{int}(OA_{\frac{d-1}{2}}C)$, then \eqref{e:ONSwaveFS}
holds for all families of orthonormal functions $(f_j)_j$ in $\dot{H}^s$, with $s = \frac{d}{2} - \frac{d}{r} - \frac{1}{q}$, and $\beta = \beta_{\frac{d-1}{2}}(q,r)$. 
\item[$(ii)$] If $d \geq 4$ and $(\frac{1}{r},\frac{1}{q})$ belongs to $\mathrm{int}(ODA_{\frac{d-1}{2}}E_{\frac{d-1}{2}})$, then
\eqref{e:ONSwaveFS}
holds for all families of orthonormal functions $(f_j)_j$ in $\dot{H}^s$, with $s = \frac{d}{2} - \frac{d}{r} - \frac{1}{q}$, and $\beta  < \frac{q}{2}$. This is sharp in the sense that the estimate fails if $\beta > \frac{q}{2}$.
\end{enumerate}
\end{theorem}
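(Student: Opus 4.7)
The plan is to deduce this theorem entirely from the sharp-admissible bounds of Theorem \ref{t:wave}(i) together with the abstract extension machinery of Proposition \ref{p:nonsharp} (for part $(i)$) and Remark \ref{remark:afterP5.1}(1) (for part $(ii)$). There should be essentially no new analytic content: all the weighted oscillatory integral input has already been used once to establish Theorem \ref{t:wave}, and the passage to the non-sharp region is a clean interpolation / Sobolev-type argument.

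For part $(i)$, I will apply Proposition \ref{p:nonsharp} with $\sigma = \tfrac{d-1}{2}$, $\Psi(\xi) = |\xi|$ and $a = -1$, so that $s(q,r) = -\tfrac{1}{q} + d(\tfrac{1}{2} - \tfrac{1}{r}) = \tfrac{d}{2} - \tfrac{d}{r} - \tfrac{1}{q}$. On the sharp $\tfrac{d-1}{2}$-admissible line this $s(q,r)$ reduces to $\tfrac{d+1}{2}(\tfrac{1}{2} - \tfrac{1}{r})$, and one checks that $\beta_{\frac{d-1}{2}}(q,r) = \tfrac{2r}{r+2}$ there, so the sharp-admissible part of the hypothesis of Proposition \ref{p:nonsharp} is exactly what Theorem \ref{t:wave}(i) supplies, and the restriction \eqref{e:rtilderange}, which translates to $\tfrac{2(d+1)}{d-1} < r < \tfrac{2d}{d-2}$, lies inside the range already handled there. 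Writing $g_j = |D|^s f_j$ identifies orthonormality of $(f_j)_j$ in $\dot H^s$ with orthonormality of $(g_j)_j$ in $L^2$, so the conclusion of Proposition \ref{p:nonsharp} gives \eqref{e:ONSwaveFS} whenever $(q,r)$ is non-sharp $\tfrac{d-1}{2}$-admissible and $q > \tfrac{d-2}{d-1}r$. A brief geometric check confirms that this is precisely $\mathrm{int}(OA_{\frac{d-1}{2}}C)$, yielding part $(i)$.

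For part $(ii)$, I will invoke Remark \ref{remark:afterP5.1}(1), whose hypothesis is the classical single-function Strichartz estimate $\|e^{it\sqrt{-\Delta}} |D|^{-s(2,r)} f\|_{L^2_t L^r_x} \lesssim \|f\|_{L^2}$ for $r$ such that $(\tfrac{1}{r}, \tfrac{1}{2})$ lies on the segment $(D, E_{\frac{d-1}{2}})$; for $d \geq 4$ this is the Keel--Tao endpoint estimate together with its non-sharp extensions, which is classical. Applying the triangle inequality to this bound furnishes \eqref{e:ONSintgoal} with $(q,\beta) = (2,1)$ at every such point. Since $\beta_{\frac{d-1}{2}}(q,r) = \tfrac{q}{2}$ all along the line $(O, A_{\frac{d-1}{2}})$, complex interpolation between the segment $(D, E_{\frac{d-1}{2}})$ and points of $\mathrm{int}(OA_{\frac{d-1}{2}}C)$ arbitrarily close to $(O, A_{\frac{d-1}{2}})$ extends \eqref{e:ONSintgoal} throughout $\mathrm{int}(ODA_{\frac{d-1}{2}}E_{\frac{d-1}{2}})$ with $\beta < \tfrac{q}{2}$. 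The sharpness claim $\beta > \tfrac{q}{2}$ is deferred to Section \ref{section:necesssary}.

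The only real obstacle is bookkeeping: one must verify that the regularity $s = \tfrac{d}{2} - \tfrac{d}{r} - \tfrac{1}{q}$ is preserved under both the interpolation of Proposition \ref{p:nonsharp} and the extension of Remark \ref{remark:afterP5.1}(1), and that the $(q,r)$-regions one lands in match $\mathrm{int}(OA_{\frac{d-1}{2}}C)$ and $\mathrm{int}(ODA_{\frac{d-1}{2}}E_{\frac{d-1}{2}})$ respectively. Both checks are short affine-geometric calculations.
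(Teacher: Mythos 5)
Your proposal is correct and follows exactly the paper's own argument: apply Proposition \ref{p:nonsharp} with $\Psi(\xi)=|\xi|$ and $s(q,r)=\frac{d}{2}-\frac{d}{r}-\frac{1}{q}$, supplying the hypothesis \eqref{e:ONSrefinedsharp} from Theorem \ref{t:wave}$(i)$ for part $(i)$, and invoke Remark \ref{remark:afterP5.1}(1) together with the classical Strichartz estimate \eqref{e:StrWave} for part $(ii)$. The extra verifications you spell out (the translation of \eqref{e:rtilderange}, the identification of the region $q>\frac{d-2}{d-1}r$ with $\mathrm{int}(OA_{\frac{d-1}{2}}C)$, and the matching of $\beta_{\frac{d-1}{2}}$ with $\frac{2r}{r+2}$ on the sharp line) are all correct and are exactly the bookkeeping the paper's three-sentence proof implicitly relies on.
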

\begin{proof}[Proof of Theorem \ref{t:wavenonsharp} (sufficiency part)]
From the classical Strichartz estimates \eqref{e:StrWave} and (1) in Remark \ref{remark:afterP5.1},  
the claimed estimates in $(ii)$ follow from those estimates in $(i)$ by interpolating them with trivial estimates. 
To prove the claimed estimates in $(i)$, we apply Proposition \ref{p:nonsharp} 
with $\Psi(\xi) = |\xi|$ and $s(q,r) = \frac{d}{2} - \frac{d}{r} - \frac{1}{q}$. Thus, it sufficient to verify \eqref{e:ONSrefinedsharp}. This  follows from Theorem \ref{t:wave}. 
\end{proof}

Following a similar argument used to prove Theorem \ref{t:wavenonsharp}, we prove the following theorems concerning the Klein--Gordon equation in the non-sharp $\sigma$-admissible cases, where $\sigma = \frac{d}{2}, \frac{d-1}{2}$. Also, we later discuss further estimates which are available for the Klein--Gordon equation.
\begin{theorem}[The Klein--Gordon equation and the non-sharp $\frac{d-1}{2}$-admissible] 
\label{t:KGnonsharp_wave} 
 Let $d\geq 2$ and suppose $(q,r)$ is non-sharp $\frac{d-1}{2}$-admissible. 
\vspace{-7pt}
\begin{enumerate}
[leftmargin=0.8cm, labelsep= 0.3cm, topsep=0pt]
\item[$(i)\,$] If $(\frac{1}{r},\frac{1}{q})$ belongs to $\mathrm{int}(OA_{\frac{d-1}{2}}C)$, then
\eqref{KG-ortho}
holds for all families of orthonormal functions $(f_j)_j$ in $H^s$, with $s \ge \frac{d}{2} - \frac{d}{r} - \frac{1}{q}$, and $\beta = \beta_{\frac{d-1}{2}}(q,r)$. 
\item[$(ii)$] If $d \geq 4$ and $(\frac{1}{r},\frac{1}{q})$ belongs to $\mathrm{int}(ODA_{\frac{d-1}{2}}E_{\frac{d-1}{2}})$, then
 \eqref{KG-ortho}
holds for all families of orthonormal functions $(f_j)_j$ in $H^s$, with $s \ge \frac{d}{2} - \frac{d}{r} - \frac{1}{q}$, and $\beta  < \frac{q}{2}$. This is sharp in the sense that the estimate fails if $\beta > \frac{q}{2}$.
\end{enumerate}
\end{theorem}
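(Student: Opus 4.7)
The plan is to mirror, almost verbatim, the proof of Theorem \ref{t:wavenonsharp}, replacing the homogeneous multiplier $|D|$ by the inhomogeneous one $\langle D \rangle$ and invoking the Klein--Gordon input from Theorem \ref{t:KGwave} in place of the wave input from Theorem \ref{t:wave}. First I would handle the critical regularity $s = s(q,r) := \tfrac{d}{2} - \tfrac{d}{r} - \tfrac{1}{q}$, and afterwards upgrade to the range $s \ge s(q,r)$ by the standard Kintchin-randomization argument recorded in Remark \ref{trivial-extension}.

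For part $(i)$, I would apply Proposition \ref{p:nonsharp} with $\phi(\xi) = \langle \xi \rangle$, $\Psi(\xi)=\langle \xi \rangle$, $\sigma = \tfrac{d-1}{2}$ and $a=-1$, so that $s(q,r) = -\tfrac{1}{q}+d(\tfrac12-\tfrac1r)$. The only nontrivial hypothesis is the sharp admissible input \eqref{e:ONSrefinedsharp}, which needs to be verified for sharp $\tfrac{d-1}{2}$-admissible pairs $(q,r)$ satisfying \eqref{e:rtilderange}, that is, for $\tfrac{2(d+1)}{d-1} < r < \tfrac{2d}{d-2}$. On sharp $\tfrac{d-1}{2}$-admissible lines one has the identity $s(q,r) = \tfrac{d+1}{2}(\tfrac12-\tfrac1r)$ and $\beta_{(d-1)/2}(q,r) = \tfrac{2r}{r+2}$, so \eqref{e:ONSrefinedsharp} is exactly the content of Theorem \ref{t:KGwave}(i). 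Consequently Proposition \ref{p:nonsharp} yields the desired estimate for every non-sharp $\tfrac{d-1}{2}$-admissible $(q,r)$ with $q>\tfrac{2\sigma-1}{2\sigma}r$, which is precisely the region $\mathrm{int}(OA_{(d-1)/2}C)$.

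For part $(ii)$ (available only for $d\ge 4$, since $\sigma=\tfrac{d-1}{2}>1$), I would follow Remark \ref{remark:afterP5.1}(1). On the horizontal segment $(D,E_{(d-1)/2})$, i.e.\ $q=2$ and $r\in[\tfrac{2(d-1)}{d-3},\infty)$, the single-function estimate $\|e^{it\sqrt{1-\Delta}}\langle D\rangle^{-s(2,r)}f\|_{2,r}\lesssim\|f\|_{2}$ is a consequence of the classical Klein--Gordon Strichartz estimate \eqref{e:StrKG}, which in turn follows from the wave estimate \eqref{e:StrWave} via the high-frequency/low-frequency comparison discussed in the introduction. Complex interpolation between this trivial-in-$\beta$ endpoint ($\beta=1$) and estimates from part $(i)$ taken with $(\tfrac1r,\tfrac1q)$ arbitrarily close to the line segment $(O,A_{(d-1)/2})$ (where $\beta_{(d-1)/2}(q,r)=\tfrac{q}{2}$) produces \eqref{KG-ortho} throughout $\mathrm{int}(ODA_{(d-1)/2}E_{(d-1)/2})$ with any $\beta<\tfrac{q}{2}$. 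Finally, Remark \ref{trivial-extension} delivers the case $s>s(q,r)$ from the critical case, completing the sufficiency part.

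The main technical point is not in the interpolation scheme, which is entirely parallel to the wave case, but in ensuring that the sharp admissible input \eqref{e:ONSrefinedsharp} feeds Proposition \ref{p:nonsharp} correctly. I expect the only mild subtlety to be checking that the weight $\langle D\rangle^{-s(q,r)}$, rather than $|D|^{-s(q,r)}$, is compatible with the conclusion of Theorem \ref{t:KGwave} (where the $H^s$ norm naturally produces $\langle D\rangle^{-s}$ factors) and with the Lieb orthonormal Sobolev inequality \eqref{e:ONSobolev} that underlies Proposition \ref{p:nonsharp}; both are in fact set up already for either choice of $\Psi$, so no new analysis is required.
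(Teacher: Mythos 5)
Your proposal is correct and follows essentially the same route as the paper: the paper's proof likewise reduces to the critical regularity $s = \frac{d}{2} - \frac{d}{r} - \frac{1}{q}$ (invoking Remark \ref{trivial-extension} for the supercritical range), applies Proposition \ref{p:nonsharp} with $\Psi(\xi) = \langle\xi\rangle$ and Theorem \ref{t:KGwave} as the sharp-admissible input, and obtains part $(ii)$ by interpolating the part $(i)$ estimates against the trivial single-function bounds via Remark \ref{remark:afterP5.1}(1). You spell out the exponent bookkeeping ($a=-1$, the identity $s(q,r)=\frac{d+1}{2}(\frac12-\frac1r)$ and $\beta_{(d-1)/2}(q,r)=\frac{2r}{r+2}$ on the sharp line, the $\tr$-range matching $\frac{2(d+1)}{d-1}<r<\frac{2d}{d-2}$) more explicitly than the paper does, but no new idea is introduced.
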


\begin{proof}[Proof of Theorem \ref{t:KGnonsharp_wave} (sufficiency part)]  As we have noticed before, it suffices to prove the estimates in $(i)$ since $(ii)$ follows 
by interpolation between the estimates in $(i)$ and the trivial estimates. We only consider the critical case $s(q,r) = \frac{d}{2} - \frac{d}{r} - \frac{1}{q}$ since the other case can be shown by the same argument with a little modification, or using the inequality in  Remark \ref{trivial-extension}.
Similarly as in  the proof of Theorem \ref{t:wavenonsharp}, we apply Proposition \ref{p:nonsharp} with $\Psi(\xi) = \langle \xi \rangle$ and $s(q,r) = \frac{d}{2} - \frac{d}{r} - \frac{1}{q}$. For the estimate \eqref{e:ONSrefinedsharp}, we employ Theorem \ref{t:KGwave}. 
\end{proof}

\begin{theorem}[The Klein--Gordon equation and the non-sharp $\frac{d}{2}$-admissible] \label{t:KGnonsharp_Schro} Let $d\geq 1$ and suppose $(q,r)$ is non-sharp $\frac{d}{2}$-admissible. 
\vspace{-7pt}
\begin{enumerate}
[leftmargin=0.8cm, labelsep= 0.3cm, topsep=0pt]
\item[$(i)\,$] If $(\frac{1}{r},\frac{1}{q})$ belongs to $\mathrm{int}(OA_{\frac{d}{2}}C)$, then
 \eqref{KG-ortho}
holds for all families of orthonormal functions $(f_j)_j$ in $H^s$, with $s \ge \frac{d}{2} - \frac{d}{r} - \frac{d-2}{dq}$, and $\beta = \beta_{\frac{d}{2}}(q,r)$. This is sharp in the sense that the estimate fails if $\beta > \beta_{\frac{d}{2}}(q,r)$.
\item[$(ii)$] If $d \geq 3$ and $(\frac{1}{r},\frac{1}{q})$ belongs to $\mathrm{int}(ODA_{\frac{d}{2}}E_{\frac{d}{2}})$, then 
 \eqref{KG-ortho}
holds for all families of orthonormal functions $(f_j)_j$ in $H^s$, with $s \ge \frac{d}{2} - \frac{d}{r} - \frac{d-2}{dq}$, and $\beta  < \frac{q}{2}$. This is sharp in the sense that the estimate fails if $\beta > \frac{q}{2}$.
\end{enumerate}
\end{theorem}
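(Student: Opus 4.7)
The plan is to mirror the proof of Theorem~\ref{t:KGnonsharp_wave}, replacing $\sigma = \tfrac{d-1}{2}$ by $\sigma = \tfrac{d}{2}$ and using Theorem~\ref{t:KGSchro} as the sharp-admissible input for the abstract machinery of Proposition~\ref{p:nonsharp}. The necessity claims will be deferred to Section~\ref{section:necesssary}. As in the proof of Theorem~\ref{t:KGnonsharp_wave}, I would first reduce matters to the critical regularity $s(q,r) = \tfrac{d}{2} - \tfrac{d}{r} - \tfrac{d-2}{dq}$; the super-critical case $s > s(q,r)$ then follows at once from the Fourier-multiplier inequality recorded in Remark~\ref{trivial-extension}.

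For part~(i), the strategy is to apply Proposition~\ref{p:nonsharp} with $\Psi(\xi) = \langle \xi \rangle$, $\sigma = \tfrac{d}{2}$, and $a = -\tfrac{d-2}{d}$, so that $s(q,r) = \tfrac{a}{q} + d(\tfrac{1}{2} - \tfrac{1}{r})$ in the required form. A short calculation using the sharp-admissibility relation $\tfrac{1}{2} - \tfrac{1}{r} = \tfrac{2}{dq}$ shows that this $s(q,r)$ coincides with $\tfrac{d+2}{2}(\tfrac{1}{2} - \tfrac{1}{r})$ on the sharp line, and the technical condition \eqref{e:rtilderange} translates into $r \in (\tfrac{2(d+2)}{d}, \tfrac{2(d+1)}{d-1})$, which lies inside the range covered by Theorem~\ref{t:KGSchro}(i). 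Since $\beta_{d/2}(q,r) = \tfrac{2r}{r+2}$ on the sharp line, the hypothesis \eqref{e:ONSrefinedsharp} of Proposition~\ref{p:nonsharp} is thereby verified, and the proposition delivers \eqref{KG-ortho} with $\beta = \beta_{d/2}(q,r)$ for every non-sharp $\tfrac{d}{2}$-admissible $(q,r)$ with $q > \tfrac{d-1}{d} r$, which is precisely the region $\mathrm{int}(OA_{d/2}C)$.

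For part~(ii), where $d \ge 3$, I would invoke Remark~\ref{remark:afterP5.1}(1) with $\sigma = \tfrac{d}{2}$: the Keel--Tao endpoint (valid since $d \ge 3$) together with Sobolev embedding supplies the classical (single-function) Strichartz estimate $\|e^{it\sqrt{1-\Delta}} \langle D\rangle^{-s(2,r)} f\|_{2,r} \lesssim \|f\|_2$ along the segment $(D, E_{d/2}]$, i.e., the trivial $\beta = 1$ case of \eqref{KG-ortho} there. A direct computation confirms that $\beta_{d/2}(q,r) = \tfrac{q}{2}$ on the line segment $(O, A_{d/2})$, so complex interpolation between these endpoint estimates and the part~(i) estimates at points of $\mathrm{int}(OA_{d/2}C)$ approaching $(O, A_{d/2})$ yields \eqref{KG-ortho} throughout $\mathrm{int}(ODA_{d/2}E_{d/2})$ with any $\beta < \tfrac{q}{2}$. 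The main step here is essentially bookkeeping: verifying that the numerical relations between $s(q,r)$, $\beta_{d/2}(q,r)$ and the sharp-admissible parameters align correctly under the substitution $\sigma \mapsto \tfrac{d}{2}$, which is the content of the calculations sketched above.
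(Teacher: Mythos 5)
Your argument is correct and follows exactly the paper's route: apply Proposition~\ref{p:nonsharp} with $\Psi(\xi)=\langle\xi\rangle$, $\sigma=\tfrac d2$, $a=-\tfrac{d-2}{d}$, feed in Theorem~\ref{t:KGSchro} to verify \eqref{e:ONSrefinedsharp}, and then obtain part~(ii) via Remark~\ref{remark:afterP5.1}(1) by interpolating with the single-function $(q,\beta)=(2,1)$ estimates along $(D,E_{d/2})$, which is precisely what the paper does. Your extra bookkeeping (matching $s(q,r)$ with $\tfrac{d+2}{2}(\tfrac12-\tfrac1r)$ on the sharp line, translating \eqref{e:rtilderange} into $r\in(\tfrac{2(d+2)}{d},\tfrac{2(d+1)}{d-1})$, and identifying $q>\tfrac{d-1}{d}r$ with $\mathrm{int}(OA_{d/2}C)$) is correct and simply spells out details the paper leaves implicit.
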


\begin{proof}[Proof of Theorem \ref{t:KGnonsharp_Schro} (sufficiency part)] Similarly as in the proof of  Theorem \ref{t:KGnonsharp_wave} (sufficiency part) we only consider the critical case $s(q,r) = \frac{d}{2} - \frac{d}{r} - \frac{d-2}{dq}$. 
It suffices also to prove the estimates in $(i)$ and to this end we apply Proposition \ref{p:nonsharp} with $\Psi(\xi) = \langle \xi \rangle$ and $s(q,r) = \frac{d}{2} - \frac{d}{r} - \frac{d-2}{dq}$. For the estimate \eqref{e:ONSrefinedsharp}, we use Theorem \ref{t:KGSchro}. 
\end{proof}

For simplicity of the exposition, we have presented our main results for the Klein--Gordon equation (Theorems \ref{t:KGSchro}, \ref{t:KGwave}, \ref{t:KGnonsharp_wave} and \ref{t:KGnonsharp_Schro}) in the case where $(q,r)$ is $\sigma$-admissible for $\sigma = \frac{d}{2},  \frac{d-1}{2}$. 
Complex interpolation between these estimates gives the orthonormal Strichartz estimates for the Klein--Gordon equation corresponding
to the sharp $\sigma$-admissible cases with $\sigma\in (\frac{d-1}{2}, \frac{d}{2})$. However, we can obtain these estimates in a unified way, with $\sigma = \frac{d-1}{2} + \rho$ for $\rho \in [0,\frac{1}{2}]$ ($d \geq 2$), and $\rho \in (0,\frac{1}{2}]$ ($d = 1$). For example, since the dispersive estimate with $O(|t|^{-\sigma})$ follows if we interpolate the estimates 
 \eqref{KGwavedecay} (\eqref{e:KGwave-more-damping}) and   \eqref{e:KGdispersivemain} (\eqref{e:KGschro-more-damping}, repectively), then simple modification to the proof of Theorems \ref{t:KGSchro} and \ref{t:KGwave} yields the following.

\begin{theorem}[The Klein--Gordon equation and the sharp $\sigma$-admissible,  $d=1$] \label{t:KGsharpunified_d=1} 
Let $d = 1$, $\sigma \in (0,\frac{1}{2}]$ and suppose $(q,r)$ is sharp $\sigma$-admissible. Then  \eqref{KG-ortho}
holds for all families of orthonormal functions $(f_j)_j$ in $H^s$, with $s \ge (\sigma + 1)(\frac{1}{2}-\frac{1}{r})$, and $\beta = \frac{2r}{r+2}$. 
\end{theorem}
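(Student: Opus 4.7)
The strategy mirrors those of Theorems \ref{t:KGSchro} and \ref{t:KGwave}: apply Proposition \ref{p:abstractdual} with an appropriate analytic family $\Theta_z$, coupled with a weighted oscillatory integral estimate obtained by interpolating between Propositions \ref{p:KGoscillatory_wave} and \ref{p:KGoscillatory_Schro}. Following the choice made in the proof of Theorem \ref{t:KGwave}, I would take
\[
\Theta_z(\xi) = \frac{\tr + 2z}{\tr - 2}\,\langle\xi\rangle^{\frac{2(\sigma + 1)}{\tr}z}\,\chi^2(\varepsilon|\xi|),
\]
so that $\Theta_{-1}(\xi) = \langle\xi\rangle^{-2s}\chi^2(\varepsilon|\xi|)$ with the critical regularity $s = (\sigma + 1)(\tfrac{1}{2} - \tfrac{1}{r})$, condition \eqref{e:abstract0} is immediate (uniformly in $\varepsilon$), and evaluation at $z = -\tr/2 + i\kappa$ produces a built-in factor of $2i\kappa/(\tr-2)$ multiplying the resulting oscillatory integral. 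Hence \eqref{e:abstractosc} reduces to showing, for $d = 1$, $\sigma \in (0, 1/2]$, $\kappa \in \mathbb{R}$ and $\varepsilon > 0$,
\[
\bigg|\kappa\int_{\mathbb{R}} e^{i(x\xi + t\langle\xi\rangle)}\chi^2(\varepsilon|\xi|)\langle\xi\rangle^{-(\sigma + 1) + ic\kappa}\,\mathrm{d}\xi\bigg| \le C(1 + |\kappa|)^N |t|^{-\sigma}
\]
uniformly in $(x, t) \in \mathbb{R}^2$, where $c = 2(\sigma + 1)/\tr$.

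The above dispersive estimate is then obtained by complex interpolation of the $d = 1$ cases of Propositions \ref{p:KGoscillatory_wave} and \ref{p:KGoscillatory_Schro} via the Hadamard three-lines theorem. Concretely, fix $\kappa$ and consider the analytic family
\[
G(z) = (z - ic\kappa)\int_{\mathbb{R}} e^{i(x\xi + t\langle\xi\rangle)}\chi^2(\varepsilon|\xi|)\langle\xi\rangle^{-(1 + z) + ic\kappa}\,\mathrm{d}\xi
\]
on the strip $\Re z \in [0, 1/2]$. At $\Re z = 0$, the prefactor $(z - ic\kappa)$ combines with Proposition \ref{p:KGoscillatory_wave} to give $|G(iu)| \le C(1 + |u| + |\kappa|)^N$: this auxiliary factor exactly cancels the $1/|\cdot|$ singularity one would get by naively inverting the $\kappa$-prefactor in Proposition \ref{p:KGoscillatory_wave}. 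At $\Re z = 1/2$, the same factor contributes only a harmless $(1 + |u| + |\kappa|)$ multiplier, and Proposition \ref{p:KGoscillatory_Schro} yields $|G(1/2 + iu)| \le C(1 + |u| + |\kappa|)^{N+1} |t|^{-1/2}$. The three-lines theorem gives $|G(\sigma)| \le C(1 + |\kappa|)^{N+1}|t|^{-\sigma}$, and dividing by $|\sigma - ic\kappa| \ge |c\kappa|$ yields the target estimate after multiplying back by $|\kappa|$.

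With this in hand, Proposition \ref{p:abstractdual} yields \eqref{KG-ortho} with $\beta = 2r/(r+2)$ and critical $s = (\sigma + 1)(\tfrac{1}{2} - \tfrac{1}{r})$, uniformly in $\varepsilon > 0$, for sharp $\sigma$-admissible $(q,r)$ satisfying \eqref{e:rtilderange} (equivalently, $r > (2+2\sigma)/\sigma$). Letting $\varepsilon \to 0$ and interpolating with the trivial estimate at $(q, r, \beta) = (\infty, 2, 1)$ extends the range to all $2 \le r < \infty$, and the non-critical case $s > (\sigma + 1)(\tfrac{1}{2} - \tfrac{1}{r})$ then follows from the randomization argument in Remark \ref{trivial-extension}.

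The principal obstacle is the interpolation step. Without the auxiliary factor $(z - ic\kappa)$, the $\kappa$-prefactor in Proposition \ref{p:KGoscillatory_wave} would translate into a $1/|\kappa|$ singularity at the wave-like edge of the strip, incompatible with the polynomial-in-$|\kappa|$ bound required by Proposition \ref{p:abstractdual}. Retaining the $(\tr + 2z)/(\tr - 2)$ factor in $\Theta_z$ together with the choice of $G$ above is what simultaneously (i) absorbs the $\kappa$-prefactor into the abstract duality framework and (ii) produces clean polynomial bounds on both edges of the interpolation strip, after which the three-lines argument is routine.
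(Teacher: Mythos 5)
Your proof is correct and follows essentially the same approach the paper intends: interpolate the weighted oscillatory integral estimates (Propositions \ref{p:KGoscillatory_wave} and \ref{p:KGoscillatory_Schro}) to obtain $|t|^{-\sigma}$ decay for $\sigma\in(0,\tfrac12]$ and then feed the result into Proposition \ref{p:abstractdual} via the analytic family $\Theta_z$ modelled on the proof of Theorem \ref{t:KGwave}. The paper records only a terse hint (``simple modification''), whereas you correctly identify and resolve the one genuinely delicate point --- the $\kappa$-prefactor mismatch between the two endpoint dispersive estimates --- by inserting the auxiliary $(z-ic\kappa)$ factor into the three-lines interpolation, which is exactly what the implicit interpolation step requires.
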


\begin{theorem}[The Klein--Gordon equation and the sharp $\sigma$-admissible, $d \geq 2$] 
\label{t:KGsharpunified} 
Let $d\geq 2$, $\rho \in [0,\frac{1}{2}]$ and suppose $(q,r)$ is sharp $(\frac{d-1}{2} + \rho)$-admissible. 
\vspace{-7pt}
\begin{enumerate}
[leftmargin=0.8cm, labelsep= 0.3cm, topsep=0pt]
\item[$(i)\,$] If $ 2\leq r<\frac{2(d+2\rho)}{d-2+2\rho},$   
  then  \eqref{KG-ortho}
holds for all families of orthonormal functions $(f_j)_j$ in $H^s$, with $s \ge  (\frac{d+1}{2} + \rho)(\frac{1}{2}-\frac{1}{r})$, and $\beta = \frac{2r}{r+2}$. 
\item[$(ii)$] When $d=2$, suppose $\frac{2(1+\rho)}{\rho} \leq r < \infty$ for $\rho \in (0,\frac{1}{2}]$. When $d=3$, suppose $6 \leq r < \infty$ for $\rho = 0$, and 
$\frac{2(d+2\rho)}{d-2+2\rho} \leq r \leq \frac{2(d-1+2\rho)}{d-3+2\rho}$ for $\rho \in (0,\frac{1}{2}]$. When $d \geq 4$, suppose
$\frac{2(d+2\rho)}{d-2+2\rho} \leq r \leq \frac{2(d-1+2\rho)}{d-3+2\rho}$ for $\rho \in [0,\frac{1}{2}]$.
Then,
\eqref{KG-ortho} holds for all families of orthonormal functions $(f_j)_j$ in $H^s$, with $s \ge (\frac{d+1}{2} + \rho)(\frac{1}{2}-\frac{1}{r})$ and $\beta < \frac{q}{2}$. This estimate is sharp in the sense that the estimate fails for $\beta > \frac{q}{2}$.
\end{enumerate}
\end{theorem}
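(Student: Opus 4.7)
The plan is to adapt the arguments used for Theorems \ref{t:KGSchro} and \ref{t:KGwave}, the only genuinely new ingredient being a unified dispersive estimate for the Klein--Gordon propagator that interpolates between the two endpoint cases $\rho = 0$ and $\rho = 1/2$. Throughout, write $\sigma = \frac{d-1}{2} + \rho$; the range restriction \eqref{e:rtilderange} of Proposition \ref{p:abstractdual} then becomes $d + 2\rho < \tr < d + 1 + 2\rho$, equivalently $\frac{2(d+1+2\rho)}{d-1+2\rho} < r < \frac{2(d+2\rho)}{d-2+2\rho}$.

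The key analytical step, and the main obstacle, is to establish that, for every $\rho \in [0, 1/2]$,
\begin{equation*}
\bigg| \kappa \int e^{i(x\cdot\xi + t\langle\xi\rangle)} \psi(\varepsilon\xi) (1+|\xi|^2)^{-\frac{d+1+2\rho}{4} - i\kappa}\, \mathrm{d}\xi \bigg| \le C(1+|\kappa|)^N |t|^{-\sigma},
\end{equation*}
uniformly in $\kappa \in \mathbb{R}$, $\varepsilon > 0$ and $(x,t) \in \mathbb{R}^d \times \mathbb{R}$. For fixed $\kappa, \varepsilon$, I would obtain this by applying the Phragm\'en--Lindel\"of principle to the analytic family
\begin{equation*}
G_\kappa(z) = \Big(\tfrac{z}{4} + i\kappa\Big) \int e^{i(x\cdot\xi + t\langle\xi\rangle)} \psi(\varepsilon\xi) (1+|\xi|^2)^{-\frac{d+1}{4} - \frac{z}{4} - i\kappa}\, \mathrm{d}\xi
\end{equation*}
on the strip $0 \le \Re z \le 1$. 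At $\Re z = 0$ the boundary bound $|G_\kappa(iv)| \le C(1 + |v| + |\kappa|)^N |t|^{-(d-1)/2}$ comes from Proposition \ref{p:KGoscillatory_wave} applied with parameter $v/4 + \kappa$, while at $\Re z = 1$ the bound $|G_\kappa(1+iv)| \le C(1+|v|+|\kappa|)^{N+1}|t|^{-d/2}$ comes from Proposition \ref{p:KGoscillatory_Schro} together with $|(1+iv)/4 + i\kappa| \lesssim 1 + |v| + |\kappa|$. After normalizing by $|t|^{(d-1)/2 + z/2}$, interpolation at $z = 2\rho$ produces $|G_\kappa(2\rho)| \le C(1+|\kappa|)^{N+1}|t|^{-\sigma}$, and since $|\kappa| \le |\rho/2 + i\kappa|$ the desired bound follows. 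Tracking the $\kappa$-prefactor through this interpolation is the delicate point; it is mandatory at the wave-like endpoint (by \eqref{KGwavedecay}) and remains harmless at the Schr\"odinger-like endpoint.

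With this dispersive estimate in hand, I would follow the scheme of the proof of Theorem \ref{t:KGwave}. In the critical case $s = (\frac{d+1}{2}+\rho)(\frac12 - \frac1r)$, I would invoke Proposition \ref{p:abstractdual} with the analytic family
\begin{equation*}
\Theta_z(\xi) = \frac{\tr + 2z}{\tr - 2}\, \langle\xi\rangle^{\frac{(d+1+2\rho)\, z}{\tr}} \chi^2(\varepsilon|\xi|),
\end{equation*}
chosen so that $\Theta_{-1}(\xi) = \langle\xi\rangle^{-2s}\chi^2(\varepsilon|\xi|)$. Hypothesis \eqref{e:abstract0} is immediate, and \eqref{e:abstractosc} at $z = -\tr/2 + i\kappa$ reduces, after rescaling of $\kappa$ by the innocuous factor $(d+1+2\rho)/\tr$, to the unified bound of the preceding paragraph. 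Since $(\langle D\rangle^{-s} f_j)_j$ is orthonormal in $L^2$ whenever $(f_j)_j$ is orthonormal in $H^s$, Proposition \ref{p:abstractdual} delivers \eqref{KG-ortho} uniformly in $\varepsilon$, and letting $\varepsilon \to 0$ settles the critical case of part (i) on the subrange above.

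The non-critical regularity $s > (\frac{d+1}{2}+\rho)(\frac12-\frac1r)$ is handled via Remark \ref{trivial-extension}, or by inserting the higher-damping variants of Remark \ref{more-damping} into the interpolation of the second paragraph. Interpolating the result with the trivial estimate at $(q,r,\beta) = (\infty, 2, 1)$ extends the range in part (i) to $2 \le r < \frac{2(d+2\rho)}{d-2+2\rho}$. For part (ii), I would interpolate between part (i) and the classical single-function Strichartz estimate at the endpoint $(\frac{1}{r}, \frac{1}{q}) = E_\sigma$ with $\beta = 1$, following Remark \ref{remark:afterP5.1}(1); the dimension-dependent restrictions on $r$ appearing in (ii) reflect precisely when $E_\sigma$ lies inside the admissible region.
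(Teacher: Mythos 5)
Your proposal is correct and follows the paper's intended route precisely: the paper states (tersely) that the $O(|t|^{-\sigma})$ dispersive bound for $\sigma = \frac{d-1}{2}+\rho$ follows by interpolating the wave-like estimate \eqref{KGwavedecay} (or \eqref{e:KGwave-more-damping}) with the Schr\"odinger-like estimate \eqref{e:KGdispersivemain} (or \eqref{e:KGschro-more-damping}), and that the remainder is a ``simple modification'' of the proofs of Theorems \ref{t:KGSchro} and \ref{t:KGwave}. Your three-lines argument for the unified dispersive estimate (including the careful handling of the mandatory $\kappa$-prefactor at the wave endpoint, which is harmless at the Schr\"odinger endpoint) and your choice of $\Theta_z$ in Proposition \ref{p:abstractdual} fill in exactly the details the paper leaves implicit, and the subsequent interpolations for parts (i) and (ii) are the same as in the proofs of Theorems \ref{t:KGSchro}, \ref{t:KGwave} and \ref{t:radial}.
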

By following the approach taken to prove our main results in the non-sharp admissible case, it is possible to extend Theorems \ref{t:KGsharpunified_d=1} and \ref{t:KGsharpunified} in a similar manner.  In \emph{Remark} \ref{re:klein_gordon} we offer some further remarks regarding the sharpness of the exponent $\beta$ in the case of the Klein--Gordon equation.

\begin{theorem}[The fractional Schr\"odinger equation and the non-sharp $\frac{d}{2}$-admissible] 
\label{t:fracSchrononsharp} 
Suppose $\alpha \in \mathbb{R} \setminus \{0,1\}$. Let $d\geq 1$ and suppose $(q,r)$ is non-sharp $\frac{d}{2}$-admissible with $\frac{d}{r} + \frac{\alpha}{q} < d$. 
\vspace{-7pt}
\begin{enumerate}
[leftmargin=0.8cm, labelsep= 0.3cm, topsep=0pt]
\item[$(i)\,$] If $(\frac{1}{r},\frac{1}{q})$ belongs to $\mathrm{int}(OA_{\frac{d}{2}}C)$, then
\eqref{FS-ortho} holds for all families of orthonormal functions $(f_j)_j$ in $\dot{H}^s$, with $s = \frac{d}{2} - \frac{d}{r} - \frac{\alpha}{q}$, and $\beta = \beta_{\frac{d}{2}}(q,r)$. This is sharp in the sense that the estimate fails if $\beta > \beta_{\frac{d}{2}}(q,r)$.
\item[$(ii)$] If $d \geq 3$ and $(\frac{1}{r},\frac{1}{q})$ belongs to $\mathrm{int}(ODA_{\frac{d}{2}}E_{\frac{d}{2}})$, then
\eqref{FS-ortho} holds for all families of orthonormal functions $(f_j)_j$ in $\dot{H}^s$, with $s = \frac{d}{2} - \frac{d}{r} - \frac{\alpha}{q}$, and $\beta  < \frac{q}{2}$. This is sharp in the sense that the estimate fails if $\beta > \frac{q}{2}$.
\end{enumerate}
\end{theorem}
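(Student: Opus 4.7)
The plan is to follow the same template already used to derive Theorems \ref{t:wavenonsharp}, \ref{t:KGnonsharp_wave} and \ref{t:KGnonsharp_Schro} from their sharp counterparts, namely, to invoke the abstract interpolation machinery of Proposition \ref{p:nonsharp} fed by the sharp $\frac{d}{2}$-admissible estimates of Theorem \ref{t:fracSchrosharp}(i). Concretely, I apply Proposition \ref{p:nonsharp} with $\sigma=\frac{d}{2}$, $\Psi(\xi)=|\xi|$, and
\[
s(q,r) = \tfrac{d}{2} - \tfrac{d}{r} - \tfrac{\alpha}{q},
\]
which matches the prescribed form $s(q,r) = \frac{a}{q} + d(\frac{1}{2}-\frac{1}{r})$ with $a = -\alpha$. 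The scaling constraint $\frac{d}{r} + \frac{\alpha}{q} < d$ is exactly $s>-\frac{d}{2}$, ensuring $\dot H^s$ has a dense Schwartz subclass, so there is nothing to check there.

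The hypothesis \eqref{e:ONSrefinedsharp} of Proposition \ref{p:nonsharp} must hold for sharp $\frac{d}{2}$-admissible $(q,r)$ satisfying \eqref{e:rtilderange}. A short calculation with $\tr = \frac{2r}{r-2}$ shows this range unfolds to $\frac{2(d+2)}{d} < r < \frac{2(d+1)}{d-1}$, which is strictly contained in the range $2\le r < \frac{2(d+1)}{d-1}$ already treated in Theorem \ref{t:fracSchrosharp}(i); the needed strengthening to Lorentz norms is automatic from Proposition \ref{p:abstractdual} as noted in Remark \ref{lorentz-improvement}. Proposition \ref{p:nonsharp} then yields \eqref{FS-ortho} with $\beta = \beta_{d/2}(q,r)$ for every non-sharp $\frac{d}{2}$-admissible $(q,r)$ satisfying $q > \frac{d-1}{d}r$, and this last condition is precisely what carves out $\mathrm{int}(OA_{d/2}C)$ from the non-sharp admissible region. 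This settles part $(i)$.

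For part $(ii)$, available only when $d\ge 3$ so that $\sigma = \frac{d}{2} > 1$, I apply verbatim the interpolation scheme in Remark \ref{remark:afterP5.1}(1). The classical Strichartz inequality \eqref{e:FracStri} furnishes
\[
\|e^{it(-\Delta)^{\alpha/2}}f\|_{2,r} \lesssim \|f\|_{\dot H^{s(2,r)}}, \qquad r > \tfrac{2d}{d-2},
\]
which is exactly \eqref{FS-ortho} with $(q,\beta)=(2,1)$ on the open segment $(D,E_{d/2})$. Since $\beta_{d/2}(q,r) = q/2$ along $(O,A_{d/2})$, complex interpolation of this edge bound against the part $(i)$ estimates at points arbitrarily close to $(O,A_{d/2})$ produces \eqref{FS-ortho} with any $\beta < q/2$ throughout the open quadrilateral $\mathrm{int}(ODA_{d/2}E_{d/2})$.

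The sharpness assertions, namely failure of \eqref{FS-ortho} above the stated thresholds for $\beta$, are set aside and handled uniformly in Section \ref{section:necesssary}. In this sense there is no real obstacle: the entire argument reduces to invoking already-proved results in the correct order, with the only delicate point being that the Lorentz-refined form of the sharp estimate is indeed what Proposition \ref{p:nonsharp} ingests, and that is immediate from how Theorem \ref{t:fracSchrosharp}(i) is produced through Proposition \ref{p:abstractdual}.
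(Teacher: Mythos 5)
Your proposal matches the paper's proof precisely: both apply Proposition \ref{p:nonsharp} with $\Psi(\xi)=|\xi|$ and $s(q,r)=\frac{d}{2}-\frac{d}{r}-\frac{\alpha}{q}$, feed it Theorem \ref{t:fracSchrosharp}$(i)$ to verify hypothesis \eqref{e:ONSrefinedsharp} on the required range $\frac{2(d+2)}{d}<r<\frac{2(d+1)}{d-1}$, and deduce part $(ii)$ from part $(i)$ via the interpolation scheme of Remark \ref{remark:afterP5.1}$(1)$ with the classical Strichartz estimate on the edge $(D,E_{d/2})$. One small correction: the hypothesis \eqref{e:ONSrefinedsharp} of Proposition \ref{p:nonsharp} is a plain $\ell^\beta$ estimate, so the Lorentz-improved form you invoke via Remark \ref{lorentz-improvement} is not actually required as input here---Theorem \ref{t:fracSchrosharp}$(i)$ already supplies exactly what is needed.
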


\begin{proof}[Proof of Theorem \ref{t:fracSchrononsharp} (sufficiency part)]
As in the proof of Theorems \ref{t:wavenonsharp}--\ref{t:KGnonsharp_Schro}, the claimed estimates in $(ii)$ follow from those in $(i)$. To prove the claimed estimates in $(i)$, we apply Proposition \ref{p:nonsharp} with $\Psi(\xi) = |\xi|$ and $s(q,r) = \frac{d}{2} - \frac{d}{r} - \frac{\alpha}{q}$. For the estimate \eqref{e:ONSrefinedsharp}, we use Theorem \ref{t:fracSchrosharp}. 
\end{proof}

\section{Necessary conditions}
\label{section:necesssary}
Let $\widetilde \chi_0\in C_c^\infty(2^{-2}, 2^2)$ and  $\widetilde \chi_0\gtrsim 1$ on $(2^{-1}, 2)$. 
For a given $(q,r) \in [2,\infty) \times [2,\infty)$, consider the frequency localized estimate
\begin{equation}\label{e:cutoffONS}
\bigg\| \sum_j\nu_j |U_\phi \widetilde \chi_0(|D|)f_j|^2 \bigg\|_{\frac q2,\frac r2}\lesssim \|\nu \|_{\ell^\beta}.
\end{equation} 
We show this estimate implies necessary conditions under fairly mild conditions on the dispersion relation $\phi$.
The following are based on a slight generalization of the construction given in \cite{BHLNS}.

\begin{proposition} 
\label{p:necessary}
Suppose \eqref{e:cutoffONS} holds for all orthonormal family $(f_j)_j$ and $\phi$ is continuously differentiable away from the origin.  Then, 
we have   \begin{equation} \label{e:necessaryL}
\beta \le \beta_{\frac{d}{2}}(q,r). 
\end{equation}
Additionally if  $\phi$ is nonnegative and radial with $\phi(\xi) = \phi_0(|\xi|)$ and $\phi_0$ is continuously differentiable away from the origin and increasing, then
\begin{equation} 
\label{e:necessaryU}
\beta \le \frac{q}{2}.
\end{equation}
\end{proposition}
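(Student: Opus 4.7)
I plan to prove each necessary condition by exhibiting an explicit orthonormal family $(f_j)_j\subset L^2(\mathbb R^d)$ and weights $\nu_j\ge 0$ that saturate \eqref{e:cutoffONS} at the claimed threshold for $\beta$; these constructions are mild generalizations of those for the Schr\"odinger equation in \cite{BHLNS}.

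In each case the family will be built from translates $f_j(x)=g(x-x_j)$ of a fixed Schwartz envelope $g$ with $\|g\|_{L^2}=1$, $\widehat g$ nonnegative, and supported in the annulus $\{|\xi|\sim1\}$. By the Schwartz decay of $g$, taking the translates $x_j$ along a widely-spaced lattice makes $(f_j)_j$ approximately orthonormal, and a standard Gram--Schmidt correction promotes it to a genuine orthonormal family with negligible effect on the relevant norm estimates (concretely, the lattice spacing is taken to be a large constant $K$ so that off-diagonal Gram entries are $O(K^{-M})$ for any $M$, and only afterwards does one send the remaining scale parameter to infinity). Translation invariance $U_\phi f_j(t,x)=(U_\phi g)(t,x-x_j)$ reduces the analysis of $\sum_j\nu_j|U_\phi f_j|^2$ to tracking overlaps of translates of the single-function density $|U_\phi g|^2$, which by stationary phase (using only the $C^1$ hypothesis on $\phi$) satisfies $|U_\phi g(t,x)|^2\sim |t|^{-d}$ on the wave region $\{x:-x/t\in\nabla\phi(\{|\xi|\sim1\})\}$, of $d$-dimensional Lebesgue measure $\sim |t|^d$ for $|t|\gtrsim 1$. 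The positivity of $\widehat g$ prevents cancellation and makes this heuristic a genuine pointwise lower bound.

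For \eqref{e:necessaryL}, I would take $\{x_j\}$ to be the integer lattice in a large box of side $R$, so $N\sim R^d$, and $\nu_j=1$. For times $|t|\gtrsim R$ the translated wave regions all overlap a common set of diameter $\sim|t|$, yielding $\sum_j|U_\phi f_j|^2\gtrsim R^d|t|^{-d}$ on a set of volume $\sim|t|^d$. Computing the $L^{q/2}_tL^{r/2}_x$ norm directly from this pointwise lower bound, and employing the sharp admissibility relation $\frac{2}{q}=d(\frac{1}{2}-\frac{1}{r})$ to evaluate the resulting $t$-integral over $[R,\infty)$, yields a lower bound of order $R^{d(1/2+1/r)}$; matching against $\|\nu\|_{\ell^\beta}=R^{d/\beta}$ then forces $\frac{1}{\beta}\ge\frac{1}{2}+\frac{1}{r}$, i.e., $\beta\le\frac{2r}{r+2}=\beta_{d/2}(q,r)$.

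For \eqref{e:necessaryU}, the additional hypothesis that $\phi=\phi_0(|\cdot|)$ with $\phi_0$ continuously differentiable and increasing implies that $\nabla\phi(\xi)=\phi_0'(|\xi|)\xi/|\xi|$ maps the frequency annulus onto a single spherical velocity shell, so the wave region for $|U_\phi g|^2$ becomes a genuine spherical annulus $\{v_-|t|\le|x|\le v_+|t|\}$ on which $|U_\phi g|^2\sim|t|^{-d}$. Because the monotonicity of $\phi_0'$ forces the velocity to be a bijective function of the frequency radius, each point of the annulus is reached by rays from a single radial direction, and one obtains a sharper version of the previous construction--for instance by placing the $x_j$ along a sphere of radius $\sim R$ and choosing the time $|t|=R/v_-$ so that every translated shell concentrates near a single bounded region of space. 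The resulting sharp time-concentration (supported in a $t$-interval of length $\sim 1$ rather than $\sim R$) of $\|\sum_j|U_\phi f_j|^2(t,\cdot)\|_{L^{r/2}_x}^{q/2}$, combined again with the admissibility relation, forces precisely $\beta\le q/2$. The main obstacle I anticipate is in making this geometric concentration rigorous and verifying that it is not smeared out after Gram--Schmidt; the monotonicity of $\phi_0'$ is exactly what prevents the different velocity rays from folding and diluting the concentration, and the same large-spacing trick as in the first part controls the orthonormalization.
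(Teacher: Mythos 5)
Your plan for \eqref{e:necessaryL} is close in spirit to the paper's argument but differs in a way that creates extra work and one real issue. The paper builds the orthonormal family \emph{in frequency}: $\widehat{f}_j(\xi) = cR^{d/2}\chi(2R|\xi - v_j|)$ with $v_j$ ranging over $R^{-1}\mathbb{Z}^d$ in the annulus. Because the Fourier supports are disjoint, the family is exactly orthonormal with no Gram--Schmidt correction, and the required lower bound $|U_\phi \widetilde\chi_0(|D|)f_j| \gtrsim R^{-d/2}$ on $\{|t|,|x|\lesssim R\}$ follows from the mere $C^1$ hypothesis on $\phi$ (the phase is close to linear on the tiny frequency cap, and linear phases do not oscillate). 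Your physical-space translates $f_j = g(\cdot - x_j)$ instead require an honest Gram--Schmidt step, and more seriously the claimed pointwise lower bound $|U_\phi g(t,x)|^2 \gtrsim |t|^{-d}$ on the wave region is a stationary-phase lower bound that does not follow from $\widehat g \ge 0$ alone, nor from $\phi \in C^1$; positivity of $\widehat g$ prevents cancellation only where the phase is nearly stationary, and to quantify that you need curvature information that a $C^1$ hypothesis does not provide. Finally, your exponent algebra invokes the sharp $\frac{d}{2}$-admissibility relation, whereas the proposition is stated for arbitrary $(q,r)\in[2,\infty)^2$ with conclusion $\beta \le \beta_{d/2}(q,r)$, which equals $\frac{2r}{r+2}$ only on the sharp admissible line; restricting the $t$-integral to $|t|\sim R$ (rather than $[R,\infty)$) would fix this and reproduce the paper's $R^{2/q+2d/r}\lesssim R^{d/\beta}$.

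For \eqref{e:necessaryU}, your approach is a genuinely different route from the paper's, and it has a gap you yourself flag. The paper takes \emph{time translates} $f_j = ce^{-ij\phi(D)}g$ with $\widehat g(\xi) = \mathbf{1}_{[\phi_0^{-1}(\ell\pi),\phi_0^{-1}((\ell+2)\pi)]}(|\xi|)\,|\xi|^{-(d-1)/2}\phi_0'(|\xi|)^{1/2}$; the radial monotonicity of $\phi_0$ makes this change of variables kosher, $\langle f_j, f_k\rangle$ becomes an integral of $e^{-i(j-k)\rho}$ over a length-$2\pi$ interval, and the family is exactly orthonormal. Then $U_\phi f_j = e^{i(t-j)\phi(D)}g$, so for $t\in(n,n+\varepsilon)$ the $j=n$ term alone produces $\|\,\cdot\,\|_{L^{r/2}_x}\gtrsim \nu_n$, and the $L^{q/2}_t$ integral disintegrates into $\sum_n \nu_n^{q/2}$, giving $\|\nu\|_{\ell^{q/2}}\lesssim\|\nu\|_{\ell^\beta}$ directly. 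This mechanism (disjoint time windows, one per index $j$) is what makes the $\ell^{q/2}$ constraint fall out. Your construction of spatial translates on a sphere of radius $R$ evaluated at one fixed time does not have this structure: you get a single snapshot in $t$ rather than a sequence of disjoint time windows indexed by $j$, so there is no obvious way to produce $\|\nu\|_{\ell^{q/2}}$ on the left from the $L^{q/2}_t$ norm. I do not see how the geometric concentration you describe can be turned into the required lower bound without introducing something playing the role of the paper's time translation, and your own caveat (``the main obstacle I anticipate is in making this geometric concentration rigorous'') confirms the argument is not complete.
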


From this, the necessity claims in Theorems \ref{t:fracSchrosharp}--\ref{t:KGwave}, and Theorems  \ref{t:wavenonsharp}--\ref{t:fracSchrononsharp} all follow.
Indeed, the orthonormal Strichartz estimates in these theorems clearly implies the corresponding  frequency localized estimate of the form \eqref{e:cutoffONS}
with suitable choices of $\widetilde \chi_0$. The exponents $q,r,$ and $\beta$ should satisfy \eqref{e:necessaryL} and \eqref{e:necessaryU}.

It seems reasonable to expect that our estimates in $(i)$ of Theorems \ref{t:wave}, \ref{t:KGwave}, \ref{t:wavenonsharp} and \ref{t:KGnonsharp_wave} (corresponding to the line segment $(A_{\frac{d-1}{2}},C)$ in the sharp $\frac{d-1}{2}$-admissible case) are also sharp with respect to the range of allowable $\beta$. 
This would follow if we could have $\beta \le \beta_{\frac{d-1}{2}}(q,r)$. Unfortunately, we are not able to reach this conclusion yet. 

\begin{proof}[Proof of Proposition \ref{p:necessary} ]
To show \eqref{e:necessaryL} we use the family of vectors 
\[
(v_j)_{j \in J} := \{\xi \in R^{-1}\mathbb{Z}^d: 2^{-1}\le  |\xi|\le 2 \}
\] 
where $\# J \sim R^{d}$. If we let 
$$
\widehat{f}_j(\xi) = c R^{d/2} \chi (2R|\xi-v_j|)
$$
for each $j\in J$, then $(f_j)_j$ becomes orthonormal system in $L^2(\mathbb{R}^d)$ for an appropriate choice of the constant $c$. Note that 
$$
|U_\phi \widetilde \chi_0(|D|) f_j(x)| \gtrsim R^{-d/2} \1_{\{|t|\lesssim R^{2}, |x+t\nabla\phi(v_j)|\lesssim R\}}.
$$ 
Also, we have $|\nabla\phi(v_j)| \lesssim 1$ since $|v_j| \sim 1$ and this gives the uniform lower bound 
$\1_{\{|t|\lesssim R^{2}, |x+t\nabla\phi(v_j)|\lesssim R\}} \gtrsim \1_{\{|t|,|x| \lesssim R\}}$. Therefore, if we assume \eqref{e:cutoffONS} holds, then 
$$
R^{\frac2q + \frac{2d}r} \lesssim R^{\frac d\beta}. 
$$
By taking $R\to\infty$, we see that $\beta \le \beta_{\frac{d}{2}}(q,r)$.

To get the second condition  \eqref{e:necessaryU}, let us consider 
\[
 f_j = c e^{-ij\phi(D)} g,
\]
where 
$$
\widehat g(\xi) = \1_{[ \phi_0^{-1}(\ell \pi), \phi_0^{-1}((\ell + 2)\pi)]} (|\xi|) |\xi|^{-\frac{d-1}{2}} \phi_0'(|\xi|)^{1/2},
$$
$c$ is a constant to be chosen momentarily, and we here choose $\ell \in \mathbb{Z}$ so that the set $\{ \phi_0^{-1}(\ell \pi)\le |\xi|\le \phi_0^{-1}((\ell + 2)\pi)\}\cap  
\{2^{-1}\le  |\xi|\le 2 \}$ has nonzero measure. Then, by changing to spherical coordinates, it is easy to check that $(f_j)_j$ is an orthonormal family by choosing an appropriate constant $c$. 

Notice that  
$$
\bigg\| \sum_j \nu_j |U_\phi \widetilde \chi_0(|D|) f_j|^2 \bigg\|_{\frac q2, \frac r2}^{\frac q2} \geq \sum_{n\in\mathbb{Z}} \int_{n}^{n+\varepsilon} \nu_n^{\frac q2} \| e^{i(t-n)\phi(D)} g \|_{r}^{q}\, \mathrm{d}t \gtrsim \|\nu\|_{\ell^{\frac q2}}^{\frac q2},
$$
by choosing $\varepsilon>0$ small enough so that $ \| e^{is\phi(D)} g \|_{r} \sim \| g \|_r $ uniformly in $s\in (0,\varepsilon)$. Therefore, if we assume \eqref{e:cutoffONS} holds, then we see $\| \nu \|_{\frac q2} \lesssim \| \nu \|_\beta$ which shows \eqref{e:necessaryU}. 
\end{proof}

\begin{remark} 
\label{re:klein_gordon}
In cases where the Sobolev exponent $s$ is determined by scaling considerations in terms of $q$ and $r$, the summability exponent $\beta$ is considered as a function of these exponents. In other cases such as the Klein-Gordon equation
$$
\bigg\| \sum_j \nu_j |e^{it\sqrt{1 - \Delta}} f_j|^2 \bigg\|_{\frac q2, \frac r2} \lesssim \| \nu \|_{\ell^\beta}
$$ 
for families of orthonormal functions $(f_j)_j$ in $H^s$, it is reasonable to expect that the sharp value of $\beta$ depends on $q,r$ and $s$. To highlight this, for simplicity, we consider the case $q = r = \frac{2(d+1)}{d-1}$. Regarding $(q,r)$ as a non-sharp $\frac{d}{2}$-admissible pair, it follows from Theorem \ref{t:KGnonsharp_Schro} that we have the estimate
\begin{equation}\label{e:KG_ST_Schro}
\bigg\| \sum_j \nu_j |e^{it\sqrt{1-\Delta}} f_j|^2  \bigg\|_{\frac{2(d+1)}{d-1}} \lesssim \| \nu \|_{\ell^{\beta_1}}
\end{equation} 
for any family of orthonormal functions $(f_j)_j$ in $H^{s_1}$, where 
$$
(\beta_1,s_1) = \bigg(\frac{d}{d-1},\frac{d^2+3d-2}{2d(d+1)}\bigg).
$$
On the other hand, regarding $(q,r)$ as a sharp $\frac{d-1}{2}$-admissible pair, from Theorem \ref{t:KGFS} we have 
\begin{equation}\label{e:KG_ST_wave}
\bigg\| \sum_j \nu_j |e^{it\sqrt{1-\Delta}} f_j|^2  \bigg\|_{\frac{2(d+1)}{d-1}} \lesssim \| \nu \|_{\ell^{\beta_2}}
\end{equation} 
for any family of orthonormal functions $(f_j)_j$ in $H^{s_2}$, where 
$$
(\beta_2,s_2) = \bigg(\frac{d+1}{d},\frac12\bigg). 
$$
Easy computations show $\beta_1 > \beta_2$ and $s_1 > s_2$. So, from the viewpoint of the regularity, \eqref{e:KG_ST_wave} is better than \eqref{e:KG_ST_Schro}, but if we want further gain in the summability exponent $\beta$, then \eqref{e:KG_ST_Schro} is better. 
It seems to be an interesting problem  
to identify the sharp value of $\beta$ for allowable $(q,r,s)$ but we do not pursue this here.
\end{remark}

\section{Applications} \label{section:applications}
In this section, we present several applications of our results towards local well-posedness of infinite systems of Hartree type, weighted velocity averaging estimates for kinetic transport equations, and refined versions of classical Strichartz estimates. As we touched on in the Introduction, that Strichartz estimates for orthonormal families of initial data enjoy such a variety of connections has already been observed by other authors and we clarify this at appropriate points in the remainder of this section. Our purpose here is to obtain improvements in various respects over the existing literature in this direction based on our progress on Strichartz estimates for orthonormal families obtained in this paper. Although broadly speaking the methods overlap with existing work, we believe there is value in taking care to include some detail, for clarity of the exposition, and since it allows us to draw certain connections between the applications themselves. 

To begin with, it will helpful to introduce an operator-theoretic version of the Strichartz estimates for orthonormal families which is useful for the forthcoming applications. For this, we introduce the notion of the density function $\rho_\gamma$ (also written $\rho[\gamma]$ where appropriate) of certain classes of compact and self-adjoint operators $\gamma$ on $L^2(\mathbb{R}^d)$. Formally, $\rho_\gamma(x) = \gamma(x,x)$, where (with the usual abuse of notation) $\gamma(x,y)$ denotes the integral kernel of the operator $\gamma$. In terms of the spectral decomposition $\gamma = \sum_j \nu_j \Pi_{f_j}$, where $(f_j)_j$ is an orthonormal family in $L^2(\mathbb{R}^d)$ and $\Pi_{f_j} := \langle \cdot, f_j \rangle f_j$ is the orthogonal projection onto the span of $f_j$, we formally obtain
$$
\rho_\gamma (x) = \sum_j \nu_j |f_j(x)|^2.
$$ 
It turns out that the estimate 
\begin{equation} \label{e:ONS_abstract}
\bigg\| \sum_j \nu_j |U_\phi \Psi(D)^{-s} f_j|^2 \bigg\|_{\frac q2,\frac r2} \leq C_* \| \nu \|_{\ell^\beta}, \qquad \textrm{$(f_j)_j$ orthonormal in $L^2(\mathbb{R}^d)$}
\end{equation}
ensures that the density function $\rho[\Psi(D)^{-s}\gamma(t) \Psi(D)^{-s}]$ is well defined in $L^{\frac{q}{2}}_t L^{\frac{r}{2}}_x$, where $\gamma(t) = e^{it\phi(D)} \gamma_0 e^{-it\phi(D)}$, and satisfies
\begin{equation}\label{e:ONS_abstract_op}
\left\|\rho [\Psi(D)^{-s} \gamma(t) \Psi(D)^{-s} ]\right\|_{\frac q2,\frac r2}
\leq C_* \|\gamma_0\|_{\mathcal{C}^\beta(L^2)} 
\end{equation}
whenever $\gamma_0 \in \mathcal{C}^\beta(L^2)$. Here $q,r \geq 2$, $\beta \geq 1$, $s \in \mathbb{R}$, and $\Psi(\xi) = |\xi|$ or $\Psi(\xi) = \langle \xi \rangle$. We refer the reader to \cite{FLLS} and \cite{FS_AJM} for further details.

As a further word on notation, in the remainder of this section we usually abbreviate $\mathcal{C}^\beta(L^2)$ to $\mathcal{C}^\beta$. Furthermore, $\mathcal{C}^{\beta,s}$ denotes the Sobolev-type Schatten space with norm
$$
\| \gamma \|_{\mathcal{C}^{\beta,s}} = \| \Psi(D)^s \gamma \Psi(D)^s \|_{\mathcal{C}^\beta(L^2)}. 
$$

Also, the homogeneous and inhomogeneous Besov spaces $\dot{B}^s_{q,r}$ and $B^s_{q,r}$ will arise in this section; their norms are given by
\begin{align*}
\|f\|_{\dot{B}^s_{q,r}}& = \Big(\sum_{j \in \mathbb{Z}} (2^{js} \|P_j f\|_q)^r \Big)^\frac{1}{r}\,, \\
\|f\|_{B^s_{q,r}}&  = \|P_{\leq 0} f\|_q + \Big(\sum_{j \geq 1} (2^{js} \|P_j f\|_q)^r \Big)^\frac{1}{r} 
\end{align*}
for finite $r$, with the obvious modification in the case $r = \infty$. Here, $(P_j)_{j \in \mathbb{Z}}$ is a smooth Littlewood--Paley family of operators associated to dyadic annuli given by
\begin{equation} \label{e:LP}
\widehat{P_j f}(\xi) = \chi_0(2^{-j} |\xi|) \widehat{f}(\xi),
\end{equation}
where $\chi_0$ is a non-negative and smooth function supported on $[\frac{1}{2},2]$ such that $\sum_{j \in \mathbb{Z}} \chi_0(2^{-j} |\xi|) = 1$ for $\xi \neq 0$. Also, $\widehat{P_{\leq 0} f}(\xi) = \chi(\xi) \widehat{f}(\xi)$, where $\chi$ is a non-negative and smooth function supported on $[-\frac{1}{2},\frac{1}{2}]$.

\subsection{Local well-posedness for Hartree-type infinite systems}
Estimates of the form \eqref{e:ONS_abstract} are a powerful tool in the analysis of infinite systems. For concreteness, we begin by discussing the infinite system of fractional Schr\"odinger equations on $\mathbb{R}^{1+d}$ with Hartree-type nonlinearity
 \begin{equation} \label{e:fractionalsystem}
 \begin{cases}
 i\partial_t u_j = (-\Delta)^{\alpha/2}u_j + (w * \rho)u_j ,\quad \quad j \in \mathbb{N} \\
 u_j(0,\cdot)=f_j,\\
 \end{cases}
 \end{equation}
where $\rho = \sum_{k=1}^\infty |u_k|^2$ is the density function and $w$ is the interaction potential. We focus on the case where the initial data $(f_j)_j$ forms an orthonormal family in $L^2(\mathbb{R}^d)$, in which case we necessarily have $\alpha \geq 2$ in order for \eqref{e:ONS_abstract} to hold when $s=0$. Recent developments on the rigorous mathematical theory of the above system in the case $\alpha = 2$ have made significant use of estimate \eqref{e:ONS_abstract} (or variants which incorporate additional smoothing effects). We refer the reader to \cite{CHP-1}, \cite{CHP-2}, \cite{FS_AJM}, \cite{LewinSabin-1}, \cite{LewinSabin-2}, and \cite{Sabin_survey} for further background and details on these applications, and to other recent developments in the theory of such systems in \cite{HKY} (on finite-time blowup criteria) and \cite{GLN} (on ground states). We also remark that in the case of a single particle, the system \eqref{e:fractionalsystem} reduces to 
 \begin{equation*} 
  i\partial_t u = (-\Delta)^{\alpha/2}u + (w * |u|^2)u  
  \end{equation*}
and this has been the subject of numerous papers; see, for example, \cite{CHHO}, \cite{CHKL}, \cite{GuoZhu} and the references therein.

In the standard way, in order to treat the infinite system \eqref{e:fractionalsystem} we consider the operator formalism
\begin{equation} \label{e:fractionalsystem_op}
  \left\{
  \begin{array}{ll}
	i\partial_t\gamma=[(-\Delta)^{\alpha/2}+w\ast\rho_\gamma,\gamma],  \quad (t,x)\in\mathbb{R}^{1+d}, \ d\ge1, \\
	\gamma(t,\cdot) = \gamma_0.
  \end{array} 
  \right.
\end{equation}
Here $\gamma=\gamma(t)$ is a bounded and self-adjoint operator on $L^2(\mathbb{R}^d)$, $\rho_\gamma$ is the associated density function and $[\cdot,\cdot]$ denotes the commutator. Regarding this equation, we have the following local-in-time existence theorem.
\begin{theorem} \label{t:LWP_frac}
Let $d \geq 1$, $\alpha > 2$, $r \in [2,r_\alpha)$ and $\frac{\alpha}{q} = d(\frac{1}{2} - \frac{1}{r})$, where $r_\alpha = \frac{2d}{d-\alpha}$ for $\alpha < d$, and $r_\alpha = \infty$ for $\alpha \geq d$. For $d=1$ we assume $\beta \in [1,\beta_\frac{1}{2}(q,r)]$, and for $d\geq 2$ we assume $\beta$ satisfies
\begin{equation*}
\beta \in 
\left\{
\begin{array}{lllllllll}
[1,\beta_\frac{d}{2}(q,r)] & \quad \textrm{if $r < \frac{2(d+\alpha-1)}{d-1}$} \vspace{2mm}\,,  \\
\textrm{$[1,\frac{q}{2}) $} & \quad \textrm{if $r \geq \frac{2(d+\alpha-1)}{d-1}$.}
\end{array}
\right.
\end{equation*}
Then, for any $w \in L^{(r/2)'}$ and $\gamma_0\in\mathcal{C}^{\beta}$, there exist $T=T(\|\gamma_0\|_{\mathcal{C}^{\beta}}, \|w\|_{L^{(r/2)'}}) > 0$ and a unique solution $\gamma\in C^0_t([0,T];\mathcal{C}^{\beta})$ to 
\eqref{e:fractionalsystem_op}
on $[0,T]\times\mathbb{R}^d$ whose density $\rho_\gamma$ belongs to $L^{q/2}_tL^{r/2}_x([0,T]\times\mathbb{R}^d)$.
\end{theorem}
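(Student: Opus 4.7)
The plan is to solve \eqref{e:fractionalsystem_op} via a Banach fixed-point argument applied to the Duhamel reformulation
\begin{equation*}
\Phi(\gamma)(t) := U(t)\gamma_0 U(t)^{*} - i\int_0^t U(t-s)\bigl[w\ast\rho_{\gamma(s)},\gamma(s)\bigr]U(t-s)^{*}\,\mathrm{d}s,
\end{equation*}
with $U(t) := e^{-it(-\Delta)^{\alpha/2}}$, seeking a fixed point in a closed ball of the Banach space
\begin{equation*}
X_T := \bigl\{\gamma \in C^0_t([0,T];\mathcal{C}^\beta) : \rho_\gamma \in L^{q/2}_tL^{r/2}_x\bigr\}, \quad \|\gamma\|_{X_T} := \|\gamma\|_{L^\infty_t\mathcal{C}^\beta} + \|\rho_\gamma\|_{L^{q/2}_tL^{r/2}_x},
\end{equation*}
of radius comparable to $\|\gamma_0\|_{\mathcal{C}^\beta}$. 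This reduces the theorem to two key estimates: a linear Strichartz bound for the density of the free evolution, and an inhomogeneous bound for the Duhamel term.

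The crucial observation is that the hypothesis $\frac{\alpha}{q} = d(\frac12 - \frac1r)$ is exactly the vanishing condition $s(q,r) := \frac{d}{2} - \frac{d}{r} - \frac{\alpha}{q} = 0$ on the Sobolev exponent appearing in Theorem~\ref{t:fracSchrononsharp}. A direct computation further shows that the threshold $r = \frac{2(d+\alpha-1)}{d-1}$ of our statement corresponds to the intersection of the line $\frac1q = \frac{d}{\alpha}(\frac12-\frac1r)$ with the line $OA_{d/2}$, so that the two regimes on $\beta$ in our hypothesis coincide precisely with parts $(i)$ and $(ii)$ of Theorem~\ref{t:fracSchrononsharp}, respectively. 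Taking $\Psi(D)^{-s} = I$ in the operator form \eqref{e:ONS_abstract_op} therefore delivers the linear density estimate
\begin{equation*}
\bigl\|\rho\bigl[U(t)\gamma_0 U(t)^{*}\bigr]\bigr\|_{L^{q/2}_tL^{r/2}_x} \lesssim \|\gamma_0\|_{\mathcal{C}^\beta}
\end{equation*}
throughout the allowed $\beta$-range, which controls the homogeneous component of $\Phi$ in $X_T$.

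For the Duhamel term, I would handle the two components of the $X_T$-norm separately. The ideal property $\|V\gamma\|_{\mathcal{C}^\beta} \le \|V\|_{L^\infty_x}\|\gamma\|_{\mathcal{C}^\beta}$, the unitarity of $U$, the triangle inequality, Young's inequality $\|w\ast\rho_\gamma\|_{L^\infty_x} \le \|w\|_{L^{(r/2)'}}\|\rho_\gamma\|_{L^{r/2}_x}$, and Hölder in time combine to give
\begin{equation*}
\bigl\|\Phi(\gamma) - U\gamma_0 U^{*}\bigr\|_{L^\infty_t\mathcal{C}^\beta} \lesssim T^{1-2/q}\,\|w\|_{L^{(r/2)'}}\,\|\rho_\gamma\|_{L^{q/2}_tL^{r/2}_x}\,\|\gamma\|_{L^\infty_t\mathcal{C}^\beta},
\end{equation*}
and note that $r < r_\alpha$ forces $q > 2$, so $T^{1-2/q}$ is a genuine smallness factor. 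For the $L^{q/2}_tL^{r/2}_x$-component of the density of the Duhamel term, I would invoke the inhomogeneous (retarded) Strichartz estimate for densities, which follows from the homogeneous one by the Frank--Sabin duality principle (Proposition~\ref{p:duality}) combined with a Christ--Kiselev-type truncation to replace $\int_{\mathbb{R}}$ by $\int_0^t$, and then applying the same Young--Hölder combination. Altogether, $\|\Phi(\gamma) - U\gamma_0 U^{*}\|_{X_T} \lesssim T^\delta \|w\|_{L^{(r/2)'}}\|\gamma\|_{X_T}^2$ for some $\delta > 0$, with an analogous bilinear estimate for $\Phi(\gamma_1) - \Phi(\gamma_2)$. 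Choosing $T$ small depending on $\|\gamma_0\|_{\mathcal{C}^\beta}$ and $\|w\|_{L^{(r/2)'}}$ therefore makes $\Phi$ a contraction on a suitable ball of $X_T$, and the Banach fixed-point theorem produces the unique solution.

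The principal anticipated obstacle is the retarded inhomogeneous density estimate: the homogeneous orthonormal Strichartz is the novel input furnished by this paper, but passing to an $L^{q/2}_tL^{r/2}_x$ bound on the density of a time-integrated perturbation requires a careful duality-plus-Christ--Kiselev reduction, which becomes slightly delicate when $\beta$ sits at the sharp endpoint $\beta_{d/2}(q,r)$. Once that step is in place, the remainder is a routine adaptation of the contraction scheme already developed in the case $\alpha = 2$ by Lewin--Sabin and Chen--Hong--Pavlović.
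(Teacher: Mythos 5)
Your overall plan — Duhamel reformulation, Banach fixed point in a space encoding both the Schatten norm and the space--time norm of the density, and the observation that $\frac{\alpha}{q}=d(\frac12-\frac1r)$ is exactly the condition $s(q,r)=0$ needed to invoke Theorem~\ref{t:fracSchrononsharp} — matches the paper's strategy (Proposition~\ref{p:wellposed_abstract}). The homogeneous density estimate and the $T^{1-2/q}$ gain from H\"older are also handled the same way.

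The gap is in your treatment of the inhomogeneous term, where you invoke ``a Christ--Kiselev-type truncation to replace $\int_\mathbb{R}$ by $\int_0^t$.'' This does not fit the setting: the object to be estimated is $\rho\bigl[\int_0^t U(t-t')R(t')U(t-t')^{*}\,\mathrm{d}t'\bigr]$, a density of a time-dependent \emph{operator}, and there is no operator-density analogue of the Christ--Kiselev lemma ready to hand; moreover, even in the scalar setting the lemma requires strict inequality between the input and output Lebesgue exponents, which translates to $q>4$ and fails for a substantial portion of the admissible range here (at the threshold $r=\frac{2(d+\alpha-1)}{d-1}$ one has $q=\frac{2(d+\alpha-1)}{d}$, which is $\le 4$ whenever $\alpha\le d+1$). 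The paper avoids the difficulty entirely through Proposition~\ref{p:inhomONS}: pairing $\rho[\gamma_R]$ against a test function $V$, using cyclicity of the trace and the pointwise inequality $|{\rm Tr}(A(t)B(t'))|\le {\rm Tr}(|A(t)||B(t')|)$ for self-adjoint $A,B$, the retarded integral $\int_0^t$ is dominated by the \emph{full} integral $\int_\mathbb{R}$, producing the deliberately crude but sufficient bound $\|\rho[\gamma_R]\|_{q/2,r/2}\lesssim\bigl\|\int_\mathbb{R}U(t)^{*}|R(t)|U(t)\,\mathrm{d}t\bigr\|_{\mathcal{C}^\beta}$, after which the $\mathcal{C}^\beta$ triangle inequality and H\"older close the argument. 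You should replace the Christ--Kiselev step with this trace-inequality trick. A secondary point: the paper's contraction is run on \emph{pairs} $(\gamma,\rho)$ rather than on $\gamma$ alone, because the density map $\gamma\mapsto\rho_\gamma$ is not continuous (indeed not even well-defined) on $\mathcal{C}^\beta$ for $\beta>1$, so your space $X_T=\{\gamma:\rho_\gamma\in L^{q/2}_tL^{r/2}_x\}$ is not obviously complete; the pair formalism sidesteps this.
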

Our proof of Theorem \ref{t:LWP_frac} follows the contraction mapping arguments in \cite{FS_AJM} and \cite{LewinSabin-1} for the Duhamel formulation of \eqref{e:fractionalsystem_op}, into which we incorporate our new estimates of the form \eqref{e:ONS_abstract} from Theorem \ref{t:fracSchrononsharp}. The argument is robust and similar applications are possible for wider classes of equations of the type
\begin{equation}\label{e:FracHa}
  \left\{
  \begin{array}{ll}
	i\partial_t\gamma=[\phi(D)+w\ast\rho_\gamma,\gamma],   \quad (t,x)\in\mathbb{R}^{1+d},\, \, d\ge1, \\
	\gamma(t,\cdot) = \gamma_0,
  \end{array} 
  \right.
\end{equation}
and thus we present an abstract statement in the forthcoming Proposition \ref{p:wellposed_abstract} based on the assumption that the associated propagator $U_\phi$ satisfies the estimate \eqref{e:ONS_abstract}. In addition, for control of the nonlinearity we shall also assume the operator norm estimates
\begin{equation} \label{e:nonlinearitycontrol}
\| \Psi(D)^{\pm s} (w * \rho) \Psi(D)^{\mp s} \|_{\mathcal{C}^\infty} \leq C_{**} \|w\|_{\mathcal{X}_{r,s}} \| \rho \|_\frac{r}{2},
\end{equation}
where $\mathcal{X}_{r,s}$ is an appropriate function space. It is of course immediate from H\"older's inequality that in the case $s = 0$ (needed for Theorem \ref{t:LWP_frac})  we may take the Lebesgue space $\mathcal{X}_{r,0} = L^{(r/2)'}$. Some remarks on the case of general $s$ will follow later.

\begin{proposition} \label{p:wellposed_abstract}
Suppose \eqref{e:ONS_abstract} and \eqref{e:nonlinearitycontrol} hold. Then, for any $w \in \mathcal{X}_{r,s}$ and $\gamma_0\in\mathcal{C}^{\beta,s}$, there exist $T=T(\|\gamma_0\|_{\mathcal{C}^{\beta,s}}, \|w\|_{\mathcal{X}_{r,s}}) > 0$ and a unique solution $\gamma\in C^0_t([0,T];\mathcal{C}^{\beta,s})$ to \eqref{e:FracHa} on $[0,T]\times\mathbb{R}^d$ whose density $\rho_\gamma$ satisfies $\rho_\gamma\in L^{q/2}_tL^{r/2}_x([0,T]\times\mathbb{R}^d)$.
\end{proposition}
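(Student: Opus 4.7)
The plan is a standard contraction mapping argument applied to the Duhamel formulation
\[
\mathcal F(\gamma)(t) := U_\phi(t)\gamma_0 U_\phi(t)^* - i\int_0^t U_\phi(t-\tau)[w\ast\rho_\gamma(\tau), \gamma(\tau)]U_\phi(t-\tau)^* \, \mathrm{d}\tau
\]
of \eqref{e:FracHa}. I would seek a fixed point of $\mathcal F$ in a closed ball of the complete metric space
\[
X_T := \{\gamma \in C^0_t([0,T]; \mathcal{C}^{\beta,s}) :\ \rho_\gamma \in L^{q/2}_t L^{r/2}_x\}
\]
equipped with the natural norm $\|\gamma\|_{X_T} = \sup_{t \in [0,T]}\|\gamma(t)\|_{\mathcal{C}^{\beta,s}} + \|\rho_\gamma\|_{\frac{q}{2}, \frac{r}{2}}$. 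Throughout I use that $U_\phi(t)$ is unitary on $L^2$ and commutes with $\Psi(D)^{\pm s}$ (both being Fourier multipliers).

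For the linear piece, unitarity yields $\|U_\phi(t)\gamma_0 U_\phi(t)^*\|_{\mathcal{C}^{\beta,s}} = \|\gamma_0\|_{\mathcal{C}^{\beta,s}}$, while commuting $\Psi(D)^{\pm s}$ through $U_\phi(t)$ and applying the operator form \eqref{e:ONS_abstract_op} with input $\Psi(D)^s\gamma_0\Psi(D)^s \in \mathcal{C}^\beta$ gives
\[
\|\rho[U_\phi(t)\gamma_0 U_\phi(t)^*]\|_{\frac{q}{2}, \frac{r}{2}} \leq C_* \|\gamma_0\|_{\mathcal{C}^{\beta,s}}.
\]
Hence the linear evolution sits in a ball of size $\sim \|\gamma_0\|_{\mathcal{C}^{\beta,s}}$ inside $X_T$.

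For the nonlinear term, the key bilinear input is the commutator bound
\[
\|[w\ast\rho_\gamma,\gamma]\|_{\mathcal{C}^{\beta,s}} \lesssim \|w\|_{\mathcal{X}_{r,s}}\|\rho_\gamma\|_{r/2}\|\gamma\|_{\mathcal{C}^{\beta,s}},
\]
obtained by inserting $\Psi(D)^{\pm s}\Psi(D)^{\mp s}$ around each summand, using the Schatten H\"older inequality $\|AB\|_{\mathcal{C}^\beta} \leq \|A\|_{\mathcal{C}^\infty}\|B\|_{\mathcal{C}^\beta}$, and applying \eqref{e:nonlinearitycontrol} to the multiplier $w\ast\rho_\gamma$ with both signs of $s$. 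For the $\mathcal{C}^{\beta,s}$ piece of $\mathcal F(\gamma)$, I integrate this bound in $\tau$ (again using that $U_\phi(t-\tau)$ is unitary and commutes with $\Psi(D)^s$) and use H\"older in time, which is permissible since $q>2$, to obtain
\[
\sup_{t\in[0,T]}\|\mathcal F(\gamma)(t) - U_\phi(t)\gamma_0 U_\phi(t)^*\|_{\mathcal{C}^{\beta,s}} \lesssim T^{1-2/q}\|w\|_{\mathcal{X}_{r,s}}\|\gamma\|_{X_T}^2.
\]
For the density piece I apply Minkowski's integral inequality to pass the Duhamel integral outside the $L^{q/2}_t L^{r/2}_x$ norm, and then apply \eqref{e:ONS_abstract_op} fibrewise to the time-shifted flow $s\mapsto U_\phi(s)N(\tau)U_\phi(s)^*$ with input $N(\tau) := [w\ast\rho_\gamma(\tau), \gamma(\tau)] \in \mathcal{C}^{\beta,s}$, yielding
\[
\Big\|\rho\Big[\int_0^t U_\phi(t-\tau) N(\tau) U_\phi(t-\tau)^* \, \mathrm{d}\tau\Big]\Big\|_{\frac{q}{2}, \frac{r}{2}} \leq C_* \int_0^T \|N(\tau)\|_{\mathcal{C}^{\beta,s}}\, \mathrm{d}\tau,
\]
and inserting the commutator bound plus another application of H\"older in $\tau$ produces the analogous $T^{1-2/q}\|w\|_{\mathcal{X}_{r,s}}\|\gamma\|_{X_T}^2$ bound.

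Combining the two estimates yields $\|\mathcal F(\gamma)\|_{X_T} \leq C_* \|\gamma_0\|_{\mathcal{C}^{\beta,s}} + C\,T^{1-2/q}\|w\|_{\mathcal{X}_{r,s}}\|\gamma\|_{X_T}^2$, and repeating the same analysis on the difference $\mathcal F(\gamma_1) - \mathcal F(\gamma_2)$ via the identity $[w\ast\rho_{\gamma_1},\gamma_1] - [w\ast\rho_{\gamma_2},\gamma_2] = [w\ast\rho_{\gamma_1-\gamma_2},\gamma_1] + [w\ast\rho_{\gamma_2},\gamma_1-\gamma_2]$ (and $\rho_{\gamma_1}-\rho_{\gamma_2}=\rho_{\gamma_1-\gamma_2}$) furnishes a Lipschitz bound with constant of the form $C\,T^{1-2/q}\|w\|_{\mathcal{X}_{r,s}}(\|\gamma_1\|_{X_T}+\|\gamma_2\|_{X_T})$. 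Choosing $T = T(\|\gamma_0\|_{\mathcal{C}^{\beta,s}}, \|w\|_{\mathcal{X}_{r,s}})$ small enough makes $\mathcal F$ a contraction on the ball of radius $2C_*\|\gamma_0\|_{\mathcal{C}^{\beta,s}}$ in $X_T$, whose unique fixed point is the desired solution. The main technical point to get right is the density bound for the Duhamel term: what makes it work cleanly (without invoking Christ--Kiselev) is that \eqref{e:ONS_abstract_op} is a space-time density estimate for the full unitary flow, so fibering in $\tau$ and using Minkowski immediately upgrades it to the inhomogeneous setting.
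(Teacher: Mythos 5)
Your proof is correct and follows the same overall contraction-mapping strategy, but takes a genuinely different route to the key inhomogeneous density estimate. Where the paper derives it from a dedicated duality statement (Proposition~\ref{p:inhomONS}(ii), proved via the estimate $|\mathrm{Tr}(AB)|\leq\mathrm{Tr}(|A|\,|B|)$ for self-adjoint $A,B$, which then introduces a factor $4$ upon decomposing the test function), you instead use Minkowski's integral inequality to pass the Duhamel integral outside the $L^{q/2}_tL^{r/2}_x$-norm (legitimate since $q/2,r/2\geq1$), exploit the time-translation invariance of the global-in-time estimate \eqref{e:ONS_abstract_op}, and then apply \eqref{e:ONS_abstract_op} to each frozen-time slice $U_\phi(\cdot)N(\tau)U_\phi(\cdot)^*$, after reorganizing the $\Psi(D)^{\pm s}$ factors. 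This lands directly on $C_*\int_0^T\|N(\tau)\|_{\mathcal{C}^{\beta,s}}\,\mathrm{d}\tau$, with a better constant than the paper's $4C_*$. For the local well-posedness argument the two routes are functionally equivalent, since the paper also passes to $\int_0^T\|N(\tau)\|_{\mathcal{C}^{\beta,s}}\,\mathrm{d}\tau$ via a triangle inequality anyway; the paper's Proposition~\ref{p:inhomONS}(ii) does, however, establish a sharper operator-integral bound that is useful in other applications (e.g.\ scattering for infinite Hartree systems), which your Minkowski argument does not recover. One formulational caveat: you iterate on a space of operators $\gamma$ with $\rho_\gamma$ treated as a derived quantity, whereas the paper iterates on \emph{pairs} $(\gamma,\rho)$ with $\rho$ an independent coordinate that only coincides with the density at the fixed point. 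The latter cleanly avoids the question of whether the pointwise-in-time density of an arbitrary $\gamma\in C^0_t\mathcal{C}^{\beta,s}$ is even well-defined --- the a priori estimate \eqref{e:ONS_abstract} only controls the density of full unitary flows $t\mapsto U_\phi(t)\gamma_0 U_\phi(t)^*$, not of a general time-dependent family $\gamma(t)$. To make your space $X_T$ rigorous one would need an additional approximation or density argument; this is a minor wrinkle and not a substantive gap, but the pairs formulation is the cleaner resolution.
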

By the Duhamel principle  the solution of the inhomogeneous equation  
\begin{equation*}
  \left\{
  \begin{array}{ll}
	i\partial_t\gamma=[\phi(D),\gamma] + R(t),  \quad (t,x)\in\mathbb{R}^{1+d},  \ d\ge1 \\
	\gamma(t,\cdot) = \gamma_0,
  \end{array} 
  \right.
\end{equation*}
is formally given by 
\[  
\gamma(t) = e^{-it\phi(D)}\gamma_0e^{it\phi(D)}
-i\int_0^t e^{-i(t-t')\phi(D)}R(t')e^{i(t-t')\phi(D)}\, \mathrm{d}t'
\]
and thus the following proposition provides control on the inhomogeneous component of the Duhamel formula for \eqref{e:FracHa}.
\begin{proposition} \label{p:inhomONS}
\
\begin{enumerate} 
[leftmargin=.8cm, labelsep=0.3 cm, topsep=0pt]
\item[$(i)\,$] If \eqref{e:ONS_abstract} holds, then for any $V\in L^{(q/2)'}_tL^{(r/2)'}_x$ we have 
\begin{equation} \label{e:dual_ONS}
\bigg\| \int_{\mathbb{R}}e^{it\phi(D)} \Psi(D)^{-s} V(t,\cdot) \Psi(D)^{-s} e^{-it\phi(D)}\, \mathrm{d}t \bigg\|_{\mathcal{C}^{\beta'}} \leq C_{*} \|V\|_{(\frac{q}{2})',(\frac{r}{2})'}.
\end{equation}
\vspace{2mm}
\item[$(ii)\,$] (Inhomogeneous estimate) If \eqref{e:ONS_abstract} holds, then
\begin{equation}\label{e:inhom}
\|\rho[\Psi(D)^{-s} \gamma_{R}(t) \Psi(D)^{-s}] \|_{\frac{q}{2},\frac{r}{2}}
\leq 4C_{*}
\bigg\|\int_{\mathbb{R}}e^{it\phi(D)}|R(t)|e^{-it\phi(D)}\, \mathrm{d}t\bigg\|_{\mathcal{C}^\beta}.
\end{equation}
 Here, for each $t \in \mathbb{R}$, $R(t):L^2\to L^2$ is a self-adjoint operator and
\[
\gamma_{R}(t)=\int_0^te^{-i(t-t')\phi(D)}R(t')e^{i(t-t')\phi(D)}\, \mathrm{d}t'.
\]
\end{enumerate}
\end{proposition}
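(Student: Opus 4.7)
I would treat part (i) by a direct duality argument and then bootstrap to part (ii) by exploiting the operator-order monotonicity of the density function, which enables the passage from the truncated Duhamel integral to a single fixed-time operator.

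For part (i), set $A_V := \int_{\mathbb{R}} e^{it\phi(D)} \Psi(D)^{-s} V(t,\cdot) \Psi(D)^{-s} e^{-it\phi(D)}\,\mathrm{d}t$. By the duality $(\mathcal{C}^\beta)^* = \mathcal{C}^{\beta'}$, it suffices to bound $|\mathrm{Tr}(A_V \gamma_0)|$ uniformly in $\gamma_0 \in \mathcal{C}^\beta$ with $\|\gamma_0\|_{\mathcal{C}^\beta} \leq 1$. Cyclicity of the trace gives
\begin{equation*}
\mathrm{Tr}(A_V \gamma_0) = \int_{\mathbb{R}} \int_{\mathbb{R}^d} V(t,x)\, \rho\bigl[\Psi(D)^{-s} e^{-it\phi(D)} \gamma_0 e^{it\phi(D)} \Psi(D)^{-s}\bigr](x)\,\mathrm{d}x\,\mathrm{d}t,
\end{equation*}
and after the change of variable $t \mapsto -t$ the kernel inside the density becomes $\Psi(D)^{-s} \gamma(t) \Psi(D)^{-s}$ for the standard forward flow $\gamma(t) = e^{it\phi(D)} \gamma_0 e^{-it\phi(D)}$. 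H\"older's inequality in $L^{(q/2)'}_t L^{(r/2)'}_x$ against $L^{q/2}_t L^{r/2}_x$ followed by the operator reformulation \eqref{e:ONS_abstract_op} of \eqref{e:ONS_abstract} then produces $|\mathrm{Tr}(A_V \gamma_0)| \leq C_* \|V\|_{(q/2)',(r/2)'} \|\gamma_0\|_{\mathcal{C}^\beta}$, which is \eqref{e:dual_ONS}.

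For part (ii), I would first rewrite $\gamma_R(t) = U_{-t} \bigl( \int_0^t \widetilde R(s)\,\mathrm{d}s \bigr) U_t$, where $U_t := e^{it\phi(D)}$ and $\widetilde R(s) := U_s R(s) U_{-s}$, and decompose the self-adjoint operator $R(s) = R_+(s) - R_-(s)$ into its positive and negative parts. Then $\widetilde R_\pm(s) := U_s R_\pm(s) U_{-s} \geq 0$, $|\widetilde R(s)| = \widetilde R_+(s) + \widetilde R_-(s)$, and for $t \geq 0$ the partial integrals over nested intervals of nonnegative operators satisfy
\begin{equation*}
0 \leq \int_0^t \widetilde R_\pm(s)\,\mathrm{d}s \leq \int_{\mathbb{R}} |\widetilde R(s)|\,\mathrm{d}s =: M.
\end{equation*}
The decisive observation is a monotonicity principle for densities: if $0 \leq X \leq Y$ are Schatten class operators, then $\rho[X] \leq \rho[Y]$ pointwise almost everywhere, since for a nonnegative test function $v$ the integral $\int v \rho[Y-X]\, \mathrm{d}x$ equals the trace of the positive operator obtained by sandwiching $Y-X$ between two copies of the multiplication by $v^{1/2}$. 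Combined with the elementary inequality $|\rho[X_+] - \rho[X_-]| \leq \rho[X_+] + \rho[X_-]$ for positive $X_\pm$, this yields, for $t \geq 0$,
\begin{equation*}
|\rho[\Psi(D)^{-s} \gamma_R(t) \Psi(D)^{-s}](x)| \leq 2\,\rho[\Psi(D)^{-s} U_{-t} M U_t \Psi(D)^{-s}](x).
\end{equation*}

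The right-hand side is then controlled in $L^{q/2}_t L^{r/2}_x$ by $C_* \|M\|_{\mathcal{C}^\beta}$ via \eqref{e:ONS_abstract_op} applied to the backwards flow $U_{-t} M U_t$ (which carries the same density bound as the forward flow through $t \mapsto -t$), giving $\|\rho[\Psi(D)^{-s} \gamma_R(t) \Psi(D)^{-s}]\|_{L^{q/2}_{t \geq 0} L^{r/2}_x} \leq 2 C_* \|M\|_{\mathcal{C}^\beta}$. A symmetric treatment on $t \leq 0$ yields the same bound, and the triangle inequality for the $L^{q/2}_t(\mathbb{R})$ norm combines both halves into the constant $4 C_*$ in \eqref{e:inhom}. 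The conceptual core of part (ii) is that the absolute-value decomposition of $R(s)$ makes the partial Duhamel integrals $\int_0^t \widetilde R_\pm(s)\,\mathrm{d}s$ operator-monotone in $t$ with the common upper bound $M$, and the density monotonicity principle transfers this operator bound to a pointwise domination by the density of the fixed-time operator $U_{-t} M U_t$; this bypasses the need for a Christ--Kiselev-type real-interpolation argument.
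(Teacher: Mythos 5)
Your proof of part $(i)$ is essentially identical to the paper's: duality against $\gamma_0 \in \mathcal{C}^\beta$, cyclicity of trace to pass to a density pairing, then H\"older and \eqref{e:ONS_abstract_op}.

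Your proof of part $(ii)$ is genuinely different from the paper's, and it is correct. The paper also argues by duality in $(ii)$: it tests $\rho[\Psi(D)^{-s}\gamma_R(t)\Psi(D)^{-s}]$ against $V$, expresses the pairing as $\int_{\mathbb{R}}\int_0^t \mathrm{Tr}(A(t)B(t'))\,\mathrm{d}t'\mathrm{d}t$ with $A(t) = e^{it\phi(D)}\Psi(D)^{-s}V(t,\cdot)\Psi(D)^{-s}e^{-it\phi(D)}$ and $B(t) = e^{it\phi(D)}R(t)e^{-it\phi(D)}$, invokes the trace inequality $|\mathrm{Tr}(A(t)B(t'))| \leq \mathrm{Tr}(|A(t)||B(t')|)$ for self-adjoint operators, extends the inner integral to all of $\mathbb{R}$, and applies Schatten--H\"older together with part $(i)$; the factor $4$ comes from decomposing a general $V$ into four signed pieces so that $(i)$ can be applied to each. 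You instead stay on the density side: splitting $R(s)$ into positive and negative parts, observing that the partial Duhamel integrals $\int_0^t \widetilde{R}_\pm(s)\,\mathrm{d}s$ are sandwiched between $0$ and $M$ in the operator order, and transferring this via the density-monotonicity principle ($0 \le X \le Y$ implies $\rho[X] \le \rho[Y]$ a.e., which is justified as you say by testing against $M_{v^{1/2}}(\cdot)M_{v^{1/2}}$ and is legitimate once one notes $\rho[\Psi(D)^{-s}e^{-it\phi(D)}Me^{it\phi(D)}\Psi(D)^{-s}]$ is finite a.e.\ by the estimate you are invoking). This bypasses both the duality step and the use of part $(i)$, and is conceptually cleaner; at heart the positive/negative decomposition is also what underlies the trace inequality in the paper's argument, but your version makes the mechanism explicit. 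One minor inefficiency: the final step splitting $t \gtrless 0$ and recombining is unnecessary, since your pointwise bound $|\rho[\Psi(D)^{-s}\gamma_R(t)\Psi(D)^{-s}]| \le 2\,\rho[\Psi(D)^{-s}e^{-it\phi(D)}Me^{it\phi(D)}\Psi(D)^{-s}]$ already holds for all $t\in\mathbb{R}$, so taking $L^{q/2}_tL^{r/2}_x$ norms directly yields the bound with constant $2C_*$ (which of course still implies \eqref{e:inhom}, as $2C_* \le 4C_*$).
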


\begin{proof}
For both $(i)$ and $(ii)$, we follow the argument in \cite[Section 2.3]{FLLS} and so we will be brief with details. 

For $(i)$, by duality it suffices to show
\begin{equation} \label{e:dual_Xray}
\bigg|{\rm Tr}\bigg( \gamma_0 \int_{\mathbb{R}}e^{it\phi(D)} \Psi(D)^{-s} V(t,\cdot)  \Psi(D)^{-s} e^{-it\phi(D)}\, \mathrm{d}t \bigg)\bigg| \leq  C_{*} \|V\|_{(\frac{q}{2})',(\frac{r}{2})'}
\end{equation}
whenever $\|\gamma_0 \|_{\mathcal{C}^\beta}=1$. Using the cyclic property of trace, it follows that the left-hand side of \eqref{e:dual_Xray} coincides with
\[
\bigg| \int_{\mathbb{R}} \int_{\mathbb{R}^d} \rho[\Psi(D)^{-s}\gamma(t) \Psi(D)^{-s}](x) V(t,x) \, \mathrm{d}x \mathrm{d}t\bigg|,
\]
where $\gamma(t) = e^{-it\phi(D)} \gamma_0 e^{it\phi(D)}$, and thus \eqref{e:dual_Xray} follows from H\"older's inequality and \eqref{e:ONS_abstract_op}.

For $(ii)$, we again proceed using duality. First, for non-negative functions $V \in L^{(q/2)'}_tL^{(r/2)'}_x$ we write
\begin{align*}
\int_{\mathbb{R}} \int_{\mathbb{R}^d} \rho[\Psi(D)^{-s}\gamma_R(t) \Psi(D)^{-s}](x) V(t,x) \, \mathrm{d}x \mathrm{d}t = \int_\mathbb{R} \int_0^t \mathrm{Tr}(A(t)  B(t')) \, \mathrm{d}t'\mathrm{d}t.
\end{align*}
Here the operators $A(t)$ and $B(t)$ are given by
\begin{align*}
A(t) & = e^{it\phi(D)} \Psi(D)^{-s} V(t,\cdot) \Psi(D)^{-s} e^{-it\phi(D)},  \quad
B(t) = e^{it\phi(D)}R(t)e^{-it\phi(D)},
\end{align*}
and since they are self-adjoint, it follows that $|{\rm Tr}(A(t)B(t'))| \leq {\rm Tr} (|A(t)| |B(t')|)$. Therefore, by H\"older's inequality for the Schatten spaces along with Part $(i)$ of this proposition, we obtain
\begin{align*}
& \Big|\int_{\mathbb{R}} \int_{\mathbb{R}^d} \rho[\Psi(D)^{-s}\gamma_R(t) \Psi(D)^{-s}](x) V(t,x) \, \mathrm{d}x \mathrm{d}t\Big| \\
& \qquad \qquad \leq  C_* \|V\|_{(\frac{q}{2})',(\frac{r}{2})'}\Big\| \int_{\mathbb{R}} e^{it' \phi(D)} |R(t')| e^{-it'\phi(D)}\, \mathrm{d}t' \Big\|_{\mathcal{C}^{\beta}}.
\end{align*}
The same inequality holds for general $V \in L^{(q/2)'}_tL^{(r/2)'}_x$ with constant $4C_*$ by appropriately decomposing $V$, and hence \eqref{e:inhom} follows from duality.
\end{proof}

\begin{proof}[Proof of Proposition \ref{p:wellposed_abstract}]
The argument below follows the proof of \cite[Theorem 14]{FS_AJM} (see also \cite{LewinSabin-1}). To begin, we set $M := \|\gamma_0\|_{\mathcal{C}^{\beta,s}}$ and define the map $\Lambda$ by
\[
\Lambda(\gamma,\rho)
=
(\Lambda_1(\gamma,\rho),\rho[\Lambda_1(\gamma,\rho)]),
\]
where
\[
\Lambda_1(\gamma,\rho)(t)
=
e^{-it\phi(D)}\gamma_0e^{it\phi(D)}
-i\int_0^t e^{-i(t-t')\phi(D)}[w \ast \rho(t'),\gamma(t')]e^{i(t-t')\phi(D)}\, \mathrm{d}t'.
\] 
We consider these mappings on the space
\begin{align*}
X_T=
\{
(\gamma,\rho)\in C^0_t([0,T];\mathcal{C}^{\beta,s})\times L^{q/2}_tL^{r/2}_x([0,T]\times\mathbb{R}^d): \| (\gamma, \rho) \|_{X_T} \leq C^*M\}
\end{align*}
equipped with the norm
\[
\| (\gamma,\rho) \|_{X_T} := \|\gamma\|_{C^0_t([0,T];\mathcal{C}^{\beta,s})}+\|\rho\|_{L^{q/2}_tL^{r/2}_x([0,T]\times\mathbb{R}^d)}.
\]
Here, $T = T(M, \|w\|_{\mathcal{X}_{r,s}}) \leq1$ is to be chosen momentarily and $C^*$ is a constant such that $C^*>\max{(10,10C_{*})}$. With this setup, it suffices to show that there is some $(\gamma,\rho) \in X_T$ which is a fixed point of $\Lambda$, and thus our goal is to show that $\Lambda$ is a contraction on the space $X_T$ if $T$ is small enough.

The proof that $\Lambda$ maps $X_T$ to itself rests on the estimates
\begin{equation}\label{e:contract-1}
\|\Lambda_1(\gamma,\rho)\|_{C^0_t([0,T];\mathcal{C}^{\beta,s})}
\leq M + 2C_{**} \|w\|_{\mathcal{X}_{r,s}} T^{1/(q/2)'}(C^*M)^2
\end{equation}
and
\begin{equation}\label{e:contract-2}
\|\rho[\Lambda_1(\gamma,\rho)]\|_{L^{q/2}_tL^{r/2}_x([0,T]\times\mathbb{R}^d)}
\leq
C_*\big(M + 8C_{**} \|w\|_{\mathcal{X}_{r,s}} T^{1/(q/2)'}(C^*M)^2\big).
\end{equation}
for any $(\gamma,\rho) \in X_T$. Once these claims are proved, then it is clear that a sufficiently small choice of $T = T(M, \|w\|_{\mathcal{X}_{r,s}})$ shows $\Lambda : X_T \to X_T$. A similar argument shows that $\Lambda$ is also a contraction, and we omit the details.

Since Schatten norms are unitarily invariant it follows that 
$
\|e^{-it\phi(D)}\gamma_0e^{it\phi(D)}\|_{\mathcal{C}^{\beta,s}}=
\|\gamma_0\|_{\mathcal{C}^{\beta,s}}= M
$
and thus for \eqref{e:contract-1} it suffices to establish
\begin{equation} \label{e:contract-1'}
\int_0^T\mathcal N \,\mathrm{d}t' \leq 2C_{**} \|w\|_{\mathcal{X}_{r,s}} T^{1/(q/2)'}(C^*M)^2,
\end{equation}
where 
\[\mathcal N=\| e^{-i(t-t')\phi(D)}[w \ast \rho(t'),\gamma(t')]e^{i(t-t')\phi(D)} \|_{\mathcal{C}^{\beta,s}}. \] 
For the integrand, we again use the unitary invariance of the Schatten norms along with H\"older's inequality to obtain
\begin{align*}
\mathcal N \leq
\big( \| \Psi(D)^s (w\ast \rho(t') ) \Psi(D)^{-s} \|_{\mathcal{C}^\infty} 
+
\| \Psi(D)^{-s} (w\ast \rho(t')) \Psi(D)^s \|_{\mathcal{C}^\infty} \big) \| \gamma(t') \|_{\mathcal{C}^{\beta,s}} 
\end{align*}
and thus, using \eqref{e:nonlinearitycontrol}, we get
\begin{align*}
 \int_0^T\mathcal N \, \mathrm{d}t'  \leq 2C_{**}\|\gamma\|_{C^0_t([0,T];\mathcal{C}^{\beta,s})} \|w\|_{\mathcal{X}_{r,s}} \int_0^T  \|\rho(t')\|_\frac{r}{2}   \, \mathrm{d}t'.
\end{align*}
Hence \eqref{e:contract-1'} follows from H\"older's inequality.

For \eqref{e:contract-2}, we first use \eqref{e:ONS_abstract_op} to see that
\[
\| \rho[e^{-it\phi(D)}\gamma_0e^{it\phi(D)}] \|_{L^{q/2}_tL^{r/2}_x([0,T]\times\mathbb{R}^d)} \leq C_*M.
\]
From this, along with \eqref{e:inhom} and unitary invariance of Schatten norms, we obtain
\begin{align*}
& \|\rho[\Lambda_1(\gamma,\rho)]\|_{L^{q/2}_tL^{r/2}_x([0,T]\times\mathbb{R}^d)} \\
& \qquad \qquad \leq  C_*M + 4C_*\int_0^T \| \Psi(D)^s [w * \rho(t'),\gamma(t')] \Psi(D)^s \|_{\mathcal{C}^\beta} \, \mathrm{d}t
\end{align*}
and estimating as in the proof of \eqref{e:contract-1} yields \eqref{e:contract-2}.
\end{proof}

We conclude with some remarks about the pseudo-relativistic case $\phi(\xi) = \langle \xi \rangle$, in which case the single-equation system of Hartree type has been studied in numerous papers; see, for example, \cite{CHHO}, \cite{FL}, \cite{Lenzmann} and further references in these papers. In this case, we have estimates of the form \eqref{e:ONS_abstract} with $\Psi(\xi) = \langle \xi \rangle$ and certain $s > 0$, and \eqref{e:nonlinearitycontrol} holds with the Besov space $\mathcal{X}_{r,s} = B^{s + \delta}_{(r/2)',\infty}$ for arbitrary $\delta > 0$. This latter assertion follows from the estimate
\begin{equation} \label{e:Triebel}
\| f g\|_{H^{s}} \leq C_{s,\delta} \|f\|_{{B}^{|s|+\delta}_{\infty,\infty}}\|g\|_{H^{s}}
\end{equation}
which is valid for all $s \in\mathbb{R}$ and $\delta>0$ (see, for example,  \cite[p. 205]{Triebel}). We also remark that the analogous estimate to \eqref{e:Triebel} on the torus was used in \cite{Nakamura} to establish local well-posedness of infinite systems of Hartree equations in the periodic setting.

\subsection{Velocity averages for kinetic transport equations}
Associated with the dispersion relation $\phi$, consider the kinetic transport equation
\begin{equation}\label{e:Kinetic}
\left\{ \begin{array}{ll}
    \partial_t F(t,x,\xi) + \nabla\phi(\xi)\cdot \nabla_xF(t,x,\xi) = 0,& (t,x,\xi) \in \mathbb{R} \times \mathbb{R}^d\times \mathbb{R}^d, \\
    F(0,x,\xi) = f(x,\xi). 
  \end{array} \right.
\end{equation} 
It is easy to see that the solution is given by the explicit formula 
\[
F(t,x,\xi) = f(x-t\nabla\phi(\xi),\xi).
\] 
The standard kinetic transport equation $\partial_t F+ \xi \cdot \nabla_xF = 0$ corresponds to the dispersion relation $\phi(\xi) = \frac{1}{2}|\xi|^2$, and the Strichartz estimates in this context
\begin{equation*}
\|F\|_{L^a_tL^b_xL^c_\xi(\mathbb{R} \times \mathbb{R}^d \times \mathbb{R}^d)} \lesssim \|f\|_{L^\beta_{x,\xi}(\mathbb{R}^d \times \mathbb{R}^d)}
\end{equation*}
were first studied by Castella and Perthame \cite{CastellaPerthame}. From the explicit form of the solution, it is obvious that it suffices to consider the case $c=1$, or equivalently, the velocity averaging estimate\footnote{As will become clear, the relabelling $(a,b) \to (\frac{q}{2},\frac{r}{2})$ is convenient for facilitating the connection to \eqref{e:ONS_abstract}}
\begin{equation} \label{e:noweightaverage}
\| \varrho f \|_{L^\frac{q}{2}_tL^\frac{r}{2}_x(\mathbb{R} \times \mathbb{R}^d)}\lesssim \|f\|_{L^{\beta}_{x,\xi}(\mathbb{R}^d \times \mathbb{R}^d)},
\end{equation}
where $\varrho$ is the velocity averaging operator given by
\[
\varrho f(t,x) = \int_{\mathbb{R}^d} f(x-t\xi,\xi)\, \mathrm{d}\xi.
\]
Combining the contribution in \cite{CastellaPerthame}, along with an observation by Keel and Tao in \cite{KeelTao}, all non-endpoint estimates \eqref{e:noweightaverage} were settled; specifically, the estimates were shown to be true for $r \in [2,\frac{2(d+1)}{d-1})$ and false for $r \in (\frac{2(d+1)}{d-1},\infty]$, where $(q,r)$ is $\frac{d}{2}$-sharp admissible and $\beta = \frac{2r}{r+1}$. The failure of the endpoint case $r = \frac{2(d+1)}{d-1}$ is due to Guo and Peng \cite{GuoPeng} (see also the work of Ovcharov \cite{Ovcharov}) when $d=1$, and \cite{BBGL_CPDE} for all $d \geq 2$. Some recent advances in the theory of nonlinear kinetic systems relied on Strichartz estimates for the standard kinetic transport equation; we refer the reader to work of Bourneveas, Calvez, Guti\'errez and Perthame \cite{BCGP} on global weak solutions for the Othmer--Dunbar--Alt kinetic model of chemotaxis, and also Ars\'enio \cite{Arsenio} and He--Jiang \cite{HJ} for contributions to the theory of the Boltzmann equation. In \cite{HJ} it is suggested that weighted Strichartz estimates may be beneficial for the study of certain kinds of potentials, and in this section we establish the following weighted extensions of the estimates in \cite{CastellaPerthame} and \cite{KeelTao} as applications of our main results in this paper.
\begin{theorem}\label{t:kinetic}
Suppose $d\ge1$. Let $(\frac{1}{r},\frac{1}{q})$ belong to $\mathrm{int}(OA_{\frac{d}{2}}C) \cup [C,A_\frac{d}{2})$ and define $\beta = \beta_{\frac{d}{2}}(q,r)$.
\vspace{-7pt}
\begin{enumerate}
[leftmargin=0.8cm, labelsep=0.3 cm, topsep=0pt]
\item[$(i)\,$]
Suppose $\alpha\in\mathbb{R}\setminus\{0,1\}$ and let $s= \frac d2 - \frac dr - \frac{\alpha}{q}$. If $\frac{d}{r} + \frac{\alpha}{q} < d$, then 
\begin{equation*}
\bigg\| \int_{\mathbb{R}^d} f(x-t|\xi|^{\alpha - 2}\xi,\xi) |\xi|^{-2s}\, \mathrm{d}\xi \bigg\|_{L^{\frac q2}_tL^{\frac r2}_x} \lesssim \| f \|_{L^{\beta}_{x,\xi}}
\end{equation*}
for all $f \in L^{\beta}_{x,\xi}$.
\item[$(ii)$]  Let  $s = \frac{d}{2} - \frac{d}{r} - \frac{d-2}{dq}$. Then
\begin{equation*}
\bigg\| \int_{\mathbb{R}^d} f(x-t\tfrac{\xi}{\langle \xi \rangle},\xi) \langle \xi\rangle^{-2s} \, \mathrm{d}\xi \bigg\|_{L^{\frac q2}_tL^{\frac r2}_x} \lesssim \| f \| _{L^{\beta}_{x,\xi}}
\end{equation*}
for all $f \in L^{\beta}_{x,\xi}$.

\end{enumerate}
\end{theorem}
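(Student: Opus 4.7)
The strategy is to derive Theorem \ref{t:kinetic} from the orthonormal Strichartz estimates established earlier in the paper---specifically, Theorems \ref{t:fracSchrosharp} and \ref{t:fracSchrononsharp} for part (i), and Theorems \ref{t:KGSchro} and \ref{t:KGnonsharp_Schro} for part (ii). The bridge between the two kinds of estimates is provided by the operator-theoretic duality of Proposition \ref{p:duality} together with an anti-Wick (Toeplitz) quantization that realizes the kinetic transport integrand as the density of a time-evolved density matrix.

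As the first step, I would apply Proposition \ref{p:duality} to rewrite the orthonormal Strichartz bounds in their equivalent density-operator form
\begin{equation*}
\bigl\|\rho\bigl[\Psi(D)^{-s}e^{-it\phi(D)}\gamma_0 e^{it\phi(D)}\Psi(D)^{-s}\bigr]\bigr\|_{L^{q/2}_tL^{r/2}_x} \lesssim \|\gamma_0\|_{\mathcal{C}^\beta},
\end{equation*}
valid for every compact self-adjoint operator $\gamma_0$ on $L^2(\mathbb{R}^d)$, with $\phi(\xi)=\tfrac{1}{\alpha}|\xi|^\alpha$, $\Psi(\xi)=|\xi|$ for part (i) and $\phi(\xi)=\langle\xi\rangle$, $\Psi(\xi)=\langle\xi\rangle$ for part (ii). Here $\beta=\beta_{d/2}(q,r)$ on the sharp admissible line, and the analogous exponent off the line, exactly as supplied by the relevant orthonormal Strichartz theorem.

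Next, to any non-negative $f\in L^\beta_{x,\xi}(\mathbb{R}^{2d})$ I would associate the positive operator $\gamma_f$ on $L^2(\mathbb{R}^d)$ given by an anti-Wick superposition of rank-one projectors onto $L^2$-normalized Gaussian coherent states $\psi_{y,\xi}$ weighted by $f$. Writing $\gamma_f$ in this way and dualizing against a compact operator $K$ produces the Husimi distribution $\langle\psi_{y,\xi},K\psi_{y,\xi}\rangle$ on phase space; the endpoint identities at $\beta=1$ and $\beta=\infty$ combine by complex interpolation to yield the Schatten--Husimi duality $\|\gamma_f\|_{\mathcal{C}^\beta}\lesssim\|f\|_{L^\beta_{x,\xi}}$ for all $\beta\in[1,\infty]$.

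Finally, one identifies
\begin{equation*}
\rho\bigl[\Psi(D)^{-s}e^{-it\phi(D)}\gamma_f e^{it\phi(D)}\Psi(D)^{-s}\bigr](x)\sim c\int f(x-t\nabla\phi(\xi),\xi)\,\Psi(\xi)^{-2s}\,\mathrm{d}\xi,
\end{equation*}
which, combined with the first step, delivers the claimed weighted velocity averaging estimate. The hard part will be justifying this last identification rigorously for non-quadratic dispersion relations $\phi$: in the Schr\"odinger case the Wigner transform gives an exact correspondence between the Weyl symbol and its free-transport evolution, but for $\phi(\xi)=\tfrac{1}{\alpha}|\xi|^\alpha$ with $\alpha\neq 2$ (and similarly for the Klein--Gordon propagator) this exactness is lost. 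One resolves this either through a semiclassical limit in the coherent-state width, with uniform control of both the Schatten norm of $\gamma_f$ and the $L^p$-approximation of the density, or by performing the change of variables $\eta=\nabla\phi(\xi)$ which converts the transport with velocity $\nabla\phi(\xi)$ into standard free transport in $\eta$ at the cost of a Jacobian $|\det\nabla^2\phi|^{-1}$; the arithmetic that forces the exponent $\beta=\beta_{d/2}(q,r)$ to appear is precisely the fact that the weight $\Psi(\xi)^{-2s}$ combines with this Jacobian so as to preserve the $L^\beta_{x,\xi}$ norm of $f$ under this substitution.
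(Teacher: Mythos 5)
Your overall strategy is the same as the paper's: deduce the velocity averaging estimates from the orthonormal Strichartz estimates via the density-operator reformulation and a semiclassical limit of a phase-space quantization, which is exactly the content of the paper's Proposition \ref{p:density}. The difference is in the choice of quantization. The paper uses a Weyl-type prescription $\gamma_h$ with integral kernel $K_h(x,x')=\int f(\tfrac{h}{2}(x+x'),\xi)e^{i(x-x')\cdot\xi}\,\mathrm{d}\xi$, for which $\|\gamma_h\|_{\mathcal{C}^\beta}\lesssim h^{-d/\beta}\|f\|_{L^\beta}$ and the density of $\Psi(D)^{-s}\gamma_h(t/h)\Psi(D)^{-s}$ evaluated at $x/h$ can be written down explicitly and shown to tend to the transport integral as $h\to 0$; your anti-Wick (coherent-state) quantization with a width parameter gives the same Schatten scaling via the $\mathcal{C}^1$--$\mathcal{C}^\infty$ interpolation you describe and can also be made to yield the right limit, so the two routes are essentially interchangeable. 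What you flag as ``the hard part'' is indeed carried out in the paper, but it is a rather direct computation once the scaling is set up: the $h$-powers cancel precisely because $\frac{d}{\beta}=\frac{2}{q}+\frac{2d}{r}$, which is the identity built into $\beta=\beta_{d/2}(q,r)$.

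Your second suggested route, the change of variables $\eta=\nabla\phi(\xi)$, contains a genuine error: it does not preserve the $L^\beta_{x,\xi}$ norm off the sharp admissible line. If you absorb the weight and the Jacobian into a new unknown $g$ so that the left-hand side becomes the unweighted transport average $\int g(x-t\eta,\eta)\,\mathrm{d}\eta$, then, changing back,
\[
\|g\|_{L^\beta_{x,\eta}}^\beta = \iint |f(x,\xi)|^\beta\,|\xi|^{-2s\beta}\,|\det H\phi(\xi)|^{1-\beta}\,\mathrm{d}x\,\mathrm{d}\xi,
\]
and the power of $|\xi|$ vanishes if and only if $2s\beta+d(\alpha-2)(\beta-1)=0$. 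With $s=\frac{d}{2}-\frac{d}{r}-\frac{\alpha}{q}$ and $\beta=\beta_{d/2}(q,r)$ this expression equals a constant multiple of $(\alpha-1)\big(\frac{d}{2}-\frac{d}{r}-\frac{2}{q}\big)$, which is nonzero throughout the interior of $OA_{d/2}C$. So the substitution only ``preserves the norm'' on the sharp admissible segment $[C,A_{d/2})$ and fails to reach the non-sharp admissible region that the theorem also covers; it also breaks down for part (ii) because $\langle\xi\rangle$ is not homogeneous, so $\nabla\langle\cdot\rangle$ is not invertible onto $\mathbb{R}^d$ and $\det H\langle\xi\rangle$ is not a power of $\langle\xi\rangle$. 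The semiclassical route, which you list first, is the one that works uniformly across the range and both phases, and is what the paper implements.
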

\begin{remark} $(i)\,$ The case $\alpha = 2$ in $(ii)$ was established in \cite{BHLNS}. Also, in the case $\alpha > 1$, the estimates were obtained in \cite{BLNS} in the restricted range corresponding to $(\frac{1}{r},\frac{1}{q})$ in $\mathrm{int} (OFC)$, where $F = (\frac{d-1}{4d},\frac{1}{4})$; the approach in \cite{BLNS} was based on a Fourier-analytic argument and is completely different to the approach taken here.

$(ii)\,$
By duality, the velocity averaging estimate
\begin{equation} \label{e:abstract_velocity}
\bigg\| \int_{\mathbb{R}^d} f(x-t\nabla \phi(\xi),\xi) \Psi(\xi)^{-2s} \, \mathrm{d}\xi \bigg\|_{L^{\frac q2}_tL^{\frac r2}_x} \lesssim \| f \| _{L^{\beta}_{x,\xi}}
\end{equation}
is equivalent to the $X$-ray transform type estimate
\begin{equation} \label{e:average*}
\bigg\| \int_{\mathbb{R}} V(t,x+t\nabla \phi(\xi)) \Psi(\xi)^{-2s}\, \mathrm{d}t \bigg\|_{L^{\beta'}_{x,\xi}} \lesssim \|V\|_{L^{(q/2)'}_tL^{(r/2)'}_x}.
\end{equation}
Since $e^{it\phi(D)} x e^{-it\phi(D)} = x + t\nabla \phi(D)$, formally at least we have
\[
e^{it\phi(D)} V(t,x) e^{-it\phi(D)} = V(t,x + t\nabla \phi(D))
\]
and thus \eqref{e:average*} has a striking resemblance to \eqref{e:dual_ONS}. In fact, the latter implies the former via a semi-classical limiting argument and this is the mechanism through which we establish Theorem \ref{t:kinetic} as an application of our main results in this paper. The following proposition formalizes this link, albeit at the level of the density functions $\rho_\gamma$ and $\varrho$, rather than their dual counterparts in \eqref{e:dual_ONS} and \eqref{e:average*}.
\end{remark}

\begin{proposition}\label{p:density}
Suppose \eqref{e:ONS_abstract} holds for some $q,r \ge 2$, $s\in\mathbb{R}$, $\beta = \beta_{\frac{d}{2}}(q,r)$ and $\phi$ is even. Then \eqref{e:abstract_velocity} holds.
\end{proposition}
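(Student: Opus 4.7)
The plan is to deduce the kinetic estimate \eqref{e:abstract_velocity} from the operator-theoretic inequality \eqref{e:ONS_abstract} by a semiclassical limit based on coherent states, as hinted at in the remark preceding the proposition. I would first dualize both sides: by Proposition \ref{p:inhomONS}(i), \eqref{e:ONS_abstract} is equivalent to the Schatten dual bound \eqref{e:dual_ONS}, and \eqref{e:abstract_velocity} is equivalent to the $X$-ray inequality \eqref{e:average*}. The task is then to derive the latter from the former, and this is where coherent states enter as the bridge between the quantum and classical evolutions.

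Fix $\psi\in\mathcal{S}(\mathbb{R}^d)$ with $\|\psi\|_{L^2}=1$ and, for $h>0$ and $(y,\eta)\in\mathbb{R}^{2d}$, define the coherent state
\[
g^h_{y,\eta}(x) := h^{-d/4}\,\psi\bigl(h^{-1/2}(x-y)\bigr)\,e^{ix\cdot\eta}.
\]
A direct stationary-phase computation, along the lines of the usual derivation of Egorov's theorem, shows that as $h\to 0$ the propagated state $e^{-it\phi(D)}g^h_{y,\eta}$ concentrates to a coherent state centred at $(y+t\nabla\phi(\eta),\eta)$, and that $\Psi(D)^{-s}$ acts on such a state asymptotically as multiplication by $\Psi(\eta)^{-s}$. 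Consequently, writing $A_V := \int_{\mathbb{R}} e^{it\phi(D)}\Psi(D)^{-s}V(t,\cdot)\Psi(D)^{-s}e^{-it\phi(D)}\,dt$, we obtain the pointwise limit
\[
\langle A_V g^h_{y,\eta},\,g^h_{y,\eta}\rangle \;\longrightarrow\; \Psi(\eta)^{-2s}\int_{\mathbb{R}} V(t,\,y+t\nabla\phi(\eta))\,dt
\]
for fixed smooth compactly supported $V$.

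To convert this pointwise limit into an $L^{\beta'}_{y,\eta}$ bound I would test against a nonnegative $g\in L^\beta_{y,\eta}$ of compact support by forming the positive operator
\[
\gamma^h_g := (2\pi h)^{-d}\int_{\mathbb{R}^{2d}} g(y,\eta)\,|g^h_{y,\eta}\rangle\langle g^h_{y,\eta}|\,dy\,d\eta.
\]
Using the resolution of identity $(2\pi)^{-d}\int|g^h_{y,\eta}\rangle\langle g^h_{y,\eta}|\,dy\,d\eta=I$ and a Berezin--Lieb type inequality, one obtains the uniform Schatten bound $\|\gamma^h_g\|_{\mathcal{C}^\beta}\lesssim \|g\|_{L^\beta_{x,\xi}}$ by complex interpolation between the endpoints $\beta=1$ (trace class) and $\beta=\infty$ (operator norm). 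Combining this with the Schatten dual hypothesis gives $|\mathrm{Tr}(\gamma^h_g A_V)|\lesssim \|g\|_{L^\beta}\|V\|_{(q/2)',(r/2)'}$. Expanding the trace and passing $h\to 0$ via dominated convergence yields
\[
\bigg|\int g(y,\eta)\,\Psi(\eta)^{-2s}\int_{\mathbb{R}} V(t,\,y+t\nabla\phi(\eta))\,dt\,dy\,d\eta\bigg| \lesssim \|g\|_{L^\beta}\|V\|_{(q/2)',(r/2)'}.
\]
Taking the supremum over $\|g\|_{L^\beta}\le 1$ yields the $X$-ray bound with the shift $+t\nabla\phi(\xi)$; the evenness of $\phi$ (so that $\nabla\phi$ is odd) together with the evenness of $\Psi$ allows the change of variables $\xi\mapsto-\xi$ to convert this to \eqref{e:average*} with the shift $-t\nabla\phi(\xi)$, and hence to \eqref{e:abstract_velocity}.

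The main obstacle is rigorously establishing the two semiclassical ingredients uniformly in the relevant parameters: the Egorov-type approximation for $e^{-it\phi(D)}g^h_{y,\eta}$ in a form strong enough to survive the $t$-integration after being sandwiched between $\Psi(D)^{-s}$ and $V(t,\cdot)$, and the uniform-in-$h$ Schatten bound $\|\gamma^h_g\|_{\mathcal{C}^\beta}\lesssim\|g\|_{L^\beta}$, which is the semiclassical analogue of the statement that the quantization of an $L^\beta$ symbol lies in $\mathcal{C}^\beta$. Both are standard in spirit but require care in the low-regularity setting $V\in L^{(q/2)'}_tL^{(r/2)'}_x$, for which a density argument reducing to smooth compactly supported $V$ will be needed before the dominated convergence step.
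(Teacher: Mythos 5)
Your overall architecture (semiclassical quantization of the phase-space symbol, Schatten bound from Berezin--Lieb type interpolation, passage to the $h\to 0$ limit via the trace pairing) mirrors the paper's strategy, but the central semiclassical ingredient is wrong as stated, and this is a genuine gap rather than a technicality.

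With your normalization $g^h_{y,\eta}(x)=h^{-d/4}\psi(h^{-1/2}(x-y))e^{ix\cdot\eta}$, the state has position width $h^{1/2}$ but \emph{frequency} width $h^{-1/2}$; that is, $\widehat{g^h_{y,\eta}}$ lives in a ball of radius $\sim h^{-1/2}$ around $\eta$, which expands rather than shrinks as $h\to 0$. Two consequences undermine the claimed limit $\langle A_Vg^h_{y,\eta},g^h_{y,\eta}\rangle\to\Psi(\eta)^{-2s}\int V(t,y+t\nabla\phi(\eta))\,\mathrm dt$. First, $\Psi(D)^{-s}$ does \emph{not} act asymptotically as multiplication by $\Psi(\eta)^{-s}$, since the relevant frequencies are $\eta+h^{-1/2}\zeta$ with $\zeta$ ranging over $\mathrm{supp}\,\widehat\psi$, and $\Psi(\eta+h^{-1/2}\zeta)^{-s}\not\to\Psi(\eta)^{-s}$. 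Second, for fixed $t\ne 0$ the propagated state $e^{-it\phi(D)}g^h_{y,\eta}$ does not concentrate at $y+t\nabla\phi(\eta)$: the momentum spread $\sim h^{-1/2}$ makes the position spread at time $t$ of order $|t|\,h^{-1/2}\to\infty$. (For example, for $\phi(\xi)=|\xi|^2/2$, the free Schr\"odinger evolution disperses a coherent state of momentum width $h^{-1/2}$ to position width $\sim|t|h^{-1/2}$.) So there is no Egorov-type concentration here, and the pointwise limit you feed into the dominated convergence step is false. The usual Egorov theorem applies to propagators of the form $e^{-it\phi(hD)/h}$ acting on coherent states that localize at \emph{both} position and momentum scales $\sqrt h$ in the appropriate semiclassically rescaled variables; your set-up mixes a classical-scale propagator with a state that only localizes in position. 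Separately, the asserted \emph{uniform} bound $\|\gamma_g^h\|_{\mathcal C^\beta}\lesssim\|g\|_{L^\beta}$ is off by a factor $h^{-d}$ given your $(2\pi h)^{-d}$ prefactor together with a resolution of identity $(2\pi)^{-d}\int|g^h_{y,\eta}\rangle\langle g^h_{y,\eta}|\,\mathrm dy\,\mathrm d\eta=I$ that carries no $h$; this inconsistency is less serious since the two $h$-powers would cancel in the trace, but it is a sign that the semiclassical bookkeeping needs to be redone.

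The paper's proof avoids this issue by a different choice of quantization and, crucially, by an explicit rescaling of both the space and time variables. It takes $\gamma_h$ to be the Weyl-type quantization with kernel $\int f(\tfrac h2(x+x'),\xi)e^{i(x-x')\cdot\xi}\,\mathrm d\xi$, computes the density $\rho[\Psi(D)^{-s}\gamma_h(t/h)\Psi(D)^{-s}](x/h)$ exactly, Taylor-expands $\phi(h\xi-\xi')-\phi(\xi')$ to extract $h\,\xi\cdot\nabla\phi(-\xi')$ (this is where evenness of $\phi$ is used), passes to the limit, and finally invokes Fatou plus the Schatten bound $\|\gamma_h\|_{\mathcal C^\beta}\lesssim h^{-d/\beta}\|f\|_{L^\beta}$, with the $h$-powers matching precisely because $\beta=\beta_{d/2}(q,r)$. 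The rescaling $(x,t)\mapsto(x/h,t/h)$ is exactly what is missing from your argument: with it, the classical transport $y\mapsto y+(t/h)\nabla\phi(\xi)$ happens at the same $1/h$ spatial scale, and the frequency variable need not be localized. If you wish to retain coherent states, you would either need the standard semiclassical phase $e^{ix\cdot\eta/h}$ together with a matching rescaling of the symbol $g$, or you would need to test against the rescaled potential $V(ht,hx)$ (which is equivalent to the paper's rescaling) before sending $h\to 0$.
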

In the case of the Schr\"odinger equation $\phi(\xi) = |\xi|^2$ when $(q,r)$ is sharp $\frac{d}{2}$-admissible, Proposition \ref{p:density} can be found in \cite[Lemma 9]{Sabin_survey}. A more general statement for the Schr\"odinger equation can be found in \cite[Proposition 5.1]{BHLNS}. Our proof of Proposition \ref{p:density} follows the same lines of argument and thus we simply provide a sketch; for further details, we refer the reader to \cite{Sabin_survey} and \cite{BHLNS}.
\begin{proof}[Proof of Proposition \ref{p:density}]
Fix any $f \in \mathcal{S}(\mathbb{R}^d)$ and, for $h > 0$, define the quantization $\gamma_{h}$ by
\[
\gamma_{h}\psi(x)
=
\int_{\mathbb{R}^{2d}}
f(\tfrac h2(x+x'),\xi)e^{i(x-x')\cdot\xi}\psi(x')\, \mathrm{d}\xi\mathrm{d}x',\;\;\; \psi\in\mathcal{S}(\mathbb{R}^d).
\]
Let us use $K_h(x,x')$ to denote the integral kernel of $\gamma_h$. Then we have 
$$
\widehat{K_h}(\xi,\xi') = \big( \tfrac{2\pi}{h} \big)^{d} \mathcal{F}_x [ f(\cdot, \tfrac{\xi-\xi'}2)](\tfrac{\xi+\xi'}{h})
$$
and from the definition of the density function and changes of variable, we have 
\begin{align*}
&\qquad\rho[\Psi(D)^{-s}\gamma_h(t/h)\Psi(D)^{-s}](x/h) \\
& =\int_{\mathbb{R}^{2d}} e^{-i\frac{t}{2h} (\phi(h\xi-\xi') - \phi(\xi')) } \Psi(h\xi-\xi')^{-s} \Psi(\xi')^{-s} \mathcal{F}_x [ f(\cdot, \tfrac{h\xi-2\xi'}2)](\xi) e^{ix\cdot \xi}\, \mathrm{d}\xi\mathrm{d}\xi' .
\end{align*}
Hence,  it is clear that 
\begin{align*}
&\lim_{h\to0}\rho[\Psi(D)^{-s}\gamma_h(t/h)\Psi(D)^{-s}](x/h) \\
=&  \int_{\mathbb{R}^{2d}} e^{-i\frac{t}{2} \nabla \phi(\xi') \cdot \xi }  \Psi(\xi')^{-2s} \mathcal{F}_x [ f(\cdot, -\xi')](\xi) e^{ix\cdot \xi}\, \mathrm{d}\xi\mathrm{d}\xi' \\
=& \int_{\mathbb{R}^d} f(x-\tfrac t2 \nabla\phi(\xi'),-\xi') \Psi(\xi')^{-2s}\, \mathrm{d}\xi'.
\end{align*}
This and \eqref{e:ONS_abstract_op} imply 
\begin{align*}
\bigg\| \int_{\mathbb{R}^d} f(x-t\nabla\phi(\xi),\xi)\Psi(\xi)^{-2s}\, \mathrm{d}\xi \bigg\|_{\frac q2, \frac r2} 
&\lesssim \liminf_{h\to0} \big\| \rho[\Psi(D)^{-s}\gamma_h(t/h)\Psi(D)^{-s}](\cdot/h) \big\|_{\frac q2,\frac r2} \\
&\lesssim \liminf_{h\to0} h^{\frac2q + \frac{2d}{r}} \| \gamma_h \|_{\mathcal{C}^\beta} \nonumber. 
\end{align*}
For the right-hand side, we have $
\| \gamma_h \|_{\mathcal{C}^\beta} \lesssim h^{-\frac d\beta} \| f \|_{L^\beta_{x,v}}
$
for $h > 0$ sufficiently small, and the result follows since $\beta = \beta_\frac{d}{2}(q,r)$.
\end{proof}
\begin{proof}[Proof of Theorem \ref{t:kinetic}]
Combine Theorems \ref{t:fracSchrosharp}, \ref{t:KGSchro},  \ref{t:KGnonsharp_Schro} and \ref{t:fracSchrononsharp} with Proposition \ref{p:density}.
\end{proof}

\subsection{Refined Strichartz estimates}
Dispersive PDE theory has been significantly enriched by the exploitation of refined versions of the classical single-function Strichartz estimates, including their decisive role in the construction of profile decompositions and understanding of concentration phenomena; see, for example,  \cite{BV}, \cite{BourgainIMRN}, \cite{CK}, \cite{CHKLY}, \cite{MVV_IMRN}, \cite{MVV_Duke}, \cite{Ramos}. In particular, we emphasize the key role of the papers by Bourgain \cite{BourgainIMRN} and Moyua--Vargas--Vega \cite{MVV_IMRN}, \cite{MVV_Duke} in many of these developments.  

An example of a refined Strichartz estimate takes the form
\begin{equation} \label{e:refined_goal}
\| U_\phi f \|_{L^q_tL^r_x} \lesssim \|f\|_\mathcal{X}^{1-\theta} \|f\|_\mathcal{H}^\theta
\end{equation}
for some $\theta \in (0,1)$ and some norm $\|\cdot\|_\mathcal{X}$ which satisfies $\|f\|_\mathcal{X} \lesssim \|f\|_\mathcal{H}$. The role of the norm $\|\cdot\|_\mathcal{X}$ is to identify the extent to which the frequency support of the initial data concentrates on certain subsets of $\mathbb{R}^d$. Frank and Sabin \cite{FS_AJM} observed that estimates of the form 
\begin{equation} \label{e:ONSgeneral-S7}
\Big\| \sum_j  \nu_j |U_\phi f_j| ^2 \Big\|_{L^{\frac q2}_tL^{\frac r2}_x} \lesssim \| \nu \|_{\ell^\beta}
\end{equation}
for families of orthonormal functions $(f_j)_j$ in $\mathcal{H}$, lead to 
\begin{equation} \label{e:abstract_refinedStrichartz}
\|U_\phi f \|_{L^q_tL^r_x} \lesssim \Big( \sum_{j \in \mathbb{Z}} \| P_jf \|_\mathcal{H}^{2\beta} \Big)^{\frac{1}{2\beta}},
\end{equation}
as long as $r < \infty$. In the case $\mathcal{H} = \dot{H}^s(\mathbb{R}^d)$, the right-hand side of \eqref{e:abstract_refinedStrichartz} is the homogeneous Besov norm $\|f\|_{\dot{B}^s_{2,2\beta}}$ and if $\beta > 1$ we immediately obtain \eqref{e:refined_goal} with $\theta = \frac{1}{\beta}$ and
\[
\|f\|_\mathcal{X} = \sup_{k \in \mathbb{Z}} \|P_k f\|_{\dot{H}^s}.
\]
In this regard, Besov space refinements of the classical Strichartz estimates are highly desirable. Prior to the recent developments on estimates of the form \eqref{e:ONSgeneral-S7}, typically refined Strichartz estimates were established using results or ideas from bilinear Fourier restriction theory, in many cases relying on sophisticated  bilinear estimates (see, for example, \cite{BV} and \cite{Ramos} which relied upon bilinear estimates in \cite{Tao_GAFA} and \cite{Tao_MathZ}, respectively). The observation of Frank--Sabin that Besov-space refinements are derivable from \eqref{e:ONSgeneral-S7} opens a fresh line of attack which is likely to be fruitful in future developments.

Although in certain contexts it is necessary to obtain yet further refinements of \eqref{e:abstract_refinedStrichartz} which detect concentration on smaller sets (such as dyadic cubes), as discussed in more detail below in the context of the fractional Schr\"odinger equation, the Besov space refinements \eqref{e:abstract_refinedStrichartz} themselves suffice for certain applications to mass concentration phenomena. Beforehand, we briefly recall the argument given by Frank and Sabin for generating \eqref{e:abstract_refinedStrichartz} from \eqref{e:ONSgeneral-S7}. In doing so, we observe that whenever $(P_j)_{j \in \mathbb{Z}}$ is a family of operators acting on a suitable subspace of $\mathcal{H}$ satisfying the following properties (1)--(4), we obtain \eqref{e:abstract_refinedStrichartz} from \eqref{e:ONSgeneral-S7}.

\begin{enumerate}
[leftmargin=0.8cm, labelsep=0.3 cm, topsep=0pt]
\item \emph{Decomposition of the identity:} $f = \sum_{j \in \mathbb{Z}} P_j f.$

\item   \emph{Commutativity with the solution operator:} $U_\phi P_jf = P_j U_\phi f$ for all $j \in \mathbb{Z}$. \footnote{Strictly speaking, the right-hand side should be interpreted as $P_j$ acting on the function $x \mapsto U_\phi f(x,t)$ for fixed $t \in \mathbb{R}$.}

\item  \emph{Littlewood--Paley inequality:} 
$
\| f \|_{L^r(\mathbb{R}^d)} \lesssim \| ( \sum_{j \in \mathbb{Z}} |P_jf|^2 )^\frac{1}{2} \|_{L^r(\mathbb{R}^d)}.
$

\item \emph{Almost orthogonality:} $N \in \mathbb{N}$ exists such that $P^*_k P_j = 0$ whenever $|j-k| \geq N$.
\end{enumerate}

Indeed, as a result of the first three properties we have
\begin{align*}
\|U_\phi f \|_{L^q_tL^r_x} \lesssim \Big\| \sum_{j \in \mathbb{Z}}  |U_\phi P_jf| ^2 \Big\|_{L^{\frac q2}_tL^{\frac r2}_x}^\frac{1}{2}.
\end{align*}
From the almost orthogonality property, we may decompose $\mathbb{Z} = \cup_{k = 1}^{N} J_k$ into disjoint index sets such that, for each $k \in \{1,\ldots,N\}$, the family $(P_jf)_{j \in J_k}$ is orthogonal in $\mathcal{H}$. Hence, by the triangle inequality in $L^{\frac q2}_tL^{\frac r2}_x$ followed by \eqref{e:ONSgeneral-S7}, we obtain
\begin{equation*} 
\|U_\phi f \|_{L^q_tL^r_x} \lesssim \Big( \sum_{k=1}^N \Big( \sum_{j \in J_k} \|P_jf\|_\mathcal{H}^{2\beta} \Big)^\frac{1}{\beta} \Big)^\frac{1}{2}
\end{equation*}
and thus, by the equivalence of norms on finite-dimensional spaces, \eqref{e:abstract_refinedStrichartz} follows. 

We also remark that if, in addition, $\sum_{j \in \mathbb{Z}} \|P_jf\|_\mathcal{H}^2 \lesssim \|f\|_\mathcal{H}^2$ (which, for example, trivially follows if we add the assumption that $P_k^*P_j$ is positive semi-definite for each $j,k \in \mathbb{Z}$) then \eqref{e:abstract_refinedStrichartz} implies the classical Strichartz estimate
\[
\| U_\phi f\|_{L^q_tL^r_x} \lesssim \|f\|_\mathcal{H}
\] 
via the trivial embedding $\ell^{2\beta} \subseteq \ell^2$. In this regard, \eqref{e:abstract_refinedStrichartz} represents a refined version of the classical Strichartz estimate for $\beta > 1$.

Clearly if $(P_j)_{j \in \mathbb{Z}}$ is the Littlewood--Paley family given by \eqref{e:LP}, then the properties (1)--(4) hold (with $N = 2$), and $P^*_kP_j$ is positive semi-definite for each $j,k$.

As our first application in this direction, for appropriate $(q,r,\beta,s)$ described in Theorem \ref{t:refined_fractional} below, we may use Theorems \ref{t:fracSchrosharp} and \ref{t:fracSchrononsharp} to deduce \eqref{e:abstract_refinedStrichartz} and consequently
\begin{equation} \label{e:refined_fractional}
\| e^{it(-\Delta)^{\alpha/2}} f \|_{L^q_tL^r_x} \lesssim \|f\|_{\dot{B}^s_{2,2\beta}}
\end{equation}
for all $f$ in the homogeneous Besov space $\dot{B}^s_{2,2\beta}$.
\begin{theorem} \label{t:refined_fractional}
Let $\alpha \in \mathbb{R} \setminus \{0,1\}$. Suppose $q,r \geq 2$ satisfy $\frac{d}{r} + \frac{\alpha}{q} < d$ and define $s = \frac{d}{2} - \frac{d}{r} - \frac{\alpha}{q}$.
\vspace{-7pt}
\begin{enumerate}
[leftmargin=.8cm, 
labelsep= 0.3 cm, topsep=0pt]
\item[$(i)\,$] Let $d \geq 2$. If $(\frac{1}{r},\frac{1}{q})$ belongs to $\mathrm{int}(OA_\frac{d}{2}C) \cup [C,A_\frac{d}{2})$, then \eqref{e:refined_fractional} holds with $\beta \leq \beta_\frac{d}{2}(q,r)$. If $(\frac{1}{r},\frac{1}{q})$ belongs to $\mathrm{int}(ODE_\frac{d}{2}A_\frac{d}{2}) \cup (O,A_\frac{d}{2}] \cup [A_\frac{d}{2},E_\frac{d}{2})$, then \eqref{e:refined_fractional} holds with $\beta < \frac{q}{2}$.
\item[$(ii)$] Let $d = 1$. If $(\frac{1}{r},\frac{1}{q})$ belongs to $\mathrm{int}(OA_\frac{1}{2}C) \cup [C,A_\frac{1}{2})$, then \eqref{e:refined_fractional} holds with $\beta \leq \beta_\frac{1}{2}(q,r)$.
\end{enumerate}
\end{theorem}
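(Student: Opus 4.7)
The plan is to apply the abstract framework outlined immediately before the theorem statement: combine the orthonormal Strichartz estimates from Theorems \ref{t:fracSchrosharp} and \ref{t:fracSchrononsharp} with the standard Littlewood--Paley family $(P_j)_{j \in \mathbb{Z}}$ defined in \eqref{e:LP}. First I would verify the four requisite properties (1)--(4) for this family: the decomposition of the identity is built into the construction; commutativity with $e^{it(-\Delta)^{\alpha/2}}$ is automatic since each $P_j$ is a Fourier multiplier; the square-function Littlewood--Paley inequality holds for $1 < r < \infty$ (which is why the restriction $r < \infty$ appears implicitly in the theorem statement); and almost orthogonality holds with $N = 2$, since $\chi_0(2^{-j} \cdot)$ and $\chi_0(2^{-k} \cdot)$ have disjoint supports whenever $|j-k| \geq 2$.

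Next, I would apply the framework with $\mathcal{H} = \dot{H}^s$, where $s = \frac{d}{2} - \frac{d}{r} - \frac{\alpha}{q}$ is the scaling regularity. Since $\|P_j f\|_{\dot{H}^s} \sim 2^{js}\|P_j f\|_{L^2}$, the framework yields
\begin{equation*}
\| e^{it(-\Delta)^{\alpha/2}} f \|_{L^q_t L^r_x} \lesssim \Big( \sum_{j \in \mathbb{Z}} \|P_j f\|_{\dot{H}^s}^{2\beta}\Big)^{\frac{1}{2\beta}} \sim \|f\|_{\dot{B}^s_{2, 2\beta}}
\end{equation*}
as soon as the required orthonormal Strichartz estimate in $\dot{H}^s$ with exponent $\beta$ is available. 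The task then reduces to pairing each region in the $(\tfrac{1}{r}, \tfrac{1}{q})$-plane with the correct ingredient theorem.

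For part~(i): on $\mathrm{int}(OA_{d/2}C)$ I would invoke Theorem \ref{t:fracSchrononsharp}(i) to obtain the orthonormal estimate with $\beta = \beta_{d/2}(q,r)$, and on $[C, A_{d/2})$ I would use Theorem \ref{t:fracSchrosharp}(i), observing that $\frac{2r}{r+2} = \beta_{d/2}(q,r)$ along the sharp $\frac{d}{2}$-admissible line. For the second claim of part~(i), Theorem \ref{t:fracSchrosharp}(ii) covers the sharp-admissible segments $(O, A_{d/2}] \cup [A_{d/2}, E_{d/2})$ with $\beta < q/2$, and Theorem \ref{t:fracSchrononsharp}(ii) covers the interior $\mathrm{int}(ODE_{d/2}A_{d/2})$. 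Part~(ii) follows analogously in the restricted range available for $d = 1$.

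Finally, to pass from the maximal value of $\beta$ to any smaller value in the stated range, I would invoke the embedding $\ell^{2\beta_1} \hookrightarrow \ell^{2\beta_2}$ for $\beta_1 \leq \beta_2$, which gives $\|f\|_{\dot{B}^s_{2, 2\beta_2}} \leq \|f\|_{\dot{B}^s_{2, 2\beta_1}}$, so that the estimate for the maximal $\beta$ implies it for any smaller $\beta$. No genuine obstacle is anticipated, since the theorem is essentially a Frank--Sabin-style repackaging of the orthonormal Strichartz estimates already established in this paper; the only care required is in correctly matching each $(q,r)$-region to the appropriate ingredient theorem and value of $\beta$.
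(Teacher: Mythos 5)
Your proposal correctly identifies the paper's approach: combine the abstract framework (properties (1)--(4) for the Littlewood--Paley family $(P_j)$) with Theorems \ref{t:fracSchrosharp} and \ref{t:fracSchrononsharp} to derive \eqref{e:abstract_refinedStrichartz}, and this is indeed all the paper offers as justification. The verification of properties (1)--(4), the identification $\|P_jf\|_{\dot H^s} \sim 2^{js}\|P_jf\|_{L^2}$, and the observation $\frac{2r}{r+2} = \beta_{d/2}(q,r)$ on the sharp admissible line are all correct and match the paper's reasoning.

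One of your region-to-theorem attributions is, however, incorrect: you describe $(O,A_{d/2}]$ as a ``sharp-admissible segment'' covered by Theorem \ref{t:fracSchrosharp}(ii). Only the endpoint $A_{d/2}$ lies on the sharp $\frac{d}{2}$-admissible line; the open segment $(O,A_{d/2})$ consists of non-sharp $\frac{d}{2}$-admissible pairs (it is the locus where $\beta_{d/2}(q,r) = q/2$, as noted in Remark \ref{remark:afterP5.1}). It is precisely the common boundary between $\mathrm{int}(OA_{d/2}C)$ (where Theorem \ref{t:fracSchrononsharp}(i) gives the orthonormal estimate with $\beta = \beta_{d/2}(q,r) < q/2$) and $\mathrm{int}(ODE_{d/2}A_{d/2})$ (where Theorem \ref{t:fracSchrononsharp}(ii) gives it with $\beta < q/2$), and neither theorem applies directly at points of $(O,A_{d/2})$. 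To cover it one must interpolate — for instance, between the estimate at $A_{d/2}$ with $\beta < q/2$ (Theorem \ref{t:fracSchrosharp}(ii)) and estimates at points of $\mathrm{int}(OA_{d/2}C)$ with $\beta = \beta_{d/2}(q,r)$ approaching $q/2$, or alternatively interpolate the scalar estimate \eqref{e:refined_fractional} between the two adjacent open regions. The paper glosses over this, so the oversight is understandable, but your stated justification for $(O,A_{d/2}]$ does not work as written. A similar caveat applies when $d=2$: Theorem \ref{t:fracSchrononsharp}(ii) requires $d \geq 3$, so for $d=2$ the interior region (which degenerates to $\mathrm{int}(ODA_1)$ since $E_1 = D$) must also be obtained by interpolating $[A_1,D)$ from Theorem \ref{t:fracSchrosharp}(ii) against $\mathrm{int}(OA_1C)$ from Theorem \ref{t:fracSchrononsharp}(i).
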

Estimates of the form \eqref{e:refined_fractional} have appeared several times in the literature for the fractional Schr\"odinger equation. As an example, in  work on the mass concentration phenomena for $L^2$-critical nonlinear fractional Schr\"odinger equations in \cite{CHL}, a key component of the proof (see equation (2.8) in \cite{CHL}) is establishing \eqref{e:refined_fractional} in the case $s=0$ for some $\beta > 1$. The argument in \cite{CHL} proceeds by establishing \eqref{e:refined_fractional} with $(q,r,\beta,s) = (4,4,2,\frac{d-\alpha}{4})$ (using bilinear arguments) and interpolation with standard Strichartz estimates (i.e. $\beta  = 1$). As a result, \eqref{e:refined_fractional} is obtained in the case of non-sharp $\frac{d}{2}$-admissible $(q,r)$ with $s=0$ and certain $\beta \in (1,2)$. Observe that the technical condition $\frac{d}{r} + \frac{\alpha}{q} < d$ in Theorem \ref{t:refined_fractional} is completely redundant for such $(q,r)$ and applying our result may lead to improvement in the range of $\beta$ for \eqref{e:refined_fractional}. For example, for $(\frac{1}{r},\frac{1}{q})$ on $(O,A_\frac{d}{2})$ with $s=0$, Theorem \ref{t:refined_fractional} gives \eqref{e:refined_fractional} with $\beta$ arbitrarily close to $\frac{d-1+\alpha}{d}$ and this exponent clearly grows unboundedly with $\alpha$. 

The estimate \eqref{e:refined_fractional} also appeared in \cite{BOQ} in their study on existence of extremizers for Fourier extension estimates associated to plane curves of the form $(\xi,|\xi|^\alpha)$, where $\alpha > 1$. In particular, it was shown that \eqref{e:refined_fractional} holds when $(d,q,r,\beta,s) = (1,6,6,\frac{3}{2},\frac{2-\alpha}{6})$; since $\beta_\frac{1}{2}(6,6) = \frac{3}{2}$, we completely recover this result from Theorem \ref{t:refined_fractional}\,$(ii)$.

For the wave equation, we obtain
\begin{equation} \label{e:refined_wave}
\| e^{it\sqrt{-\Delta}} f \|_{L^q_tL^r_x} \lesssim \|f\|_{\dot{B}^s_{2,2\beta}}
\end{equation}
as a consequence of Theorems \ref{t:wave} and \ref{t:wavenonsharp} combined with \eqref{e:abstract_refinedStrichartz}, where $(q,r,\beta,s)$ are prescribed in the following.
\begin{theorem} \label{t:refined_wave}
Suppose $q,r \geq 2$ and define $s = \frac{d}{2} - \frac{d}{r} - \frac{1}{q}$.
\vspace{-7pt}
\begin{enumerate}
[leftmargin=.8cm, 
labelsep= 0.3 cm, topsep=0pt]
\item[$(i)\,$] Let $d \geq 3$. If $(\frac{1}{r},\frac{1}{q})$ belongs to $\mathrm{int}(OA_\frac{d-1}{2}C) \cup [C,A_\frac{d-1}{2})$, then \eqref{e:refined_wave} holds with $\beta \leq \beta_\frac{d-1}{2}(q,r)$. If $(\frac{1}{r},\frac{1}{q})$ belongs to $\mathrm{int}(ODE_\frac{d-1}{2}A_\frac{d-1}{2}) \cup (O,A_\frac{d-1}{2}] \cup [A_\frac{d-1}{2},E_\frac{d-1}{2})$, then \eqref{e:refined_fractional} holds with $\beta < \frac{q}{2}$.
\item[$(ii)$] Let $d = 2$. If $(\frac{1}{r},\frac{1}{q})$ belongs to $\mathrm{int}(OA_\frac{1}{2}C) \cup [C,A_\frac{1}{2})$, then \eqref{e:refined_wave} holds with $\beta \leq \beta_\frac{1}{2}(q,r)$.
\end{enumerate}
\end{theorem}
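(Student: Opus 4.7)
The plan is to read Theorem \ref{t:refined_wave} as essentially an immediate corollary of the general framework stated just before it, applied to the orthonormal Strichartz estimates for the wave propagator that are furnished by Theorems \ref{t:wave} and \ref{t:wavenonsharp}. As the family $(P_j)_{j\in\mathbb Z}$ I will take the standard Littlewood--Paley decomposition \eqref{e:LP}; the four listed properties are well known for this family (almost orthogonality holds with $N=2$ by the support of $\chi_0$, commutation with $e^{it\sqrt{-\Delta}}$ is automatic since each $P_j$ is a Fourier multiplier, and the vector-valued Littlewood--Paley inequality on $L^r(\mathbb R^d)$ is classical for $r\in(1,\infty)$). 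Feeding the orthonormal estimate \eqref{e:ONSgeneral-S7} through the Frank--Sabin argument reproduced in the excerpt then yields \eqref{e:abstract_refinedStrichartz}, and since $\|P_j f\|_{\dot H^s}\sim 2^{js}\|P_j f\|_{L^2}$ by the frequency localization, the right-hand side is comparable to $\|f\|_{\dot B^s_{2,2\beta}}$, producing \eqref{e:refined_wave}.

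It then remains only to match regions with the available orthonormal estimates. For Part (i), $d\ge 3$: on the sharp $\frac{d-1}{2}$-admissible segment $[C,A_{\frac{d-1}{2}})$, Theorem \ref{t:wave}(i) supplies \eqref{e:ONSgeneral-S7} with $\beta=\frac{2r}{r+2}=\beta_{\frac{d-1}{2}}(q,r)$, while on the non-sharp triangle $\mathrm{int}(OA_{\frac{d-1}{2}}C)$ Theorem \ref{t:wavenonsharp}(i) supplies it with the same sharp exponent $\beta_{\frac{d-1}{2}}(q,r)$. In the upper region, Theorem \ref{t:wave}(ii) handles the sharp admissible piece $[A_{\frac{d-1}{2}},E_{\frac{d-1}{2}})$ and Theorem \ref{t:wavenonsharp}(ii) handles the interior $\mathrm{int}(ODA_{\frac{d-1}{2}}E_{\frac{d-1}{2}})$, both with $\beta<q/2$. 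Part (ii), $d=2$, is covered analogously by Theorem \ref{t:wave}(i) on $[C,A_{\frac{1}{2}})$ and Theorem \ref{t:wavenonsharp}(i) on $\mathrm{int}(OA_{\frac{1}{2}}C)$. The fact that \eqref{e:refined_wave} with some fixed $\beta$ implies the same with any smaller $\beta'$ is immediate from the embedding $\ell^{2\beta'}\supseteq\ell^{2\beta}$ for sequences, so one only needs to produce the estimate at the largest admissible value of $\beta$ in each region.

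The one point I expect to require additional care is the boundary segment $(O,A_{\frac{d-1}{2}}]$ in the upper region of Part (i), which lies on the common boundary of the two open triangles above and hence is not directly produced by Theorem \ref{t:wavenonsharp}. At $A_{\frac{d-1}{2}}$ itself the estimate follows from Theorem \ref{t:wave}(ii), but on the interior of the segment, where $\beta_{\frac{d-1}{2}}(q,r)=q/2$, I would obtain the target summability $\beta<q/2$ by interpolation between refined Strichartz estimates already proved at nearby points of the two adjacent triangles (selecting the two points so that the interpolation line passes through the boundary point with matching Sobolev exponent $s$), using the standard real interpolation identity $(\dot B^{s}_{2,q_0},\dot B^{s}_{2,q_1})_{\theta,q}=\dot B^{s}_{2,q}$ to recover the desired Besov norm on the right-hand side. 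Apart from this bookkeeping at the boundary, the proof amounts to combining the general Frank--Sabin framework with the orthonormal Strichartz estimates already established in Sections \ref{section:ONSsharp} and \ref{section:nonsharp}.
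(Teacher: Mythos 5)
Your proposal follows essentially the same route as the paper: generate \eqref{e:abstract_refinedStrichartz} from the Littlewood--Paley decomposition and almost orthogonality, then feed in the orthonormal Strichartz estimates of Theorems \ref{t:wave} and \ref{t:wavenonsharp}, and interpolate where needed. The mapping of regions to those theorems in the lower triangle and on the sharp admissible segments is accurate, and you correctly identify the boundary segment $(O,A_{\frac{d-1}{2}}]$ as requiring an extra interpolation step because it lies on the common boundary of the two open regions covered by Proposition \ref{p:nonsharp}.

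There is, however, a gap for $d=3$ that you do not address. You invoke Theorem \ref{t:wavenonsharp}(ii) to cover the interior $\mathrm{int}(ODE_{\frac{d-1}{2}}A_{\frac{d-1}{2}})$, but that statement is only available for $d\geq 4$ (behind the scenes it relies on Remark \ref{remark:afterP5.1}(1), which requires $\sigma>1$, and the relevant single-function Strichartz estimate at $q=2$ for the three-dimensional wave equation fails). So when $d=3$, \emph{both} the interior $\mathrm{int}(ODA_1)$ and the boundary segment $(O,A_1]$ must be obtained by interpolating refined Strichartz estimates between the lower region $\mathrm{int}(OA_1C)\cup[C,A_1)$ and the segment $(A_1,D)$ furnished by Theorem \ref{t:wave}(ii), using the strict inequalities $\beta_0=\beta_1(q_0,r_0)<q_0/2$ and $\beta_1<q_1/2$ to conclude $\beta<q/2$. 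In doing so you should not insist that the interpolation line hold $s$ fixed: for $(\tfrac{1}{r},\tfrac{1}{q})$ close to $O$ the constant-$s$ line exits the region through the side $OD$ before reaching $(A_1,D)$, so the real-interpolation identity you quote is not usable there, whereas complex interpolation of $\dot B^{s_0}_{2,p_0}$ and $\dot B^{s_1}_{2,p_1}$ along a generic chord produces the required $\dot B^s_{2,2\beta}$ with the $s_i$ averaging to the target $s$. Finally, a small misstatement: the monotonicity in $\beta$ follows from the embedding $\ell^{2\beta'}\hookrightarrow\ell^{2\beta}$ for $\beta'\leq\beta$ (so that $\|f\|_{\dot B^s_{2,2\beta}}\leq\|f\|_{\dot B^s_{2,2\beta'}}$), not from the reverse inclusion as written.
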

Although the estimates in Theorem \ref{t:refined_wave} for $(\frac{1}{r},\frac{1}{q})$ belonging to $[C,B_\frac{d-1}{2}]$ may be obtained from Theorem \ref{t:waveFS} (due to Frank--Sabin), for such $q$ and $r$, the estimates \eqref{e:refined_wave} appeared earlier with coincidental or superior thresholds for $\beta$, and via different techniques. For example, Quilodr\'an \cite{Q} obtained \eqref{e:refined_wave} in the case $(d,q,r,\beta,s) = (2,6,6,\frac{3}{2},\frac{1}{2})$ in his work on sharp Fourier extension estimates for the two-dimensional cone at the Stein--Tomas exponent; in this instance, the threshold for $\beta$ is the same as in Theorem \ref{t:refined_wave}. In a significant development, Ramos \cite{Ramos} independently obtained the same estimate as in \cite{Q} when $d=2$, and\footnote{In fact, a yet further refinement of \eqref{e:refined_wave} in terms of conical sectors was proved in \cite{Ramos}.} \eqref{e:refined_wave} with $(q,r,\beta,s) = (\frac{2(d+1)}{d-1},\frac{2(d+1)}{d-1},\frac{d+1}{d-1},\frac{1}{2})$ for $d \geq 3$ using deep bilinear Fourier restriction theory; this is superior to the threshold $\beta = \frac{d+1}{d}$ given by Theorem \ref{t:refined_wave} in this special case. We remark that by the aforementioned result of Ramos and a trivial interpolation, one obtains \eqref{e:refined_wave} with $(\frac{1}{r},\frac{1}{q})$ on $[ (\frac{d-1}{2(d+1)},\frac{d-1}{2(d+1)}), E_{\frac{d-1}{2}}]$, $s = \frac{d+1}{4}(\frac{1}{2}-\frac{1}{r})$ and $\beta = \frac{q}{2}$; interestingly, from Theorem \ref{t:refined_wave}, we are essentially able to recover this result
on $[ A_{\frac{d-1}{2}}, E_{\frac{d-1}{2}}]$ since our theorem yields \eqref{e:refined_wave} with $\beta < \frac{q}{2}$ in such a case.

Finally, we note that Theorems \ref{t:KGSchro}, \ref{t:KGwave}, \ref{t:KGnonsharp_wave}--\ref{t:KGsharpunified} all lead to refinements of the Strichartz estimates for the Klein--Gordon equation 
\begin{equation*} 
\| e^{it\sqrt{1-\Delta}} f \|_{L^q_tL^r_x} \lesssim \|f\|_{B^s_{2,2\beta}},
\end{equation*}
where $\|f\|_{\dot{B}^s_{2,2\beta}}$ is the inhomogeneous Besov norm; the details of the statements are left to the interested reader. We also refer the reader to earlier work on refined Strichartz estimates for the Klein--Gordon propagator in \cite{Candy}, \cite{COS}, \cite{COSS}, \cite{KSV}.

\

\begin{acknowledgements}
This work was supported by JSPS Kakenhi grant numbers 18KK0073 and 19H01796  (Bez), Korean Research Foundation Grant  no.  NRF-2018R1A2B2006298 (Lee), and Grant-in-Aid for JSPS Research Fellow no. 17J01766 (Nakamura).  
The authors would like to thank  Younghun Hong and Yoshihiro Sawano for an earlier collaboration from which they benefited in the course of the current project.  
\end{acknowledgements}

\end{document}